\newtheorem{theorem}{Theorem}
\newtheorem{proposition}{Proposition}
\newtheorem{definition}{Definition}
\newtheorem{lemma}{Lemma}
\newtheorem*{remark}{Remark}
\newtheorem*{corollary}{Corollary}
\newtheorem*{example}{Example}
\newtheorem{assumption}{Assumption}
\newcommand{\E}{\mathbb{E}}
\newcommand{\N}{\mathbb{N}}
\newcommand{\bR}{\mathbb{R}}
\newcommand{\bC}{\mathbb{C}}
\newcommand{\bX}{\mathbb{X}}
\newcommand{\bW}{\mathbf{W}}
\newcommand{\bT}{\mathbb{T}}
\newcommand{\cF}{\mathcal{F}}
\newcommand{\bx}{\mathbf{x}}
\newcommand{\by}{\mathbf{y}}
\newcommand{\cA}{\mathcal{A}}
\newcommand{\cP}{\mathcal{P}}
\newcommand{\fs}{\mathfrak{s}}
\newcommand{\fd}{\mathfrak{d}}
\newcommand{\norm}[1]{\left\lVert#1\right\rVert}
\newcommand{\sprod}[2]{\langle #1, #2 \rangle}
\begin{document}
\title[Genus expansion for non-linear random matrix theory]{Genus expansion for non--linear random matrix ensembles with applications to neural networks}

\author{Nicola Muça Cirone}
\address{Department of Mathematics, Imperial College London}
\email{n.muca-cirone22@imperial.ac.uk}

\author{Jad Hamdan}
\address{Mathematical Institute, University of Oxford}
\email{hamdan@maths.ox.ac.uk}

\author{Cristopher Salvi}
\address{Department of Mathematics, Imperial College London}
\email{c.salvi@imperial.ac.uk}

\begin{abstract}
We present a unified approach to studying certain non-linear random matrix ensembles and associated random neural networks at initialization.

This begins with a novel series expansion for neural networks which generalizes Faà di Bruno’s formula to an arbitrary number of compositions. The role of monomials is played by random multilinear maps indexed by directed graphs, whose edges correspond to random matrices. Crucially, this expansion linearises the effect of the activation functions, allowing for the direct application of {Wick’s principle} and the genus expansion technique.

As an application, we prove several results about neural networks with random weights. We first give a new proof of the fact that they converge to Gaussian processes as their width tends to infinity. Secondly, we quantify the rate of convergence of the Neural Tangent Kernel \cite{jacot2018neural} to its deterministic limit in Frobenius norm. Finally, we compute moments of the limiting spectral distribution of the Jacobian (only the first two of which were previously known), expressing them as sums over non-crossing partitions.

All of these results are then generalized to the case of neural networks with sparse and non-Gaussian weights, under moment assumptions. 
\end{abstract}

\maketitle
\tableofcontents

\section{Introduction}
\label{sec:intro}
\subsection{Randomly initialised neural networks and non-linear random matrix theory} 

Deep neural networks (NNs) whose weights' and biases' entries are initialised as appropriately rescaled,  independent and identically distributed (i.i.d.) Gaussian random variables converge to Gaussian processes (GPs) as their width tends to infinity. This well-known fact was originally observed by \citet{neal2012bayesian} for shallow feedforward networks and more recently by \citet{matthews2018gaussian} for multi-layer feedforward networks, by \citet{novak2018bayesian} and \cite{garriga2018deep} for deep convolutional networks, and by \citet{yang2019wide} for more general architectures, including recurrent and attention-based networks. The study of such random initializations is of practical interest, as a better understanding of their properties can help make design choices that improve training. For instance, the concentration of the spectrum of the network's input-output Jacobian can help prevent vanishing and exploding gradients \cite{glorot2010understanding, saxe2013exact, pennington2018emergence}. Other relevant objects which have been studied include the Gram \cite{PenningtonWorah} and Fisher information \cite{pennington2018spectrum} matrices, as well as the \textit{neural tangent kernel} (NTK) \cite{jacot2018neural}. From a theoretical viewpoint, these offer natural non-linear generalizations of the classical questions of random matrix theory (RMT) and free probability, and as such have attracted the attention of both communities \cite{PasturGaussian, PasturIID, BenigniPeche, WangZhu, ZhouFanWang}.

\bigskip 

Albeit of a similar nature, these results have been derived using a wide array of tools which do not always offer a clear path towards universality (meaning initialisations with non-Gaussian random weights). The goal of this paper is to propose a unifying framework that is naturally suited to this task, and is built on the \textit{genus expansion} technique.

This technique has its roots in the connection between matrix integrals and the enumeration of (combinatorial) maps, which was first discovered in the context of quantum field theory (see \cite{feynman,thooft, itzykson}, as well as \cite{zvonkin} for an accessible introduction to the subject). The link to random matrix theory was later established by Harer and Zagier \cite{harerzagier} in a seminal work investigating moduli spaces of curves, and has since been used to study various matrix ensembles and their asymptotic first and second-order freeness (e.g. \cite{mingospeicher, nicaspeicher, RedelmeierI, RedelmeierII}, as well as \cite{Dubach2021} and the references therein).
Roughly speaking, the approach consists of expanding traces of random matrix products and evaluating their expectation using Wick's principle. This yields a sum whose terms are in bijection with a set of graphs, and one determines which terms are of leading order by embedding their corresponding graphs into surfaces and computing their Euler characteristic. 

In a recent work, Dubach and Peled \cite{Dubach2021} used this approach to study products of non-Hermitian random matrices, and in particular showed that it can be adapted to non-Gaussian and sparse matrices with relative ease. This would, at least in principle, make it ideal for the study of multilayer neural networks, but the presence of non-linear activations in that setting prohibits one from being able to apply Wick's formula.

We circumvent this problem by developing a graphical language to express a large class of matrix/vector products, and deriving an expansion for neural networks in terms of this language. This linearises the effect of the activation functions, allowing for the use of Wick’s formula and the connection to the enumeration of maps to be made. To the best of our knowledge, this is the first use of the genus expansion technique in deep learning and non-linear RMT. A high-level overview of our approach is given in section \ref{sec:intro_mainideas} below.

\subsection{Main results}
Fix sequences $(\varphi_{\ell}: \bR \to \bR ~|~ \ell \in \N_{>0})$ of polynomial \emph{activation functions}, $(N_\ell \in \N_{>0} ~|~  \ell \in \N)$ of \textit{layer dimensions} and $(W_{\ell} \in \mathbb{R}^{N_{\ell+1}\times N_{\ell}} ~|~ \ell \in \N)$ of weight matrices.
We define a \emph{feed-forward neural network} $\Phi_L$ of depth $L$ by the recursion
\begin{equation}\label{intro:def:NN}
     \Phi_0(\bx)=W_0\bx,\quad \Phi_{\ell+1}(\bx)=W_{\ell+1}\varphi_{\ell+1}(\Phi_{\ell}(\bx)),
\end{equation}
where each $\varphi_{\ell}$ is applied entry-wise. We omit bias terms and restrict ourselves to polynomial activations for simplicity here (generalizations are discussed in Section \ref{sec:informal_ext}).

To illustrate the proposed method, we prove three results on NNs initialised with sparse random weights, beginning with the following Gaussian process limit.
\begin{theorem}[Gaussian process limit for sparse neural networks]
\label{thm:general_GP}
Let $N_\ell = N$ when $\ell>0$, and assume that each $W_\ell$ has i.i.d. entries drawn from a symmetric, centred distribution with finite moments and variance $\frac{1}{N}\mathbf{1}(\ell>0)+\mathbf{1}(\ell=0)$.

Then for any $M,L \geq 1$ we have 
 \begin{equation}
        ([\Phi_{L}]_1,...,[\Phi_{L}]_M) \xrightarrow[N \to \infty]{d} \mathcal{GP}(0,K_{L} \otimes \mathrm{Id}_M)
    \end{equation}
    where the right-hand side is a Gaussian Process indexed on $\bR^{N_0}$, with diagonal covariance function defined by
        \begin{gather}\label{eq:intro_GPKer}
        K_{0}(\bx,\by) = \sprod{\bx}{\by}_{\bR^{N_0}}, ~
        K_{\ell+1}(\bx,\by) = \E\left[ \varphi_{\ell+1}(X_{\ell}) \varphi_{\ell+1}(Y_{\ell}) \right] \\
        (X_{\ell},Y_{\ell}) \sim \mathcal{N}\left(0,
        \begin{bmatrix}
            K_{\ell}(\bx,\bx) & K_{\ell}(\bx,\by)
            \\
            K_{\ell}(\by,\bx) & K_{\ell}(\by,\by)
        \end{bmatrix}\right),
    \end{gather}
    and $\mathrm{Id}_M$ is the $M\times M$ identity matrix.
Furthermore, the same result holds if the weight matrices in \eqref{intro:def:NN} are replaced by $\tilde W_\ell := W_\ell \odot \frac{1}{\sqrt{p_N}} B_\ell$, where $W_\ell$ are as above and the $B_\ell$ are independent matrices with i.i.d., Bernoulli distributed entries with parameter $p_N$ satisfying $N p_N \to \infty$.   
\end{theorem}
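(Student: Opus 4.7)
The plan is to use the method of moments: show that all joint moments of the finite-dimensional marginals $([\Phi_{L}(\bx^{(i)})]_{k_i})_{i=1}^n$ converge to those of the claimed Gaussian process indexed by $\bR^{N_0}$. Following the pipeline of Section \ref{pipeline}, I would first apply Theorem \ref{thm:tree_exp} to expand
\[
    [\Phi_L(\bx)]_k = \sum_{G} c(G)\, \bW_G(\bx),
\]
where $G$ ranges over a family of rooted trees of depth $L$, with edges labelled by the layer-indexed weight matrices and root labelled by the output coordinate $k$. Substituting this into the $n$-fold product reduces the computation of each joint moment to a sum, over $n$-tuples of trees, of expectations of the form $\E\bigl[\prod_i \bW_{G_i}(\bx^{(i)})\bigr]$.

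The second step is to evaluate each such expectation via Wick's formula (equation (\ref{eq:wickexpansion})), obtaining a sum over admissible pairings $\phi$ of the edges of the disjoint union $G_1 \sqcup \cdots \sqcup G_n$. The GP parameterisation, whose variance scaling is $1/N$ per non-input edge, is tailored so that each $\bW_{G_\phi}$ scales as $N^{|V(G_\phi)| - E/2}$ where $E$ is the number of non-input edges; the Euler characteristic computation described in the introduction then identifies the \emph{fully-atomic} (planar) pairings as the only ones surviving in the $N \to \infty$ limit. The symmetry of the entry distributions ensures that any pairing leaving an unpaired edge contributes zero, so only complete pairings enter the final sum.

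The combinatorial core of the proof is to show that these leading pairings reproduce the recursive kernel $K_L$. Two observations drive this: (i) a planar pairing must match edges at the same depth in the disjoint union, and any top-layer edge incident to root $i$ must be paired with one incident to root $j$, forcing $k_i = k_j$ and thereby producing the diagonal factor $\mathbf{I}_M$ in the covariance; (ii) once the top-layer pairing is fixed, what remains is a Wick-style sum over pairings of the subtrees hanging below, whose contribution is precisely the Gaussian expectation $\E[\varphi_L(X_{L-1})\varphi_L(Y_{L-1})]$ against the kernel from the previous depth, matching the recursion (\ref{eq:intro_GPKer}) for $K_L$. Iterating this peeling argument down to the base case $K_0(\bx,\by)=\sprod{\bx}{\by}$ yields the claimed covariance, while the vanishing of higher joint cumulants (non-pair partitions of the edges correspond to sub-leading pairings with strictly smaller $|V(G_\phi)|$) confirms that the limit is Gaussian.

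The extension to non-Gaussian and sparse weight matrices is then obtained by invoking the universality statements of Sections \ref{sec:non_gaussian} and \ref{sec:sparse}: in both regimes the Wick expansion continues to hold up to an $o(1)$ error under the stated moment assumptions and, in the sparse case, under $N p_N \to \infty$, so the preceding argument carries over unchanged in the limit. The main obstacle is the combinatorial step (iii): correctly identifying the planar pairings and checking that their contribution assembles into the kernel recursion. This is where the tree structure of the expansion interacts most intricately with the moments of the polynomial activations $\varphi_\ell$ and must be tracked carefully using the graphical dictionary of Section \ref{sec:graph_dictionary}.
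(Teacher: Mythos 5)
Your proposal follows essentially the same route as the paper: coordinate-wise tree expansion, Wick/genus expansion to isolate the leading pairings, a recursive combinatorial identification of the surviving contributions with the kernel recursion, and the universality corollaries for non-Gaussian and sparse weights. The only slip is terminological: since each tree in the expansion has a single top-layer edge, the \emph{fully-atomic} pairings contribute nothing and it is the \emph{bi-atomic} pairings (trees collapsing in pairs) that dominate — which is exactly the pairwise structure your "top-layer edge incident to root $i$ must be paired with one incident to root $j$" step describes, and what yields Gaussianity.
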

\begin{proof}The claim for $W_\ell$ with Gaussian entries is Theorem \ref{thm:GPlimit}, and the corollaries in Section \ref{sec:extensions} generalize this to the non-Gaussian and sparse cases.
\end{proof}

This adds to the growing list of generalisations of the result of Matthews et al. \cite{matthews2018gaussian} to non-Gaussian settings, such as that of Huang \cite{huang} to orthogonal weights, and Hanin \cite{Hanin_annals} to weights with i.i.d. entries satisfying finite moment assumptions. More recently, Nait-Saada, Naderi and Tanner \cite{saada2024beyond} encompassed both of these results by showing that one can relax the i.i.d. assumption to a class of weights which they call \textsc{Pseudo-IID}. In particular, this class includes \textit{structured sparse} weights, making their work the first to rigorously show that the Gaussian process limit holds in a sparse setting. That said, while their result holds for more general activations than the ones considered here, it only deals with sparsification using a fixed binary mask $B$, whereas Theorem \ref{thm:GPlimit} allows for masks $B_\ell$ whose expected proportion of ones decreases as $N_\ell$ tends to infinity.

\bigskip 

Our second result concerns the NTK (\cite{du2018gradient, jacot2018neural}), which is defined as the matrix representation $\Theta_{L}(\bx,\by)\in\bR^{N_{L+1} \times N_{L+1}}$ of
\begin{equation}\label{def:NTK}
    \sum_{\ell=0}^{L} \lambda_{\ell}^2 (\mathrm{d}\Phi_{L}(\bx))_{W_{\ell}}\circ(\mathrm{d}\Phi_{L}(\by))_{W_{\ell}}^{\top},
\end{equation}
where $(\mathrm{d}\Phi_{L}(\by))_{W_{\ell}}^{\top}$ denotes the adjoint of $(\mathrm{d}\Phi_{L}(\by))_{W_{\ell}}$, and the $(\lambda_\ell)_\ell$ are the so-called \emph{layer-wise learning rates}. We show that at initialisation and under ``NTK parametrisation", $\Theta_L$ converges in $L^2$ to a deterministic kernel matrix (in the space of $N_{L+1}\times N_{L+1}$ matrices equipped with Frobenius norm), at a rate that is inversely proportional to the layer width. Note that the implicit constant in this estimate depends on the depth $L$.

\begin{theorem}[Convergence in $L^2$ of the NTK at intialisation]\label{thm:general_NTK}
For each $0<\ell\leq L$, let $N_\ell = N$ and assume that  $W_\ell$ has i.i.d. entries drawn from a symmetric, centered distribution with finite moments and variance $\sigma_\ell^2=\frac{1}{N}\mathbf{1}(\ell>0)+\mathbf{1}(\ell=0)$.  Moreover, assume that the layer-wise learning rates are $\lambda_\ell=\sigma_\ell^2$.

Then 
\begin{equation} 
    \mathbb{E}\bigg\{\Big\Vert\Theta_{L}(\bx,\by) - \Theta_{L}^{\infty}(\bx,\by) \otimes \mathrm{Id}_{N_{L+1}}\Big\Vert^2\bigg\}= \mathcal{O}\bigg(\frac{N_{L+1}^2}{N}\bigg)
\end{equation}
where $\Vert\cdot\Vert$ denotes the Frobenius norm, and $\Theta_{L}^{\infty}:\mathbb{R}^{N_0}\times\mathbb{R}^{N_0}\to\mathbb{R}$ is a scalar-valued kernel defined recursively as
\begin{equation}
    \Theta_0^{\infty}(\bx,\by) = \sprod{\bx}{\by}_{\bR^{N_0}}, \quad \Theta_{L}^{\infty}(\bx,\by) = K_{L}(\bx,\by) + \dot K_{L}(\bx,\by)\Theta_{L - 1}^{\infty}(\bx,\by).
\end{equation}
($\dot K_{\ell}$ is defined in the same way as $K_{\ell}$ only  substituting $\varphi_{\ell}$ for $\varphi'_{\ell}$ in (\ref{eq:intro_GPKer}).)

The same result holds with a rate of $\mathcal{O}(N_{L+1}^2/(Np_N))$ if the weight matrices in \eqref{intro:def:NN} are replaced by $\tilde W_\ell := W_\ell \odot \frac{1}{\sqrt{p_N}} B_\ell$, where $W_\ell$ are as above and the $B_\ell$ are independent matrices with i.i.d., Bernoulli distributed entries with parameter $p_N$ satisfying $N p_N \to \infty$. 
\end{theorem}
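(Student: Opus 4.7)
The plan is to apply the four-step pipeline of Section \ref{pipeline}. \textbf{(I) Graph expansion of the NTK.} By the chain rule, each summand of $\Theta_L(\bx,\by)$ factorises as
\begin{equation*}
(\mathrm{d}_{W_\ell}\Phi_L(\bx))(\mathrm{d}_{W_\ell}\Phi_L(\by))^{\top} = \sprod{\varphi_\ell(\Phi_{\ell-1}(\bx))}{\varphi_\ell(\Phi_{\ell-1}(\by))}\, J_{\ell\to L}(\bx)\,J_{\ell\to L}(\by)^{\top},
\end{equation*}
where $J_{\ell\to L}(\bx)=W_L D_L(\bx) W_{L-1} D_{L-1}(\bx)\cdots W_{\ell+1} D_{\ell+1}(\bx)$ and $D_k(\bx)=\mathrm{diag}(\varphi_k'(\Phi_{k-1}(\bx)))$ (with no $\varphi_0$ for $\ell=0$). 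Using Theorem \ref{thm:tree_exp} applied to $\Phi_{\ell-1}$ and to each Jacobian, and the fact that differentiation, Hadamard products and composition are all graphical operations (Section \ref{subsec:operator_graph}), we rewrite $\Theta_L(\bx,\by)$ as a finite linear combination $\sum_G c(G)\bW_G$ of product-graph values whose edges are copies of $W_0,\dots,W_L$.

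\textbf{(II)--(III) Leading contribution to the expectation.} Applied entry-wise to $\E\Theta_L(\bx,\by)$, the genus expansion \eqref{eq:wickexpansion} together with the Euler-characteristic argument underlying Theorem \ref{thm:joint_gaussian_limit} singles out the fully-atomic pairings as the unique leading-order contributions. A combinatorial inspection of these pairings---directly analogous to the one in the proof of Theorem \ref{thm:GPlimit}---establishes three facts: (i) the two free output indices of $J_{\ell\to L}(\bx) J_{\ell\to L}(\by)^{\top}$ are forced to coincide (off-diagonal entries vanish by symmetry of the weight distribution), giving the $\otimes\,\mathrm{Id}_{N_{L+1}}$ factor; (ii) the pairings internal to the scalar inner product $\sprod{\varphi_\ell(\Phi_{\ell-1}(\bx))}{\varphi_\ell(\Phi_{\ell-1}(\by))}$ match $K_\ell(\bx,\by)$ in the limit; and (iii) the pairings internal to the iterated Jacobian telescope to $\prod_{k>\ell}\dot K_k(\bx,\by)$. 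Summing over $\ell$ with the rescalings $\lambda_\ell$ produces exactly the recursion $\Theta_L^{\infty}=K_L+\dot K_L\,\Theta_{L-1}^{\infty}$.

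\textbf{(IV) Variance, extensions and main obstacle.} To promote convergence in expectation to $L^2$ convergence, I bound $\mathrm{Var}((\Theta_L(\bx,\by))_{ij})$ via Lemma \ref{lemma:mixedmoments_genus}: the variance decomposes as a sum of centered mixed moments $\E[(\bW_G-\E\bW_G)(\bW_{G'}-\E\bW_{G'})]$ in which centering eliminates all within-copy pairings, and a second Euler count shows that the surviving cross-pairings are strictly sub-leading, yielding $O(1/N)$ variance. The extensions to non-Gaussian i.i.d.\ and to Bernoulli-sparsified weights then follow by invoking the universality corollaries of Sections \ref{sec:non_gaussian} and \ref{sec:sparse}, which guarantee that every genus-expansion identity used above persists up to an $o(1)$ error (and under $Np_N\to\infty$ in the sparse case). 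The hardest step is (II)--(III): differentiation with respect to $W_\ell$ and the $\varphi_k'$ inside each $D_k$ proliferate many new operator graphs, and the fully-atomic pairings reproducing the diagonal $\prod_{k>\ell}\dot K_k$ structure must be enumerated systematically---a careful but essentially mechanical extension of the bookkeeping carried out in the proof of Theorem \ref{thm:GPlimit}.
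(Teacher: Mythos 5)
Your proposal is correct in outline but is organised around a genuinely different decomposition than the paper's. You factor each NTK summand via the chain rule into a scalar inner product $\sprod{\varphi_\ell(\Phi_{\ell-1}(\bx))}{\varphi_\ell(\Phi_{\ell-1}(\by))}$ times the back-propagated Jacobian product $J_{\ell\to L}(\bx)J_{\ell\to L}(\by)^{\top}$, then argue that the fully-atomic pairings make the limit of the product equal the product of the limits, with the Jacobian chain telescoping to $\prod_{k>\ell}\dot K_k$; summing over $\ell$ recovers the unrolled recursion $\Theta_L^{\infty}=\sum_{\ell=0}^{L}K_\ell\prod_{k>\ell}\dot K_k$. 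The paper never leaves the tree language: it expands each $\mathrm{d}_{W_\ell}\Phi_L$ over the spaces $\partial_\ell\bT_{L,i}(\bx)$ of trees with one freed $\ell$-edge (Equation (\ref{eqn:NTK_matrix_components})), computes the $L^2$ scaling limit of each glued graph $\tau\circ\eta^{\top}$ (Lemma \ref{lemma:L2}), and then peels off a single layer at a time through the bijection $\eta\mapsto(\eta^+,\eta^-)$ of Lemma \ref{lemma:NTK_bijection}, under which $a(\tau\circ\eta^{\top})=\alpha(\tau^+\sqcup\eta^+)\,a(\tau^-\circ(\eta^-)^{\top})$; a short induction on $L$ then yields $\Theta_L^{\infty}=K_L+\dot K_L\Theta_{L-1}^{\infty}$ directly. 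Your route is closer to the classical NTK computation and makes the limiting formula more transparent; the paper's route replaces your asserted ``three facts'' by a single one-step factorisation lemma that is easier to verify rigorously.

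Two points deserve care if you flesh this out. First, the scalar factor and the Jacobian factor are not independent: the diagonal matrices $D_k(\bx)=\mathrm{diag}(\varphi_k'(\Phi_{k-1}(\bx)))$ inside $J_{\ell\to L}$ involve the same weights $W_0,\dots,W_{k-1}$ that appear in $\varphi_\ell(\Phi_{\ell-1}(\bx))$, so Wick pairings do couple the two components. The asymptotic factorisation is nevertheless correct, but only because a fully-atomic pairing of a disconnected graph must, by Definition \ref{def:various_partitions}, pair each connected component within itself and collapse it to a tree; this should be stated explicitly rather than left inside ``combinatorial inspection''. Second, your fact (iii) --- that the Jacobian chain converges to $\prod_{k>\ell}\dot K_k\otimes\mathrm{Id}$ --- is itself a telescoping claim requiring its own induction on the depth of the chain (it is essentially the $k=1$ case of the decorated-cycle analysis of Section \ref{sec:jacobian}); in the paper this is exactly the work done layer by layer by Lemma \ref{lemma:NTK_bijection}. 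The variance bound via Lemma \ref{lemma:mixedmoments_genus} and the non-Gaussian and sparse extensions via the corollaries of Sections \ref{sec:non_gaussian} and \ref{sec:sparse} match the paper's argument.
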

\begin{proof}The claim for $W_\ell$ with Gaussian entries is Theorem \ref{thm:ntk}, and the corollaries in Section \ref{sec:extensions} generalize this to the non-Gaussian and sparse cases.
\end{proof}

Previous results regarding the NTK at initialization have only been shown for Gaussian and orthogonal weights  \cite{jacot2018neural, huang}. With the caveat of only holding for polynomial activations, our result is one of the few quantitative results on the NTK for deep networks, and to the best of our knowledge, the only result to hold with universal and sparse weights.

\bigskip

We then move on to the problem of analysing the Jacobian spectrum of (the post-activations of) $\Phi_L$. Defining $\mathbf{J}_{L,\bx} := \mathrm{d}(\varphi_{L}\circ \Phi_{L-1})_{\bx}$, we’re interested in the macroscopic behaviour of the squared singular values of $\mathbf{J}_{L,\bx}$, and study the empirical spectral distribution of $\mathbf{J}_{L,\bx}\mathbf{J}_{L,\bx}^\top$, defined as
\begin{equation}\label{def:ESDjacobian}
    	\rho_L := \frac{1}{{N}}\sum_{I=1}^N \delta_{\xi_i}
\end{equation}
where $\{\xi_1, \dots, \xi_N\}$ are the eigenvalues of $\mathbf{J}_{L,\bx}\mathbf{J}_{L,\bx}^\top$ and $\delta_{\xi_i}$ denotes a Dirac mass on $\xi_i$.

This measure is known to converge to a deterministic limiting measure as $N\to \infty$, following the works of Collins and Hayase \cite{collins} and Pastur \cite{PasturGaussian} for orthogonal and Gaussian weights, respectively. These works justify an asymptotic freeness assumption, under which Pennington et al. in \cite{pennington2018emergence} first identified the limit of $\rho_L$ using the analytic machinery of free probability. The result of \cite{PasturGaussian} was then extended to non-Gaussian weights in a recent work of Pastur and Slavin \cite{PasturIID}. 

All of these works characterise the limiting measure through an implicit functional equation for its Stieltjes transform. This allows one to compute the first two moments $m_{1,L}$ and $m_{2,L}$ by expanding and solving for coefficients (see Equation (21) in \cite{pennington2018emergence}), but this approach breaks down for higher moments. Our main result (Theorem \ref{thm:general_jacobian}) computes the entire sequence of moments for the first time, by expressing them as weighted sums over non-crossing partitions. As with our previous two results, we also show that these formulae hold for non-Gaussian and sparse weights.

\begin{theorem}[Moments of the limiting spectral distribution of the Jacobian]\label{thm:general_jacobian} 

For each $\ell \geq 0$, assume that $N_\ell = N$ and that  $W_\ell$ has i.i.d. entries drawn from a symmetric, centered distribution with finite moments and variance ${1}/{N}$. Let $\mathbf{x}\in \mathbb{R}^N$ be such that $\frac{1}{N}\langle\mathbf{x},\mathbf{x}\rangle\to x^2\in \mathbb{R}$, and define $\rho_L$ as in Equation \eqref{def:ESDjacobian}.

Then for every $k\geq 0$, 
\[
    \int t^k\mathrm{d}\rho_L(t)\underset{N\to\infty}{\longrightarrow} m_{k,L}(x)
\]
where $\{m_{k,L}(x)\}_{k,L}$ is defined by the recursion in Equation (\ref{eq:recursion}). 

Furthermore, the same result holds if the weight matrices in \eqref{intro:def:NN} are replaced by $\tilde W_\ell := W_\ell \odot \frac{1}{\sqrt{p_N}} B_\ell$, where $W_\ell$ are as above and the $B_\ell$ are independent matrices with i.i.d., Bernoulli distributed entries with parameter $p_N$ satisfying $N p_N \to \infty$.   

\end{theorem}
\begin{proof}The claim for $W_\ell$ with Gaussian entries is Theorem \ref{thm:jacobianmoments}, and the corollaries in Section \ref{sec:extensions} generalize this to the non-Gaussian and sparse cases.
\end{proof}

This recursion sheds light on the relationship between the moments $m_{k,L}$ and the Fuss-Catalan numbers $\mathrm{FC}_{L}^{(k)}$ (see \cite{nicaspeicher}): they are obtained by inserting activation-dependent coefficients in the latter's defining recursion (see Remark \ref{eq:fusscatalan})\footnote{when expressed as a sum over non-crossing partitions of $k$ elements.}. The limit of $\rho_L$ can thus be seen as a non-linear generalisation the Fuss-Catalan or Raney distribution, which is known to be the universal first-order limit of squared singular values for products of Ginibre matrices (in the language of free probability, it is the $L$-fold free multiplicative convolution of the Marchenko-Pastur law). 

\subsection{Overview of our method.} \label{sec:intro_mainideas}

\subsubsection{A graphical language for neural network computations.} 
The idea of using a graphical language to simplify computations involving multilinear maps is not new, dating back to at least the 1970s with the introduction of Penrose diagrams \cite{penrose},
which have more recently been applied in the context of machine learning (see \cite{biamonte2017tensornetworksnutshell, Cichocki_2016}).
 As discussed earlier, graphs have also been used to evaluate expectations of products of Gaussian variables, and this forms the basis of the genus expansion technique.

The graphs that we introduce are novel and accomplish both of these tasks at once. On the one hand, they can be used to express deterministic products and operations involving multilinear maps. On the other, when dealing with tensors with Gaussian entries,  the expectation of these operations can once again be expressed in terms of graphs (in the sense of equation (\ref{eq:wickexpansion})). We explain this briefly below, deferring to Section \ref{sec:graph_dictionary} for more details.

In what we call a \textit{product} graph $G=(V,E)$, edges will correspond to matrices and vertices to vectors, which we call the inputs of their respective edge/vertex. The graph’s structure then dictates a well-defined product involving these inputs, the result of which we call the \textit{value} of the graph and denote by $\mathbf{W}_G$. For instance, a path of length $k$ can be used to express an (ordinary) product of $k$ matrices, while trees can be used to express Hadamard (entrywise) products (this is depicted in Figures \ref{fig:path-graph} and \ref{fig:basic-tree}).
 
If we omit inputs for some vertices and edges of the graph and view them as variables, then the resulting graph can be associated to a (multi)linear map and we call it an \textit{operator graph}. Differentiation, composition and other operations involving these maps then turn out to be expressible using simple manipulations of their corresponding graphs (composition, for instance, reduces to attaching graphs by a vertex), as explained in Section  \ref{subsec:operator_graph} and the figures therein. 

\begin{remark}
    After completing this work, we were made aware of the theory of traffic probability \cite{male2018trafficdistributionsindependencepermutation}, with which the graphical formalism developed here overlaps significantly. We leave the task of reconciling both theories to future work, but point out similarities between the two across Section \ref{sec:graph_dictionary}.
\end{remark}

\subsubsection{Graph expansions of neural networks.}
The connection to neural networks is made by expanding their output at a given input $\bx$ as a linear combination of product graphs
\begin{equation}\label{eqn:intro_nn_graph_expansion}
    \Phi(\bx)= \sum_{G\in \mathcal{F}} c(G)\bW_G
\end{equation}
for some family of graphs $\mathcal{F}$ and combinatorial factors $c(G)$. 

This is achieved in Theorem \ref{thm:tree_exp}, which generalizes Faà di Bruno’s formula (see \cite{Constantine1996AMF, JOHNSTON2022108080}) to the case of an arbitrary number of compositions. In similar tasks, trees have been shown to be a natural combinatorial tool to keep track of terms (see the literature on Butcher series  \cite{mclachlan2017butcherseriesstoryrooted, GubTrees}, and, more generally, on Runge-Kutta methods for ordinary differential equations \cite{HairerBook}), and this is reflected here in the fact that $\mathcal{F}$ (in Eq. (\ref{eqn:intro_nn_graph_expansion})) turns out to be a set of rooted trees.

By applying our previously mentioned graphical rules to each term in this sum, we derive similar expansions for various related quantities, namely the $k$-th coordinate of $\Phi(\bx)$, the neural tangent kernel, and the trace of the input-output Jacobian of $\Phi$ times its transpose, raised to an arbitrary power. The moment method then reduces the study of the asymptotic distribution of these quantities to computing $\E \bW_G$ for various graphs $G$.

\subsubsection{Wick’s principle and genus expansion.}\label{subsec:wick_intro} When $G$ is a product graph whose edge inputs have Gaussian entries, our main tool to evaluate $\E\bW_G$ is Wick's principle, which reduces the expectation of products of Gaussian variables to the sum of their pairwise covariances. Applied to $\bW_G$, it yields the following simple identity
\begin{equation}\label{eq:wickexpansion}
    \E{\bW_G} = \sum_{\phi} \bW_{G_\phi},
\end{equation}
(see Theorem \ref{thm:wickexpansion}), where the sum is taken over \textit{admissible} pairings $\phi$ of the edges of $G$ (see Def. \ref{def:admissible_pairings}), and $G_\phi$ is the graph obtained from $G$ after identifying edges paired by $\phi$ (meaning that we consider such edges to be the same edge in $G_\phi$). Under additional assumptions on $G$ (c.f. Assumption \ref{assumption:genus_graph}), we find that $\bW_{G_\phi}=\sigma_G N^{|V(G_\phi)|}$ for every $\phi$, where $|V(G_\phi)|$ is the number of vertices in $G_\phi$ and $\sigma_G$ is a variance parameter. The leading order of $\bW_G$ is thus determined by the pairings for which $|V(G_\phi)|$ is maximised. 

Instead of counting this quantity directly, it turns out to be much simpler to embed the graph onto a surface $S_\phi$ (as defined in Equation \ref{eq:sphi}) and to then compute $|V(G_\phi)|$ using the \textit{Euler characteristic formula}
\[
    |V(G_\phi)|-|E(G_\phi)|+f(G_\phi:S_\phi) = 2-2g(S_\phi),
\]
where $|E(G_\phi)|$ is the number of edges of $G_\phi$, $f(G_\phi:S_\phi)$ the number of faces of $G_\phi$ in $S_\phi$ and $g(S_\phi)$ the genus of $S_\phi$. This formula allows us to identify which $\phi$ give rise to leading and sub-leading order terms in Eq. (\ref{eq:wickexpansion}), which we call \textit{atomic} and \textit{bi-atomic} pairings, respectively, following \cite{Dubach2021}. We use this to give a more explicit version of equation (\ref{eq:wickexpansion}), and to extend it to centred mixed moments $\E \{\prod_{G} (\bW_G-\E \bW_G)\}$ as well (see Lemma \ref{lemma:mixedmoments_genus}). Lastly, we combine these results to obtain a limit theorem for the joint moments of product graphs (Theorem \ref{thm:joint_gaussian_limit}), reminiscent of a celebrated result of Diaconis and Shahshahani \cite{diaconis} for traces of powers of random unitary matrices and its recent extension generalisation in \cite{Dubach2021} (Theorem 1.2).

When the edge inputs in $G$ are complex, non-Gaussian or sparse matrices (or any combination of the three), we show that the expressions for $\mathbb{E}\mathbf{W}_G$ and $\E \{\prod_{G} (\bW_G-\E \bW_G)\}$ hold up to an $o(1)$ error term (see Sections \ref{sec:complex_case}, \ref{sec:non_gaussian}, \ref{sec:sparse}, respectively). This allows us to extend all of our results to NNs with such weight matrices.
\subsection{Possible generalizations}\label{sec:informal_ext} We imposed two simplifying assumptions in order to better illustrate our method ( polynomial activations and the absence of biases in \eqref{intro:def:NN}). We briefly discuss how these can be lifted below.
\subsubsection{Random biases} Typically, one defines $\Phi_L$ as
\[
    \Phi_0(\mathbf{x})=W_0\mathbf{x},\quad \Phi_{\ell+1}(\mathbf{x})=W_{\ell+1}\varphi_{\ell+1}(\Phi_\ell(\mathbf{x}))+b_{\ell+1}
\]
for some vectors $\{b_\ell\}_{\ell}$ (the \textit{biases}) which can also be initialized at random. 
The method presented here can be adapted to this setting, albeit at the cost of additional combinatorial bookkeeping (namely a more complicated set of graphs $\mathcal{F}$ in \ref{eqn:intro_nn_graph_expansion}). When applying Wick's theorem to compute the resulting $\mathbb{E}\mathbf{W}_G$, one would also have to pair vertices first, before being able to pair edges as explained in \ref{subsec:wick_intro}.

\subsubsection{Non-polynomial activations}  Restricting to polynomial activations allowed us to ignore issues of convergence in Theorem \ref{thm:tree_exp}. That said, the expansion in the latter formally holds for any (reasonably well-behaved) function, and extending the results here to more commonly used activations (e.g. Sigmoid, ReLU) should thus be feasible by an approximation argument as was done by Benigni and Peché in \cite{BenigniPeche}. We present some numerical evidence that Theorem \ref{thm:general_jacobian} holds with ReLU activations in Section \ref{app:numerics}.

\subsection{Notation and nomenclature}

Given a matrix $A \in \bR^{N\times M}$ and a vector $v \in \bR^N$ we write $[A]_{ij}$ and $[v]_j$ for their $i,j$ and $j$-th coordinate, respectively. More generally, we will use square brackets with subscripts to denote coordinates of tensors.
We use the notation $\mathbf{I}_{M,N} \in \bR^{N\times M}, \mathbf{1}_M \in \bR^{N}$ to denote the matrix and vector having all entries equal to $1$ (omitting the subscripts whenever it does not hurt comprehension, and $\mathbf{I}_{M}=\mathbf{I}_{M,M}$), 
 $\mathbf{E}_{ij} \in \bR^{M \times N}$ and $\mathbf{e}_i \in \bR^N$ to denote the canonical basis matrices/vectors in $\mathbb{R}^N$, and $\mathrm{Id}_{N}$ to denote the identity matrix in $\mathbb{R}^{N\times N}$.

$\langle,\rangle$ will denote the standard inner product, with the space in subscript when it is not clear from the context. 
 If $A$ is a matrix with complex entries, we use $\bar{A}$ to denote its conjugate and  $A^*$ its Hermitian transpose. $\mathcal{N}(\mu,\sigma^2)$ will denote a Gaussian with mean $\mu$ and variance $\sigma^2$, and similarly $\mathcal{N}_\mathbb{C}(0,1)$ will denote a standard complex Gaussian (with real and imaginary parts each having variance $1/2$).

We will use standard asymptotic notation, writing $f(T)=o(g(T))$ to mean that $|f(T)/g(T)|\to_{T\to\infty} 0$ and $f(T)=\mathcal{O}(g(T))$ to mean that  $\limsup_{T\to\infty}|f(T)/g(T)|$ is bounded. 

% We will often write $W[x_1,\dots,x_k]$ to denote the evaluation of a $k$-linear function $W$ at some input $(x_1,\dots,x_k)$; this is not different from $W(x_1,\dots,x_k)$ but will be used to stress the linear nature of the map. 

For any positive integer $N$, we will use $[N]$ to denote the set $\{1,...,N\}$. Whenever $G$ is a directed graph, we let $V(G), E(G)$ denote its set of vertices and edges respectively, and omit the dependence on $G$ whenever the graph is clear from the context. For any edge $e=(u,v)$ we call $u$ the \textit{head} and $v$ the \textit{tail} of $e$, and say that $e$ is \textit{adjacent} to $u,v$ and vice versa. 

\bigskip 

A table compiling the notation that we introduce throughout the paper can be found in Appendix \ref{app:sect:table}.

\bigskip 
\textbf{Acknowledgements. } N.C. thanks William Turner for pointing him to \cite{Dubach2021} and J.H. thanks Adam Jones for helpful discussions.  C.S. is supported by Innovate UK (Proj ID 10073285). N.C. and J.H. are supported by the EPSRC Centre for Doctoral Training in Mathematics of Random Systems: Analysis, Modelling and Simulation (EP/S023925/1).

% literature, statement of results.

\section{Graphical descriptions of analytic operations}
\label{sec:graph_dictionary}
In this section we explain how various matrix--vector products can be expressed by means of directed graphs. 

Consider a directed graph $G=(V,E)$. We associate a vector $\mathbf{X}_v\in \bR^{\fd(v)}$ to each vertex $v\in V$ and a matrix $\mathbf{X}_e\in \bR^{\fd(u)\times \fd(v)}$ to each edge $e=(u,v)\in E$, calling $\mathbf{X}_e$ and $\mathbf{X}_v$ the \textit{inputs} of the edge $e$ and vertex $v$ respectively. Here, $\fd(v)$ is a positive integer which call the \textit{dimension} of the vertex $v$, and we extend $\fd$ to edges by using the shorthand $\fd(e)=(\fd(u),\fd(v))$ for $e=(u,v)\in E$ (in which case $\bR^{\fd(e)}:=\bR^{\fd(u)\times \fd(v)}$).

As we are often going to talk about vertices and edges simultaneously, it will be useful to have a single term to refer to both types of objects. For this purpose we will use the term \emph{cells}. Every cell in $C:= V\cup E$ will thus have a  dimension assigned to it by $\fd$, and this map is entirely determined by the values that it takes on vertices by definition.

As we will soon make precise, $G$ can be seen as describing a type of product of its inputs. This motivates the naming in the following definition, which summarizes what we have introduced so far. 
\begin{definition}[Product graph]\label{def:product-graph}
    A \emph{product graph} is a triple $(G,\fd, \mathfrak{C})$, where 
    \begin{itemize}
        \item $G=(V,E)$ is a directed graph with vertex set $V$ and edge set $E\subseteq V\times V$.
        \item  $\fd$ assigns a dimension to each vertex in $V$, and thus to each cell of $G$.
        \item $\mathfrak{C}=\{\mathbf{X}_c \in \bR^{\fd(c)} : c\in C =  V\cup E\}$ is a collection of inputs (vectors/matrices) associated to the cells $C$ of $G$.
    \end{itemize}
    % Furthermore, $\mathfrak{C}$ is \textit{compatible} with $\fd$, meaning that any cell $c\in C$ is associated to an input $\mathbf{X}_c \in \bR^{\fd(c)}$.
\end{definition}

When $\fd$, $\mathfrak{C}$ are clear or implied from the context, we will omit them and simply use $G$ to denote the product graph $(G,\fd, \mathfrak{C})$. To a product graph $G$ we can uniquely associate the following \textit{value}.

\begin{definition}\label{def:product}
    The value $\bW_G \in \mathbb R$ of a product graph $(G,\fd,\mathfrak{C})$ is defined as the following scalar
\begin{equation}\label{eq:product}
        \bW_G:= \sum_{(i_v)_{v\in V}}\prod_{(u,v)\in E}[\mathbf{X}_{(u,v)}]_{i_ui_v} \prod_{w\in V} [\mathbf{X}_{w}]_{i_w} =\sum_{\mathbf{i}_{V}} \prod_{c \in C}~
        [\mathbf{X}_c]_{i_c} \in \mathbb{R}
\end{equation}
where the sum on the right hand side is taken over all \textit{indexations} $\mathbf{i}_{V} = (i_v)_{v\in V} \in \prod_{v\in V} \{1,...,\mathfrak{d}(v)\}$ and
where for $c = (u,v) \in E$ we define $i_c := i_ui_v$.
\end{definition}

\begin{remark}
    The constraints imposed by $\mathfrak{d}$ on the dimensions of the elements of $\mathfrak{C}$ in 
    Definition \ref{def:product-graph} ensure that product $(\ref{eq:product})$ is well-defined. Furthermore, if $G$ is disconnected with connected components $\{G_i\}_i$, then $\bW_G=\prod_{i} \bW_{G_i}$. In the event that some of its inputs are random, $\bW_G$ then becomes a random variable.
\end{remark}

\begin{remark}
    Definition \ref{def:product} inherently depends on the bases relative to which the coordinates of the $\mathbf{X}_c$ are being taken. This will always be the canonical basis. In general, taking $A_v\in \mathrm{GL}(\mathbb{R}^{\mathfrak{d}(v)})$ for each $v\in V$ and substituting
    \begin{align}\label{eq:change_of_basis}
    \mathbf{X}_v\mapsto A_v^{-1}\mathbf{X}_v,\,\forall v\in V.\quad \mathbf{X}_{(u,v)}\mapsto A_u^{-1}\mathbf{X}_{(u,v)}A_v,\, \forall (u,v)\in E,
    \end{align}
    can change the value of $\mathbf{W}_G$. On the other hand, the latter is invariant to \eqref{eq:change_of_basis} if each $A_v$ is a permutation matrix: this mimics Lemma 1.4 in \cite{male2018trafficdistributionsindependencepermutation}, with their *-graph monomials corresponding to \eqref{def:product} without vertex inputs.
\end{remark}

We illustrate this definition with some examples. 
\begin{example}
If $G$ is the product graph in Figure \ref{fig:path-graph}, then $\bW_G=\mathbf{x}_a^{\top}W_3W_2W_1\mathbf{x}_d$ since
\[
\mathbf{x}_a^{\top}W_3W_2W_1\mathbf{x}_d = 
\sum_{\alpha=1}^{\fd(a)}
\sum_{\beta=1}^{\fd(b)}
\sum_{\gamma=1}^{\fd(c)}
\sum_{\delta=1}^{\fd(d)}
[W_3]_{\alpha,\beta} [W_2]_{\beta,\gamma} [W_3]_{\gamma,\delta} [\mathbf{x}_a]_{\alpha} [\mathbf{x}_d]_{\delta}.
\]
\begin{figure}[ht]
    \centering
    \tikzset{every picture/.style={line width=0.75pt}} %set default line width to 0.75pt        

\begin{tikzpicture}[x=0.75pt,y=0.75pt,yscale=-1,xscale=1]
%uncomment if require: \path (0,4809); %set diagram left start at 0, and has height of 4809

%Straight Lines [id:da8672893534823851] 
\draw    (534.07,4659.91) -- (583.08,4660.16) ;
\draw [shift={(560.98,4660.05)}, rotate = 180.3] [color={rgb, 255:red, 0; green, 0; blue, 0 }  ][line width=0.75]    (4.37,-1.32) .. controls (2.78,-0.56) and (1.32,-0.12) .. (0,0) .. controls (1.32,0.12) and (2.78,0.56) .. (4.37,1.32)   ;
%Straight Lines [id:da020751515370918794] 
\draw    (485.07,4659.65) -- (534.07,4659.91) ;
\draw [shift={(511.97,4659.79)}, rotate = 180.3] [color={rgb, 255:red, 0; green, 0; blue, 0 }  ][line width=0.75]    (4.37,-1.32) .. controls (2.78,-0.56) and (1.32,-0.12) .. (0,0) .. controls (1.32,0.12) and (2.78,0.56) .. (4.37,1.32)   ;
%Straight Lines [id:da33474343168610965] 
\draw    (436.06,4659.39) -- (485.07,4659.65) ;
\draw [shift={(462.96,4659.53)}, rotate = 180.3] [color={rgb, 255:red, 0; green, 0; blue, 0 }  ][line width=0.75]    (4.37,-1.32) .. controls (2.78,-0.56) and (1.32,-0.12) .. (0,0) .. controls (1.32,0.12) and (2.78,0.56) .. (4.37,1.32)   ;
%Flowchart: Connector [id:dp003827445543849839] 
\draw  [color={rgb, 255:red, 0; green, 0; blue, 0 }  ,draw opacity=1 ][fill={rgb, 255:red, 0; green, 0; blue, 0 }  ,fill opacity=1 ] (487.69,4662.47) .. controls (489.25,4661.01) and (489.33,4658.58) .. (487.88,4657.02) .. controls (486.43,4655.47) and (483.99,4655.38) .. (482.44,4656.83) .. controls (480.88,4658.29) and (480.8,4660.72) .. (482.25,4662.28) .. controls (483.7,4663.83) and (486.14,4663.92) .. (487.69,4662.47) -- cycle ;
%Flowchart: Connector [id:dp11577394372596173] 
\draw  [color={rgb, 255:red, 0; green, 0; blue, 0 }  ,draw opacity=1 ][fill={rgb, 255:red, 0; green, 0; blue, 0 }  ,fill opacity=1 ] (438.69,4662.21) .. controls (440.24,4660.76) and (440.33,4658.32) .. (438.87,4656.76) .. controls (437.42,4655.21) and (434.99,4655.13) .. (433.43,4656.58) .. controls (431.88,4658.03) and (431.79,4660.47) .. (433.24,4662.02) .. controls (434.69,4663.58) and (437.13,4663.66) .. (438.69,4662.21) -- cycle ;
%Flowchart: Connector [id:dp8608726893960391] 
\draw  [color={rgb, 255:red, 0; green, 0; blue, 0 }  ,draw opacity=1 ][fill={rgb, 255:red, 0; green, 0; blue, 0 }  ,fill opacity=1 ] (536.7,4662.72) .. controls (538.26,4661.27) and (538.34,4658.83) .. (536.89,4657.28) .. controls (535.44,4655.72) and (533,4655.64) .. (531.44,4657.09) .. controls (529.89,4658.54) and (529.81,4660.98) .. (531.26,4662.53) .. controls (532.71,4664.09) and (535.15,4664.17) .. (536.7,4662.72) -- cycle ;
%Flowchart: Connector [id:dp5476999394378288] 
\draw  [color={rgb, 255:red, 0; green, 0; blue, 0 }  ,draw opacity=1 ][fill={rgb, 255:red, 0; green, 0; blue, 0 }  ,fill opacity=1 ] (585.71,4662.98) .. controls (587.26,4661.53) and (587.35,4659.09) .. (585.9,4657.53) .. controls (584.44,4655.98) and (582.01,4655.9) .. (580.45,4657.35) .. controls (578.9,4658.8) and (578.81,4661.24) .. (580.26,4662.79) .. controls (581.72,4664.35) and (584.15,4664.43) .. (585.71,4662.98) -- cycle ;

% Text Node
\draw (473.31,4639.81) node [anchor=north west][inner sep=0.75pt]  [rotate=-359.2]  {$\mathbf{1}_{\fd(b)}$};
% Text Node
\draw (522.05,4639.81) node [anchor=north west][inner sep=0.75pt]  [rotate=-0.29]  {$\mathbf{1}_{\fd(c)}$};
% Text Node
\draw (455.56,4663) node [anchor=north west][inner sep=0.75pt]  [font=\tiny,rotate=-359.58]  {$W_3$};
% Text Node
\draw (504.57,4663) node [anchor=north west][inner sep=0.75pt]  [font=\tiny,rotate=-0.73]  {$W_2$};
% Text Node
\draw (553.58,4663) node [anchor=north west][inner sep=0.75pt]  [font=\tiny,rotate=-0.89]  {$W_1$};
% Text Node
% \draw (436.06,4667) node [anchor=north west][inner sep=0.75pt]  [font=\tiny,rotate=-0.12]  {$a$};
% % Text Node
% \draw (485.07,4664) node [anchor=north west][inner sep=0.75pt]  [font=\tiny,rotate=-359.85]  {$b$};
% % Text Node
% \draw (534.07,4667) node [anchor=north west][inner sep=0.75pt]  [font=\tiny,rotate=-359.98]  {$c$};
% % Text Node
% \draw (583.08,4664) node [anchor=north west][inner sep=0.75pt]  [font=\tiny,rotate=-0.97]  {$d$};
% Text Node
\draw (571.94,4640.25) node [anchor=north west][inner sep=0.75pt]  [rotate=-358.82]  {$\mathbf{x}_{d}$};
% Text Node
\draw (431.66,4640.04) node [anchor=north west][inner sep=0.75pt]  [rotate=-359.8]  {$\mathbf{x}_{a}$};

\end{tikzpicture}
    \caption{A product graph giving rise to a word of matrices multiplied by vectors on either side. $\mathbf{1}_{N}$ is the $N\times 1$ vector of ones.}
    \label{fig:path-graph}
\end{figure}
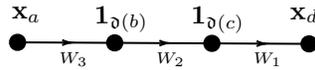
\end{example}

\begin{example}
   If $G$ is a tree, then the corresponding value $\bW_G$ is defined by means of Hadamard products of matrices. 
    For instance, taking $G$ to be the product graph in Figure \ref{fig:basic-tree}
    gives
    \[  
    \bW_G=\mathbf{x}_a^{\top} W_2\big(\mathbf{x}_{b} \odot (W_1\mathbf{x}_d)\astrosun(W_1\mathbf{x}_c)\big)
    \]
    where $A\astrosun B$ denotes the Hadamard product of $A$ and $B$, since
    \[
    \bW_G := \sum_{\alpha=1}^{\fd(a)}
    \sum_{\beta=1}^{\fd(b)}
    \sum_{\gamma=1}^{\fd(c)}
    \sum_{\delta=1}^{\fd(d)}
    [W_2]_{\alpha,\beta}[W_1]_{\beta,\gamma}[W_1]_{\beta,\delta} 
    [\mathbf{x}_{a}]_{\alpha}[\mathbf{x}_{b}]_{\beta}
    [\mathbf{x}_{c}]_{\gamma}[\mathbf{x}_{d}]_{\delta}.
    \]
    \begin{figure}[ht]\label{fig:tree}
        \centering
        \tikzset{every picture/.style={line width=0.75pt}} %set default line width to 0.75pt        

\begin{tikzpicture}[x=0.75pt,y=0.75pt,yscale=-1,xscale=1]
%uncomment if require: \path (0,4809); %set diagram left start at 0, and has height of 4809

%Flowchart: Connector [id:dp07767311719383374] 
\draw  [color={rgb, 255:red, 0; green, 0; blue, 0 }  ,draw opacity=1 ][fill={rgb, 255:red, 0; green, 0; blue, 0 }  ,fill opacity=1 ] (459.69,4778.21) .. controls (461.24,4776.76) and (461.33,4774.32) .. (459.87,4772.76) .. controls (458.42,4771.21) and (455.99,4771.13) .. (454.43,4772.58) .. controls (452.88,4774.03) and (452.79,4776.47) .. (454.24,4778.02) .. controls (455.69,4779.58) and (458.13,4779.66) .. (459.69,4778.21) -- cycle ;
%Straight Lines [id:da014360091540862707] 
\draw    (457.06,4775.39) -- (457.06,4737.11) ;
\draw [shift={(457.06,4753.85)}, rotate = 90] [color={rgb, 255:red, 0; green, 0; blue, 0 }  ][line width=0.75]    (4.37,-1.32) .. controls (2.78,-0.56) and (1.32,-0.12) .. (0,0) .. controls (1.32,0.12) and (2.78,0.56) .. (4.37,1.32)   ;
%Flowchart: Connector [id:dp547022129164065] 
\draw  [color={rgb, 255:red, 0; green, 0; blue, 0 }  ,draw opacity=1 ][fill={rgb, 255:red, 0; green, 0; blue, 0 }  ,fill opacity=1 ] (459.69,4739.92) .. controls (461.24,4738.47) and (461.33,4736.03) .. (459.87,4734.48) .. controls (458.42,4732.92) and (455.99,4732.84) .. (454.43,4734.29) .. controls (452.88,4735.74) and (452.79,4738.18) .. (454.24,4739.74) .. controls (455.69,4741.29) and (458.13,4741.38) .. (459.69,4739.92) -- cycle ;
%Flowchart: Connector [id:dp39719393658624924] 
\draw  [color={rgb, 255:red, 0; green, 0; blue, 0 }  ,draw opacity=1 ][fill={rgb, 255:red, 0; green, 0; blue, 0 }  ,fill opacity=1 ] (484.69,4714.92) .. controls (486.24,4713.47) and (486.33,4711.03) .. (484.87,4709.48) .. controls (483.42,4707.92) and (480.99,4707.84) .. (479.43,4709.29) .. controls (477.88,4710.74) and (477.79,4713.18) .. (479.24,4714.74) .. controls (480.69,4716.29) and (483.13,4716.38) .. (484.69,4714.92) -- cycle ;
%Flowchart: Connector [id:dp3536500922128665] 
\draw  [color={rgb, 255:red, 0; green, 0; blue, 0 }  ,draw opacity=1 ][fill={rgb, 255:red, 0; green, 0; blue, 0 }  ,fill opacity=1 ] (434.69,4714.92) .. controls (436.24,4713.47) and (436.33,4711.03) .. (434.87,4709.48) .. controls (433.42,4707.92) and (430.99,4707.84) .. (429.43,4709.29) .. controls (427.88,4710.74) and (427.79,4713.18) .. (429.24,4714.74) .. controls (430.69,4716.29) and (433.13,4716.38) .. (434.69,4714.92) -- cycle ;
%Straight Lines [id:da9343650926933857] 
\draw    (457.06,4737.11) -- (432.06,4712.11) ;
\draw [shift={(442.86,4722.91)}, rotate = 45] [color={rgb, 255:red, 0; green, 0; blue, 0 }  ][line width=0.75]    (4.37,-1.32) .. controls (2.78,-0.56) and (1.32,-0.12) .. (0,0) .. controls (1.32,0.12) and (2.78,0.56) .. (4.37,1.32)   ;
%Straight Lines [id:da833629402317041] 
\draw    (457.06,4737.11) -- (482.06,4712.11) ;
\draw [shift={(471.26,4722.91)}, rotate = 135] [color={rgb, 255:red, 0; green, 0; blue, 0 }  ][line width=0.75]    (4.37,-1.32) .. controls (2.78,-0.56) and (1.32,-0.12) .. (0,0) .. controls (1.32,0.12) and (2.78,0.56) .. (4.37,1.32)   ;

% Text Node
\draw (469.56,4725.61) node [anchor=north west][inner sep=0.75pt]  [font=\tiny,rotate=-0.89]  {$W_1$};
% Text Node
\draw (440.06,4756.25) node [anchor=north west][inner sep=0.75pt]  [font=\tiny,rotate=-0.73]  {$W_2$};
% Text Node
\draw (430.56,4724.61) node [anchor=north west][inner sep=0.75pt]  [font=\tiny,rotate=-0.89]  {$W_1$};
% % Text Node
% \draw (453.24,4782.02) node [anchor=north west][inner sep=0.75pt]  [font=\small,rotate=-0.12]  {$a$};
% % Text Node
% \draw (441.06,4734.11) node [anchor=north west][inner sep=0.75pt]  [font=\small,rotate=-0.12]  {$b$};
% % Text Node
% \draw (479.43,4717.29) node [anchor=north west][inner sep=0.75pt]  [font=\small,rotate=-0.12]  {$c$};
% % Text Node
% \draw (419.43,4715.29) node [anchor=north west][inner sep=0.75pt]  [font=\small,rotate=-0.12]  {$d$};
% Text Node
\draw (415.66,4693) node [anchor=north west][inner sep=0.75pt]  [rotate=-359.8]  {$\mathbf{x}_{c}$};
% Text Node
\draw (479.87,4693) node [anchor=north west][inner sep=0.75pt]  [rotate=-359.8]  {$\mathbf{x}_{d}$};
% Text Node
\draw (463.87,4769.48) node [anchor=north west][inner sep=0.75pt]  [rotate=-359.8]  {$\mathbf{x}_{a}$};
% Text Node
\draw (461.87,4737.48) node [anchor=north west][inner sep=0.75pt]  [rotate=-359.2]  {$\mathbf{x}_{b}$};

\end{tikzpicture}
        \caption{Trees give rise to Hadamard products.}\label{fig:basic-tree}
    \end{figure}
\end{example}
    
    Note that while the previous two examples can be described in terms of ordinary matrix/vector multiplication and entry-wise products, it isn't necessarily the case in general. For instance, taking $G$ to be the product graph in Figure \ref{fig:peculiar_graph}
    gives
    \begin{align*}
        \bW_G 
        &= \mathrm{Tr}\left\{
        \left[ \mathbf{x}_a \mathbf{1}^{\top} \odot W_1 \right]
        \left[ (\mathbf{x}_b \odot W_3\mathbf{x}_c)\mathbf{1}^{\top} \odot W_2 \right] \right\}
    \end{align*}

    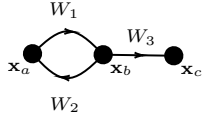
\begin{figure}[ht]
        \centering
        \tikzset{every picture/.style={line width=0.75pt}} %set default line width to 0.75pt        

\begin{tikzpicture}[x=0.75pt,y=0.75pt,yscale=-1,xscale=1]
%uncomment if require: \path (0,214); %set diagram left start at 0, and has height of 214

%Flowchart: Connector [id:dp5988100244748802] 
\draw  [color={rgb, 255:red, 0; green, 0; blue, 0 }  ,draw opacity=1 ][fill={rgb, 255:red, 0; green, 0; blue, 0 }  ,fill opacity=1 ] (337.1,121.61) .. controls (338.72,123.39) and (341.47,123.52) .. (343.25,121.91) .. controls (345.04,120.29) and (345.17,117.54) .. (343.55,115.76) .. controls (341.94,113.98) and (339.19,113.84) .. (337.4,115.46) .. controls (335.62,117.07) and (335.49,119.83) .. (337.1,121.61) -- cycle ;
%Flowchart: Connector [id:dp5764938953623508] 
\draw  [color={rgb, 255:red, 0; green, 0; blue, 0 }  ,draw opacity=1 ][fill={rgb, 255:red, 0; green, 0; blue, 0 }  ,fill opacity=1 ] (372.73,122.12) .. controls (374.34,123.9) and (377.1,124.03) .. (378.88,122.42) .. controls (380.66,120.8) and (380.79,118.05) .. (379.18,116.27) .. controls (377.56,114.49) and (374.81,114.35) .. (373.03,115.97) .. controls (371.25,117.58) and (371.11,120.34) .. (372.73,122.12) -- cycle ;
%Straight Lines [id:da2308583004678446] 
\draw    (340.33,118.68) -- (375.95,119.19) ;
\draw [shift={(360.54,118.97)}, rotate = 180.82] [color={rgb, 255:red, 0; green, 0; blue, 0 }  ][line width=0.75]    (4.37,-1.32) .. controls (2.78,-0.56) and (1.32,-0.12) .. (0,0) .. controls (1.32,0.12) and (2.78,0.56) .. (4.37,1.32)   ;
%Flowchart: Connector [id:dp6618042296096491] 
\draw  [color={rgb, 255:red, 0; green, 0; blue, 0 }  ,draw opacity=1 ][fill={rgb, 255:red, 0; green, 0; blue, 0 }  ,fill opacity=1 ] (301.48,121.1) .. controls (303.1,122.88) and (305.85,123.01) .. (307.63,121.4) .. controls (309.41,119.78) and (309.55,117.03) .. (307.93,115.25) .. controls (306.31,113.47) and (303.56,113.33) .. (301.78,114.95) .. controls (300,116.56) and (299.86,119.32) .. (301.48,121.1) -- cycle ;
%Curve Lines [id:da06380582025643866] 
\draw    (304.7,118.17) .. controls (318.95,134.08) and (324.53,134.96) .. (340.33,118.68) ;
\draw [shift={(319,129.96)}, rotate = 7.24] [color={rgb, 255:red, 0; green, 0; blue, 0 }  ][line width=0.75]    (4.37,-1.32) .. controls (2.78,-0.56) and (1.32,-0.12) .. (0,0) .. controls (1.32,0.12) and (2.78,0.56) .. (4.37,1.32)   ;
%Curve Lines [id:da022737308513707255] 
\draw    (304.7,118.17) .. controls (320.19,102.89) and (325.39,102.97) .. (340.33,118.68) ;
\draw [shift={(325.18,107.14)}, rotate = 183.42] [color={rgb, 255:red, 0; green, 0; blue, 0 }  ][line width=0.75]    (4.37,-1.32) .. controls (2.78,-0.56) and (1.32,-0.12) .. (0,0) .. controls (1.32,0.12) and (2.78,0.56) .. (4.37,1.32)   ;

% Text Node
\draw (311.6,91) node [anchor=north west][inner sep=0.75pt]  [font=\scriptsize]  {$W_{1}$};
% Text Node
\draw (312,138.6) node [anchor=north west][inner sep=0.75pt]  [font=\scriptsize]  {$W_{2}$};
% Text Node
\draw (350.4,104.6) node [anchor=north west][inner sep=0.75pt]  [font=\scriptsize]  {$W_{3}$};
% Text Node
\draw (290.8,121.27) node [anchor=north west][inner sep=0.75pt]  [font=\scriptsize]  {$\mathbf{x}_{a}$};
% Text Node
\draw (377.95,122.59) node [anchor=north west][inner sep=0.75pt]  [font=\scriptsize]  {$\mathbf{x}_{c}$};
% Text Node
\draw (342.33,122.08) node [anchor=north west][inner sep=0.75pt]  [font=\scriptsize]  {$\mathbf{x}_{b}$};

\end{tikzpicture}
        \caption{A simple product graph leading to a more involved analytical expression for its value.}\label{fig:peculiar_graph}
    \end{figure}

\subsection{Operator graphs and their associated linear map} 
\label{subsec:operator_graph}

So far, we have associated a vector/matrix to each cell in our graphs. 
By \emph{freeing} some of these cells--meaning that we consider their inputs as variables--we can use graphs to define more general linear maps between tensors.

\bigskip

Consider for example the product graph $(G,\fd,\mathfrak{C})$ in Figure \ref{fig:basic-tree}, noting how the product defining $\bW_G$ is {linear} in $\bx_a$. If we consider $\bx_a$ to be a variable, this yields a well-defined linear map $\bR^{\fd(a)} \to \bR$, or equivalently a vector in $\bR^{\fd(a)}$.
In this particular case, the linear map in question would be $\bx \mapsto \bx^{\top} W_2\big(\mathbf{x}_{b} \odot (W_1\mathbf{x}_d)\astrosun(W_1\mathbf{x}_c)\big)$, and its vector representation is $W_2\big(\mathbf{x}_{b} \odot (W_1\mathbf{x}_d)\astrosun(W_1\mathbf{x}_c)\big)$. 

\bigskip

More generally, let $G$ be a directed graph with dimensions assigned by $\fd$, and let $\mathcal{F}$ be an (ordered) sequence of {free} cells of $G$ (meaning vertices and edges which are considered as variables), assuming that the remaining cells $c$ which do not belong to $\mathcal{F}$ are each \emph{fixed} to some input $\mathbf{X}_c$. Then for any input sequence $(\mathbf{X}_c)_{c\in \mathcal{F}},$ the product defining $\mathbf{W}_{(G,\fd,\{\mathbf{X}_c\}_{c\in C})}=\mathbf{W}_G$ is $|\mathcal{F}|-$linear in these inputs.

Any map defined this way will be scalar-valued, but we can generalize to vector-valued maps through partial evaluation. Namely, given a partition of $\mathcal{F}$ into two subsequences $\mathcal{F}_\mathrm{in}$ and $\mathcal{F}_{\mathrm{out}}$, we can consider the multilinear map
\begin{equation}\label{eq:associatedmap1}
    (\mathbf{X}_c)_{c\in \mathcal{F}_\mathrm{in}} \mapsto \big(\otimes_{c\in \mathcal{F}_{\mathrm{out}}}\mathbf{X}_c \mapsto \mathbf{W}_G\big).
\end{equation}
This is equivalent to considering the map
\begin{equation}\label{eq:associatedmap2}
       (\mathbf{X}_c)_{c\in \mathcal{F}_\mathrm{in}} \mapsto \mathbf{Y},\quad \langle \mathbf{Y}, \otimes_{c\in \mathcal{F}_\mathrm{out}}  \mathbf{X}_{\mathrm{c}}\rangle= \mathbf{W}_G\, \text{ for any } (\mathbf{X}_c)_{c\in \mathcal{F}_{\mathrm{out}}}    
\end{equation}
where $\mathbf{Y} \in \bigotimes_{c\in \mathcal{F}_{\mathrm{out}}} \bR^{{\fd}(c)}$ is the Riesz representation of $\big(\otimes_{c\in \mathcal{F}_{\mathrm{out}}}\mathbf{X}_c \mapsto \mathbf{W}_G\big)$. Note that the input and output dimensions of this map depend on the partition of $\mathcal{F}$ that is being taken. We refer to it as the \textit{operator associated to} $G$, which we define below. In what follows, we abuse notation slightly and write $c\notin \mathcal{F}$ to mean $c\in C\setminus(\mathcal{F}_{\mathrm{in}}\sqcup\mathcal{F}_{\mathrm{out}}).$

\begin{definition}[Operator associated to a graph]\label{def:operator-graph}
    Let $G$ and $\fd$ be as in Definition \ref{def:product-graph}.
    Let $\mathcal{F}=(\mathcal{F}_{\mathrm{in}},\mathcal{F}_{\mathrm{out}})$ be a sequence of free cells of $C$. Then:
    \begin{itemize}
        \item If $\mathcal{F}\neq ((),())$ the operator associated to $(G,\fd,\mathcal{F},\{\mathbf{X}_c\}_{c\notin \mathcal{F}})$ is defined as
    \begin{equation*}
        \mathbf{W}_G: \prod_{c\in \mathcal{F}_{\mathrm{in}}} \bR^{\fd(c)} \to \bigotimes_{c\in \mathcal{F}_{\mathrm{out}}} \bR^{\fd(c)}, \quad\mathbf{W}_G\big((\mathbf{X}_c)_{c\in \mathcal{F}_\mathrm{in}}\big)=\mathbf{Y}
    \end{equation*}
    where $\mathbf{Y} \in \bigotimes_{c\in \mathcal{F}_{\mathrm{out}}} \bR^{{\fd}(c)}$ is the unique vector satisfying
        $$\sprod{\mathbf{Y}}{\otimes_{c\in \mathcal{F}_{\mathrm{out}}}\mathbf{X}_c} = 
        \bW_{(G,\fd,\{\mathbf{X}_c\}_{c\in C})},$$
    namely the Riesz representation of $\otimes_{c\in\mathcal{F}_{\mathrm{out}}}\mathbf{X}_c\mapsto \bW_{(G,\fd,\{\mathbf{X}_c\}_{c\in C})}$.
    \item Otherwise, $G$ is a product graph and $\mathbf{W}_G$ is defined as in Definition \ref{def:product}.
    \end{itemize}

\end{definition}
 
Note that if $\mathcal{F}_{\mathrm{in}}$ is empty but $\mathcal{F}_{\mathrm{out}}$ isn't, then $\mathbf{W}_G$ is a constant map from $\{0\}$ to $\bigotimes_{c\in\mathcal{F}_{\mathrm{out}}}\mathbb{R}^{\mathfrak{d}(c)}$ which we then identify with its value (see the top right example in Figure \ref{ex:first_example}).

Just as we did for product graphs, we will abuse notation and just use $G$ to denote an  operator graph when $\mathcal{F}, \{\mathbf{X}_c\}_{c\notin \mathcal{F}}$ and $\fd$ are clear from the context. As a result, $\mathbf{W}_G$ can denote both an operator (when $G$ is an operator graph) and a scalar (when $G$ is a product graph).

\begin{figure}[t]
\centering
\tikzset{every picture/.style={line width=0.75pt}} %set default line width to 0.75pt        

\begin{tikzpicture}[x=0.75pt,y=0.75pt,yscale=-1,xscale=1]
%uncomment if require: \path (0,1355); %set diagram left start at 0, and has height of 1355

%Straight Lines [id:da40791148480937134] 
\draw    (94.64,435.61) -- (94.64,402.34) ;
\draw [shift={(94.64,416.57)}, rotate = 90] [color={rgb, 255:red, 0; green, 0; blue, 0 }  ][line width=0.75]    (4.37,-1.32) .. controls (2.78,-0.56) and (1.32,-0.12) .. (0,0) .. controls (1.32,0.12) and (2.78,0.56) .. (4.37,1.32)   ;
%Flowchart: Connector [id:dp5861595726277824] 
\draw  [color={rgb, 255:red, 0; green, 0; blue, 0 }  ,draw opacity=1 ][fill={rgb, 255:red, 0; green, 0; blue, 0 }  ,fill opacity=1 ] (97.62,405.52) .. controls (99.37,403.88) and (99.47,401.12) .. (97.83,399.37) .. controls (96.19,397.61) and (93.43,397.51) .. (91.67,399.15) .. controls (89.92,400.8) and (89.82,403.55) .. (91.46,405.31) .. controls (93.1,407.07) and (95.86,407.16) .. (97.62,405.52) -- cycle ;
%Flowchart: Connector [id:dp6244392188162333] 
\draw  [color={rgb, 255:red, 0; green, 0; blue, 0 }  ,draw opacity=1 ][fill={rgb, 255:red, 0; green, 0; blue, 0 }  ,fill opacity=1 ] (97.62,438.79) .. controls (99.37,437.15) and (99.47,434.4) .. (97.83,432.64) .. controls (96.19,430.88) and (93.43,430.78) .. (91.67,432.43) .. controls (89.92,434.07) and (89.82,436.82) .. (91.46,438.58) .. controls (93.1,440.34) and (95.86,440.43) .. (97.62,438.79) -- cycle ;
%Straight Lines [id:da9787791624734887] 
\draw    (239.64,434.61) -- (239.64,401.34) ;
\draw [shift={(239.64,415.57)}, rotate = 90] [color={rgb, 255:red, 0; green, 0; blue, 0 }  ][line width=0.75]    (4.37,-1.32) .. controls (2.78,-0.56) and (1.32,-0.12) .. (0,0) .. controls (1.32,0.12) and (2.78,0.56) .. (4.37,1.32)   ;
%Flowchart: Connector [id:dp47093335724409013] 
\draw  [color={rgb, 255:red, 0; green, 0; blue, 0 }  ,draw opacity=1 ][fill={rgb, 255:red, 0; green, 0; blue, 0 }  ,fill opacity=1 ] (242.62,404.52) .. controls (244.37,402.88) and (244.47,400.12) .. (242.83,398.37) .. controls (241.19,396.61) and (238.43,396.51) .. (236.67,398.15) .. controls (234.92,399.8) and (234.82,402.55) .. (236.46,404.31) .. controls (238.1,406.07) and (240.86,406.16) .. (242.62,404.52) -- cycle ;
%Flowchart: Connector [id:dp6655484633333119] 
\draw  [color={rgb, 255:red, 74; green, 144; blue, 226 }  ,draw opacity=1 ][fill={rgb, 255:red, 255; green, 255; blue, 255 }  ,fill opacity=1 ][line width=1.5]  (242.62,437.79) .. controls (244.37,436.15) and (244.47,433.4) .. (242.83,431.64) .. controls (241.19,429.88) and (238.43,429.78) .. (236.67,431.43) .. controls (234.92,433.07) and (234.82,435.82) .. (236.46,437.58) .. controls (238.1,439.34) and (240.86,439.43) .. (242.62,437.79) -- cycle ;
%Straight Lines [id:da5673666718511231] 
\draw    (93.64,498.61) -- (93.64,465.34) ;
\draw [shift={(93.64,479.57)}, rotate = 90] [color={rgb, 255:red, 0; green, 0; blue, 0 }  ][line width=0.75]    (4.37,-1.32) .. controls (2.78,-0.56) and (1.32,-0.12) .. (0,0) .. controls (1.32,0.12) and (2.78,0.56) .. (4.37,1.32)   ;
%Flowchart: Connector [id:dp6898204184498177] 
\draw  [color={rgb, 255:red, 0; green, 0; blue, 0 }  ,draw opacity=1 ][fill={rgb, 255:red, 0; green, 0; blue, 0 }  ,fill opacity=1 ] (96.62,468.52) .. controls (98.37,466.88) and (98.47,464.12) .. (96.83,462.37) .. controls (95.19,460.61) and (92.43,460.51) .. (90.67,462.15) .. controls (88.92,463.8) and (88.82,466.55) .. (90.46,468.31) .. controls (92.1,470.07) and (94.86,470.16) .. (96.62,468.52) -- cycle ;
%Straight Lines [id:da26039684695159804] 
\draw    (239.64,497.61) -- (239.64,464.34) ;
\draw [shift={(239.64,478.57)}, rotate = 90] [color={rgb, 255:red, 0; green, 0; blue, 0 }  ][line width=0.75]    (4.37,-1.32) .. controls (2.78,-0.56) and (1.32,-0.12) .. (0,0) .. controls (1.32,0.12) and (2.78,0.56) .. (4.37,1.32)   ;
%Flowchart: Connector [id:dp3250643755648992] 
\draw  [color={rgb, 255:red, 74; green, 144; blue, 226 }  ,draw opacity=1 ][fill={rgb, 255:red, 255; green, 255; blue, 255 }  ,fill opacity=1 ][line width=1.5]  (242.62,500.79) .. controls (244.37,499.15) and (244.47,496.4) .. (242.83,494.64) .. controls (241.19,492.88) and (238.43,492.78) .. (236.67,494.43) .. controls (234.92,496.07) and (234.82,498.82) .. (236.46,500.58) .. controls (238.1,502.34) and (240.86,502.43) .. (242.62,500.79) -- cycle ;
%Flowchart: Connector [id:dp7434237968674291] 
\draw  [color={rgb, 255:red, 0; green, 0; blue, 0 }  ,draw opacity=1 ][fill={rgb, 255:red, 255; green, 255; blue, 255 }  ,fill opacity=1 ][line width=1.5]  (96.62,501.79) .. controls (98.37,500.15) and (98.47,497.4) .. (96.83,495.64) .. controls (95.19,493.88) and (92.43,493.78) .. (90.67,495.43) .. controls (88.92,497.07) and (88.82,499.82) .. (90.46,501.58) .. controls (92.1,503.34) and (94.86,503.43) .. (96.62,501.79) -- cycle ;
%Flowchart: Connector [id:dp1474488072126271] 
\draw  [color={rgb, 255:red, 0; green, 0; blue, 0 }  ,draw opacity=1 ][fill={rgb, 255:red, 255; green, 255; blue, 255 }  ,fill opacity=1 ][line width=1.5]  (242.62,467.52) .. controls (244.37,465.88) and (244.47,463.12) .. (242.83,461.37) .. controls (241.19,459.61) and (238.43,459.51) .. (236.67,461.15) .. controls (234.92,462.8) and (234.82,465.55) .. (236.46,467.31) .. controls (238.1,469.07) and (240.86,469.16) .. (242.62,467.52) -- cycle ;

% Text Node
\draw (79.33,412.06) node [anchor=north west][inner sep=0.75pt]  [font=\tiny,rotate=-0.89]  {$M$};
% Text Node
\draw (90.22,389.89) node [anchor=north west][inner sep=0.75pt]  [font=\tiny,rotate=-0.89]  {$\mathbf{x}$};
% Text Node
\draw (103,407) node [anchor=north west][inner sep=0.75pt]  [font=\footnotesize]  {$\mathbf{W}_{G} =\mathbf{y}^{\top } M\mathbf{x} \in \mathbb{R}$};
% Text Node
\draw (89.22,440.89) node [anchor=north west][inner sep=0.75pt]  [font=\tiny,rotate=-0.89]  {$\mathbf{y}$};
% Text Node
\draw (224.33,411.06) node [anchor=north west][inner sep=0.75pt]  [font=\tiny,rotate=-0.89]  {$M$};
% Text Node
\draw (235.22,388.89) node [anchor=north west][inner sep=0.75pt]  [font=\tiny,rotate=-0.89]  {$\mathbf{x}$};
% Text Node
\draw (252,407) node [anchor=north west][inner sep=0.75pt]  [font=\footnotesize]  {$\mathbf{W}_{G} =M\mathbf{x} \in \mathbb{R}^{n}$};
% Text Node
\draw (78.33,475.06) node [anchor=north west][inner sep=0.75pt]  [font=\tiny,rotate=-0.89]  {$M$};
% Text Node
\draw (90.22,452.89) node [anchor=north west][inner sep=0.75pt]  [font=\tiny,rotate=-0.89]  {$\mathbf{x}$};
% Text Node
\draw (224.33,474.06) node [anchor=north west][inner sep=0.75pt]  [font=\tiny,rotate=-0.89]  {$M$};
% Text Node
\draw (252,464) node [anchor=north west][inner sep=0.75pt]  [font=\footnotesize]  {$\mathbf{W}_{G} :\mathbb{R}^{n}\rightarrow \mathbb{R}^{n}$};
% Text Node
\draw (106,462) node [anchor=north west][inner sep=0.75pt]  [font=\footnotesize]  {$\mathbf{W}_{G} :\mathbb{R}^{n}\rightarrow \mathbb{R}$};
% Text Node
\draw (110,475) node [anchor=north west][inner sep=0.75pt]  [font=\footnotesize]  {$=\left(\mathbf{y} \mapsto \mathbf{y}^{\top } M\mathbf{x}\right)$};
% Text Node
\draw (257,476) node [anchor=north west][inner sep=0.75pt]  [font=\footnotesize]  {$=(\mathbf{x} \mapsto M\mathbf{x})$};

\end{tikzpicture}
\caption{
A simple product graph $G$ (top left), from which we can free vertices to make $\mathbf{W}_G$ either a vector, a scalar-valued map, or a vector-valued map (top right, bottom left, and bottom right respectively). }
\label{ex:first_example}
\end{figure}
\bigskip

Some examples to illustrate these definitions might be in order, and are given in Figures \ref{ex:first_example} in \ref{fig:basic-tree-free}. When depicting a graph, a free vertex is drawn as a circle (e.g. $\circ$) and is depicted in blue if it is in the set of outputs $\mathcal{F}_{\mathrm{out}}$, while a vertex fixed to $\mathbf{x}$ is drawn as a black dot labeled by said vector (e.g. $\bullet_{\mathbf{x}}$). 
Similarly, we draw free edges with dotted directed lines (e.g. $\dashrightarrow$) and in blue if they are in the set $\mathcal{F}_{\mathrm{out}}$ of output cells, while fixed edges are drawn as solid lines labeled by their input matrix (e.g. $\xrightarrow[]{W}$). 
For simplicity, we will omit labels from vertices (resp. edges) whose input is $\mathbf{1}$ (resp. $\mathbf{I}$) when drawing them. We will also not indicate dimensions given by $\mathfrak{d}$ nor the order of the sequence $\mathcal{F}$ on the graph itself.

\begin{example}
Let $G$ be the graph in Figure \ref{fig:basic-tree-free} (left). To turn this into an operator graph, let $a,b,c,d$ refer to its vertices from bottom to top/ left to right and let $\fd:\{a,b,c,d\}\to\mathbb{N}$ be any fixed dimension map. Then if we consider $\cF_{\mathrm{in}} = (d,(a,b))$, $\cF_{\mathrm{out}} = (a)$ and fix the remaining cells as 
\[
b \mapsto \mathbf{x}_b, ~
c \mapsto \mathbf{1}, ~ 
(b,d) \mapsto \mathbf{I}, ~
(b,c) \mapsto W_1
\]
(c.f. Figure \ref{fig:basic-tree-free}), then $\mathbf{W}_G$ is the map
$(\mathbf{x}_d, W_{(a,b)}) \mapsto 
W_{(a,b)}
\big(\mathbf{x}_{b} \odot (W_1\mathbf{1})\odot(\mathbf{I} ~\mathbf{x}_d)\big)$.

    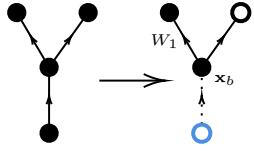
\begin{figure}[ht]
    \centering
    
    \tikzset{every picture/.style={line width=0.75pt}} %set default line width to 0.75pt        

\begin{tikzpicture}[x=0.75pt,y=0.75pt,yscale=-1,xscale=1]
%uncomment if require: \path (0,7031); %set diagram left start at 0, and has height of 7031

%Straight Lines [id:da5206937468000061] 
\draw  (448.81,6367.37) -- (426.81,6345.37) ;
\draw [shift={(436.12,6354.68)}, rotate = 45] [color={rgb, 255:red, 0; green, 0; blue, 0 }  ][line width=0.75]    (4.37,-1.32) .. controls (2.78,-0.56) and (1.32,-0.12) .. (0,0) .. controls (1.32,0.12) and (2.78,0.56) .. (4.37,1.32)   ;
%Flowchart: Connector [id:dp6065503757167129] 
\draw  [color={rgb, 255:red, 0; green, 0; blue, 0 }  ,draw opacity=1 ][fill={rgb, 255:red, 0; green, 0; blue, 0 }  ,fill opacity=1 ] (429.78,6348.56) .. controls (431.54,6346.92) and (431.64,6344.16) .. (430,6342.4) .. controls (428.35,6340.64) and (425.6,6340.55) .. (423.84,6342.19) .. controls (422.08,6343.83) and (421.99,6346.59) .. (423.63,6348.35) .. controls (425.27,6350.1) and (428.03,6350.2) .. (429.78,6348.56) -- cycle ;
%Straight Lines [id:da31868373811418205] 
\draw    (448.81,6367.37) -- (470.81,6345.37) ;
\draw [shift={(461.51,6354.68)}, rotate = 135] [color={rgb, 255:red, 0; green, 0; blue, 0 }  ][line width=0.75]    (4.37,-1.32) .. controls (2.78,-0.56) and (1.32,-0.12) .. (0,0) .. controls (1.32,0.12) and (2.78,0.56) .. (4.37,1.32)   ;
%Flowchart: Connector [id:dp004184634194174586] 
\draw  [color={rgb, 255:red, 0; green, 0; blue, 0 }  ,draw opacity=1 ][fill={rgb, 255:red, 0; green, 0; blue, 0 }  ,fill opacity=1 ] (451.78,6404.54) .. controls (453.54,6402.9) and (453.64,6400.15) .. (452,6398.39) .. controls (450.35,6396.63) and (447.6,6396.53) .. (445.84,6398.18) .. controls (444.08,6399.82) and (443.99,6402.57) .. (445.63,6404.33) .. controls (447.27,6406.09) and (450.03,6406.18) .. (451.78,6404.54) -- cycle ;
%Flowchart: Connector [id:dp2821163129540917] 
\draw  [color={rgb, 255:red, 0; green, 0; blue, 0 }  ,draw opacity=1 ][fill={rgb, 255:red, 0; green, 0; blue, 0 }  ,fill opacity=1 ] (451.78,6370.56) .. controls (453.54,6368.92) and (453.64,6366.16) .. (452,6364.4) .. controls (450.35,6362.64) and (447.6,6362.55) .. (445.84,6364.19) .. controls (444.08,6365.83) and (443.99,6368.59) .. (445.63,6370.35) .. controls (447.27,6372.1) and (450.03,6372.2) .. (451.78,6370.56) -- cycle ;
%Flowchart: Connector [id:dp25347963549302077] 
\draw  [color={rgb, 255:red, 0; green, 0; blue, 0 }  ,draw opacity=1 ][fill={rgb, 255:red, 0; green, 0; blue, 0 }  ,fill opacity=1 ] (473.78,6348.56) .. controls (475.54,6346.92) and (475.64,6344.16) .. (474,6342.4) .. controls (472.35,6340.64) and (469.6,6340.55) .. (467.84,6342.19) .. controls (466.08,6343.83) and (465.99,6346.59) .. (467.63,6348.35) .. controls (469.27,6350.1) and (472.03,6350.2) .. (473.78,6348.56) -- cycle ;
%Straight Lines [id:da15262549216705834] 
\draw [color={rgb, 255:red, 0; green, 0; blue, 0 }  ,draw opacity=1 ]   (448.81,6367.37) -- (448.81,6401.36) ;
\draw [shift={(448.81,6380.97)}, rotate = 90] [color={rgb, 255:red, 0; green, 0; blue, 0 }  ,draw opacity=1 ][line width=0.75]    (4.37,-1.32) .. controls (2.78,-0.56) and (1.32,-0.12) .. (0,0) .. controls (1.32,0.12) and (2.78,0.56) .. (4.37,1.32)   ;
%Straight Lines [id:da5183986558356469] 
\draw    (515.81,6367.66) -- (493.81,6345.66) ;
\draw [shift={(503.12,6354.96)}, rotate = 45] [color={rgb, 255:red, 0; green, 0; blue, 0 }  ][line width=0.75]    (4.37,-1.32) .. controls (2.78,-0.56) and (1.32,-0.12) .. (0,0) .. controls (1.32,0.12) and (2.78,0.56) .. (4.37,1.32)   ;
%Flowchart: Connector [id:dp11183445609945963] 
\draw  [color={rgb, 255:red, 0; green, 0; blue, 0 }  ,draw opacity=1 ][fill={rgb, 255:red, 0; green, 0; blue, 0 }  ,fill opacity=1 ] (496.78,6348.84) .. controls (498.54,6347.2) and (498.64,6344.45) .. (497,6342.69) .. controls (495.35,6340.93) and (492.6,6340.84) .. (490.84,6342.48) .. controls (489.08,6344.12) and (488.99,6346.87) .. (490.63,6348.63) .. controls (492.27,6350.39) and (495.03,6350.48) .. (496.78,6348.84) -- cycle ;
%Straight Lines [id:da6935168690667989] 
\draw    (515.81,6367.66) -- (537.81,6345.66) ;
\draw [shift={(528.51,6354.96)}, rotate = 135] [color={rgb, 255:red, 0; green, 0; blue, 0 }  ][line width=0.75]    (4.37,-1.32) .. controls (2.78,-0.56) and (1.32,-0.12) .. (0,0) .. controls (1.32,0.12) and (2.78,0.56) .. (4.37,1.32)   ;
%Flowchart: Connector [id:dp44142899332609276] 
\draw  [color={rgb, 255:red, 0; green, 0; blue, 0 }  ,draw opacity=1 ][fill={rgb, 255:red, 255; green, 255; blue, 255 }  ,fill opacity=1 ][line width=1.5]  (540.78,6348.84) .. controls (542.54,6347.2) and (542.64,6344.45) .. (541,6342.69) .. controls (539.35,6340.93) and (536.6,6340.84) .. (534.84,6342.48) .. controls (533.08,6344.12) and (532.99,6346.87) .. (534.63,6348.63) .. controls (536.27,6350.39) and (539.03,6350.48) .. (540.78,6348.84) -- cycle ;
%Straight Lines [id:da5944266709991936] 
\draw [dotted] [color={rgb, 255:red, 0; green, 0; blue, 0 }  ,draw opacity=1 ]   (515.81,6367.66) -- (515.81,6401.64) ;
\draw [shift={(515.81,6381.25)}, rotate = 90] [color={rgb, 255:red, 0; green, 0; blue, 0 }  ,draw opacity=1 ][line width=0.75]    (4.37,-1.32) .. controls (2.78,-0.56) and (1.32,-0.12) .. (0,0) .. controls (1.32,0.12) and (2.78,0.56) .. (4.37,1.32)   ;
%Flowchart: Connector [id:dp5427689091349179] 
\draw  [color={rgb, 255:red, 0; green, 0; blue, 0 }  ,draw opacity=1 ][fill={rgb, 255:red, 0; green, 0; blue, 0 }  ,fill opacity=1 ] (518.78,6370.84) .. controls (520.54,6369.2) and (520.64,6366.45) .. (519,6364.69) .. controls (517.35,6362.93) and (514.6,6362.84) .. (512.84,6364.48) .. controls (511.08,6366.12) and (510.99,6368.87) .. (512.63,6370.63) .. controls (514.27,6372.39) and (517.03,6372.48) .. (518.78,6370.84) -- cycle ;
%Flowchart: Connector [id:dp16164923375640317] 
\draw  [color={rgb, 255:red, 74; green, 144; blue, 226 }  ,draw opacity=1 ][fill={rgb, 255:red, 255; green, 255; blue, 255 }  ,fill opacity=1 ][line width=1.5]  (518.78,6404.83) .. controls (520.54,6403.19) and (520.64,6400.43) .. (519,6398.67) .. controls (517.35,6396.92) and (514.6,6396.82) .. (512.84,6398.46) .. controls (511.08,6400.1) and (510.99,6402.86) .. (512.63,6404.62) .. controls (514.27,6406.37) and (517.03,6406.47) .. (518.78,6404.83) -- cycle ;
%Straight Lines [id:da6197330735990269] 
\draw   (468,6379.94) -- (494.5,6379.94) ;
\draw [shift={(496.5,6379.94)}, rotate = 180] [color={rgb, 255:red, 0; green, 0; blue, 0 }  ][line width=0.75]    (6.56,-1.97) .. controls (4.17,-0.84) and (1.99,-0.18) .. (0,0) .. controls (1.99,0.18) and (4.17,0.84) .. (6.56,1.97)   ;

% Text Node
\draw (488.81,6356.66) node [anchor=north west][inner sep=0.75pt]  [font=\tiny]  {$W_{1}$};
% Text Node
\draw (521,6366.69) node [anchor=north west][inner sep=0.75pt]  [font=\tiny]  {$\mathbf{x}_{b}$};

\end{tikzpicture}
    \caption{Example of an operator graph (right) obtained from a graph (left) by a choice of $\mathcal{F}_{\mathrm{in}}$, $\mathcal{F}_{\mathrm{out}}$, $\fd$ and $(\mathbf{X}_c)_{c \notin \cF}$. Note how on the right, we omit labels from vertices and edges whose inputs are $\mathbf{1}$ or $\mathbf{I}$ respectively.}
    \label{fig:basic-tree-free}
\end{figure}
\end{example}

\subsection{Operations on graphs.}\label{dictionary} Having outlined the correspondence between graphs and operators, we can now go further and show how various analytic operations can be expressed as binary operations on graphs.

\begin{proposition}[Evaluation] \label{def:fixing}
Let $(G,\fd, (\mathcal{F}_{\mathrm{in}},\mathcal{F}_{\mathrm{out}}), \{\mathbf{X}_c\}_{c \notin \cF})$ be an operator graph, $\mathbf{W}_G$ its associated map and $c_0 \in \cF_{\mathrm{in}}$ be a free cell of $G$. Let $G[\mathbf{X}_{c_0}]$ denote the graph obtained by fixing $c_0$'s input to some $\mathbf{X}_{c_0} \in \bR^{\fd(c_0)}$. Then if $|\mathcal{F}_{\mathrm{in}}|=1$, we have $\mathbf{W}_{G[\mathbf{X}_{c_0}]}=\mathbf{W}_G(\mathbf{X}_{c_0})$. Otherwise,
\[
        \mathbf{W}_{G[\mathbf{X}_{c_0}]}: \prod_{c\in \mathcal{F}_{\mathrm{in}}\setminus c_0} \bR^{\fd(c)} \to \bigotimes_{c\in \mathcal{F}_{\mathrm{out}}} \bR^{\fd(c)}
\]
\[
    \mathbf{W}_{G[\mathbf{X}_{c_0}]}\big((\mathbf{X}_c)_{c\in\mathcal{F}_\mathrm{in}\setminus c_0}\big)=
    \mathbf{W}_{G}\big((\mathbf{X}_c)_{c\in\mathcal{F}_\mathrm{in}}\big).
\]
In particular, $$\mathbf{W}_{G[(\mathbf{X}_c)_{c\in \mathcal{F}_{\mathrm{in}}}]}=\mathbf{W}_G\big((\mathbf{X}_c)_{c\in \mathcal{F}_{\mathrm{in}}}\big). $$
\end{proposition}
\begin{proof}
    This is an immediate consequence of Definition \ref{def:operator-graph}.
\end{proof}

For example, let $G$ be the operator graph on the left in Figure \ref{fig:fixing} below and denote its top-right vertex by $v$. Note that $\mathcal{F}_{\mathrm{in}}=(v)$ and $\mathcal{F}_{\mathrm{out}}=()$. Ignoring dimensions for the sake of this example, $$ \mathbf{W}_G:\mathbf{x}  \mapsto W_{2}\big(( W_{1}\mathbf{1}) \odot ( W_{1}\mathbf{x})\big),$$ 
and fixing the unique in-vertex's input to $\mathbf{X}_{v}$ (c.f. Figure \ref{fig:fixing}, right) gives $W_{2}( (W_{1}\mathbf{1}) \odot (W_{1}\mathbf{X}_v))$.
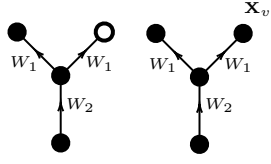
\begin{figure}[ht]
    \centering
    \tikzset{every picture/.style={line width=0.75pt}} %set default line width to 0.75pt        

\begin{tikzpicture}[x=0.75pt,y=0.75pt,yscale=-1,xscale=1]
%uncomment if require: \path (0,6537); %set diagram left start at 0, and has height of 6537
%Straight Lines [id:da5206937468000061] 
\draw    (448.81,6287.37) -- (426.81,6265.37) ;
\draw [shift={(436.12,6274.68)}, rotate = 45] [color={rgb, 255:red, 0; green, 0; blue, 0 }  ][line width=0.75]    (4.37,-1.32) .. controls (2.78,-0.56) and (1.32,-0.12) .. (0,0) .. controls (1.32,0.12) and (2.78,0.56) .. (4.37,1.32)   ;
%Flowchart: Connector [id:dp6065503757167129] 
\draw  [color={rgb, 255:red, 0; green, 0; blue, 0 }  ,draw opacity=1 ][fill={rgb, 255:red, 0; green, 0; blue, 0 }  ,fill opacity=1 ] (429.78,6268.56) .. controls (431.54,6266.92) and (431.64,6264.16) .. (430,6262.4) .. controls (428.35,6260.64) and (425.6,6260.55) .. (423.84,6262.19) .. controls (422.08,6263.83) and (421.99,6266.59) .. (423.63,6268.35) .. controls (425.27,6270.1) and (428.03,6270.2) .. (429.78,6268.56) -- cycle ;
%Straight Lines [id:da31868373811418205] 
\draw    (448.81,6287.37) -- (470.81,6265.37) ;
\draw [shift={(461.51,6274.68)}, rotate = 135] [color={rgb, 255:red, 0; green, 0; blue, 0 }  ][line width=0.75]    (4.37,-1.32) .. controls (2.78,-0.56) and (1.32,-0.12) .. (0,0) .. controls (1.32,0.12) and (2.78,0.56) .. (4.37,1.32)   ;
%Flowchart: Connector [id:dp004184634194174586] 
\draw  [color={rgb, 255:red, 0; green, 0; blue, 0 }  ,draw opacity=1 ][fill={rgb, 255:red, 0; green, 0; blue, 0 }  ,fill opacity=1 ] (451.78,6324.54) .. controls (453.54,6322.9) and (453.64,6320.15) .. (452,6318.39) .. controls (450.35,6316.63) and (447.6,6316.53) .. (445.84,6318.18) .. controls (444.08,6319.82) and (443.99,6322.57) .. (445.63,6324.33) .. controls (447.27,6326.09) and (450.03,6326.18) .. (451.78,6324.54) -- cycle ;
%Flowchart: Connector [id:dp2821163129540917] 
\draw  [color={rgb, 255:red, 0; green, 0; blue, 0 }  ,draw opacity=1 ][fill={rgb, 255:red, 0; green, 0; blue, 0 }  ,fill opacity=1 ] (451.78,6290.56) .. controls (453.54,6288.92) and (453.64,6286.16) .. (452,6284.4) .. controls (450.35,6282.64) and (447.6,6282.55) .. (445.84,6284.19) .. controls (444.08,6285.83) and (443.99,6288.59) .. (445.63,6290.35) .. controls (447.27,6292.1) and (450.03,6292.2) .. (451.78,6290.56) -- cycle ;
%Straight Lines [id:da26709810667067013] 
\draw    (378.81,6286.66) -- (356.81,6264.66) ;
\draw [shift={(366.12,6273.96)}, rotate = 45] [color={rgb, 255:red, 0; green, 0; blue, 0 }  ][line width=0.75]    (4.37,-1.32) .. controls (2.78,-0.56) and (1.32,-0.12) .. (0,0) .. controls (1.32,0.12) and (2.78,0.56) .. (4.37,1.32)   ;
%Flowchart: Connector [id:dp3213615577581649] 
\draw  [color={rgb, 255:red, 0; green, 0; blue, 0 }  ,draw opacity=1 ][fill={rgb, 255:red, 0; green, 0; blue, 0 }  ,fill opacity=1 ] (359.78,6267.84) .. controls (361.54,6266.2) and (361.64,6263.45) .. (360,6261.69) .. controls (358.35,6259.93) and (355.6,6259.84) .. (353.84,6261.48) .. controls (352.08,6263.12) and (351.99,6265.87) .. (353.63,6267.63) .. controls (355.27,6269.39) and (358.03,6269.48) .. (359.78,6267.84) -- cycle ;
%Straight Lines [id:da789938583846317] 
\draw    (378.81,6286.66) -- (400.81,6264.66) ;
\draw [shift={(391.51,6273.96)}, rotate = 135] [color={rgb, 255:red, 0; green, 0; blue, 0 }  ][line width=0.75]    (4.37,-1.32) .. controls (2.78,-0.56) and (1.32,-0.12) .. (0,0) .. controls (1.32,0.12) and (2.78,0.56) .. (4.37,1.32)   ;
%Flowchart: Connector [id:dp09513092636167708] 
\draw  [color={rgb, 255:red, 0; green, 0; blue, 0 }  ,draw opacity=1 ][fill={rgb, 255:red, 255; green, 255; blue, 255 }  ,fill opacity=1 ][line width=1.5]  (403.78,6267.84) .. controls (405.54,6266.2) and (405.64,6263.45) .. (404,6261.69) .. controls (402.35,6259.93) and (399.6,6259.84) .. (397.84,6261.48) .. controls (396.08,6263.12) and (395.99,6265.87) .. (397.63,6267.63) .. controls (399.27,6269.39) and (402.03,6269.48) .. (403.78,6267.84) -- cycle ;
%Straight Lines [id:da8404093686260746] 
\draw [color={rgb, 255:red, 0; green, 0; blue, 0 }  ,draw opacity=1 ]   (378.81,6286.66) -- (378.81,6320.64) ;
\draw [shift={(378.81,6300.25)}, rotate = 90] [color={rgb, 255:red, 0; green, 0; blue, 0 }  ,draw opacity=1 ][line width=0.75]    (4.37,-1.32) .. controls (2.78,-0.56) and (1.32,-0.12) .. (0,0) .. controls (1.32,0.12) and (2.78,0.56) .. (4.37,1.32)   ;
%Flowchart: Connector [id:dp21649883047742868] 
\draw  [color={rgb, 255:red, 0; green, 0; blue, 0 }  ,draw opacity=1 ][fill={rgb, 255:red, 0; green, 0; blue, 0 }  ,fill opacity=1 ] (381.78,6323.83) .. controls (383.54,6322.19) and (383.64,6319.43) .. (382,6317.67) .. controls (380.35,6315.92) and (377.6,6315.82) .. (375.84,6317.46) .. controls (374.08,6319.1) and (373.99,6321.86) .. (375.63,6323.62) .. controls (377.27,6325.37) and (380.03,6325.47) .. (381.78,6323.83) -- cycle ;
%Flowchart: Connector [id:dp6233327493181047] 
\draw  [color={rgb, 255:red, 0; green, 0; blue, 0 }  ,draw opacity=1 ][fill={rgb, 255:red, 0; green, 0; blue, 0 }  ,fill opacity=1 ] (381.78,6289.84) .. controls (383.54,6288.2) and (383.64,6285.45) .. (382,6283.69) .. controls (380.35,6281.93) and (377.6,6281.84) .. (375.84,6283.48) .. controls (374.08,6285.12) and (373.99,6287.87) .. (375.63,6289.63) .. controls (377.27,6291.39) and (380.03,6291.48) .. (381.78,6289.84) -- cycle ;
%Flowchart: Connector [id:dp25347963549302077] 
\draw  [color={rgb, 255:red, 0; green, 0; blue, 0 }  ,draw opacity=1 ][fill={rgb, 255:red, 0; green, 0; blue, 0 }  ,fill opacity=1 ] (473.78,6268.56) .. controls (475.54,6266.92) and (475.64,6264.16) .. (474,6262.4) .. controls (472.35,6260.64) and (469.6,6260.55) .. (467.84,6262.19) .. controls (466.08,6263.83) and (465.99,6266.59) .. (467.63,6268.35) .. controls (469.27,6270.1) and (472.03,6270.2) .. (473.78,6268.56) -- cycle ;
%Straight Lines [id:da15262549216705834] 
\draw [color={rgb, 255:red, 0; green, 0; blue, 0 }  ,draw opacity=1 ]   (448.81,6287.37) -- (448.81,6321.36) ;
\draw [shift={(448.81,6300.97)}, rotate = 90] [color={rgb, 255:red, 0; green, 0; blue, 0 }  ,draw opacity=1 ][line width=0.75]    (4.37,-1.32) .. controls (2.78,-0.56) and (1.32,-0.12) .. (0,0) .. controls (1.32,0.12) and (2.78,0.56) .. (4.37,1.32)   ;

% Text Node
\draw (469.85,6248.24) node [anchor=north west][inner sep=0.75pt]  [font=\tiny]  {$\mathbf{X}_{v}$};
% Text Node
\draw (459.81,6276.37) node [anchor=north west][inner sep=0.75pt]  [font=\tiny]  {$W_{1}$};
% Text Node
\draw (421.81,6276.37) node [anchor=north west][inner sep=0.75pt]  [font=\tiny]  {$W_{1}$};
% Text Node
\draw (389.81,6275.66) node [anchor=north west][inner sep=0.75pt]  [font=\tiny]  {$W_{1}$};
% Text Node
\draw (351.81,6275.66) node [anchor=north west][inner sep=0.75pt]  [font=\tiny]  {$W_{1}$};
% Text Node
\draw (380.14,6296.39) node [anchor=north west][inner sep=0.75pt]  [font=\tiny]  {$W_{2}$};
% Text Node
\draw (450.14,6296.39) node [anchor=north west][inner sep=0.75pt]  [font=\tiny]  {$W_{2}$};

\end{tikzpicture}
    \vspace{-25px}
    \caption{Fixing a free in-vertex (middle) or in-edge (right) of a graph.}
    \label{fig:fixing}
\end{figure} 

Recall that by definition, an operator graph $(G,\mathfrak{d},((),\mathcal{F}_{\mathrm{out}}),\{\mathbf{X}_{c}\}_{c\notin\mathcal{F}})$ gives rise to $\mathbf{W}_G\in\bigotimes_{c\in\mathcal{F}_{\mathrm{out}}}\mathbb{R}^{\mathfrak{d}(c)}$ (see Figure \ref{ex:first_example}). For two such graphs $G_1$ and $G_2$, it therefore makes sense to talk about the Hadamard (entrywise) product of $\mathbf{W}_{G_1}$ and $\mathbf{W}_{G_2}$.

\begin{proposition}[Hadamard product] 
\label{def:graph_wedge}
Let $$G_1=(G_1,\fd_1, ((), (v_1)), \{\mathbf{X}_c\}_{c\in C_1\setminus \{v_1\}}),\, G_2=(G_2,\fd_2, ((), (v_2)), \{\mathbf{X}_c\}_{c\in C_2\setminus \{v_2\}})$$ be operator graphs with unique free out-vertices $v_1\in V(G_1)$ and $v_2\in V(G_2)$ respectively. Assume that $\fd_1(v_1) = \fd_2(v_2)=d>0$. 

Define $G_1 \wedge G_2$ to be the operator graph obtained by identifying $v_1$ and $v_2$ (meaning that we take the union of the two graphs and consider $v_1=v_2$ to be the same vertex). Then\[
    \mathbf{W}_{G_1\land G_2}=\mathbf{W}_{G_1}\odot\mathbf{W}_{G_2}\in\mathbb{R}^{\mathfrak{d}_1(v_1)},
\]
where $\odot$ denotes the Hadamard product.
\end{proposition}
\begin{proof}
Let $v_1 \wedge v_2$ be the unified vertex in $G_1 \wedge G_2$, and let $V_i^-=V(G_i)\setminus \{v_i\}$ for $i\in\{1,2\}$. Then for any $ \mathbf{X} \in \bR^{\fd_1(v_1)}$, Proposition \ref{def:fixing} yields $\sprod{\mathbf{W}_{G_1\land G_2}}{\mathbf{X}} = \mathbf{W}_{(G_1\land G_2)[\mathbf{X}]}$, which by definition equals
\begin{align*}
&\sum_{\mathbf{i}_{V(G_1)},\mathbf{i}_{V(G_2)}} 
\delta_{i_{v_1}i_{v_2}}
\bigg(
\prod_{c_1 \in C_1 \setminus\{v_1\}}~[\mathbf{X}_{c_1}]_{i_{c_1}} 
\bigg)
\bigg(
\prod_{c_2 \in C_2 \setminus\{v_2\}}~[\mathbf{X}_{c_2}]_{i_{c_2}} 
\bigg)
[\mathbf{X}]_{i_{v_1}}
\\
&=\sum_{i_{v_1},i_{v_2}=1}^{d}
\bigg(
\sum_{\mathbf{i}_{V_1^-}} 
\prod_{c_1 \in C_1 \setminus\{v_1\}}~[\mathbf{X}_{c_1}]_{i_{c_1}} 
\bigg)
\bigg(
\sum_{\mathbf{i}_{V_2^-}} 
\prod_{c_2 \in C_2 \setminus\{v_2\}}~[\mathbf{X}_{c_2}]_{i_{c_2}} 
\bigg)
[\mathbf{X}]_{i_{v_1}}
\\
&=\sum_{k=1}^{d}
[\mathbf{W}_{G_1}]_{k}
[\mathbf{W}_{G_2}]_{k}
[\mathbf{X}]_{k}
= \sum_{k=1}^{d}
[\mathbf{W}_{G_1} \odot \mathbf{W}_{G_2}]_{k}
[\mathbf{X}]_{k}
\\
&= \sprod{\mathbf{W}_{G_1} \odot \mathbf{W}_{G_2}}{\mathbf{X}},
\end{align*}
where the first sum is over indexations $\mathbf{i}_{V(G_j)}\in \prod_{v\in V(G_j)} [\mathfrak{d}(v)]$, $j\in \{1,2\}$ and $\delta_{ij}$ is the Kronecker delta.
The proposition then follows by uniqueness of the Riesz representation.
\end{proof}
\begin{figure}[h]
    \centering
    \include{Figures/identifying}
    \vspace{-20pt}
    \caption{Identifying two graphs by their unique free (out-)vertex gives rise to a Hadamard product.}
    \label{fig:identifying}
\end{figure}
Note that the graph operation $\wedge$ is associative and symmetric much like its analytic counterpart $\odot$, meaning that there is no confusion in writing $G_1 \wedge \cdots \wedge G_n = \bigwedge_{i \in [n]} G_i$.    

\begin{proposition}[Transpose]\label{def:swap_vertices}
Let $G=(G,\fd, (\mathcal{F}_{\mathrm{in}},\mathcal{F}_{\mathrm{out}}, \{\mathbf{X}_c\}_{c \notin \mathcal{F}})$ be such that $\mathcal{F}_{\mathrm{in}}$ and $\mathcal{F}_{\mathrm{out}}$ have at most one element, but are not both empty. Define
\[
    G^\top:=(G,\fd, (\mathcal{F}_{\mathrm{out}},\mathcal{F}_{\mathrm{in}}, \{\mathbf{X}_c\}_{c \notin \mathcal{F}}).
\]
Then $\mathbf{W}_{G^\top}$ is the adjoint of $\mathbf{W}_G$. 
\end{proposition}
\begin{proof}
Assume that $\mathcal{F}_{\mathrm{in}}=(c_1)$ and $\mathcal{F}_{\mathrm{out}}=(c_2)$. Then by definition, for any fixed $\mathbf{X}_{c_1},\mathbf{X}_{c_2}$,
\begin{align*}
\Big\langle{\mathbf{W}_G\big(\mathbf{X}_{c_1}\big),}{\mathbf{X}_{c_2}}\Big\rangle &=\bW_{(G,\fd,(\mathbf{X}_c)_{c\in C})}= \Big\langle{\mathbf{X}_{c_1}},{\mathbf{W}_{G^\top }\big(\mathbf{X}_{c_2}\big)\Big\rangle}.
\end{align*}
The proof is essentially identical if one of $\mathcal{F}_{\mathrm{in}}$ or $\mathcal{F}_{\mathrm{out}}$ is $(())$.
\end{proof}

\begin{proposition}[Composition] \label{prop:composition}
Let $G_1=(G_1,\fd_1, \mathcal{F}_1=((a_1),(b_1)), \{\mathbf{X}_c\}_{c\notin \mathcal{F}_1})$ and $G_2=(G_2,\fd_2, \mathcal{F}_2=((b_2),(c_2)), \{\mathbf{X}_c\}_{c\notin \mathcal{F}_2})$ be such that $\fd_1(b_1)=\fd_2(b_2)$. 
Define $G_2 \circ G_1$ to be the graph obtained by identifying $b_1$ and $b_2$ and fixing the resulting cell's input to $\mathbf{1}$ if the $b_i$ are vertices, and $\mathbf{I}$ if they are edges.
In the event that vertices $v_1, v_2$ get identified as a result (e.g. when $b_1,b_2$ are edges), the resulting vertex is given input 
 $\mathbf{X}_{v_1}\odot \mathbf{X}_{v_2}$.
%$\mathbf{x}_1\odot \mathbf{x}_2$ where $\mathbf{x}_1,\mathbf{x}_2$ are the respective inputs of $v_1$ and $v_2$. 
Then we have
\[  
    \mathbf{W}_{G_2 \circ G_1} = \mathbf{W}_{G_2} \circ \mathbf{W}_{G_1}
\]
where on the right hand side, $\circ$ denotes composition.
\end{proposition}
\begin{proof}
Assume that $b_1,b_2$ are vertices (the proof is essentially identical if they are edges) for which $\mathfrak{d}(b_1)=\mathfrak{d}(b_2)=d>0$. Fix a pair of inputs $\mathbf{X}_{a_1},\mathbf{X}_{d_2}$ for $a_1$ and $c_2$. Then on the one hand, Proposition \ref{def:fixing} yields
\begin{align}
    \Big\langle \mathbf{W}_{G_2\circ G_1}\big(\mathbf{X}_{a_1}\big),\mathbf{X}_{c_2}\Big\rangle \nonumber &=\Big\langle \mathbf{W}_{G_2\circ G_1[\mathbf{X}_{a_1}]},\mathbf{X}_{c_2}\Big\rangle,
\end{align}
which is by definition equal to the value of the product graph obtained from $G_{2}\circ G_1$ by fixing $a_1$ and $c_2$ to have input $\mathbf{X}_{a_1}$ and $\mathbf{X}_{c_2}$ respectively. Letting  $V_i^-=V(G_i)\setminus \{b_i\}$ and $C_i^-=C_i\setminus \{b_i\}$ for simplicity, this equals
\begin{align}
    &\sum_{\mathbf{i}_{V(G_1)},\mathbf{i}_{V(G_2)}}\delta_{i_{b_1}i_{b_2}}\prod_{s_1\in C_1^-}[\mathbf{X}_{s_1}]_{i_{s_1}}\prod_{s_1\in C_2^-}[\mathbf{X}_{s_2}]_{i_{s_2}}\nonumber\\
    &=\sum_{i_{b_1},i_{b_2}=1}^d\bigg(\sum_{\mathbf{i}_{V_1^-}}\prod_{s_1\in C_1^{-}}[\mathbf{X}_{s_1}]_{i_{s_1}}\bigg)\bigg(\sum_{\mathbf{i}_{V_2^-}}\prod_{s_2\in C_2^{-}}[\mathbf{X}_{s_2}]_{i_{s_2}}\bigg).\label{eq:composition}
\end{align}

On the other hand, Propositions \ref{def:swap_vertices} and \ref{def:graph_wedge} yield
\begin{align*}
    \Big\langle\mathbf{W}_{G_2}\big(\mathbf{W}_{G_1}(\mathbf{X}_{a_1})\big),\mathbf{X}_{c_2}\Big\rangle&=\Big\langle \mathbf{W}_{G_1}\big(\mathbf{X}_{a_1}\big),\mathbf{W}_{G_2^\top}\big(\mathbf{X}_{c_2}\big)\Big\rangle \\
    &=\Big\langle \mathbf{W}_{G_1}\big(\mathbf{X}_{a_1}\big)\odot\mathbf{W}_{G_2^\top}\big(\mathbf{X}_{c_2}\big), \mathbf{1}\Big\rangle_{\mathbb{R}^d}
\end{align*}
which is equal to the right-hand side of \eqref{eq:composition}.
\end{proof}
\begin{figure}[t]
        \centering
        \input{Figures/composition_example_specific}
        \caption{Two graphs $G_1, G_2$ for which $\mathbf{W}_{G_1}(\mathbf{x})=W_2W_1\mathbf{x}$ and $\mathbf{W}_{G_2}(\mathbf{x})=W_4W_3\mathbf{x}$, some matrices $W_i$. Composing them yields the map $\mathbf{W}_{G_2\circ G_1}(\mathbf{x})=W_4W_3W_2W_1\mathbf{x}$.}
        \label{fig:composition_example_specific}
\end{figure}
% To illustrate, consider two operator graphs $$G_1=(G_1, \fd_1, ((a), (b)), (\mathbf{X}_c)_{c\in C_1\setminus \{a,b\}}), G_2=(G_2, \fd_2, ((b'), (c)), (\mathbf{X}_c)_{c\in C_2\setminus \{b',c\}}),$$ such that $\fd_1(b)=\fd_2(b')=d$ for some $d>0$. Then
% \[
% [\bW_{G_2}\circ\bW_{G_1}]_{\gamma,\alpha} = \sum_{\beta = 1}^{d} [\bW_{G_2}]_{\gamma,\beta} [\bW_{G_1}]_{\beta,\alpha} = \sum_{\beta = 1}^{d} [\bW_{G_2}]_{\gamma,\beta} [\bW_{G_1}]_{\beta,\alpha} [\boldsymbol{1}]_{\beta}
% \]
% and the right-hand side is exactly $\mathbf{W}_{G_2\circ G_1}$. This is depicted in Figure \ref{fig:composition_example_specific}.

 This operation is depicted in Figure \ref{fig:composition_example_specific} when $c_1,c_2$ are vertices. A concrete example of composition by identifying edges  will be given in Section \ref{sec:NTK} when computing the neural tangent kernel.

\begin{proposition}[Trace]\label{prop:trace}
Let $G=(G,\fd,((c_1), (c_2)), \{\mathbf{X}_c\}_{c\in C\setminus\{c_1,c_2\}})$ be such that $\fd(c_1)=\fd(c_2)$.
Define the \emph{trace} $\mathrm{Tr}(G)$ of $G$ to be the graph obtained by identifying $c_1$ with $c_2$ and fixing the resulting cell to $\mathbf{1}$ or $\mathbf{I}$ accordingly. Again, if any vertices $v_1, v_2$ are identified as a result (e.g., when $c_1,c_2$ are edges), the resulting vertex is given input $\mathbf{X}_1\odot \mathbf{X}_2$ where $\mathbf{X}_1,\mathbf{X}_2$ are the respective inputs of $v_1$ and $v_2$.
Then we have 
\[
\mathrm{Tr}(\bW_G) = \bW_{\mathrm{Tr}(G)}.
\]
\end{proposition}
\begin{proof}
Assume that $c_1,c_2$ are vertices (the argument is essentially identical if they are edges). Let $c_1\land c_2$ denote the unified vertex in $\mathrm{Tr}(G)$, and $C'$ be the set of cells of $\mathrm{Tr}(G)$. Then there is a canonical map $\phi: C \to C'$ which restricts to a bijection between $C\setminus\{c_1, c_2\}$ and $C'\setminus\{c_1 \wedge c_2\}$, and such that $\phi(c_1)=\phi(c_2)=c_1 \wedge c_2$.
Moreover, this map induces a bijection between the indexations of $V(G)$ for which $i_{c_1} = i_{c_2}$ and the indexations $\mathbf{i}_{V(\mathrm{Tr}(G))}$ of $V(\mathrm{Tr}(G))$, which is given by $i_v \mapsto i_{\phi(v)}$ and under which $[\mathbf{X}_{c}]_{i_c} = [\mathbf{X}_{\phi(c)}]_{i_{\phi(c)}}$ for all $c \in C \setminus \{c_1, c_2\}$. It follows that
\begin{align*}
    \mathrm{Tr}(\bW_G) &= \sum_{\mathbf{i}_{V(G)}} \delta_{i_{c_1}i_{c_2}} 
    \prod_{c \in C\setminus\{c_1, c_2\}} [\mathbf{X}_{\phi(c)}]_{i_{\phi(c)}}= \sum_{\mathbf{i}_{V(\mathrm{Tr}(G))}} 
    \prod_{c' \in C'\setminus\{c_1 \wedge c_2\}} [\mathbf{X}_{c'}]_{i_{c'}}
\\
    &
    = \sum_{\mathbf{i}_{V(\mathrm{Tr}(G))}} 
    \prod_{c' \in C'\setminus\{c_1 \wedge c_2\}} [\mathbf{X}_{c'}]_{i_{c'}}
    [\mathbf{1}]_{i_{c_1 \wedge c_2}} = \bW_{\mathrm{Tr}(G)}.
\end{align*}
\end{proof}

Note that the cyclic property of the trace translates into the formation of a cycle in $\mathrm{Tr}(G)$, as depicted in  Figure \ref{fig:traces}).
\begin{figure}[ht]
        \centering
        \tikzset{every picture/.style={line width=0.75pt}} %set default line width to 0.75pt        

\begin{tikzpicture}[x=0.75pt,y=0.75pt,yscale=-1,xscale=1]
%uncomment if require: \path (0,1319); %set diagram left start at 0, and has height of 1319

%Straight Lines [id:da13947462674659483] 
\draw    (150.74,960.8) -- (184.01,960.95) ;
\draw [shift={(169.77,960.89)}, rotate = 180.27] [color={rgb, 255:red, 0; green, 0; blue, 0 }  ][line width=0.75]    (4.37,-1.32) .. controls (2.78,-0.56) and (1.32,-0.12) .. (0,0) .. controls (1.32,0.12) and (2.78,0.56) .. (4.37,1.32)   ;
%Flowchart: Connector [id:dp7899726861447603] 
\draw  [color={rgb, 255:red, 0; green, 0; blue, 0 }  ,draw opacity=1 ][fill={rgb, 255:red, 0; green, 0; blue, 0 }  ,fill opacity=1 ] (180.81,963.91) .. controls (182.44,965.67) and (185.2,965.78) .. (186.97,964.15) .. controls (188.73,962.52) and (188.84,959.76) .. (187.21,958) .. controls (185.57,956.23) and (182.82,956.12) .. (181.05,957.76) .. controls (179.29,959.39) and (179.18,962.14) .. (180.81,963.91) -- cycle ;
%Flowchart: Connector [id:dp5030412957420348] 
\draw  [color={rgb, 255:red, 74; green, 144; blue, 226 }  ,draw opacity=1 ][fill={rgb, 255:red, 255; green, 255; blue, 255 }  ,fill opacity=1 ][line width=1.5]  (147.54,963.75) .. controls (149.17,965.52) and (151.93,965.63) .. (153.69,963.99) .. controls (155.46,962.36) and (155.57,959.6) .. (153.93,957.84) .. controls (152.3,956.07) and (149.55,955.97) .. (147.78,957.6) .. controls (146.02,959.23) and (145.91,961.99) .. (147.54,963.75) -- cycle ;
%Straight Lines [id:da40538874731300245] 
\draw    (184.01,960.95) -- (217.28,961.11) ;
\draw [shift={(203.04,961.04)}, rotate = 180.27] [color={rgb, 255:red, 0; green, 0; blue, 0 }  ][line width=0.75]    (4.37,-1.32) .. controls (2.78,-0.56) and (1.32,-0.12) .. (0,0) .. controls (1.32,0.12) and (2.78,0.56) .. (4.37,1.32)   ;
%Flowchart: Connector [id:dp5911875804618927] 
\draw  [color={rgb, 255:red, 0; green, 0; blue, 0 }  ,draw opacity=1 ][fill={rgb, 255:red, 0; green, 0; blue, 0 }  ,fill opacity=1 ] (214.08,964.07) .. controls (215.72,965.83) and (218.47,965.94) .. (220.24,964.31) .. controls (222,962.67) and (222.11,959.92) .. (220.48,958.15) .. controls (218.84,956.39) and (216.09,956.28) .. (214.32,957.91) .. controls (212.56,959.54) and (212.45,962.3) .. (214.08,964.07) -- cycle ;
%Straight Lines [id:da3337529361181897] 
\draw    (217.28,961.11) -- (250.55,961.27) ;
\draw [shift={(236.32,961.2)}, rotate = 180.27] [color={rgb, 255:red, 0; green, 0; blue, 0 }  ][line width=0.75]    (4.37,-1.32) .. controls (2.78,-0.56) and (1.32,-0.12) .. (0,0) .. controls (1.32,0.12) and (2.78,0.56) .. (4.37,1.32)   ;
%Flowchart: Connector [id:dp6379427916238621] 
\draw  [color={rgb, 255:red, 0; green, 0; blue, 0 }  ,draw opacity=1 ][fill={rgb, 255:red, 255; green, 255; blue, 255 }  ,fill opacity=1 ][line width=1.5]  (247.35,964.22) .. controls (248.99,965.99) and (251.74,966.1) .. (253.51,964.46) .. controls (255.27,962.83) and (255.38,960.08) .. (253.75,958.31) .. controls (252.12,956.54) and (249.36,956.44) .. (247.59,958.07) .. controls (245.83,959.7) and (245.72,962.46) .. (247.35,964.22) -- cycle ;
%Straight Lines [id:da7699946160849709] 
\draw    (376.13,970.11) -- (330.13,970.11) ;
\draw [shift={(350.73,970.11)}, rotate = 360] [color={rgb, 255:red, 0; green, 0; blue, 0 }  ][line width=0.75]    (4.37,-1.32) .. controls (2.78,-0.56) and (1.32,-0.12) .. (0,0) .. controls (1.32,0.12) and (2.78,0.56) .. (4.37,1.32)   ;
%Flowchart: Connector [id:dp6758802007384352] 
\draw  [color={rgb, 255:red, 0; green, 0; blue, 0 }  ,draw opacity=1 ][fill={rgb, 255:red, 0; green, 0; blue, 0 }  ,fill opacity=1 ] (356.1,945.3) .. controls (357.85,943.66) and (357.95,940.9) .. (356.31,939.14) .. controls (354.67,937.38) and (351.91,937.29) .. (350.15,938.93) .. controls (348.4,940.57) and (348.3,943.33) .. (349.94,945.08) .. controls (351.58,946.84) and (354.34,946.94) .. (356.1,945.3) -- cycle ;
%Straight Lines [id:da9051824551668542] 
\draw    (376.13,970.11) -- (353.13,942.11) ;
\draw [shift={(366.78,958.74)}, rotate = 230.6] [color={rgb, 255:red, 0; green, 0; blue, 0 }  ][line width=0.75]    (4.37,-1.32) .. controls (2.78,-0.56) and (1.32,-0.12) .. (0,0) .. controls (1.32,0.12) and (2.78,0.56) .. (4.37,1.32)   ;
%Flowchart: Connector [id:dp2177613660769402] 
\draw  [color={rgb, 255:red, 0; green, 0; blue, 0 }  ,draw opacity=1 ][fill={rgb, 255:red, 0; green, 0; blue, 0 }  ,fill opacity=1 ] (333.1,973.3) .. controls (334.85,971.66) and (334.95,968.9) .. (333.31,967.14) .. controls (331.67,965.38) and (328.91,965.29) .. (327.15,966.93) .. controls (325.4,968.57) and (325.3,971.33) .. (326.94,973.08) .. controls (328.58,974.84) and (331.34,974.94) .. (333.1,973.3) -- cycle ;
%Straight Lines [id:da5261223172800297] 
\draw    (330.13,970.11) -- (353.13,942.11) ;
\draw [shift={(343.15,954.26)}, rotate = 129.4] [color={rgb, 255:red, 0; green, 0; blue, 0 }  ][line width=0.75]    (4.37,-1.32) .. controls (2.78,-0.56) and (1.32,-0.12) .. (0,0) .. controls (1.32,0.12) and (2.78,0.56) .. (4.37,1.32)   ;
%Flowchart: Connector [id:dp04291791249837851] 
\draw  [color={rgb, 255:red, 0; green, 0; blue, 0 }  ,draw opacity=1 ][fill={rgb, 255:red, 0; green, 0; blue, 0 }  ,fill opacity=1 ] (379.1,973.3) .. controls (380.85,971.66) and (380.95,968.9) .. (379.31,967.14) .. controls (377.67,965.38) and (374.91,965.29) .. (373.15,966.93) .. controls (371.4,968.57) and (371.3,971.33) .. (372.94,973.08) .. controls (374.58,974.84) and (377.34,974.94) .. (379.1,973.3) -- cycle ;
%Straight Lines [id:da6004490132860463] 
\draw    (270.5,959.55) -- (308.5,959.55) ;
\draw [shift={(310.5,959.55)}, rotate = 180] [color={rgb, 255:red, 0; green, 0; blue, 0 }  ][line width=0.75]    (7.65,-2.3) .. controls (4.86,-0.97) and (2.31,-0.21) .. (0,0) .. controls (2.31,0.21) and (4.86,0.98) .. (7.65,2.3)   ;

% Text Node
\draw (190.33,947.39) node [anchor=north west][inner sep=0.75pt]  [font=\tiny,rotate=-0.89]  {$W_{1}{}$};
% Text Node
\draw (157.33,947.39) node [anchor=north west][inner sep=0.75pt]  [font=\tiny,rotate=-0.89]  {$W_{2}{}$};
% Text Node
\draw (223.33,947.39) node [anchor=north west][inner sep=0.75pt]  [font=\tiny,rotate=-0.89]  {$W_{0}{}$};
% Text Node
\draw (142.32,969.14) node [anchor=north west][inner sep=0.75pt]  [font=\scriptsize]  {$\mathbf{W}_{G}(\mathbf{x}) =W_{2} W_{1} W_{0}\mathbf{x}$};
% Text Node
\draw (323.33,941.39) node [anchor=north west][inner sep=0.75pt]  [font=\tiny,rotate=-0.89]  {$W_{1}{}$};
% Text Node
\draw (344.33,975.39) node [anchor=north west][inner sep=0.75pt]  [font=\tiny,rotate=-0.89]  {$W_{2}{}$};
% Text Node
\draw (366.33,942.39) node [anchor=north west][inner sep=0.75pt]  [font=\tiny,rotate=-0.89]  {$W_{0}{}$};
% Text Node
\draw (295.32,988.53) node [anchor=north west][inner sep=0.75pt]  [font=\scriptsize]  {$\mathbf{W}_{\mathrm{Tr}( G)} =\mathrm{Tr}( W_{2} W_{1} W_{0})$};

\end{tikzpicture}
        \caption{The trace of a path graph.}
        
        \label{fig:traces}
\end{figure}

\begin{remark}Propositions \ref{def:graph_wedge}, \ref{def:swap_vertices}, and \ref{prop:composition} are analogous to the rules for entry-wise products, transposition and composition in \cite{male2018trafficdistributionsindependencepermutation} (namely (7), (4) and (3) in Example 1.3, respectively).
\end{remark}

We close this section by showing how differentiation can be performed graphically. To that end, it will be convenient to introduce the notion of sums of graphs.

Let $\{G_i\}_i$ be a set operator graphs for which the associated $\{
\mathbf{W}_{G_i}\}_{i}$ all belong to the same space. Let $\mathfrak{F}=(\mathfrak{F}(\{G_i\}_i),+)$ denote the free abelian group generated by the $G_i$. Then the map $G\mapsto \mathbf{W}_G$ can be uniquely extended to $\mathfrak{F}$ by
\[
    \mathbf{W}_{aG_1+bG_2}:=a\mathbf{W}_{G_1}+b\mathbf{W}_{G_2},\quad \forall a,b\in \mathbb{N}, aG_1+bG_2\in \mathfrak{F}.
\]
In what follows, linear combinations of operator graphs are always understood as elements of an underlying free abelian group (which we will not specify).

\begin{definition}[Differential]\label{def:graphdiff}
    Let $G=(G,\fd, (\mathcal{F}_{\mathrm{in}},\mathcal{F}_{\mathrm{out}}), \{\mathbf{X}_c\}_{c\notin \mathcal{F}})$ for which $\mathcal{F}_\mathrm{in}=(c_1,...,c_d)$ for some $d>0$. Fix a sequence $(\mathbf{X}_{c_j})_{j=1}^d$ of possible inputs, and define
    \[ 
        (\mathrm{d}_iG)_{(\mathbf{X}_{c_j})_{j\in [d]}}:=(G,\fd, ((c_i),\mathcal{F}_{\mathrm{out}}), \{\mathbf{X}_c\}_{c\in C \setminus (\mathcal{F}_{\mathrm{out}} \cup \{c_i\})}),
    \]
    namely the graph obtained from $G$ by inserting $\mathbf{X}_{c_j}$ in $c_j$ for all $j\neq i$.
    Then the \emph{differential} of $G$ at $(\mathbf{X}_{c_j})_{j\in [d]}$ is defined as
    \[
    (\mathrm{d}G)_{(\mathbf{X}_{c_j})_{j\in [d]}} := 
    \sum_{i=1}^{d}
    (\mathrm{d}_iG)_{(\mathbf{X}_{c_j})_{j\in [d]}}.
    \]
    \end{definition}
    \begin{proposition}\label{prop:differentiate}
    Let $(G,\fd, (\mathcal{F}_{\mathrm{in}},\mathcal{F}_{\mathrm{out}}), \{\mathbf{X}_c\}_{c\notin \mathcal{F}})$ be such that $|\mathcal{F}_{\mathrm{in}}|=d>0$. Denote its $i$-th partial derivative by $\mathrm{d}_i\mathbf{W}_G$ and its total differential by $\mathrm{d}\mathbf{W}_G$. Then
    \[
    (\mathrm{d}\mathbf{W}_G)_{(\mathbf{X}_{c_j})_{j\in [d]}} 
    = \sum_{i=1}^{d}
    (\mathrm{d}_i \mathbf{W}_G)_{(\mathbf{X}_{c_j})_{j\in [d]}} = \sum_{i=1}^{d}
    \bW_{(\mathrm{d}_iG)_{(\mathbf{X}_{c_j})_{j\in [d]}}}= 
    \bW_{(\mathrm{d}G)_{(\mathbf{X}_{c_j})_{j\in [d]}}} 
    \]
    \end{proposition}
    \begin{proof} 
        It suffices to show that $(\mathrm{d}_1 \bW_{G})_{(\mathbf{X}_{c_j})_{j\in [d]}} = \bW_{(\mathrm{d}_1 G)_{(\mathbf{X}_{c_j})_{j\in [d]}}}$. The claim then follows by definition of the total differential.

        Let $\mathcal{F}_{\mathrm{in}}=(c_1,...,c_d)$, so that $\mathrm{d}_1\mathbf{W}_{G}$ denotes the partial derivative with respect to the cell $c_1$'s input. Then since
        \[
            \Big\langle \mathbf{W}_G\big((\mathbf{X}_{c_i})_{i\in [d]}\big),\otimes_{c\in \mathcal{F}_{\mathrm{out}}}\mathbf{X}_{c}\Big\rangle=\sum_{\mathbf{i}_{V(G)}}\bigg(\prod_{c\in C\setminus\mathcal{F}_{\mathrm{in}}}[\mathbf{X}_{c}]_{i_c}\bigg)\bigg(\prod_{j=2}^{d}[\mathbf{X}_{c_j}]_{i_{c_j}}\bigg)[\mathbf{X}_{c_1}]_{i_{c_1}},
        \]
        and $(\mathrm{d}_1\mathbf{W}_G)_{(\mathbf{X}_{c_j})_{j\in [d]}}$ is linear (by definition), we that 
        \begin{align*}
        \Big\langle {(\mathrm{d}_1 \bW_{G})_{(\mathbf{X}_{c_j})_{j\in [d]}}(\mathbf{H}_{c_1})},
        {\otimes_{c \in \mathcal{F}_{\mathrm{out}}} \mathbf{X}_{c}}\Big\rangle
        &= 
        \sum_{\mathbf{i}_{V(G)}} \bigg(
        \prod_{c \in C \setminus \mathcal{F}_{\mathrm{in}}} [\mathbf{X}_{c}]_{i_c}
        \bigg)
        \bigg(
        \prod_{j=2}^{d}[\mathbf{X}_{c_{j}}]_{i_{c_{j}}}
        \bigg)
        [\mathbf{H}_{c_{1}}]_{i_{c_{1}}}
        \\
        &= 
        \sum_{\mathbf{i}_{V(G)}} \bigg(
        \prod_{c \in C \setminus \{c_1\}} [\mathbf{X}_{c}]_{i_c}
        \bigg)
        [\mathbf{H}_{c_{1}}]_{i_{c_{1}}}
        \\
        &= 
        \sum_{\mathbf{i}_{V((\mathrm{d}_1 G)_{(\mathbf{X}_{c_j})_{j\in [d]}})}} \bigg(
        \prod_{c \in C \setminus \{c_1\}} [\mathbf{X}_{c}]_{i_c}
        \bigg)
        [\mathbf{H}_{c_{1}}]_{i_{c_{1}}}
        \\
        &=: \Big\langle \bW_{(\mathrm{d}_1 G)_{{(\mathbf{X}_{c_j})_{j\in [d]}}}}(\mathbf{H}_{c_{1}}) 
        ,{\otimes_{c \in \mathcal{F}_{\mathrm{out}}} \mathbf{X}_{c}}\Big\rangle
        \end{align*}
        for any increment $\mathbf{H}_{c_1}$, where $C=C(G)$ denotes the set of cells of $G$.
    \end{proof}

\begin{figure}[ht]
        \centering
        \input{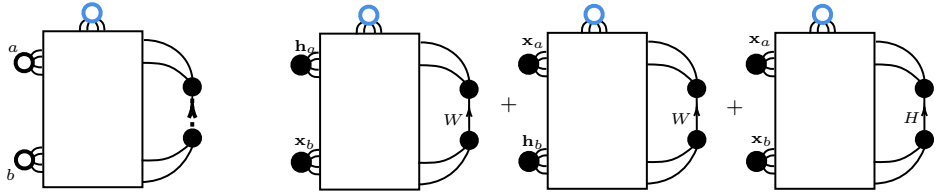}
        \caption{An operator graph (top) and its total differential at $(\mathbf{x}_a,\mathbf{x}_b,W)$ (right) evaluated in the increment vector \( (\mathbf{h}_a, \mathbf{h}_b, H) \).}
        \label{fig:derivative}
\end{figure}

\section{Neural network expansions}
\label{sec:trees}

Having outlined our correspondence between analytic and graphical operations, we now show how to express neural networks and related quantities using these graphs.

\subsection{Tree expansion for feed-forward neural networks.}
\label{sec:tree_exp_defs}

Fix sequences $(\varphi_{\ell}: \bR \to \bR ~|~ \ell \in \N_{>0})$ of polynomial \emph{activation functions}, $(N_\ell \in \N_{>0} ~|~  \ell \in \N)$ of \textit{layer dimensions} and $(W_{\ell} \in \mathbb{R}^{N_{\ell+1}\times N_{\ell}} ~|~ \ell \in \N)$ of weight matrices.
Recall that a \emph{feed-forward neural network} $\Phi_L$ of depth $L$ with no bias term is defined by the recursion
\begin{equation}\label{def:NN}
     \Phi_0(\bx)=W_0x,\quad \Phi_{\ell+1}(\bx)=W_{\ell+1}\varphi_{\ell+1}(\Phi_{\ell}(\bx)).
\end{equation}
\noindent As explained in the introduction, we aim to derive an expansion for $\Phi_L(\bx)$ that linearizes the effect of the activation functions.

\bigskip 

This is achieved in the main result of the section (Theorem \ref{thm:tree_exp}), in which we expand $\Phi_L$ as a sum of operator graphs which are rooted trees (see Definition \ref{eq:gubspace}). Some examples of the trees arising in this expansion are plotted in Figure \ref{fig:some_trees}: they all have weight matrices as edge inputs, basis vectors as leaf inputs and their internal nodes all have input $\mathbf{1}$. Each of these trees has a unique free vertex (its root) which is an out-vertex, and the latter always has out-degree one. 
\begin{figure}[ht]
    \centering
    \adjustbox{scale=0.9}{\input{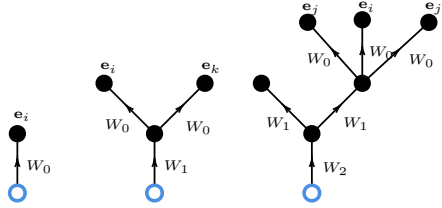}}
    \caption{Examples of trees arising in the expansion of $\Phi_{3}$}
    \label{fig:some_trees}
\end{figure}

 Consider $k$ such trees $\tau_1,...,\tau_k$ (along with the data required to make them operator graphs), assuming that their respective root edges are all fixed to the same $W_\ell$ for some $\ell>0$ (which implies that the dimension assigned to each of the root vertices is $N_{\ell +1}$). 
 We define $[\tau_1\cdots\tau_k]_{\ell}$ to be the tree (%\tikzset{every picture/.style={line width=0.75pt}} %set default line width to 0.75pt        
\begin{tikzpicture}[x=0.75pt,y=0.75pt,yscale=-.5,xscale=.5]
%uncomment if require: \path (0,147); %set diagram left start at 0, and has height of 147

%Straight Lines [id:da753110984037878] 
\draw    (79.64,35) -- (120.14,35) ;
\draw [shift={(102.29,35)}, rotate = 180] [color={rgb, 255:red, 0; green, 0; blue, 0 }  ][line width=0.75]    (4.37,-1.32) .. controls (2.78,-0.56) and (1.32,-0.12) .. (0,0) .. controls (1.32,0.12) and (2.78,0.56) .. (4.37,1.32)   ;
%Flowchart: Connector [id:dp4079521348389359] 
\draw  [color={rgb, 255:red, 74; green, 144; blue, 226 }  ,draw opacity=1 ][fill={rgb, 255:red, 255; green, 255; blue, 255 }  ,fill opacity=1 ][line width=1.5]  (82.62,38.18) .. controls (84.37,36.54) and (84.47,33.79) .. (82.83,32.03) .. controls (81.19,30.27) and (78.43,30.18) .. (76.67,31.82) .. controls (74.92,33.46) and (74.82,36.21) .. (76.46,37.97) .. controls (78.1,39.73) and (80.86,39.82) .. (82.62,38.18) -- cycle ;
%Flowchart: Connector [id:dp48085501081901394] 
\draw  [color={rgb, 255:red, 0; green, 0; blue, 0 }  ,draw opacity=1 ][fill={rgb, 255:red, 255; green, 255; blue, 255 }  ,fill opacity=1 ][line width=1.5]  (123.11,38.18) .. controls (124.87,36.54) and (124.97,33.79) .. (123.33,32.03) .. controls (121.68,30.27) and (118.93,30.18) .. (117.17,31.82) .. controls (115.41,33.46) and (115.32,36.21) .. (116.96,37.97) .. controls (118.6,39.73) and (121.36,39.82) .. (123.11,38.18) -- cycle ;

% Text Node
\draw (83,16) node [anchor=north west][inner sep=0.75pt]  [font=\tiny,rotate=-0.89]  {$W_{\ell}$};

\end{tikzpicture})$\circ (\wedge_{i=1}^k \tau_i)$, where $\circ$ denotes composition as defined in Section \ref{dictionary} and where for %\tikzset{every picture/.style={line width=0.75pt}} %set default line width to 0.75pt        
\begin{tikzpicture}[x=0.75pt,y=0.75pt,yscale=-.5,xscale=.5]
%uncomment if require: \path (0,147); %set diagram left start at 0, and has height of 147

%Straight Lines [id:da753110984037878] 
\draw    (79.64,35) -- (120.14,35) ;
\draw [shift={(102.29,35)}, rotate = 180] [color={rgb, 255:red, 0; green, 0; blue, 0 }  ][line width=0.75]    (4.37,-1.32) .. controls (2.78,-0.56) and (1.32,-0.12) .. (0,0) .. controls (1.32,0.12) and (2.78,0.56) .. (4.37,1.32)   ;
%Flowchart: Connector [id:dp4079521348389359] 
\draw  [color={rgb, 255:red, 74; green, 144; blue, 226 }  ,draw opacity=1 ][fill={rgb, 255:red, 255; green, 255; blue, 255 }  ,fill opacity=1 ][line width=1.5]  (82.62,38.18) .. controls (84.37,36.54) and (84.47,33.79) .. (82.83,32.03) .. controls (81.19,30.27) and (78.43,30.18) .. (76.67,31.82) .. controls (74.92,33.46) and (74.82,36.21) .. (76.46,37.97) .. controls (78.1,39.73) and (80.86,39.82) .. (82.62,38.18) -- cycle ;
%Flowchart: Connector [id:dp48085501081901394] 
\draw  [color={rgb, 255:red, 0; green, 0; blue, 0 }  ,draw opacity=1 ][fill={rgb, 255:red, 255; green, 255; blue, 255 }  ,fill opacity=1 ][line width=1.5]  (123.11,38.18) .. controls (124.87,36.54) and (124.97,33.79) .. (123.33,32.03) .. controls (121.68,30.27) and (118.93,30.18) .. (117.17,31.82) .. controls (115.41,33.46) and (115.32,36.21) .. (116.96,37.97) .. controls (118.6,39.73) and (121.36,39.82) .. (123.11,38.18) -- cycle ;

% Text Node
\draw (83,16) node [anchor=north west][inner sep=0.75pt]  [font=\tiny,rotate=-0.89]  {$W_{\ell}$};

\end{tikzpicture} the dimension of the vertices is given by $\fd(\text{\begin{tikzpicture}[x=0.75pt,y=0.75pt,yscale=-.5,xscale=.5]
%uncomment if require: \path (0,196); %set diagram left start at 0, and has height of 196

%Flowchart: Connector [id:dp10716904639166391] 
\draw  [color={rgb, 255:red, 0; green, 0; blue, 0 }  ,draw opacity=1 ][fill={rgb, 255:red, 255; green, 255; blue, 255 }  ,fill opacity=1 ][line width=1]  (15.62,19.02) .. controls (17.37,17.38) and (17.47,14.63) .. (15.83,12.87) .. controls (14.19,11.11) and (11.43,11.01) .. (9.67,12.66) .. controls (7.92,14.3) and (7.82,17.05) .. (9.46,18.81) .. controls (11.1,20.57) and (13.86,20.66) .. (15.62,19.02) -- cycle ;

\end{tikzpicture}}\hspace{-3pt})=N_{\ell}, \fd(\text{\begin{tikzpicture}[x=0.75pt,y=0.75pt,yscale=-.5,xscale=.5]
%uncomment if require: \path (0,196); %set diagram left start at 0, and has height of 196

%Flowchart: Connector [id:dp10716904639166391] 
\draw  [color={rgb, 255:red, 74; green, 144; blue, 226 }  ,draw opacity=1 ][fill={rgb, 255:red, 255; green, 255; blue, 255 }  ,fill opacity=1 ][line width=1]  (15.62,19.02) .. controls (17.37,17.38) and (17.47,14.63) .. (15.83,12.87) .. controls (14.19,11.11) and (11.43,11.01) .. (9.67,12.66) .. controls (7.92,14.3) and (7.82,17.05) .. (9.46,18.81) .. controls (11.1,20.57) and (13.86,20.66) .. (15.62,19.02) -- cycle ;

\end{tikzpicture}}\hspace{-3pt})=N_{\ell+1}$
 (recalling that empty products in $\bR^N$ are valued as $\mathbf{1}_N$, we have $[~]_{\ell} = %\tikzset{every picture/.style={line width=0.75pt}} %set default line width to 0.75pt        
\begin{tikzpicture}[x=0.75pt,y=0.75pt,yscale=-.5,xscale=.5]
%uncomment if require: \path (0,147); %set diagram left start at 0, and has height of 147

%Straight Lines [id:da753110984037878] 
\draw    (79.64,35) -- (120.14,35) ;
\draw [shift={(102.29,35)}, rotate = 180] [color={rgb, 255:red, 0; green, 0; blue, 0 }  ][line width=0.75]    (4.37,-1.32) .. controls (2.78,-0.56) and (1.32,-0.12) .. (0,0) .. controls (1.32,0.12) and (2.78,0.56) .. (4.37,1.32)   ;
%Flowchart: Connector [id:dp4079521348389359] 
\draw  [color={rgb, 255:red, 74; green, 144; blue, 226 }  ,draw opacity=1 ][fill={rgb, 255:red, 255; green, 255; blue, 255 }  ,fill opacity=1 ][line width=1.5]  (82.62,38.18) .. controls (84.37,36.54) and (84.47,33.79) .. (82.83,32.03) .. controls (81.19,30.27) and (78.43,30.18) .. (76.67,31.82) .. controls (74.92,33.46) and (74.82,36.21) .. (76.46,37.97) .. controls (78.1,39.73) and (80.86,39.82) .. (82.62,38.18) -- cycle ;
%Flowchart: Connector [id:dp48085501081901394] 
\draw  [color={rgb, 255:red, 0; green, 0; blue, 0 }  ,draw opacity=1 ][fill={rgb, 255:red, 0; green, 0; blue, 0 }  ,fill opacity=1 ][line width=1.5]  (123.11,38.18) .. controls (124.87,36.54) and (124.97,33.79) .. (123.33,32.03) .. controls (121.68,30.27) and (118.93,30.18) .. (117.17,31.82) .. controls (115.41,33.46) and (115.32,36.21) .. (116.96,37.97) .. controls (118.6,39.73) and (121.36,39.82) .. (123.11,38.18) -- cycle ;

% Text Node
\draw (83,16) node [anchor=north west][inner sep=0.75pt]  [font=\tiny,rotate=-0.89]  {$W_{\ell}$};

\end{tikzpicture}$).
 Then using this bracket operation, any tree like those in Figure \ref{fig:some_trees} can be constructed from the following family of operator graphs:
\[
\{[\bullet_{\mathbf{e}_i}]_0 \}_{i\in [N_0]}, \text{ where }[\bullet_{\mathbf{e}_i}]_0=\text{\begin{tikzpicture}[x=0.75pt,y=0.75pt,yscale=-.4,xscale=.4]
%uncomment if require: \path (0,147); %set diagram left start at 0, and has height of 147

%Straight Lines [id:da753110984037878] 
\draw    (79.64,35) -- (120.14,35) ;
\draw [shift={(102.29,35)}, rotate = 180] [color={rgb, 255:red, 0; green, 0; blue, 0 }  ][line width=0.75]    (4.37,-1.32) .. controls (2.78,-0.56) and (1.32,-0.12) .. (0,0) .. controls (1.32,0.12) and (2.78,0.56) .. (4.37,1.32)   ;
%Flowchart: Connector [id:dp4079521348389359] 
\draw  [color={rgb, 255:red, 74; green, 144; blue, 226 }  ,draw opacity=1 ][fill={rgb, 255:red, 255; green, 255; blue, 255 }  ,fill opacity=1 ][line width=1.5]  
(82.62,38.18) .. controls (84.37,36.54) and (84.47,33.79) .. (82.83,32.03) .. controls (81.19,30.27) and (78.43,30.18) .. (76.67,31.82) .. controls (74.92,33.46) and (74.82,36.21) .. (76.46,37.97) .. controls (78.1,39.73) and (80.86,39.82) .. (82.62,38.18) -- cycle ;
%Flowchart: Connector [id:dp48085501081901394] 
\draw [color={rgb, 255:red, 0; green, 0; blue, 0 }, draw opacity=1 ]
[fill={rgb, 255:red, 0; green, 0; blue, 0 }  ,fill opacity=1 ]
[line width=1.5]  
(123.11,38.18) .. controls (124.87,36.54) and (124.97,33.79) .. (123.33,32.03) .. controls (121.68,30.27) and (118.93,30.18) .. (117.17,31.82) .. controls (115.41,33.46) and (115.32,36.21) .. (116.96,37.97) .. controls (118.6,39.73) and (121.36,39.82) .. (123.11,38.18) -- cycle ;

% Text Node
\draw (78,9) node [anchor=north west][inner sep=0.75pt]  [font=\tiny,rotate=-0.89]  {$W_{0}$};
% Text Node
\draw (127,26.82) node [anchor=north west][inner sep=0.75pt]  [font=\tiny,rotate=-0.89]  {$\mathbf{e}_{i}{}{}$};

\end{tikzpicture}}
\text{so that } \fd(\bullet) = N_0, \fd(\text{\begin{tikzpicture}[x=0.75pt,y=0.75pt,yscale=-.5,xscale=.5]
%uncomment if require: \path (0,196); %set diagram left start at 0, and has height of 196

%Flowchart: Connector [id:dp10716904639166391] 
\draw  [color={rgb, 255:red, 74; green, 144; blue, 226 }  ,draw opacity=1 ][fill={rgb, 255:red, 255; green, 255; blue, 255 }  ,fill opacity=1 ][line width=1]  (15.62,19.02) .. controls (17.37,17.38) and (17.47,14.63) .. (15.83,12.87) .. controls (14.19,11.11) and (11.43,11.01) .. (9.67,12.66) .. controls (7.92,14.3) and (7.82,17.05) .. (9.46,18.81) .. controls (11.1,20.57) and (13.86,20.66) .. (15.62,19.02) -- cycle ;

\end{tikzpicture}}\hspace{-3pt}) = N_1,
\]
where $\fd$ is the dimension function for $[\bullet_{\mathbf{e}_i}]_0$.
For instance, the first and second tree in Figure \ref{fig:some_trees} can be written as $[\bullet_{\mathbf{e}_i}]_0$ and $[[\bullet_{\mathbf{e}_i}]_0[\bullet_{\mathbf{e}_j}]_0]_1$. 

\begin{definition}\label{eq:gubspace}
Fix $L>0$, $\mathbb{T}_L$ is the set of operator graphs defined inductively by
\begin{equation*}
         \bT_{0}= \{[\bullet_{\mathbf{e}_1}]_0,\cdots,[\bullet_{\mathbf{e}_{N_0}}]_0\},
         \quad
         \bT_{\ell+1} := \{[ \tau_1  \cdots \tau_M ]_{\ell+1} ~|~ M \geq 0, ~ \forall i \in [M]~\tau_i \in \bT_{\ell}\}.
\end{equation*}
\end{definition}

\begin{remark}
    Note that for any $\tau \in \bT_{\ell}$, there is a unique dimension function $\fd$ that is compatible with its inputs:
if an edge $(u,v)$ is labeled by $W_{k}$ then we must have $\fd(u) = N_{k+1}$ and  $\fd(v) = N_{k}$. In what follows, we will implicitly assume this choice of dimension function for all of the trees being considered.
\end{remark}

By definition of the bracket operation $[~ \cdot ~]$ and the graph operations $\wedge$ and $\circ$ (see Propositions \ref{def:graph_wedge} and \ref{prop:composition}), we have that 
\[\bW_{[ \tau_1  \cdots \tau_M ]_{\ell+1}} = W_{\ell+1} ( \odot_{i=1}^M \bW_{\tau_i})
\]
for every $\tau = [ \tau_1  \cdots \tau_M ]_{\ell+1} \in \bT_{\ell+1}$.

Note that by construction, the trees in $\mathbb{T}_L$ are \textit{non-plane}, meaning that the order of the edges coming out of each vertex is ignored. For any $\tau\in \mathbb{T}_\ell$, it will thus be useful to keep track of how many ways we it can be obtained from plane trees (where the order of edges is not ignored).
Following Gubinelli \cite{GubTrees}, we call this quantity the \textit{symmetry factor} $s(\tau)$ of $\tau$, and define it recursively as
\begin{equation}\label{def:symmetry-factor}
    s([\bullet_{\mathbf{e}_i}]_0) = 1, \quad s([(\tau_1)^{k_1} \cdots (\tau_m)^{k_m}]_{\ell}) =  \prod_i (k_i!) s(\tau_i)^{k_i},
\end{equation}
where $\tau_1,...,\tau_m$ are assumed to be distinct and $\tau_1^{k_1}$ denotes the repetition of $\tau_1$, $k_1$ times. Lastly, we introduce the following bit of notation: for any vector $\bx$, we let
\begin{equation}\label{def:x_tree}
\bx_{[\bullet_{\mathbf{e}_i}]_0} = [\bx]_i, \quad \bx_{[\tau_1 \cdots \tau_M]_{\ell}} =  \prod_{i=1}^M \bx_{\tau_i} \in \bR,
\end{equation}
and for any activation function $\varphi$, \begin{equation}\label{def:varphi}
    \varphi_{[{\bullet_{\mathbf{e}_i}}]_0} = 1, \quad \varphi_{[\tau_1 \cdots \tau_M]_{\ell+1}} = \varphi^{(M)}_{\ell+1}(0) \cdot \prod_{i=1}^M \varphi_{\tau_i} \in \bR,
\end{equation}
where $\varphi^{(M)}$ is the $M$-th derivative of $\varphi$.

\begin{definition}\label{def:bT_L(x)}
    Given $\bx \in \bR^{N_0}$ we define the sets $\bT_{\ell}(\bx)$ of operator graphs obtained from the trees in $\bT_{\ell}$ by changing all the leaves' inputs from $\bullet_{\mathbf{e}_i}$ to $\bx$. We will henceforth denote this operation by  $\bullet_{\mathbf{e}_i} \mapsto \bullet_\bx$.
\end{definition}

% Note that, by definition, there is a canonical surjective mapping $\tau \mapsto \tau(\bx)$ from $\bT_{\ell}$ onto $\bT_{\ell}(\bx)$. 
% This mapping gives us the partition in fibers
% \[
% \bT_{\ell} = \bigsqcup_{\eta \in \bT_{\ell}(\bx)} \{\tau \in \bT_{\ell} : \tau(\bx) = \eta\}
% \]

If we extend $s(\eta)$ and $\varphi_\eta$ to $\bT_{\ell}(\bx)$ setting the base cases as $s([\bullet_{\bx}]_0) = \varphi_{[\bullet_{\bx}]_0} = 1$, then clearly $\varphi_{\tau}=\varphi_{\tau(\bx)}$ for all $\tau \in \bT_\ell$ since the definition of $\varphi$ only depends on the out-degree of the vertices, which is left unchanged.
While it is not generally true that $s(\tau) = s(\tau(\bx))$, we nonetheless have the useful equality
\begin{equation}\label{eqn:symmetric_Tx}
    \frac{1}{s(\eta)}\bW_{\eta} = \sum_{\tau \in \bT_\ell : \tau(\bx) = \eta} \frac{\bx_{\tau}}{s(\tau)}\bW_{\tau}.
\end{equation}
This follows from the fact that $s(\tau(\bx))/s(\tau)$ counts the number of ways in which the {non-plane} tree $\tau$ is obtained from $\tau(\bx)$ by applying the operation $\bullet_{\bx} \mapsto \bullet_{\mathbf{e}_i}$.

\begin{example}
    Consider the trees $\eta := [[\bullet_\bx]_0 ~ [\bullet_\bx]_0]_1$ and $\tau := [[\bullet_1]_0 ~ [\bullet_2]_0]_1$. 
    Then $\eta = \tau(\bx)$, but $s(\eta) = 2! s([\bullet_{\bx}]_0) = 2$ while $s(\tau) = s([\bullet_1]_0)s([\bullet_2]_0)=1$.
    Notice that $\tau$ can be obtained both by $[[\bullet_\bx]_0 ~ [\bullet_\bx]_0]_1 \mapsto [[\bullet_1]_0 ~ [\bullet_2]_0]_1$ and $[[\bullet_\bx]_0 ~ [\bullet_\bx]_0]_1 \mapsto [[\bullet_2]_0 ~ [\bullet_1]_0]_1$, so in $2 = s(\eta)/s(\tau)$ ways.
\end{example}

We can now state the main result of this section.
        
\begin{theorem}[Tree expansion]
\label{thm:tree_exp}
Let $\Phi_L$ be as defined in (\ref{def:NN}). Then for any $\mathbf{x}\in \mathbb{R}^{N_0}$, $\Phi_L(\bx)$ admits the following expansion over $\bT_{\ell}$:
\begin{equation}\label{eqn:tree_exp}
     \Phi_{\ell}(\bx) = \sum_{\tau \in \bT_{\ell}} \frac{\varphi_{\tau}}{s(\tau)} \bx_{\tau} \bW_{\tau}  
     =\sum_{\eta \in \bT_{\ell}(\bx)} \frac{\varphi_{\eta}}{s(\eta)} \bW_{\eta}
     \in \bR^{N_{\ell+1}}.
\end{equation}
\end{theorem}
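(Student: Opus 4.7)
The plan is to proceed by induction on $\ell \geq 0$, proving only the first equality in (\ref{eqn:tree_exp}); the second then follows immediately from the identity (\ref{eqn:symmetric_Tx}) already established in the text.

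\textbf{Base case ($\ell = 0$).} By definition $\Phi_0(\bx) = W_0\bx = \sum_{i=1}^{N_0} (W_0 \mathbf{e}_i)\,[\bx]_i$. Each tree $[\bullet_{\mathbf{e}_i}]_0 \in \bT_0$ satisfies $\bW_{[\bullet_{\mathbf{e}_i}]_0} = W_0 \mathbf{e}_i$, $\bx_{[\bullet_{\mathbf{e}_i}]_0}=[\bx]_i$, and $s([\bullet_{\mathbf{e}_i}]_0) = \varphi_{[\bullet_{\mathbf{e}_i}]_0} = 1$, so the formula is immediate.

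\textbf{Inductive step.} Assume the expansion holds at level $\ell$. Since $\varphi_{\ell+1}$ is a polynomial, Taylor expansion about $0$ (applied entry-wise) gives
\[
\varphi_{\ell+1}(\Phi_\ell(\bx)) = \sum_{M \geq 0} \frac{\varphi_{\ell+1}^{(M)}(0)}{M!}\, \Phi_\ell(\bx)^{\odot M},
\]
as a finite sum. Substituting the inductive hypothesis and expanding the Hadamard power yields
\[
\Phi_\ell(\bx)^{\odot M} = \sum_{(\tau_1,\dots,\tau_M) \in \bT_\ell^M} \prod_{i=1}^{M} \frac{\varphi_{\tau_i}}{s(\tau_i)}\,\bx_{\tau_i}\; \bigodot_{i=1}^{M} \bW_{\tau_i}.
\]
Applying $W_{\ell+1}$ and using the graphical identity $W_{\ell+1}\bigl[\odot_{i=1}^M \bW_{\tau_i}\bigr] = \bW_{[\tau_1\cdots\tau_M]_{\ell+1}}$ (immediate from the definition of the bracket, recalling that $[\tau_1\cdots\tau_M]_{\ell+1}$ is obtained by composing $\bigwedge_i \tau_i$ with a single edge labelled $W_{\ell+1}$), we obtain
\[
\Phi_{\ell+1}(\bx) = \sum_{M \geq 0} \frac{\varphi_{\ell+1}^{(M)}(0)}{M!} \sum_{(\tau_1,\dots,\tau_M) \in \bT_\ell^M} \prod_{i=1}^{M} \frac{\varphi_{\tau_i}\,\bx_{\tau_i}}{s(\tau_i)}\, \bW_{[\tau_1\cdots\tau_M]_{\ell+1}}.
\]

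\textbf{Reindexing by non-plane trees.} This is the one place where care is needed: the map $(\tau_1,\dots,\tau_M) \mapsto [\tau_1\cdots\tau_M]_{\ell+1}$ forgets the ordering of children, so I must count preimages. Grouping by multiplicities: if $\tau = [\sigma_1^{k_1}\cdots\sigma_m^{k_m}]_{\ell+1}$ with the $\sigma_j$ pairwise distinct and $\sum_j k_j = M$, then the number of ordered tuples producing $\tau$ is the multinomial coefficient $\binom{M}{k_1,\dots,k_m}$, and the integrand depends only on $\tau$ since $\prod_i \varphi_{\tau_i}\bx_{\tau_i}/s(\tau_i) = \prod_j (\varphi_{\sigma_j}\bx_{\sigma_j}/s(\sigma_j))^{k_j}$. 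Combining the multinomial coefficient with the $1/M!$ prefactor cancels $M!$ and leaves $\prod_j 1/k_j!$. By the recursive definitions (\ref{def:symmetry-factor}), (\ref{def:x_tree}) and (\ref{def:varphi}),
\[
\frac{\varphi_{\ell+1}^{(M)}(0)\,\prod_j \varphi_{\sigma_j}^{k_j}}{\prod_j k_j!\, s(\sigma_j)^{k_j}} \cdot \prod_j \bx_{\sigma_j}^{k_j} = \frac{\varphi_\tau}{s(\tau)}\,\bx_\tau,
\]
so the double sum collapses to $\sum_{\tau \in \bT_{\ell+1}} \frac{\varphi_\tau}{s(\tau)}\bW_\tau\bx_\tau$, as required.

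The main obstacle in this argument is precisely the last step: keeping the symmetry-factor bookkeeping straight when converting ordered tuples of children (which arise naturally from the Hadamard power) into the non-plane trees indexing $\bT_{\ell+1}$. All other ingredients (Taylor expansion, linearity, the dictionary between $W_{\ell+1}$ and the bracket) are routine once the graphical framework of Section~\ref{sec:graph_dictionary} is in place.
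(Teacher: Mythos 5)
Your proposal is correct and follows essentially the same route as the paper: induction on depth, Taylor expansion of $\varphi_{\ell+1}$ at the origin, expansion of the Hadamard power, and reindexing ordered tuples of subtrees by non-plane trees with the symmetry-factor bookkeeping $s(\tau)=\prod_j (k_j!)\,s(\sigma_j)^{k_j}$. The only cosmetic difference is that you carry out the multinomial count inline, whereas the paper packages it as Proposition~\ref{prop:symmetric_expansion} on symmetric tuples; the second equality is obtained from (\ref{eqn:symmetric_Tx}) in both.
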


\begin{remark}
    For $\ell = 1$ this expansion reduces to the classical (vector valued) Taylor expansion since $s([[\bullet_{\mathbf{e}_1}]_0^{k_1} \cdots [\bullet_{\mathbf{e}_d}]_0^{k_d}]_1) = \prod_i k_i!$.
\end{remark}

\begin{remark}
    Despite the space $\bT_{\ell}$ being infinite, this sum is always well-defined since only finitely many $\varphi_\tau$ are non-zero (having assumed that the $\varphi$ are polynomials).
\end{remark}

\begin{proof}
     We proceed by induction on depth $\ell$ for the first equality. 
     If $\ell=0$ then $\Phi_0(\bx) = W_0\bx$ and 
     \[
      \sum_{\tau \in \bT_{0}} \frac{\varphi_{\tau} \bx_{\tau}}{s(\tau)} \bW_{\tau}
      =
       \sum_{i=1}^{N_0} \frac{1\cdot[\bx]_i}{1} W_0\mathbf{e}_i
        = W_0 \left( \sum_{i=1}^{N_0} [\bx]_i \mathbf{e}_i \right)
        = W_0 \bx \in \bR^{N_1}     
    \]
    as needed. 
    
    Now assume then the claim holds for $\ell$.  For $\mathbf{y} \in \bR^{N_{\ell}}$, we write $\varphi_{\ell+1}(\mathbf{y})$ to mean $\varphi_{\ell+1}$ applied componentwise to $\mathbf{y}$. Then by Taylor expansion around the origin, 
    \[
    \varphi_{\ell+1}(\mathbf{y}) = \sum_{M=0}^{\infty} \frac{\varphi_{\ell +1}^{(M)}(0)}{M!} \mathbf{y}^{\odot M}
    \]
    where $\odot$ denotes the Hadamard product. Note that this sum is actually finite, since $\varphi_{\ell+1}$ is assumed to be a polynomial.
    
    If $y = \sum_{\tau \in \bT_{\ell}} \frac{\varphi_{\tau} }{s(\tau)} \bW_{\tau}\bx_{\tau}$ then Proposition \ref{prop:symmetric_expansion} yields
    \[
    y^{\odot M} = \sum_{\bar\tau = \llbracket\tau_1 \cdots \tau_M\rrbracket \in {\bX^M_{\bT_{\ell}}}} \frac{M! }{{\fs(\bar\tau)}} \odot_{i=1}^M \left( \frac{\varphi_{\tau_i} }{s(\tau_i)} \bW_{\tau_i} \bx_{\tau} \right).
    \]
    Here, $\mathbb{X}_{\bT_{\ell}}^M$ denotes the set of cardinality-$M$ multisets of elements of $\mathbb{T}_\ell$, and the $\mathfrak{s}(\bar{\tau})$ are symmetry factors defined as
    \[
        \mathfrak{s}(\llbracket \rrbracket)=1,\quad \mathfrak{s}(\llbracket (\eta_1)^{k_1}\cdots(\eta_m)^{k_m}\rrbracket):=\prod_{i=1}^m(k_i).
    \]
    ($\llbracket (\eta_1)^{k_1}\cdots(\eta_m)^{k_m}\rrbracket$ is the multiset containing $\eta_i\in \mathbb{T}_\ell$ with multiplicity $k_i$.) Letting $\mathbb{X}_{\mathbb{T}_\ell}=\cup_{M\geq 0}\mathbb{X}_{\mathbb{T}_\ell}^{M}$, we have a natural bijection between $\mathbb{T}_{\ell+1}$ and $\mathbb{X}_{\mathbb{T}_\ell}$ given by 
    \[
        \tau:=[\tau_1\cdots\tau_m]_{\ell+1}\mapsto \bar{\tau}:=\llbracket \tau_1\cdots \tau_m\rrbracket.
    \]
    Put simply, we identify a rooted non-plane tree in $\mathbb{T}_{\ell+1}$ with the multiset of subtrees ($\in \mathbb{T}_\ell$) rooted at its depth-$1$ vertices. Under this identification, $s(\tau) = {\fs(\bar\tau)} \prod_i s(\tau_i)$ and we conclude that
    \begin{align*}
        W_{\ell+1} \varphi_{\ell + 1}(\mathbf{y}) &= 
        W_{\ell+1} \sum_{M=0}^{\infty} \frac{\varphi^{(M)}(0)}{M!} \mathbf{y}^{\odot M} 
        \\ &=
        \sum_{M=0}^{\infty} \varphi^{(M)}(0) \sum_{\bar\tau = \llbracket\tau_1 \cdots \tau_M\rrbracket \in \bX^M_{\bT_{\ell}}} \frac{1}{\fs(\bar\tau)} W_{\ell+1} \left[ \odot_{i=1}^M \left( \frac{\varphi_{\tau_i}}{s(\tau_i)} \bW_{\tau_i} \bx_{\tau_i} \right) 
        \right]   
        \\ &=
        \sum_{M=0}^{\infty}\sum_{\bar\tau = \llbracket\tau_1 \cdots \tau_M\rrbracket \in \bX^M_{\bT_{\ell}}}
        \frac{ \varphi^{(M)}(0) \prod_i \varphi_{\tau_i}}{\fs(\bar\tau) \prod_i s(\tau_i)}
        \left(  W_{\ell+1} \left[ \odot_{i=1}^M \bW_{\tau_i} \right]    \right)
        \left( \prod_i \bx_{\tau_i} \right)
        \\ &= 
        \sum_{\tau \in \bT_{\ell+1}} \frac{\varphi_{\tau}\bx_{\tau}}{s(\tau)} \bW_{\tau}\in \bR^{N_{\ell+1}}.
    \end{align*}

    The second equality in the theorem statement readily follows from
    \begin{align*}
        \sum_{\eta \in \bT_{\ell}(\bx)} \frac{\varphi_{\eta}}{s(\eta)} \bW_{\eta}
        &=
        \sum_{\eta \in \bT_{\ell}(\bx)} \sum_{\tau \in \bT_\ell : \tau(\bx) = \eta} 
        \frac{\varphi_{\tau}}{s(\tau)} \bW_{\tau} \bx_{\tau}
        =
        \sum_{\tau \in \bT_{\ell}} 
        \frac{\varphi_{\tau}}{s(\tau)} \bW_{\tau} \bx_{\tau}.
    \end{align*}
\end{proof}

% \begin{remark}
%     This result is a particular instance of a more general expansion given in Proposition \ref{app:prop:tree_order} in Appendix \ref{app:sect:TreeStuff}.
% \end{remark}

\subsection{Expansions for related quantities}
\label{sec:related_expansions}
Combined with the dictionary in Section \ref{dictionary}, this expansion allows us to express the coordinates of $\Phi_L$ and various related quantities as linear combinations of values of product graphs. Note that in the figures below, we label an edge with input $W_\ell$ by $\ell$ for simplicity.

\subsubsection{Entry-wise expansion}

 We use Theorem \ref{thm:tree_exp} to derive an expansion for the $k$-th entry of $\Phi_L(\mathbf{x})$ (in the standard basis).
To that end, let $\mathbb{T}_{\ell,k}$ be the set of trees obtained from $\bT_{\ell}$ by fixing the root to $\mathbf{e}_k$ (\emph{i.e.} applying $\text{\begin{tikzpicture}[x=0.75pt,y=0.75pt,yscale=-.5,xscale=.5]
%uncomment if require: \path (0,196); %set diagram left start at 0, and has height of 196

%Flowchart: Connector [id:dp10716904639166391] 
\draw  [color={rgb, 255:red, 74; green, 144; blue, 226 }  ,draw opacity=1 ][fill={rgb, 255:red, 255; green, 255; blue, 255 }  ,fill opacity=1 ][line width=1]  (15.62,19.02) .. controls (17.37,17.38) and (17.47,14.63) .. (15.83,12.87) .. controls (14.19,11.11) and (11.43,11.01) .. (9.67,12.66) .. controls (7.92,14.3) and (7.82,17.05) .. (9.46,18.81) .. controls (11.1,20.57) and (13.86,20.66) .. (15.62,19.02) -- cycle ;

\end{tikzpicture}}\hspace{-3pt} \mapsto \bullet_{\mathbf{e}_k}$), noting that this makes the tree a product graph.
Then we can write the $k$-th entry of $\Phi_L$ a
\begin{equation}\label{eqn:tree_exp_coords}
    [\Phi_{L}(\bx)]_k = \sum_{\tau \in \mathbb{T}_{L,k}} \frac{\varphi_{\tau}}{s(\tau)} \bW_{\tau} \bx_{\tau} 
= \sum_{\eta \in \mathbb{T}_{L,k}(\bx)} \frac{\varphi_{\eta}}{s(\eta)} \bW_{\eta}   \in \bR.
\end{equation}
where $s(\tau), \varphi_\tau$ and $\bx_{\tau}$ are defined as in equations (\ref{def:symmetry-factor}),(\ref{def:x_tree}) and (\ref{def:varphi}), as these definitions only depend on $\tau$ through its set of vertices and edges.

\subsubsection{Input-output Jacobian} 
Consider the (post-activation) Jacobian of $\Phi_L$ with respect to an input $\bx$, which we define as the matrix representation $\mathbf{J}_{L,\mathbf{x}}\in \mathbb{R}^{N_L\times N_0}$ of
\begin{equation}\label{def:Jacobian}
    \mathrm{d}(\varphi_{L}\circ \Phi_{L-1})_{\bx}.
\end{equation}
Note that, following the same arguments as in the proof of Theorem \ref{thm:tree_exp}, we can write 
\begin{equation}
     (\varphi_{L}\circ \Phi_{L-1})(\bx) 
     = \sum_{\eta \in \bT_{L}(\bx)} \frac{\varphi_{\eta^*}}{s(\eta^*)} \bW_{\eta^*}
     \in \bR^{N_{\ell+1}}
\end{equation}
where $\eta^*$ denotes the tree obtained from $\eta$ by deleting the root and the edge which stems from it, and where $\varphi_{\eta^*}$ and $s(\eta^*)$ are equal to $\varphi_{\eta}$ and $s(\eta)$, respectively.
This operation is depicted in Figure \ref{fig:post-activation-trees}, and let $(\mathbb{T}_L(\mathbf{x}))^*=\{\eta^*: \eta\in \mathbb{T}_L(\mathbf{x})\}$.
\begin{figure}[ht]
    \centering
    \input{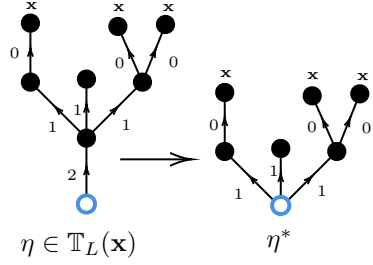}
    \caption{A tree $\eta\in \mathbb{T}_{2}(\mathbf{x})$ (left) and $\eta^*$ (right).}
    \label{fig:post-activation-trees}
\end{figure}

We now take the derivative. For any tree $\eta\in (\mathbb{T}(\mathbf{x}))^*$, we want to see it as a multi-linear map in the $\mathbf{x}$'s and compute the total derivative of said map at $(\mathbf{x},...,\mathbf{x})$. To be precise, let $k$ denote the number of vertices in $\eta$ with input $\mathbf{x}$, and $\eta_i$ denote the tree obtained from $\eta$ by applying $\bullet_{\bx}\mapsto \text{\begin{tikzpicture}[x=0.75pt,y=0.75pt,yscale=-.5,xscale=.5]
%uncomment if require: \path (0,196); %set diagram left start at 0, and has height of 196

%Flowchart: Connector [id:dp10716904639166391] 
\draw  [color={rgb, 255:red, 0; green, 0; blue, 0 }  ,draw opacity=1 ][fill={rgb, 255:red, 255; green, 255; blue, 255 }  ,fill opacity=1 ][line width=1]  (15.62,19.02) .. controls (17.37,17.38) and (17.47,14.63) .. (15.83,12.87) .. controls (14.19,11.11) and (11.43,11.01) .. (9.67,12.66) .. controls (7.92,14.3) and (7.82,17.05) .. (9.46,18.81) .. controls (11.1,20.57) and (13.86,20.66) .. (15.62,19.02) -- cycle ;

\end{tikzpicture}}\hspace{-3pt}$ to the $i$-th such vertex (respectively). Then by Proposition \ref{prop:differentiate},
\[
    (\mathrm{d}\eta)_{\mathbf{x}} = \sum_{i=1}^k \eta_i
\]
and, arguing similarly as we did to get Equation (\ref{eqn:symmetric_Tx}) yields
\begin{equation}\label{eqn:symmetrix_dTx}
    \frac{1}{s(\eta)}\bW_{(\mathrm{d}\eta)_{\mathbf{x}}} = 
\sum_{\eta^*: \eta^*(\bx) = \eta} \frac{1}{s(\eta^*)}\bW_{\eta^*}.
\end{equation}
This equality holds since $s(\eta^*)/s(\eta)$ is equal to the number of ways in which $\eta^*$ can be obtained from a tree in $(\partial_{\bx} \mathbb{T}_L(\bx))^*$ as defined below, which is equal to the number of times it appears in the total derivative of $\eta$.

We let $\partial_\mathbf{x} \bT_{L}(\bx)$ denote the space of trees which can be obtained from trees in $\mathbb{T}_L(\bx)$ by applying $\bullet_{\bx}\mapsto \text{\begin{tikzpicture}[x=0.75pt,y=0.75pt,yscale=-.5,xscale=.5]
%uncomment if require: \path (0,196); %set diagram left start at 0, and has height of 196

%Flowchart: Connector [id:dp10716904639166391] 
\draw  [color={rgb, 255:red, 0; green, 0; blue, 0 }  ,draw opacity=1 ][fill={rgb, 255:red, 255; green, 255; blue, 255 }  ,fill opacity=1 ][line width=1]  (15.62,19.02) .. controls (17.37,17.38) and (17.47,14.63) .. (15.83,12.87) .. controls (14.19,11.11) and (11.43,11.01) .. (9.67,12.66) .. controls (7.92,14.3) and (7.82,17.05) .. (9.46,18.81) .. controls (11.1,20.57) and (13.86,20.66) .. (15.62,19.02) -- cycle ;

\end{tikzpicture}}\hspace{-3pt}$ to exactly one of their leaves. Summarizing what we have said to far, we have
\begin{equation}\label{eqn:jacobian_graph_exp}
   \mathrm{d}(\varphi_{L}\circ \Phi_{L-1})_{\bx}=\sum_{\eta\in (\partial_{\bx} \mathbb{T}_L(\bx))^*} \frac{\varphi_\eta}{s(\eta)}\mathbf{W}_\eta,
\end{equation}
noting that $\varphi_\eta$ is invariant under $\bullet_{\bx}\mapsto \text{\begin{tikzpicture}[x=0.75pt,y=0.75pt,yscale=-.5,xscale=.5]
%uncomment if require: \path (0,196); %set diagram left start at 0, and has height of 196

%Flowchart: Connector [id:dp10716904639166391] 
\draw  [color={rgb, 255:red, 0; green, 0; blue, 0 }  ,draw opacity=1 ][fill={rgb, 255:red, 255; green, 255; blue, 255 }  ,fill opacity=1 ][line width=1]  (15.62,19.02) .. controls (17.37,17.38) and (17.47,14.63) .. (15.83,12.87) .. controls (14.19,11.11) and (11.43,11.01) .. (9.67,12.66) .. controls (7.92,14.3) and (7.82,17.05) .. (9.46,18.81) .. controls (11.1,20.57) and (13.86,20.66) .. (15.62,19.02) -- cycle ;

\end{tikzpicture}}\hspace{-3pt}$. 

Recall that $\mathbf{J}_{L,\mathbf{x}}$ denotes the matrix representation of $\mathrm{d}(\varphi_{L}\circ \Phi_{L-1})_{\bx}$.
In Section \ref{sec:application_NNs}, we will study the distribution of the singular values of $\mathbf{J}_{L,\mathbf{x}}$ when the weight matrices of $\Phi_L$ are chosen at random. This will amount to estimating the elements of the sequence 
\[  \mathrm{Tr}\big((\mathbf{J}_{L,\mathbf{x}}\mathbf{J}_{L,\mathbf{x}}^\top)^k\big), \quad k \in \mathbb N.
\]
We note that every $\eta$ in the expansion of $\mathbf{J}_{L,\bx}$ has exactly two free vertices: its root ($\text{\begin{tikzpicture}[x=0.75pt,y=0.75pt,yscale=-.5,xscale=.5]
%uncomment if require: \path (0,196); %set diagram left start at 0, and has height of 196

%Flowchart: Connector [id:dp10716904639166391] 
\draw  [color={rgb, 255:red, 74; green, 144; blue, 226 }  ,draw opacity=1 ][fill={rgb, 255:red, 255; green, 255; blue, 255 }  ,fill opacity=1 ][line width=1]  (15.62,19.02) .. controls (17.37,17.38) and (17.47,14.63) .. (15.83,12.87) .. controls (14.19,11.11) and (11.43,11.01) .. (9.67,12.66) .. controls (7.92,14.3) and (7.82,17.05) .. (9.46,18.81) .. controls (11.1,20.57) and (13.86,20.66) .. (15.62,19.02) -- cycle ;

\end{tikzpicture}}\hspace{-3pt}$) and one of its leaves ($\text{\begin{tikzpicture}[x=0.75pt,y=0.75pt,yscale=-.5,xscale=.5]
%uncomment if require: \path (0,196); %set diagram left start at 0, and has height of 196

%Flowchart: Connector [id:dp10716904639166391] 
\draw  [color={rgb, 255:red, 0; green, 0; blue, 0 }  ,draw opacity=1 ][fill={rgb, 255:red, 255; green, 255; blue, 255 }  ,fill opacity=1 ][line width=1]  (15.62,19.02) .. controls (17.37,17.38) and (17.47,14.63) .. (15.83,12.87) .. controls (14.19,11.11) and (11.43,11.01) .. (9.67,12.66) .. controls (7.92,14.3) and (7.82,17.05) .. (9.46,18.81) .. controls (11.1,20.57) and (13.86,20.66) .. (15.62,19.02) -- cycle ;

\end{tikzpicture}}\hspace{-3pt}$). The transpose of $\mathbf{J}_{L,\mathbf{x}}$ can thus be obtained from the same expansion by taking $\mathbf{W}_{\eta^\top}$ instead of $\mathbf{W}_{\eta}$ (noting that $\varphi_\eta$ and $s(\eta)$ are invariant under $\eta\mapsto \eta^\top$). 
We can thus write
\begin{equation}
    \mathbf{J}_{L,\bx}\mathbf{J}_{L,\bx}^{\top} = \sum_{\eta_1, \eta_2 \in (\partial_\mathbf{x} \bT_{L}(\bx))^*}
    \frac{\varphi_{\eta_1}\varphi_{\eta_2}}{s(\eta_1)s(\eta_2)}\mathbf{W}_{\eta_1\circ\eta_2^{\top}}
\end{equation}
which by linearity of the trace gives
\begin{equation}\label{eq:k=1trace}
    \mathrm{Tr}(\mathbf{J}_{L,\bx}\mathbf{J}_{L,\bx}^{\top}) = \sum_{\eta_1, \eta_2 \in (\partial_\mathbf{x} \bT_{L}(\bx))^*}
    \frac{\varphi_{\eta_1}\varphi_{\eta_2}}{s(\eta_1)s(\eta_2)}\mathbf{W}_{\mathrm{Tr}(\eta_1\circ\eta_2^{\top})}.
\end{equation}

\begin{figure}[t]
    \centering
    \input{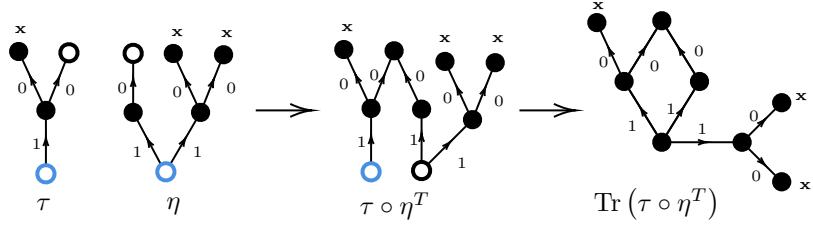}
    \caption{Operator graphs arising in the expansion of $\mathrm{Tr}(\mathbf{J}_{2,\mathbf{x}}\mathbf{J}_{2,\mathbf{x}}^\top)$.}
    \label{fig:decoratedcycle_base}
\end{figure}

Similarly, for $k=2$ we can repeat the above procedure to obtain
\begin{equation}\label{eq:k=2trace}
    \sum_{\eta^{(1)}_1, \eta^{(1)}_2, \eta^{(2)}_1, \eta^{(2)}_2 \in (\partial_\mathbf{x} \bT_{L}(\bx))^*}
    \left( 
    \prod_{i=1}^2\prod_{j=1}^2\frac{\varphi_{\eta^{(i)}_j}}{s(\eta^{(i)}_j)}
    \right) \mathbf{W}_{\mathrm{Tr}\left(\eta^{(1)}_1\circ (\eta^{(1)}_2)^{\top} \circ \eta^{(2)}_1\circ (\eta^{(2)}_2)^{\top}\right)},
\end{equation}
and the case for general $k$ case follows analogously, with the final expression involving product graphs of the form $\mathrm{Tr}\left(\eta^{(1)}_1\circ (\eta^{(1)}_2)^{\top} \circ \cdots \circ \eta^{(k)}_1\circ (\eta^{(k)}_2)^{\top}\right)$. 
This will be discussed in detail in Section \ref{sec:jacobian}, and is depicted in Figure \ref{fig:decoratedcycleconcrete}.

\subsubsection{Neural tangent kernel} 
The same procedure can be used to compute the NTK $\Theta_{L}(\bx,\by)\in \bR^{N_{L+1} \times N_{L+1}} $, which we recall is the matrix representation of
\begin{equation} 
    \sum_{\ell=0}^{L} \lambda_{\ell}^2 (\mathrm{d}\Phi_{L}(\bx))_{W_{\ell}}\circ(\mathrm{d}\Phi_{L}(\by))_{W_{\ell}}^{\top},
\end{equation}
for layer-wise learning rates $(\lambda_{\ell})_\ell \in \bR$ (see Equation \eqref{def:NTK}).

To find an expansion for $\Theta_L$, let $\partial_\ell \mathbb{T}_L(\mathbf{x})$ denote the space of trees which can be obtained from trees in $\mathbb{T}_L(\mathbf{x})$ by freeing exactly one of its height $\ell$ edges (i.e. one edge with input $W_\ell$), making it an in-edge. We can then write
\begin{equation}
    (\mathrm{d}\Phi_{L}(\bx))_{W_{\ell}} = \sum_{\tau \in \partial_{\ell}\bT_{L}(\mathbf{x})} \frac{\varphi_{\tau}}{s(\tau)}\mathbf{W}_{\tau},
\end{equation}
where $\mathbf{W}_{\tau}: \bR^{N_{\ell+1} \times N_{\ell}} \to \bR^{N_{L+1}}$, noting that the $s(\tau)$ take care of any overcounting. Arguing as we did above, we then find that 
\begin{equation}\label{eqn:NTK_matrix_components}
    [(\mathrm{d}\Phi_{L}(\bx))_{W_{\ell}}\circ(\mathrm{d}\Phi_{L}(\by))_{W_{\ell}}^{\top}]_{ij}
    = \sum_{\tau \in \partial_{\ell}\bT_{L,i}(\mathbf{x})}\sum_{\eta \in \partial_{\ell}\bT_{L,j}(\mathbf{y})}
    \frac{\varphi_{\tau}\varphi_{\eta}}
    {s(\tau)s(\eta)}
    \mathbf{W}_{\tau\circ\eta^{\top}}.
\end{equation}
% introduce notation and conventions, explain the 

\section{Genus expansion and limit theorems}
\label{sec:genus}
In this section, we develop tools to compute quantities of the form $\E\{\bW_G\}$, where $G$ is a product graph whose edge inputs are matrices with Gaussian entries. We will extend our methods to non-Gaussian and sparse inputs in Section \ref{sec:non_gaussian}. These results are all consequences of the classical Wick's Theorem, which we now recall. 

For any set $S$ of even size $k$, define a \emph{pairing} $\phi$ of $S$ to be a partition of $S$ into pairs $S_j=\{\phi_{j,1},\phi_{j,2}\}$ for $1\leq j\leq k/2$. Then Wick's theorem reduces the task of computing expectations of a product of Gaussian random variables to that of computing pairwise covariances.

\begin{theorem}[Wick's theorem] 
\label{thm:Wicks_theo}
Let $(Z_i)_{i\geq 1}$ be a real centered Gaussian vector.
Then for any sequence of integers $i_1,...,i_k$, 
\[
\mathbb{E}\Big\{\prod_{j=1}^k Z_{i_j}\Big\} = \sum_{\phi} \prod_{j=1}^{k/2}\mathbb{E}\big\{Z_{i_{\phi_{j,1}}}Z_{i_{\phi_{j,2}}}\big\}.
\]
where the sum is taken over all pairings of $\{1,...,k\}$.
\end{theorem}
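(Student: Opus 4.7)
The plan is to prove Wick's theorem by induction on $k$, with the inductive step supplied by Gaussian integration by parts (Stein's identity). Since only finitely many coordinates of $(Z_i)_{i\geq 1}$ appear in the product, we may restrict attention to a finite centered Gaussian vector $Z = (Z_1,\dots,Z_n)$ with covariance $C_{ab}:=\E[Z_a Z_b]$. The key analytic input is the identity
\begin{equation*}
  \E[Z_a f(Z)] = \sum_{b=1}^n C_{ab}\, \E[\partial_b f(Z)],
\end{equation*}
valid for any polynomial $f$, which follows by differentiating the Gaussian density and integrating by parts.

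First I would dispose of the odd case: if $k$ is odd, the left-hand side vanishes because the Gaussian measure is invariant under $Z\mapsto -Z$, while the right-hand side is an empty sum since no pairing of an odd-size set exists. For the even case, induct on $k$, taking the base case $k=2$ to be the definition of $C_{i_1 i_2}$. For the inductive step, apply Stein's identity to $f(Z) = \prod_{j=2}^k Z_{i_j}$, extracting the factor $Z_{i_1}$:
\begin{equation*}
  \E\Big[\prod_{j=1}^k Z_{i_j}\Big]
  = \sum_{m=2}^{k} C_{i_1 i_m}\, \E\Big[\prod_{j\in\{2,\dots,k\}\setminus\{m\}} Z_{i_j}\Big],
\end{equation*}
since $\partial_b f$ is a sum of products over $m$ with $b=i_m$. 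By the inductive hypothesis, each inner expectation expands as a sum over pairings of the $(k-2)$-element set $\{2,\dots,k\}\setminus\{m\}$.

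The remainder of the argument is a combinatorial bijection: a pairing $\phi$ of $\{1,\dots,k\}$ is uniquely specified by the partner $m\in\{2,\dots,k\}$ of the element $1$ together with an arbitrary pairing of $\{2,\dots,k\}\setminus\{m\}$. Pairing the prefactor $C_{i_1 i_m}$ with the pairings produced by the induction hypothesis therefore realises each pairing $\phi$ of $\{1,\dots,k\}$ exactly once, with contribution $\prod_{j=1}^{k/2} \E[Z_{i_{\phi_{j,1}}} Z_{i_{\phi_{j,2}}}]$, yielding the claimed formula. The main (and only genuine) obstacle is verifying this bijection cleanly; the analytic justification of Stein's identity for polynomial integrands is standard and poses no difficulty.
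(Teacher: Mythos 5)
The paper does not prove this statement at all: it recalls Wick's theorem as a classical fact (citing it implicitly as standard) and immediately uses it as the engine for Theorem \ref{thm:wickexpansion}. Your proposal therefore supplies a proof where the paper supplies none, and the route you choose --- induction on $k$ with the inductive step driven by Gaussian integration by parts, $\E[Z_a f(Z)] = \sum_b C_{ab}\,\E[\partial_b f(Z)]$, followed by the bijection between pairings of $\{1,\dots,k\}$ and (partner of $1$) $\times$ (pairing of the remaining $k-2$ elements) --- is one of the two standard proofs and is correct as sketched; the other standard route expands the moment generating function $\exp(\tfrac12 t^\top C t)$ and matches coefficients, which trades the recursion for a multinomial count. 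The only point you should tighten is the justification of Stein's identity itself: "differentiating the Gaussian density" presupposes a nondegenerate covariance, whereas the statement allows an arbitrary centered Gaussian vector. This is easily repaired --- either write $Z = AW$ with $W$ standard Gaussian and apply the one-dimensional identity coordinatewise, or note that both sides of Wick's formula are polynomials in the entries of $C$ and pass to the degenerate case by continuity --- but as written the analytic input is asserted in slightly more generality than your justification covers.
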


The arguments below are inspired by those of Dubach and Peled \cite{Dubach2021}, albeit with substantial modifications to accommodate our more general setting.

\subsection{Wick expansion for product graphs with random inputs} Throughout this section, we will let $\mathcal{W}$ denote the following sequence of random matrices. 
\begin{definition}
Let $\mathcal{W}:=(W_i)_{i\in \mathbb{N}}$, where $W_i\in \bR^{N_{r_i}\times N_{c_i}}$ are independent Gaussian random matrices with i.i.d. entries distributed as $\mathcal{N}_{\bR}(0,\sigma_i^2)$.
\end{definition}

\begin{assumption}\label{assumption1} We assume that any graph $G$ considered throughout the section is a product graph whose inputs are either deterministic or matrices in $\mathcal{W}$. 
\end{assumption}

For any product graph $G$ satisfying Assumption \ref{assumption1} and edge $e\in E$, we will say that $e$ is an ``$\ell$-edge" if its input is fixed to $W_\ell\in \mathcal{W}$.
For simplicity, we label such edges by $\ell$ instead of $W_\ell$ when depicting the graph, as the latter can quickly become cumbersome.

We will consider pairings of edges of $G$ whose inputs are matrices in $\mathcal{W}$, denoting the set of such edges by $E_\mathcal{W}$. 
Such a pairing is said to be \emph{admissible} if it only pairs $\ell-$edges with $\ell-$edges.

\begin{definition}[Admissible pairing] 
\label{def:admissible_pairings}
    Let $G$ be a product graph, and $E_\mathcal{W}=\{e\in E: \mathbf{X}_e = W_\ell\text{ for some $\ell(e):=\ell \in \N$}\}$.
    Then a pairing $\phi$ of $E_\mathcal{W}$ is said to be \emph{admissible} if any two paired edges $\{e,e'\} \in \phi$ satisfy $\ell(e)=\ell(e')$. 
    We denote the set of all admissible pairings of $E_\mathcal{W}$ by $\mathcal{P}(G)$.
\end{definition}

\begin{figure}[t]
    \centering
    \vspace{-5pt}
    \scalebox{0.80}{\input{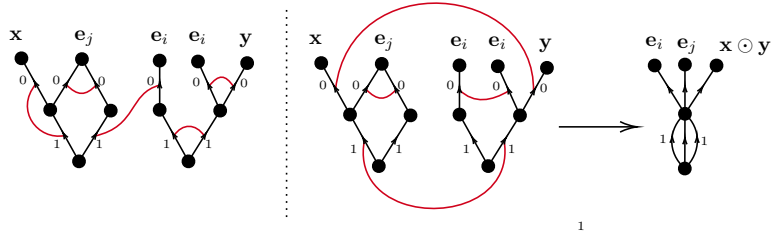}}
    \caption{Left: A pairing which is \emph{not} admissible (in red), since a $1$-edge is paired to a $0$-edge. Right: An admissible pairing (in red) and the corresponding $G_{\phi}$ (note that not all edges are necessarily paired).}
    \label{fig:admissible_partition}
\end{figure}

For any product graph $G$ and  admissible pairing $\phi\in \mathcal{P}(G)$, let $G_{\phi}$ be the product graph constructed by identifying the edges of $G$ paired by $\phi$ coherently with their orientation (see, e.g., Figure \ref{fig:admissible_partition}). 
The inputs of the cells in $G_\phi$ are assigned as follows:
\begin{itemize}
    \item When identifying two edges fixed to  $W_\ell$, the resulting edge will be fixed to $\E[W_\ell\odot W_{\ell}] = \sigma_{\ell}^2 \mathbf{I}$.
    \item When identifying two vertices $u,v$ (e.g. by pairing edges which have $u$ and $v$ as their respective endpoints), the resulting vertex will have input $\mathbf{x}_u\odot \mathbf{x}_v$. 
\end{itemize} 
Examples of pairings which are and aren't admissible are depicted in Figure \ref{fig:admissible_partition}, along with the graph $G_\phi$ that arises when admissibility holds. 

In the event that the graph $G$ is not connected, we write it as a disjoint union $G_{\phi} = \sqcup_{i=1}^{c(G_{\phi})} (G_i)_{\phi}$ of its connected components $(G_i)_{\phi}$, where $c(G_{\phi})$ denotes the number of connected components of $G_\phi$.

\begin{theorem}[Wick expansion]\label{thm:wickexpansion}
 Let $G$ be a (possibly disconnected) product graph satisfying Assumption \ref{assumption1}. Then
    \begin{equation}\label{eq:genusexpansion}
        \E \left\{\bW_G \right\}=  \sum_{\phi \in \cP(G)} \bW_{G_{\phi}}.
    \end{equation}
where $\phi$ runs over all of $G$'s \emph{admissible} pairings.
\end{theorem}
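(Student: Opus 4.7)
The plan is to prove Eq.~(\ref{eq:genusexpansion}) by expanding $\bW_G$ entrywise, applying Wick's theorem directly to the resulting Gaussian moment, and then repackaging the sum as $\sum_\phi \bW_{G_\phi}$ by matching index identifications with vertex/edge identifications in $G_\phi$.

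First I would start from the defining formula
\[
    \bW_G = \sum_{\mathbf{i}_V} \prod_{v \in V}[\mathbf{X}_v]_{i_v} \prod_{e \in E}[\mathbf{X}_e]_{i_e},
\]
take expectation, and bring it inside the finite sum by linearity. Vertex inputs are deterministic by Assumption~\ref{assumption1}, and so are the inputs of edges in $E \setminus E_\mathcal{W}$, so these factors pull out of $\E$. The remaining object is the centred Gaussian moment $\E\bigl\{\prod_{e \in E_\mathcal{W}} [W_{\ell(e)}]_{i_e}\bigr\}$, to which Theorem~\ref{thm:Wicks_theo} applies verbatim.

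Next I would apply Wick's theorem to rewrite this moment as a sum over all pairings of $E_\mathcal{W}$ of products of pairwise covariances. Because the $W_\ell$'s are independent across $\ell$ and i.i.d.\ within each layer, for $e=(u,v)$ and $e'=(u',v')$ we have
\[
    \E\bigl\{[W_{\ell(e)}]_{i_e}[W_{\ell(e')}]_{i_{e'}}\bigr\}
    = \sigma_{\ell(e)}^2\, \mathbf{1}(\ell(e)=\ell(e'))\, \delta_{i_u,i_{u'}}\delta_{i_v,i_{v'}}.
\]
The indicator $\mathbf{1}(\ell(e)=\ell(e'))$ kills every non-admissible pairing, so the sum collapses to one over $\phi \in \mathcal{P}(G)$, and the Kronecker deltas produced by the covariances enforce that the endpoints of the paired edges carry equal indices.

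Finally, I would swap the order of summation between $\mathbf{i}_V$ and $\phi$ and identify the inner sum, for fixed $\phi$, as $\bW_{G_\phi}$. Summing over $\mathbf{i}_V$ subject to the delta constraints is exactly summing over indexations of $V(G_\phi)$: two vertices identified by $\phi$ share an index, and their vertex inputs combine as $[\mathbf{X}_u]_{i_u}[\mathbf{X}_{u'}]_{i_u} = [\mathbf{X}_u \odot \mathbf{X}_{u'}]_{i_u}$, matching the Hadamard-product convention assigned to the identified vertex in $G_\phi$. For each paired pair of edges, the covariance contributes the constant factor $\sigma_{\ell(e)}^2$ along the shared-index locus, which matches the input $\sigma_\ell^2 \mathbf{I}$ assigned to the identified edge in $G_\phi$ (where $\mathbf{I}$ is the all-ones matrix in the paper's convention). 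Deterministic edges contribute unchanged, and one reads off $\bW_{G_\phi}$.

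I expect the main obstacle to be the careful bookkeeping in this last step, in particular verifying that the constrained index sum really reproduces $\bW_{G_\phi}$ in full generality: when $G$ is disconnected (since cross-component pairings may merge components in $G_\phi$), and when paired edges share endpoints in $G$ (so that more than one delta constraint can act on the same pair of indices). Once the dictionary between delta-induced index identifications and vertex/edge identifications in $G_\phi$ is nailed down and the input assignments verified, the identity follows at once.
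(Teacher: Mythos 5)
Your proposal is correct and follows essentially the same route as the paper's proof: entrywise expansion, Wick's theorem on the random edge factors, the vanishing of non-admissible pairings via the layer indicator in the covariance, and then the reindexing of the constrained sum as $\bW_{G_\phi}$. The "bookkeeping" step you flag at the end is precisely where the paper spends its effort (setting up the bijections $V(G_\phi)\simeq V_\mathcal{W}^c\sqcup(V_\mathcal{W}/\sim_\phi)$ and $E(G_\phi)\simeq E_\mathcal{W}^c\sqcup(E_\mathcal{W}/\sim_\phi)$ and matching the Kronecker deltas on edge-index pairs to deltas on vertex classes), and your description of how that dictionary works is accurate.
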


\begin{proof}
    Let $G=(G,\fd, \{\mathbf{X}\}_{c\in C})$ be a product graph and consider the set of indexations $\mathcal{I}^G := \prod_{v \in V(G)} [N_{v}]$. 
    Recall that for each $e=(u,v)\in E$, $i_e=i_ui_v$. 
    Then for any $\mathbf{i}_V\in \mathcal{I}^G$,
    \[
    \E\Big\{\prod_{c \in C} [\mathbf{X}_c]_{i_c}\Big\} 
        = \prod_{c \in C\setminus E_{\mathcal{W}}} [\mathbf{X}_c]_{i_c}
        \E\Big\{\prod_{e \in E_{\mathcal{W}}} [\mathbf{X}_e]_{i_e}\Big\}.
    \]
    By Wick's theorem, we have
     \begin{align*}
        \E\Big\{\prod_{e \in E_{\mathcal{W}}} [\mathbf{X}_e]_{i_e}\Big\} =& 
         \sum_{\phi} \prod_{\{e,e'\} \in \phi } \E\Big\{ [\mathbf{X}_e]_{i_e} [\mathbf{X}_{e'}]_{i_{e'}}\Big\}
    \end{align*}
    where the sum is over all pairings $\phi$ of $E_{\mathcal{W}}$. Note that if $\phi\notin \cP(G)$ then 
    $$\prod_{\{e,e'\} \in \phi } \E\Big\{ [\mathbf{X}_e]_{i_e} [\mathbf{X}_{e'}]_{i_{e'}}\Big\} = 0$$ 
    and we can therefore write
    \begin{align}\label{eq:W_Glemma}
        \E\{\mathbf{W}_G\} =& 
        \sum_{\mathbf{i}_V} 
        \bigg(
        \prod_{c \in C\setminus E_{\mathcal{W}}} [\mathbf{X}_c]_{i_c}
        \bigg)
        \sum_{\phi \in \cP(G)} \prod_{\{e,e'\} \in \phi } 
        \E\Big\{ [\mathbf{X}_e]_{i_e} [\mathbf{X}_{e'}]_{i_{e'}}\Big\}
        \nonumber \\ =&
        \sum_{\phi \in \cP(G)} \sum_{\mathbf{i}_V} 
        \bigg(
        \prod_{c \in C\setminus E_{\mathcal{W}}} [\mathbf{X}_c]_{i_c}
        \bigg)
        \prod_{\{e,e'\} \in \phi } \sigma_{\ell(e)}^{2} \delta_{i_ei_{e'}},
    \end{align}
    where $\delta_{ij} := \mathbf{1}(i=j)$ is the Kronecker delta function.
 
    For any vertices $u,v\in V$, we write $u\sim_\phi v$ if they are identified after identifying edges according to $\phi$. Similarly, we write $e\sim_\phi e'$ if $\{e,e'\}\in \phi$.
    If we let $V_\mathcal{W}$ be the set of vertices adjacent to edges in $E_\mathcal{W}$, we then have canonical bijections
    \[
        V(G_\phi)\simeq V_\mathcal{W}^c \sqcup (V_\mathcal{W}/\sim_\phi),\quad E(G_\phi)\simeq E^c_\mathcal{W} \sqcup (E_\mathcal{W}/\sim_\phi)\,
    \]
    Note that each element in $\varepsilon\in E_\mathcal{W}/\sim_\phi$ corresponds to a block $\varphi(\varepsilon)\in \phi$, while each element of $\nu\in V_\mathcal{W}/\sim_\phi$ corresponds to a subset $\varphi(\nu)\subset V$ (these are the pre-images under the quotient maps).  
     Then for any $\phi\in \mathcal{P}(G)$ we can write the product of $G_{\phi}$ in terms of indexations of $G$ as
    \begin{align*}
        \bW_{G_{\phi}} =& 
        \sum_{\mathbf{i}\in \mathcal{I}^{G_{\phi}}} 
        \prod_{c \in E_{\mathcal{W}}^c \cup V_{\mathcal{W}}^c} [\mathbf{X}_c]_{i_c}
        \bigg( 
        \prod_{\varepsilon\in (E_\mathcal{W}/\sim_\phi)} \sigma_{\ell(\varepsilon)}^{2} 
        \bigg) 
        \bigg(
        \prod_{\nu \in (V_\mathcal{W}/\sim_\phi)} \prod_{u \in \varphi(\nu)}[\mathbf{X}_u]_{i_\nu}
        \bigg)
        \\
        =& 
        \sum_{\mathbf{i}\in \mathcal{I}^{G}} 
        \prod_{c \in E_{\mathcal{W}}^c \cup V_{\mathcal{W}}^c} [\mathbf{X}_c]_{i_c}
        \bigg( 
        \prod_{\{e,e'\} \in \varphi(\epsilon) } \sigma_{\ell(e)}^{2}
        \delta_{i_ei_{e'}}
        \bigg) 
        \bigg(
        \prod_{\nu \in (V_\mathcal{W}/\sim_\phi)} 
        \delta_{\mathbf{i}|_\nu}
        \prod_{u \in \varphi(\nu)}[\mathbf{X}_u]_{i_u} 
        \bigg)
    \end{align*}
    Here $\delta_{\mathbf{i}|_\nu} := \mathbf{1}\left(\forall u,u' \in \varphi(\nu), ~i_u=i_{u'}\right)$ and $\sigma_{\ell(\varepsilon)} = \sigma_{\ell(e)}$ for any choice of $e \in \varphi(\varepsilon)$, since they all yield the same $\sigma_{\ell(e)}$.
     By definition, there eixts a $\nu \in (V_\mathcal{W}/\sim_\phi)$ such that $u,u' \in \varphi(\nu)$ if and only if we have $\{e,e'\}\in\phi$ such that $u,u'$ are either the respective heads or or respective tails of $e,e'$, it follows that
     \[
     \prod_{\{e,e'\} \in \varphi(\epsilon)} \delta_{i_ei_{e'}} =
     \prod_{\nu \in (V_\mathcal{W}/\sim_\phi)} \delta_{\mathbf{i}|_\nu}.
     \]
     Using this, the claim then follows from the equality
    \[
    \prod_{c \in C\setminus E_{\mathcal{W}}} [\mathbf{X}_c]_{i_c} = 
 \prod_{c \in E_{\mathcal{W}}^c \cup V_{\mathcal{W}}^c} [\mathbf{X}_c]_{i_c} 
    \prod_{\nu \in (V_\mathcal{W}/\sim_\phi)} 
    \prod_{u \in \varphi(\nu)}[\mathbf{X}_u]_{i_u}.
    \]
\end{proof}

We now derive a centered version of Theorem \ref{thm:wickexpansion} (Proposition \ref{prop:mixedmoments_}), which we will need to derive limit theorems for $\mathbf{W}_{G}$.
To that end, we need to single out the admissible pairings which give rise to leading and subleading order terms in the limit. The nomenclature below is borrowed from \cite{Dubach2021}, noting that the following is essentially their  Definition 2.3.

\begin{definition}
\label{def:various_partitions}
    Let $G = \sqcup_{i=1}^{c(G)} G_{i}$ be a product graph, given as a disjoint union of its connected components, and fix an admissible pairing $\phi \in \cP(G)$ of its edges.
    \begin{enumerate}
        \item $\phi$ is said to be \emph{atomic} if and only if for every $1\leq i\leq c(G)$, the edges in $E(G_i)$ are paired between themselves and
        $(G_i)_{\phi|_{G_i}}$ is a tree, where $\phi|_{G_i}$ denotes the pairing induced by $\phi$ on the edges of $G_i$.
        \item $\phi$ is \emph{bi-atomic} if and only if for every $1\leq i\leq c(G)$, there exists a $j\neq i$ such that $\phi$ pairs edges of $E(G_i)\sqcup E(G_j)$ between themselves and $(G_i \sqcup G_j)_{\phi|_{G_i \sqcup G_j}}$ is a tree.
        \item $\phi$ is \emph{atom-free} if and only if there is no $G_i$ for which $\phi$ pairs the edges in $E(G_i)$ between themselves and $(G_i)_\phi$ is a tree.
    \end{enumerate}
\end{definition}
We denote the set of (admissible) atomic, bi-atomic and atom-free pairings of $G$ by $\mathcal{P}_{\mathrm{A}}(G)$, 
$\mathcal{P}_{\mathrm{B}}(G)$, and $\mathcal{P}_{\mathrm{AF}}(G)$  respectively. 
With these definitions, we can now state and prove the following centered version of Theorem \ref{thm:wickexpansion}.

\begin{proposition}\label{prop:mixedmoments_}
Let $G=\sqcup_{i=1}^{c(G)}G_i$ be a product graph satisfying Assumption \ref{assumption1}, with disjoint connected components $G_i$. Then
\begin{equation}\label{eqn:lemma:fix_vertices_new}
     \E\bigg\{
     \prod_{i=1}^{c(G)} \Big( 
     \bW_{G_i} - 
    \sum_{\psi \in \mathcal{P}_{\mathrm{A}}(G_{i})}
      \bW_{(G_i)_\psi}
     \Big)
     \bigg\} = 
     \sum_{\phi \in \mathcal{P}_{\mathrm{AF}}(G)} \bW_{G_\phi}
\end{equation}
\end{proposition}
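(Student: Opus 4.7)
The plan is to expand the product on the left by distributivity, apply Theorem~\ref{thm:wickexpansion} to the expectations that arise, and reorganize the resulting double sum by grouping terms according to the admissible pairing $\phi$ of $G$ they assemble into; the claim will then follow from a simple binomial cancellation.

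To start, I would observe that for each $i$ the quantity $Y_i := \sum_{\psi \in \mathcal{P}_{\mathrm{A}}(G_i)} \bW_{(G_i)_\psi}$ is deterministic, since every random edge in $(G_i)_\psi$ has been replaced by $\sigma_\ell^2 \mathbf{I}$. Writing $n = c(G)$ and expanding by distributivity,
\begin{equation*}
\prod_{i=1}^{n}\bigl(\bW_{G_i}-Y_i\bigr) \;=\; \sum_{S\subseteq[n]} (-1)^{n-|S|} \Bigl(\prod_{i\in S}\bW_{G_i}\Bigr)\prod_{i\notin S}Y_i.
\end{equation*}
I would take expectations, pull out the deterministic $Y_i$'s, use $\prod_{i\in S}\bW_{G_i}=\bW_{\sqcup_{i\in S}G_i}$, and apply Theorem~\ref{thm:wickexpansion} to the disjoint union $\sqcup_{i\in S}G_i$ to obtain
\begin{equation*}
\E\Bigl\{\prod_{i=1}^{n}(\bW_{G_i}-Y_i)\Bigr\} \;=\; \sum_{S\subseteq[n]}(-1)^{n-|S|}\sum_{\phi_S\in\mathcal{P}(\sqcup_{i\in S}G_i)}\sum_{(\psi_i)_{i\notin S}}\bW_{(\sqcup_{i\in S}G_i)_{\phi_S}}\prod_{i\notin S}\bW_{(G_i)_{\psi_i}},
\end{equation*}
where each $\psi_i\in\mathcal{P}_{\mathrm{A}}(G_i)$.

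The key step is to reindex the triple sum over $(S,\phi_S,(\psi_i)_{i\notin S})$ by the single admissible pairing $\phi:=\phi_S\cup\bigcup_{i\notin S}\psi_i$ of $E(G)$. Since each $\psi_i$ pairs edges only within $G_i$, the product graph $G_\phi$ splits as the disjoint union $(\sqcup_{i\in S}G_i)_{\phi_S}\sqcup\bigsqcup_{i\notin S}(G_i)_{\psi_i}$, so $\bW_{G_\phi}$ factors across these pieces. Given $\phi\in\mathcal{P}(G)$, let $A(\phi)\subseteq[n]$ be the set of components on which $\phi$ acts atomically, meaning $\phi$ pairs the edges of $G_i$ only within $G_i$ and $(G_i)_{\phi|_{G_i}}$ is a tree. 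Then the triples giving rise to $\phi$ are in bijection with the sets $S\supseteq[n]\setminus A(\phi)$ (with $\phi_S$ and the $\psi_i$ then uniquely determined as restrictions of $\phi$), so the coefficient of $\bW_{G_\phi}$ is
\begin{equation*}
\sum_{S\supseteq[n]\setminus A(\phi)}(-1)^{n-|S|} \;=\; \sum_{R\subseteq A(\phi)}(-1)^{|A(\phi)|-|R|} \;=\; (1+(-1))^{|A(\phi)|},
\end{equation*}
which equals $1$ when $A(\phi)=\emptyset$ and $0$ otherwise. Since $A(\phi)=\emptyset$ is precisely the defining condition $\phi\in\mathcal{P}_{\mathrm{AF}}(G)$, this yields the right-hand side of \eqref{eqn:lemma:fix_vertices_new}.

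The main (mild) obstacle I anticipate is making the bijection between triples and pairings watertight, specifically verifying the direction ``$\phi$ given $\Rightarrow$ triples'': one needs to check that $\phi|_{G_i}$ really is fully-atomic for $i\in A(\phi)$, so it is a legitimate choice of $\psi_i$, and that restricting $\phi$ to the edges lying inside $\sqcup_{i\in S}G_i$ defines an admissible pairing $\phi_S$ for every $S\supseteq[n]\setminus A(\phi)$. Both checks are essentially definitional once one unpacks the atomicity condition; everything else in the argument is bookkeeping and the binomial identity.
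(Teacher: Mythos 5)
Your proposal is correct and follows essentially the same route as the paper's proof: expand the product binomially over subsets of components, apply the Wick expansion (Theorem \ref{thm:wickexpansion}) to each $\sqcup_{i\in S}G_i$, and cancel via inclusion--exclusion, which the paper phrases set-theoretically on the families of pairings while you compute the coefficient of each $\bW_{G_\phi}$ directly as $(1+(-1))^{|A(\phi)|}$. The bijection and factorization checks you flag are exactly the definitional verifications the paper's argument also relies on, so there is no gap.
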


\begin{proof}
Let $[c(G)]:=\{1,...,c(G)\}$, and for any $T \subseteq [c(G)]$ define $G_{T} := \sqcup_{i \in T} G_i$. Then by expanding the expectation in (\ref{eqn:lemma:fix_vertices_new}), we get
    \begin{align*}
        \mathbb{E}\bigg\{
        \prod_i &\Big(\bW_{G_i} -
        \sum_{\psi \in \mathcal{P}_{\mathrm{A}}(G_{i})}
         \bW_{(G_i)_\psi}\Big)\bigg\}
         \\
        &=
        \sum_{T \subseteq [c(G)]} (-1)^{|T^c|} \E\left\{ \prod_{i \in T} \bW_{G_i} \right\} %\sigma_{G_{T^c}}
        \left( \prod_{i \notin T} 
       \sum_{\psi \in \mathcal{P}_{\mathrm{A}}(G_{i})}
        \bW_{(G_i)_{\psi}}
        \right)
    \end{align*}
 Using Theorem \ref{thm:wickexpansion}, we have
\[
\E\bigg\{\prod_{i \in T} \bW_{G_i} \bigg\} = \sum_{\phi \in \mathcal{P}(G_T)}
     \bW_{(G_T)_{\phi}}
\]
and in turn
\begin{align*}
    &=  
\sum_{T \subseteq [c(G)]} \sum_{\phi \in \cP(G_T)}
(-1)^{|T^c|}
\bW_{(G_T)_{\phi}}
\prod_{i \notin T} 
       \sum_{\psi \in \mathcal{P}_{\mathrm{A}}(G_{i})}
     \bW_{(G_i)_{\psi}}  
     \\
     &= 
        \sum_{T \subseteq [c(G)]} 
        (-1)^{|T^c|}
        \sum_{\phi \in \cP(G_T) \times (\prod_{i \in T^c} \mathcal{P}_{\mathrm{A}}(G_i))}
        \bW_{G_{\phi}}
\end{align*}
and the conclusion follows from the inclusion-exclusion principle since 
\[
\mathcal{P}_{\mathrm{AF}}(G) = \bigcap_{i=1}^{c(G)} (\mathcal{P}(G_{[c(G)]\setminus \{i\}}) \times \mathcal{P}_{\mathrm{A}}(G_i))^c = \mathcal{P}(G) \setminus \bigcup_{i=1}^{c(G)} \big(\mathcal{P}(G_{[c(G)]\setminus \{i\}}) \times \mathcal{P}_{\mathrm{A}}(G_i)\big)
\]
and 
\[
\bigcap_{i \in T^c} \big( \mathcal{P}(G_{[c(G)]\setminus \{i\}}) \times \mathcal{P}_{\mathrm{A}}(G_i)\big) = \cP(G_T) \times\prod_{i \in T^c} \mathcal{P}_{\mathrm{A}}(G_i).
\]
\end{proof}

More generally, the same proof gives the following.
\begin{proposition}
    Let $G=\sqcup_{i=1}^{c(G)}G_i$ be a product graph satisfying Assumption \ref{assumption1}, with disjoint connected components $G_i$. 
    Given any sequence $\cP_i \subseteq \cP(G_i)$ and setting $\cP_{\star} := \bigcap_i [\cP(G_{[c(G)]\setminus\{i\}}) \sqcup \cP_i]^c$ one has
\begin{equation}
     \E\bigg\{
     \prod_{i=1}^{c(G)} \Big( 
     \bW_{G_i} - 
    \sum_{\psi \in \cP_i}
      \bW_{(G_i)_\psi}
     \Big)
     \bigg\} = 
     \sum_{\phi \in \cP_{\star}} \bW_{G_\phi}.
\end{equation}
\end{proposition}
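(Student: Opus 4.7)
The plan is to repeat the argument used in Proposition \ref{prop:mixedmoments_} almost verbatim, with $\cP_i$ playing the role that $\mathcal{P}_{\mathrm{A}}(G_i)$ played there: the specific structure of $\mathcal{P}_{\mathrm{A}}$ was never actually exploited in that proof beyond the set-theoretic inclusion-exclusion at the very end. So I would first expand the product on the left-hand side as a signed sum over subsets $T\subseteq[c(G)]$, selecting $\bW_{G_i}$ for $i\in T$ and $-\sum_{\psi\in\cP_i}\bW_{(G_i)_\psi}$ for $i\in T^c$. This gives
\[
\E\bigg\{\prod_{i=1}^{c(G)}\Big(\bW_{G_i}-\sum_{\psi\in\cP_i}\bW_{(G_i)_\psi}\Big)\bigg\}
=\sum_{T\subseteq[c(G)]}(-1)^{|T^c|}\,\E\Big\{\prod_{i\in T}\bW_{G_i}\Big\}\prod_{i\in T^c}\sum_{\psi\in\cP_i}\bW_{(G_i)_\psi}.
\]

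The next step is to apply Theorem \ref{thm:wickexpansion} to the factor $\E\{\prod_{i\in T}\bW_{G_i}\}=\bW_{G_T}$, which yields $\sum_{\phi\in\cP(G_T)}\bW_{(G_T)_\phi}$; here I use that $G_T=\sqcup_{i\in T}G_i$ is itself a product graph. Combining this Wick expansion with the remaining products over $i\in T^c$, and noting that a choice of pairing in $\cP(G_T)$ together with a choice of pairing in each $\cP_i$ for $i\in T^c$ canonically assembles into an admissible pairing of the whole $G$, I can rewrite the expression as
\[
\sum_{T\subseteq[c(G)]}(-1)^{|T^c|}\sum_{\phi\in\cP(G_T)\times\prod_{i\in T^c}\cP_i}\bW_{G_\phi}.
\]

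The final step is purely set-theoretic. For each $i$, let $A_i:=\cP(G_{[c(G)]\setminus\{i\}})\times\cP_i\subseteq\cP(G)$ be the set of admissible pairings that split $G_i$ off from the rest and pair it internally via an element of $\cP_i$. The identity
\[
\bigcap_{i\in T^c}A_i=\cP(G_T)\times\prod_{i\in T^c}\cP_i
\]
holds because a pairing belongs to every $A_i$ with $i\in T^c$ precisely when it isolates each such $G_i$ with an internal pairing from $\cP_i$, leaving the components indexed by $T$ to be paired among themselves. Inclusion-exclusion applied to the indicator of $\big(\bigcup_i A_i\big)^c=\cP_\star$ then collapses the signed sum over $T$ to exactly $\sum_{\phi\in\cP_\star}\bW_{G_\phi}$, which proves the claim.

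The only potentially delicate point is the identification $\bigcap_{i\in T^c}A_i=\cP(G_T)\times\prod_{i\in T^c}\cP_i$: one has to check that the natural map gluing these pairings together is a bijection onto the intersection, which amounts to the observation that a pairing in $\bigcap_{i\in T^c}A_i$ pairs no edge of any $G_i$ ($i\in T^c$) with an edge of $G_{T}\cup\bigsqcup_{j\in T^c,\,j\neq i}G_j$, forcing the decomposition. Everything else is a direct transcription of the argument already given for Proposition \ref{prop:mixedmoments_}.
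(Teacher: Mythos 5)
Your proposal is correct and is exactly the paper's intended argument: the paper proves this generalization by remarking that the proof of Proposition \ref{prop:mixedmoments_} carries over verbatim with $\cP_i$ in place of $\mathcal{P}_{\mathrm{A}}(G_i)$, since only the set-theoretic inclusion-exclusion identity $\bigcap_{i\in T^c}\big(\cP(G_{[c(G)]\setminus\{i\}})\times\cP_i\big)=\cP(G_T)\times\prod_{i\in T^c}\cP_i$ is needed, not any structural property of atomic pairings. Your identification and justification of that delicate point match the paper's.
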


\subsection{Genus expansion}
\label{sec:genus_actual}

We now study a particular setting in which it will be possible to explicitly express $\mathbf{W}_{G_{\phi}}$ for each $\phi$. Recalling that $\mathcal{W}$ is the sequence of random matrices serving as inputs to edges in $E_\mathcal{W}(G)\subset E(G)$, we assume the following.

\begin{assumption}\label{assumption:genus_graph}
      $\mathcal{W}$ is such that all $N_{c_i} = N_{r_i} = N$ for some integer $N>0$, and $G$ is such that $\fd(v)=N$, $\mathbf{X}_v = \mathbf{1}_N$ for all $v\in V$ and $\mathbf{X}_e = \mathbf{I}$ for all $e \in E\setminus E_{\mathcal{W}}(G)$.
\end{assumption}

Under Assumption \ref{assumption:genus_graph}, the definition of $\bW_{G_\phi}$ then directly yields
\begin{equation}\label{eq:allones}
    \bW_{G_\phi} = \sigma_G N^{|V(G_\phi)|}, \quad \sigma_G = \prod_{e\in E_{\mathcal{W}}}\sigma_{\ell(e)},
\end{equation}
given that all the edges of $G_\phi$ are labeled by $\mathbf{I}_{N \times N}$.
The order of $\bW_{G_\phi}$ is thus determined by the number of vertices in $G_\phi$. 

Instead of counting the number of vertices $|V(G_\phi)|$ directly, it will be simpler to associate a surface to $G_\phi$ and use its Euler characteristic to express $|V(G_\phi)|$ in terms of this surface's genus, number of edges and connected components. This forms the basis of the \textit{genus expansion} technique.

To use it, we will need some basic notions from topological graph theory (see \cite{GrossTucker} for a more in-depth introduction). Any connected graph $G=(V,E)$ can be viewed as a topological space, by viewing vertices as distinct points {and edges as subspaces homeomorphic to $[0,1]$ joining their ends, such that edges only meet at their endpoints.
An \textit{embedding} $\rho$ of $G$ on some orientable surface $S$ is then defined as a homeomorphism from $G$ (viewed as a space) to $S$}, and is said to be \textit{cellular} if the resulting faces are all homeomorphic to the open disk. For our purposes, all embeddings are assumed to be cellular.

The connected components of $S\setminus\rho(G)$
are referred to as the \textit{faces} of the embedding, and we denote the total number of faces by $f(G:S)$. Lastly, we denote the genus of $S$ is denoted by $g(S)$. For any cellular embedding, we have the following formula.
\begin{proposition}[Euler Characteristic formula] Let $\rho$ be a (cellular) embedding of a connected $G=(V,E)$ on an orientable surface $S$. Then we have
\[
    |V|-|E|+f(G:S)=2-2g(S).
\]
\end{proposition}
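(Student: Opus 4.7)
The plan is to reduce the statement to two standard facts from algebraic topology: the topological invariance of the Euler characteristic, and the classification of closed orientable surfaces.

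First, I would observe that because $\rho$ is cellular, it equips $S$ with a finite CW-complex structure in a canonical way. The $0$-cells are the images $\rho(v)$ for $v \in V$; the (open) $1$-cells are the images of the open edges $\rho(e \setminus \partial e)$ for $e \in E$, each homeomorphic to the open interval $(0,1)$ with attaching maps sending endpoints to vertex $0$-cells; and the $2$-cells are precisely the connected components of $S \setminus \rho(G)$, each of which is homeomorphic to an open disk by the cellularity hypothesis, with attaching maps given by the boundary walks. This exhibits $S$ as a CW-complex with cell counts $|V|$, $|E|$, and $f(G:S)$ in dimensions $0$, $1$, $2$ respectively.

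Second, I would invoke the topological invariance of the Euler characteristic of a finite CW-complex: the alternating sum of the number of cells in each dimension equals the singular-homology Euler characteristic of the underlying space, and in particular is independent of the CW-decomposition chosen. Applied to the CW-structure above, this gives
\[
\chi(S) \;=\; |V| - |E| + f(G:S).
\]
Finally, I would apply the classification of closed connected orientable surfaces: any such surface is homeomorphic to a sphere with $g$ handles, for a uniquely determined integer $g = g(S) \geq 0$. Computing $\chi$ from a standard polygonal CW-decomposition (one $0$-cell, $2g$ loop $1$-cells, a single $2$-cell attached by the word $\prod_{i=1}^{g}[a_i,b_i]$) yields $\chi(S) = 1 - 2g + 1 = 2 - 2g(S)$. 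Equating the two expressions for $\chi(S)$ delivers the formula.

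The main obstacle in making this completely elementary is the topological invariance of $\chi$ under change of CW-structure, which in full generality rests on homological machinery. If one preferred an induction-on-$|E|$ argument one would have to handle separately the case of an edge bordering two distinct faces (where $|E|$ and $f$ each decrease by $1$ while the surface is unchanged) and the case of an edge on the boundary of a single face (where removing the edge forces a change in the ambient surface, and one must argue carefully that it reduces the genus by one or disconnects). Either route works; the CW-complex route is cleaner given that the statement is standard and is being quoted here only as a tool for the subsequent genus expansion.
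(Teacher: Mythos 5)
Your argument is correct. The paper itself offers no proof of this proposition — it is quoted as a classical fact from topological graph theory with a pointer to the literature (Gross–Tucker), so there is no in-paper argument to compare against. Your route is the standard one: a cellular embedding endows $S$ with a finite CW-structure whose cell counts in dimensions $0$, $1$, $2$ are $|V|$, $|E|$, $f(G:S)$; topological invariance of the Euler characteristic then gives $\chi(S)=|V|-|E|+f(G:S)$; and the classification of closed orientable surfaces, together with the one-vertex polygonal decomposition of the genus-$g$ surface, gives $\chi(S)=2-2g(S)$. You are also right to flag that the invariance of $\chi$ is the only non-elementary input, and your sketch of the alternative edge-deletion induction correctly identifies the two cases one would need to handle. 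Nothing is missing.
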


For instance, if $G$ is a planar graph with $F$ faces, then its drawing on the plane is a cellular embedding and this reduces to the famous identity $|V|-|E|+F=2$. 

\noindent Further, it is well known that all orientable surfaces of genus $g$ are equivalent up to homeomorphism (as a representative of their equivalence class, one can for instance take the surface obtained by adding $g$ ``handles" to a $2$-sphere, an idea going back to \cite{Brahana}). For any $g\geq 0$, it thus makes sense to talk about \textit{the} surface of genus $g$, which we henceforth denote by $S_g$. 

Lastly, we can define embeddings of disconnected graphs by considering each connected component separately. Letting $c(G)$ denote the number of components and $G=\sqcup_{i=1}^{c(G)} G_i$, an embedding of $G$ is given by embedding each $G_i$ on an oriented surface $S_i$. We can also extend the definitions of $f$ and $g$ by writing  $S=\sqcup_{i=1}^{c(G)}S_i$,\[
    g(S):=\sum_{i=1}^{c(G)}g(S_i),\quad \text{ and }f(G:S):=\sum_{i=1}^{c(G)}f(G_i:S_i),
\]
and we note that the Euler characteristic formula still holds with these definitions.

\bigskip

Going back the setting of Assumption \ref{assumption:genus_graph}, let $G$ be such a product graph and $\phi\in \mathcal{P}(G)$ be an admissible pairing of its edges. Recalling that $G_\phi$ is the graph obtained from $G$ after identifying edges paired by $\phi$, we let
\begin{equation}\label{eq:sphi}
        S_\phi := \min_{g\geq 0} \{S_g:\ \text{there exists an embedding } \rho:G_\phi\to S_g\}.
\end{equation}
if $G$ is connected. Otherwise, let $(S_i)_{\phi}$ be the surface obtained this way from each connected component $G_i$ of $G$, and define $S_\phi = \sqcup_{i=1}^{c(G)} (S_i)_{\phi}$.
\begin{remark}
    This minimum in (\ref{eq:sphi}) is always attained, and the value of $g$ for which $S_g=S_\phi$ is often referred to as the \textit{minimum orientable genus} of $G$ in the literature (see \cite{GrossTucker}).  
\end{remark} 
We do not dwell on the details of these concepts and definitions, as we essentially only care about the case when $G_\phi$ is a tree in our arguments. In that case, $g(S_\phi)=0$ and we concern ourselves with $f(G_\phi:S_\phi)$ and $c(G_\phi)$ instead.

Note also that we obtained $S_\phi$ from $G_\phi$ in a different way than in \cite{Dubach2021}: in their case, $G$ is always a cycle, which naturally gives rise to a closed surface after identifying edges. No such construction seems to exist in our case.

\bigskip

For any $G$ satisfying Assumption \ref{assumption:genus_graph} and $\phi \in \cP(G)$ we note that $G_{\phi}$ always has the same number of edges, which we denote by
\begin{equation}
        \check{e}(G) := |E(G_{\phi})| = |E_\mathcal{W}(G)|/2+|E\setminus E_{\mathcal{W}}(G)|,
\end{equation}
since the random edges are paired up by $\phi$ and the rest are left untouched. Theorem \ref{thm:wickexpansion} then yields the following.
\begin{corollary}[Genus expansion]
\label{thm:wickexpansion_ones}
Let $G$ be a product graph satisfying Assumptions \ref{assumption1} and \ref{assumption:genus_graph}.
Then for any $\phi\in \mathcal{P}(G)$,
\begin{equation}
    \bW_{G_\phi} = \sigma_{G}N^{2(c(G_\phi)-g(S_\phi))-f(G_\phi:S_\phi)+\check{e}(G)},
\end{equation}
and it follows that
\begin{equation}
    \E \left\{ \bW_G \right\} = \sigma_G N^{\check{e}(G)} \sum_{\phi \in \cP(G)} N^{2(c(G_\phi)-g(S_\phi))-f(G_\phi:S_{\phi})}.
\end{equation}
\end{corollary}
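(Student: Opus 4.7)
The plan is to assemble the corollary from two already-available pieces: the explicit formula $\mathbf{W}_{G_\phi} = \sigma_G N^{|V(G_\phi)|}$ from Equation~\eqref{eq:allones} (which is the content of Assumption~\ref{assumption:genus_graph}), and the Euler characteristic formula for cellular embeddings. No new probabilistic input is needed beyond what is already in Theorem~\ref{thm:wickexpansion}; the whole task is purely topological bookkeeping, so I do not expect any substantive obstacle, just one point of pedantry about disconnected graphs.

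First I would trivially re-derive $\mathbf{W}_{G_\phi} = \sigma_G N^{|V(G_\phi)|}$: under Assumption~\ref{assumption:genus_graph} every vertex input is $\mathbf{1}_N$ and every non-random edge input is $\mathbf{I}_N$, so the defining sum over indexations of $G_\phi$ decouples into an unconstrained product of $|V(G_\phi)|$ factors of $N$, with the constant prefactor $\sigma_G = \prod_{e \in E_{\mathcal{W}}} \sigma_{\ell(e)}$ (this is exactly the constant attached to each identified edge pair via $\E[W_\ell \odot W_\ell] = \sigma_\ell^2 \mathbf{I}$, and these constants are invariant under the identifications).

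Next, I would feed $|V(G_\phi)|$ through the Euler characteristic formula. Choose, for each connected component of $G_\phi$, a cellular embedding on the minimal-genus orientable surface, so that the union of these realises $S_\phi$ as defined in \eqref{eq:sphi} (this minimum is attained for any finite graph, and in the disconnected case we use the additive conventions for $g$ and $f$ recalled in the excerpt). Summing the Euler characteristic identity across components yields
\begin{equation*}
    |V(G_\phi)| - |E(G_\phi)| + f(G_\phi : S_\phi) \;=\; 2 c(G_\phi) - 2 g(S_\phi).
\end{equation*}
Since an admissible pairing identifies random edges two-at-a-time and leaves the non-random edges untouched, $|E(G_\phi)| = |E_{\mathcal{W}}(G)|/2 + |E\setminus E_{\mathcal{W}}(G)| = \check e(G)$. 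Rearranging gives
\begin{equation*}
    |V(G_\phi)| \;=\; 2\bigl(c(G_\phi) - g(S_\phi)\bigr) - f(G_\phi : S_\phi) + \check e(G),
\end{equation*}
which is precisely the exponent in the first identity of the corollary.

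Finally, the second identity is immediate: substituting the just-derived formula for each $\mathbf{W}_{G_\phi}$ into the Wick expansion $\E\{\mathbf{W}_G\} = \sum_{\phi \in \mathcal{P}(G)} \mathbf{W}_{G_\phi}$ from Theorem~\ref{thm:wickexpansion} and factoring out $\sigma_G N^{\check e(G)}$, which does not depend on $\phi$, yields the claim. The only thing one should double-check is the disconnected case, where one verifies that $\sigma_G$, $\check e(G)$ and the additive conventions for $c$, $g$, $f$ all play well with writing $G = \sqcup_i G_i$ and handling each component of the pairing separately; this is routine.
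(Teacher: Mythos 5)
Your proposal is correct and follows essentially the same route as the paper's own proof: apply $\bW_{G_\phi}=\sigma_G N^{|V(G_\phi)|}$ from Equation~(\ref{eq:allones}), rewrite $|V(G_\phi)|$ via the (componentwise-summed) Euler characteristic formula using $|E(G_\phi)|=\check e(G)$, and substitute into the Wick expansion of Theorem~\ref{thm:wickexpansion}. The extra care you take with the disconnected case and the invariance of $\sigma_G$ under edge identification is consistent with the conventions the paper sets up just before the corollary.
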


\begin{proof}
This follows directly from Equation (\ref{eq:allones}) and the Euler characteristic formula, which gives
\begin{equation}
|V(G_{\phi})| =  2(c(G_{\phi})-g(S_{\phi})) + |E(G_\phi)| - f(G_{\phi}: S_{\phi}).
\end{equation}
\end{proof}

Let us demonstrate how this result can be used to study words of random matrices, as defined in \cite{Dubach2021}. As explained in the following example, this amounts to computing the values of polygonal product graphs. 

\begin{example}
    Under Assumption \ref{assumption:genus_graph}, let $w$ be a word of matrices in $\mathcal{W}$, say $w=W_1W_4^TW_2^TW_5W_3^T$. Then taking $G$ to be the product graph in Figure \ref{fig:polygon-word} below gives $\mathbf{W}_G=\mathrm{Tr}(w)$.
    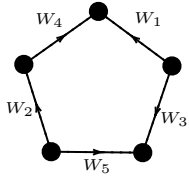
\begin{figure}[t]
        \centering
        \tikzset{every picture/.style={line width=0.75pt}} %set default line width to 0.75pt        

\begin{tikzpicture}[x=0.75pt,y=0.75pt,yscale=-1,xscale=1]
%uncomment if require: \path (0,6537); %set diagram left start at 0, and has height of 6537

%Shape: Regular Polygon [id:dp3596961965794997] 
\draw  [dash pattern={on 0.84pt off 2.51pt}] (110.98,6155.34) -- (148.05,6182.87) -- (133.33,6226.63) -- (87.15,6226.15) -- (73.35,6182.09) -- cycle ;
%Flowchart: Connector [id:dp4519540802863534] 
\draw  [color={rgb, 255:red, 0; green, 0; blue, 0 }  ,draw opacity=1 ][fill={rgb, 255:red, 0; green, 0; blue, 0 }  ,fill opacity=1 ] (113.96,6158.52) .. controls (115.71,6156.88) and (115.81,6154.13) .. (114.17,6152.37) .. controls (112.53,6150.61) and (109.77,6150.51) .. (108.01,6152.16) .. controls (106.26,6153.8) and (106.16,6156.55) .. (107.8,6158.31) .. controls (109.44,6160.07) and (112.2,6160.16) .. (113.96,6158.52) -- cycle ;
%Flowchart: Connector [id:dp8945672156787903] 
\draw  [color={rgb, 255:red, 0; green, 0; blue, 0 }  ,draw opacity=1 ][fill={rgb, 255:red, 0; green, 0; blue, 0 }  ,fill opacity=1 ] (151.03,6186.05) .. controls (152.78,6184.41) and (152.88,6181.66) .. (151.24,6179.9) .. controls (149.6,6178.14) and (146.84,6178.04) .. (145.08,6179.69) .. controls (143.32,6181.33) and (143.23,6184.08) .. (144.87,6185.84) .. controls (146.51,6187.6) and (149.27,6187.69) .. (151.03,6186.05) -- cycle ;
%Flowchart: Connector [id:dp001044781565274322] 
\draw  [color={rgb, 255:red, 0; green, 0; blue, 0 }  ,draw opacity=1 ][fill={rgb, 255:red, 0; green, 0; blue, 0 }  ,fill opacity=1 ] (136.3,6229.81) .. controls (138.06,6228.17) and (138.15,6225.42) .. (136.51,6223.66) .. controls (134.87,6221.9) and (132.11,6221.81) .. (130.36,6223.45) .. controls (128.6,6225.09) and (128.5,6227.84) .. (130.14,6229.6) .. controls (131.78,6231.36) and (134.54,6231.45) .. (136.3,6229.81) -- cycle ;
%Flowchart: Connector [id:dp48099606947605644] 
\draw  [color={rgb, 255:red, 0; green, 0; blue, 0 }  ,draw opacity=1 ][fill={rgb, 255:red, 0; green, 0; blue, 0 }  ,fill opacity=1 ] (90.13,6229.33) .. controls (91.88,6227.69) and (91.98,6224.93) .. (90.34,6223.18) .. controls (88.7,6221.42) and (85.94,6221.32) .. (84.18,6222.96) .. controls (82.43,6224.61) and (82.33,6227.36) .. (83.97,6229.12) .. controls (85.61,6230.88) and (88.37,6230.97) .. (90.13,6229.33) -- cycle ;
%Flowchart: Connector [id:dp42708536118880847] 
\draw  [color={rgb, 255:red, 0; green, 0; blue, 0 }  ,draw opacity=1 ][fill={rgb, 255:red, 0; green, 0; blue, 0 }  ,fill opacity=1 ] (76.32,6185.27) .. controls (78.08,6183.63) and (78.17,6180.87) .. (76.53,6179.11) .. controls (74.89,6177.36) and (72.13,6177.26) .. (70.38,6178.9) .. controls (68.62,6180.54) and (68.52,6183.3) .. (70.16,6185.06) .. controls (71.81,6186.82) and (74.56,6186.91) .. (76.32,6185.27) -- cycle ;
%Straight Lines [id:da710481370463453] 
\draw    (73.35,6182.09) -- (110.98,6155.34) ;
\draw [shift={(94.12,6167.32)}, rotate = 144.6] [color={rgb, 255:red, 0; green, 0; blue, 0 }  ][line width=0.75]    (4.37,-1.32) .. controls (2.78,-0.56) and (1.32,-0.12) .. (0,0) .. controls (1.32,0.12) and (2.78,0.56) .. (4.37,1.32)   ;
%Straight Lines [id:da22553815232315388] 
\draw    (148.05,6182.87) -- (110.98,6155.34) ;
\draw [shift={(127.59,6167.67)}, rotate = 36.6] [color={rgb, 255:red, 0; green, 0; blue, 0 }  ][line width=0.75]    (4.37,-1.32) .. controls (2.78,-0.56) and (1.32,-0.12) .. (0,0) .. controls (1.32,0.12) and (2.78,0.56) .. (4.37,1.32)   ;
%Straight Lines [id:da18865039355503432] 
\draw    (133.33,6226.63) -- (148.05,6182.87) ;
\draw [shift={(139.61,6207.97)}, rotate = 288.6] [color={rgb, 255:red, 0; green, 0; blue, 0 }  ][line width=0.75]    (4.37,-1.32) .. controls (2.78,-0.56) and (1.32,-0.12) .. (0,0) .. controls (1.32,0.12) and (2.78,0.56) .. (4.37,1.32)   ;
%Straight Lines [id:da9796016947308497] 
\draw    (87.15,6226.15) -- (73.35,6182.09) ;
\draw [shift={(79.53,6201.83)}, rotate = 72.6] [color={rgb, 255:red, 0; green, 0; blue, 0 }  ][line width=0.75]    (4.37,-1.32) .. controls (2.78,-0.56) and (1.32,-0.12) .. (0,0) .. controls (1.32,0.12) and (2.78,0.56) .. (4.37,1.32)   ;
%Straight Lines [id:da41975909237548004] 
\draw    (87.15,6226.15) -- (133.33,6226.63) ;
\draw [shift={(112.64,6226.41)}, rotate = 180.6] [color={rgb, 255:red, 0; green, 0; blue, 0 }  ][line width=0.75]    (4.37,-1.32) .. controls (2.78,-0.56) and (1.32,-0.12) .. (0,0) .. controls (1.32,0.12) and (2.78,0.56) .. (4.37,1.32)   ;

% Text Node
\draw (127.67,6154.52) node [anchor=north west][inner sep=0.75pt]  [font=\tiny]  {$W_{1}$};
% Text Node
\draw (62.67,6199.52) node [anchor=north west][inner sep=0.75pt]  [font=\tiny]  {$W_{2}$};
% Text Node
\draw (140.69,6204.75) node [anchor=north west][inner sep=0.75pt]  [font=\tiny]  {$W_{3}$};
% Text Node
\draw (77.69,6154.75) node [anchor=north west][inner sep=0.75pt]  [font=\tiny]  {$W_{4}$};
% Text Node
\draw (101.67,6228.52) node [anchor=north west][inner sep=0.75pt]  [font=\tiny]  {$W_{5}$};

\end{tikzpicture}
        \caption{A product graph whose value is the trace of a word of Ginibre random matrices.}
        \label{fig:polygon-word}
    \end{figure}
    Products of such traces (the subject of Theorem 2.2 in \cite{Dubach2021}) can then be evaluated using Corollary \ref{thm:wickexpansion_ones} by considering disjoint unions of such graphs.
\end{example}

In this specific setting, we can also refine our result on centered mixed moments.
\begin{lemma}\label{lemma:mixedmoments_genus}
Let $G$ satisfy Assumptions \ref{assumption1} and \ref{assumption:genus_graph}. Then for every $\phi\in \cP_{\mathrm{AF}}(G)$, we have $$2(c(G_\phi)-g(S_\phi))-f(G_\phi:S_{\phi})\leq  \frac{c(G)}{2}. $$
Further, equality holds if and only if $\phi$ is bi-atomic, giving
\begin{equation*}
    \E\left\{\prod_{i=1}^{c(G)} 
    \left(\bW_{G_i}-\sigma_{G_i} N^{\check{e}(G_i)+1} |\mathcal{P}_\mathrm{A}(G_i)| \right)\right\} = \sigma_G N^{ \check{e}(G) + \frac{c(G)}{2} } \left(|\mathcal{P}_\mathrm{B}(G)|+\mathcal{O}\left(\frac{1}{N}\right) \right).
\end{equation*}
\end{lemma}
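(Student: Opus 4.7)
The plan is to combine Proposition \ref{prop:mixedmoments_}, Corollary \ref{thm:wickexpansion_ones}, and a cluster-level Euler-characteristic argument. First I would apply Proposition \ref{prop:mixedmoments_} to rewrite the centered mixed moment as $\sum_{\phi\in\cP_{\mathrm{AF}}(G)}\bW_{G_\phi}$. For the atomic correction on each $G_i$, note that by definition any $\psi\in\cP_{\mathrm{A}}(G_i)$ gives a tree $(G_i)_\psi$ with exactly $\check{e}(G_i)$ edges and hence $\check{e}(G_i)+1$ vertices, so under Assumption \ref{assumption:genus_graph} equation (\ref{eq:allones}) yields $\bW_{(G_i)_\psi}=\sigma_{G_i}N^{\check{e}(G_i)+1}$, and summing over $\psi$ produces the subtracted term $\sigma_{G_i}N^{\check{e}(G_i)+1}|\cP_{\mathrm{A}}(G_i)|$ appearing in the statement.

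Next I would invoke Corollary \ref{thm:wickexpansion_ones} to express each term as $\bW_{G_\phi}=\sigma_G N^{\check{e}(G)+D(\phi)}$ with $D(\phi):=2(c(G_\phi)-g(S_\phi))-f(G_\phi:S_\phi)$, so the task reduces to proving the inequality $D(\phi)\leq c(G)/2$ with equality iff $\phi\in\cP_{\mathrm{B}}(G)$. For this, I would decompose $\phi$ into \emph{clusters}: define an equivalence relation on $\{G_1,\dots,G_{c(G)}\}$ by saying $G_i\sim G_j$ when they are connected through a chain of edge pairs in $\phi$. If $A_1,\dots,A_k$ are the equivalence classes with sizes $m_j:=|A_j|$, then by construction $(G_{A_j})_{\phi|_{A_j}}$ is connected in $G_\phi$, so $c(G_\phi)=k$, $g(S_\phi)=\sum_j g(S_{\phi|_{A_j}})$ and $f(G_\phi:S_\phi)=\sum_j f_j$. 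Applying Euler's formula connected-component-wise gives
\begin{equation*}
    D(\phi)=\sum_{j=1}^k\bigl(2-2g_j-f_j\bigr)=\sum_{j=1}^k\bigl(|V_j|-|E_j|\bigr),
\end{equation*}
where $V_j,E_j$ are the vertices and edges of $(G_{A_j})_{\phi|_{A_j}}$. The standard inequality for connected graphs yields $|V_j|-|E_j|\leq 1$, with equality iff the cluster is a tree; moreover when $m_j=1$ the atom-free hypothesis forbids this cluster from being a tree, so $|V_j|-|E_j|\leq 0$. A case analysis on $m_j$ then gives $|V_j|-|E_j|\leq m_j/2$ in every case (trivially for $m_j=1$ since the left side is an integer $\leq 0$; for $m_j=2$ since $|V_j|-|E_j|\leq 1=m_j/2$; for $m_j\geq 3$ since $|V_j|-|E_j|\leq 1 \leq m_j/2$). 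Summing and using $\sum_j m_j=c(G)$ yields $D(\phi)\leq c(G)/2$.

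For the equality case, I would verify that the above cluster inequalities are all tight only when every $m_j=2$ and every cluster is a tree, which is exactly the bi-atomic condition. The main obstacle is this cluster analysis — in particular, handling the genus carefully to reduce to a purely combinatorial statement on $|V_j|-|E_j|$ and then matching the equality case to $\cP_{\mathrm{B}}(G)$. Finally, to obtain the remainder estimate, I would observe that for any non–bi-atomic $\phi\in\cP_{\mathrm{AF}}(G)$ the integrality of $\sum_j(|V_j|-|E_j|)$ forces a loss of at least $1$ when $c(G)$ is even (giving $D(\phi)\leq c(G)/2-1$), so isolating the bi-atomic contribution in $\sigma_G N^{\check{e}(G)}\sum_{\phi\in\cP_{\mathrm{AF}}(G)}N^{D(\phi)}$ produces the stated leading term $|\cP_{\mathrm{B}}(G)|$ together with a relative error of order $1/N$.
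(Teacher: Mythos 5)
Your proof is correct and follows essentially the same route as the paper: both arguments reduce the bound to the per-component Euler-characteristic inequality $2(1-g)-f=|V|-|E|\le 1$, with equality iff the component of $G_\phi$ is a tree, and both use atom-freeness to rule out singleton tree components. Your cluster decomposition with the per-cluster bound $|V_j|-|E_j|\le m_j/2$ is a cleaner packaging of the paper's bookkeeping, which instead splits on whether $c(G_\phi)\le c(G)/2$ and counts how many components of $G_\phi$ must be atoms. One caveat, which applies equally to the paper's own argument: when $c(G)$ is odd your inequalities only give $D(\phi)\le c(G)/2-1/2$ for atom-free $\phi$ (achievable, e.g., by one singleton non-tree cluster together with tree pairs), so the relative error is $\mathcal{O}(N^{-1/2})$ rather than $\mathcal{O}(N^{-1})$; since $|\cP_{\mathrm{B}}(G)|=0$ in that case this does not harm the downstream limit theorems, but strictly speaking the stated remainder is only established for even $c(G)$, as your own parenthetical ``when $c(G)$ is even'' acknowledges.
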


\begin{proof}
    Assume that $
    c(G_{\phi}) \leq {c(G)}/{2}
    $.
    Then it suffices to prove that for each connected component $G_i$ of $G$ and its corresponding surface $S_i$, 
    \[
    2(1-g((S_i)_\phi))-f\big((G_i)_\phi:(S_i)_{\phi}\big) \leq 1
    \]
    and that the equality holds if and only if $(G_i)_\phi$ is a tree.

    Since $f\big((G_i)_\phi:(S_i)_{\phi}\big) \geq 1$ and $g\big((S_i)_\phi\big) \geq 0$, we have 
    \[
    2(1-g\big((S_i)_\phi)\big)-f\big((G_i)_\phi:(S_i)_\phi\big) 
    \leq 
    2(1-0)-1 = 1
    \]
    with equality if and only if $f\big((G_i)_\phi : (S_i)_\phi\big) = 1$ and $g\big((S_i)_\phi\big) = 0$ (which is equivalent to $(G_i)_\phi$ being a tree).
    Now if $c(G_{\phi}) > {c(G)}/{2}$ instead, then there must be at least     
    \[
    2\bigg(c(G_{\phi}) - \frac{c(G)}{2}\bigg)
    \]
    components of $G_\phi$ of the form $(G_j)\big|_{\phi}$ for some $1\leq j\leq c(G)$. Letting $k$ be the number of components obtained this way, then we must have that $k \geq 2(c(G_{\phi}) - {c(G)}/{2})$: assuming otherwise, the inequality 
    \[
    c(G) \geq k + 2(c(G_{\phi}) - k) = 2\bigg(c(G_{\phi}) - \frac{k}{2}\bigg)
    \]
    quickly gives a contradiction. Since $\phi$ is atom-free, 
    \[
    f((G_i)_\phi:(S_i)_\phi) = 1 \implies g((S_i)_\phi) > 0
    \]
    for these components, and it follows that
    \[
    2(1-g\big((S_i)_\phi\big)-f\big((G_i)_\phi:(S_i)_\phi\big) \leq 0.
    \]
    As for the remaining components (of which there are strictly less than $c(G)/2$) this quantity is $\leq 1$ as proved above, and the lemma follows.
\end{proof}

\begin{corollary}
Let $G$ be a product graph satisfying Assumptions \ref{assumption1} and \ref{assumption:genus_graph}, and $\lambda_G := ({\sigma_{G} N^{\check{e}(G)+1}})^{-1}$ 
    \begin{equation}
    \E\left\{ 
    \left(\lambda_G \mathbf{W}_{G}- |\mathcal{P}_\mathrm{A}(G)| \right)^2 \right\} = \frac{1}{N} \left(|\mathcal{P}_\mathrm{B}(G)|+\mathcal{O}\left(\frac{1}{N}\right) \right).
\end{equation}
\end{corollary}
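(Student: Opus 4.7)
The plan is to derive this directly from Lemma~\ref{lemma:mixedmoments_genus} by applying it to the product graph $G' := G \sqcup G$ obtained from two disjoint copies of $G$ sharing the same random inputs, i.e., each $\ell$-edge in either copy is fixed to the \emph{same} $W_\ell \in \mathcal{W}$. Under this identification, the product formula of Definition~\ref{def:product-graph} factors over connected components (as noted in the remark there), so $\mathbf{W}_{G_1} = \mathbf{W}_{G_2} = \mathbf{W}_G$ and the two-fold product on the left-hand side of the lemma collapses into a square.

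First I would read off the invariants of $G'$: one has $c(G') = 2$, $\sigma_{G'} = \sigma_G^2$, $\check{e}(G') = 2\check{e}(G)$, and $\mathcal{P}_{\mathrm{A}}(G_i) = \mathcal{P}_{\mathrm{A}}(G)$ for $i=1,2$. Substituting these into Lemma~\ref{lemma:mixedmoments_genus} would yield
\[
\E\left\{\left(\mathbf{W}_G - \sigma_G N^{\check{e}(G)+1}|\mathcal{P}_{\mathrm{A}}(G)|\right)^2\right\}
= \sigma_G^2 N^{2\check{e}(G)+1}\left(|\mathcal{P}_{\mathrm{B}}(G \sqcup G)| + \mathcal{O}(N^{-1})\right).
\]
I would then multiply both sides by $\lambda_G^2 = \sigma_G^{-2}N^{-2\check{e}(G)-2}$, which collapses the prefactor exactly to $N^{-1}$ and produces the stated identity. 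The notation $|\mathcal{P}_{\mathrm{B}}(G)|$ in the corollary should be read as shorthand for $|\mathcal{P}_{\mathrm{B}}(G \sqcup G)|$, since a connected graph cannot admit a bi-atomic pairing on its own; concretely, this counts the admissible pairings that cross-link the two copies of $G$ into a single tree (together with any fully-atomic pairings on the individual components whose contribution has already been subtracted out on the left-hand side).

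There is no significant obstacle here: the genuine combinatorial work, namely the Euler-characteristic bound showing that atom-free pairings contribute at most $N^{c(G)/2}$ with equality only for the bi-atomic ones, is already carried out in Lemma~\ref{lemma:mixedmoments_genus}. The only point that merits explicit verification is that the doubled-graph construction $G \sqcup G$ with shared random inputs is legitimately a product graph satisfying Assumptions~\ref{assumption1} and~\ref{assumption:genus_graph}, which is immediate since both properties are stable under disjoint unions and under re-use of the same edge labels.
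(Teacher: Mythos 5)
Your derivation is correct and is exactly the route the paper intends: the corollary is stated without proof as an immediate specialization of Lemma \ref{lemma:mixedmoments_genus} to the doubled graph $G\sqcup G$ with shared edge inputs (so that the two-fold product becomes a square), and your bookkeeping of $c(G\sqcup G)=2$, $\sigma_{G\sqcup G}=\sigma_G^2$, $\check{e}(G\sqcup G)=2\check{e}(G)$ and the normalization by $\lambda_G^2$ is right. Your reading of $|\mathcal{P}_{\mathrm{B}}(G)|$ as shorthand for $|\mathcal{P}_{\mathrm{B}}(G\sqcup G)|$ (for connected $G$) is also the correct one, as confirmed by the covariance entries $[C]_{i,j}=|\mathcal{P}_{\mathrm{B}}(G_i\sqcup G_j)|$ in Theorem \ref{thm:joint_gaussian_limit}.
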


Lastly, we note that by combining the previous two results, we can derive the following limit theorem for the joint distribution of product graphs. 
\begin{theorem}\label{thm:joint_gaussian_limit}
    Let $(G_i)_{i\geq 1}$ be a sequence of connected product graphs satisfying Assumptions \ref{assumption1} and \ref{assumption:genus_graph}. Then for any $m\geq 1$,
    \[
        \big((\sigma_{G_i}N^{\check{e}(G_i) + \frac{1}{2}}\big)^{-1}\bW_{G_i} - |\mathcal{P}_\mathrm{A}(G_i)| \sqrt{N})_{i=1}^m \overset{d}{\longrightarrow} \mathbf{Z} = \big(Z_1,...,Z_m\big)^{\top}
    \]
    where the limit is a centered Gaussian vector with covariance function  $C := \E[\mathbf{Z}\mathbf{Z}^{\top}]$
    having entries $[C]_{i,j} = |\mathcal{P}_\mathrm{B}(G_i \sqcup G_j)|$.
\end{theorem}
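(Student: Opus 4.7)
The plan is to apply the method of moments. Since $\mathbf{Z}$ is a centered Gaussian vector with finite covariance, its joint distribution is uniquely determined by its moments, so it suffices to show that for every tuple $(k_1,\dots,k_m)\in\N^m$,
\begin{equation*}
    \E\!\left[\prod_{i=1}^m \big(Y_i^{(N)}\big)^{k_i}\right] \xrightarrow[N\to\infty]{} \E\!\left[\prod_{i=1}^m Z_i^{k_i}\right],
\end{equation*}
where $Y_i^{(N)} := (\sigma_{G_i}N^{\check{e}(G_i)+1/2})^{-1}\bW_{G_i} - |\mathcal{P}_{\mathrm{A}}(G_i)|\sqrt{N}$. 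Joint convergence in distribution then follows from the moment-determinacy of multivariate Gaussians.

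Fix such a tuple and set $K:=\sum_i k_i$. The first step is to consider the disjoint union $G := \bigsqcup_{i=1}^m G_i^{\sqcup k_i}$, whose $K$ connected components I label $H_1,\dots,H_K$ with types $i(j)\in[m]$ (so $H_j\cong G_{i(j)}$). Disjoint union preserves Assumptions \ref{assumption1} and \ref{assumption:genus_graph}, and one has $\sigma_G=\prod_j \sigma_{H_j}$, $\check{e}(G)=\sum_j \check{e}(H_j)$, $c(G)=K$. The identity $\sigma_{G_i}N^{\check{e}(G_i)+1/2}\,Y_i^{(N)} = \bW_{G_i}-\sigma_{G_i}N^{\check{e}(G_i)+1}|\mathcal{P}_{\mathrm{A}}(G_i)|$ lets me recognize $\prod_i(Y_i^{(N)})^{k_i}$, up to the scalar $\sigma_G N^{\check{e}(G)+K/2}$, as the centered product on the left hand side of Lemma \ref{lemma:mixedmoments_genus}. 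Applying that lemma to $G$ and dividing through yields
\begin{equation*}
    \E\!\left[\prod_{i=1}^m \big(Y_i^{(N)}\big)^{k_i}\right] = |\mathcal{P}_{\mathrm{B}}(G)| + \mathcal{O}(1/N).
\end{equation*}

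The next step is the combinatorial factorization
\begin{equation*}
    |\mathcal{P}_{\mathrm{B}}(G)| = \sum_{\pi\in\Pi_2(K)}\,\prod_{\{a,b\}\in \pi} |\mathcal{P}_{\mathrm{B}}(H_a \sqcup H_b)|,
\end{equation*}
where $\Pi_2(K)$ denotes the set of pairings of $[K]$ (empty when $K$ is odd). Any $\phi\in\mathcal{P}_{\mathrm{B}}(G)$ induces a pairing $\pi$ of components via "$a\sim b$ iff $\phi$ pairs an edge of $H_a$ with an edge of $H_b$"; by Definition \ref{def:various_partitions}(2) this relation is a well-defined involution on $[K]$, and the restriction $\phi|_{H_a\sqcup H_b}$ is itself bi-atomic. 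Conversely, any such data recombines into $\phi$. Since $H_a\sqcup H_b \cong G_{i(a)}\sqcup G_{i(b)}$, one has $|\mathcal{P}_{\mathrm{B}}(H_a\sqcup H_b)|=C_{i(a),i(b)}$, and the right hand side is precisely the Isserlis--Wick expansion of $\E[\prod_i Z_i^{k_i}]$; when $K$ is odd both sides vanish, consistent with centered Gaussian odd moments.

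The main obstacle is the combinatorial factorization above: one must verify that in a bi-atomic $\phi$ no tree component of $G_\phi$ can absorb edges from three or more of the $H_j$'s, so that the induced relation on $[K]$ really is a pairing. This is essentially built into Definition \ref{def:various_partitions}(2) — which already stipulates that each component of $G$ be paired with exactly one other so that their union yields a tree — so the argument amounts to a careful unpacking of the definition rather than genuinely new content.
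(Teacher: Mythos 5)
Your proposal is correct and follows essentially the same route as the paper's own proof: the method of moments combined with Lemma~\ref{lemma:mixedmoments_genus} applied to the disjoint union $\bigsqcup_i G_i^{\sqcup k_i}$, with $|\mathcal{P}_{\mathrm{B}}(G)|$ factoring over pairings of connected components to match the Isserlis--Wick expansion of the Gaussian moments. Your version is somewhat more careful about the normalization bookkeeping and the component-pairing factorization, and it correctly uses $|\mathcal{P}_{\mathrm{B}}(\tilde G\sqcup\tilde G')|$ for the pairwise contribution where the paper's proof sketch has a slip writing $|\mathcal{P}_{\mathrm{A}}(\tilde G\sqcup\tilde G')|$.
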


\begin{proof}
    We use the method of moments. Setting 
    \[
    \tilde Z_i := (\sigma_{G_i}N^{\check{e}(G_i) + \frac{1}{2}})^{-1}\bW_{G_i} - |\mathcal{P}_\mathrm{A}(G_i)| \sqrt{N}
    \]we want to compute the expectations
    \[
    \E\left\{ \prod_{i=1}^m (\tilde Z_i)^{k_i}\right\}
    \]
    for non-negative integers $k_i$.
    We are in the setting of Lemma \ref{lemma:mixedmoments_genus} and the leading term of the expectation is the number of bi-atomic pairings of the induced graph.
    Given a possible pairing of its connected components, for each of its pairs $(\tilde G, \tilde G')$ this contributes a factor \(|\mathcal{P}_\mathrm{A}(\tilde G \sqcup \tilde G')|\) to the total sum. 
    But this is exactly the result of computing 
    \[
    \E\left\{ \prod_{i=1}^m (Z_i)^{k_i}\right\}
    \]
    by applying Wick's theorem. 
\end{proof}
% all of the genus expansion stuff, without the combinatorics

\section{Applications to feed-forward networks}
\label{sec:application_NNs}

Recall that a neural network $\Phi_L$ is defined by
\begin{equation}
     \Phi_0(\bx)=W_0\bx,\quad \Phi_{\ell+1}(\bx)=W_{\ell+1}\varphi_{\ell+1}(\Phi_{\ell}(\bx)),
\end{equation}
for a choice of activation functions $(\varphi_\ell)_{\ell \geq 1}$, layer dimensions $(N_\ell)_{\ell\geq 0}$ and weight matrices $(W_{\ell})_{\ell\geq 0}$ (see Equation (\ref{def:NN})). 
In this section, the weight matrices $W_\ell$ are assumed to have i.i.d. Gaussian entries with standard deviation $\sigma_\ell$. Each of the results will depend on a different choice of  $(N_\ell)_{\ell\geq 0}$ and $(\sigma_\ell)_{\ell}$. Extensions to non-Gaussian/sparse matrices are handled in Sections \ref{sec:sparse} and \ref{sec:non_gaussian}. 

Note that the proofs of some auxiliary lemmas stated here are deferred to Section \ref{sec:aux} to help with readability.

% \begin{enumerate}[label=\Roman{*}/]
%     \item Express the quantity that we want to compute in terms of values of product graphs $G$ (for instance, using the tree expansion in theorem \ref{thm:tree_exp}).
%     \item Use Proposition \ref{prop:mixedmoments_} to get scaling limits for the $\mathbf{W}_G$, by identifying the leading terms in the sums, which will depend on the topology of the graphs.
%     \item The leading terms will turn out to be combinatorial quantities, which can be evaluated recursively leveraging symmetries present in the graphs.
%     \item Substitute these quantities back into the expression in the first step.
%     % \item  Substitute the limits for the graphs in the linear combinations and obtain closed-form expressions with analytical structure arising from the combinatorial one. 
% \end{enumerate}

\subsection{Gaussian process limit}\label{sec:GP}
We begin by proving a special case of the Gaussian process limit (Theorem \ref{thm:general_GP}), for $\Phi$ with Gaussian weights. 

\begin{assumption}\label{assumption:GP_lim}
    The input dimension $N_0$ of $\Phi_L$ is fixed, and $N_{\ell}=N$ for all other layers $\ell > 0$. Assume also that $\sigma_0=1$ and $\sigma_{\ell} = N_{\ell}^{-1/2}$  for $\ell\geq 1$,
\end{assumption}

\begin{theorem}\label{thm:GPlimit} 
    Under Assumption \ref{assumption:GP_lim}, for any $M,L \geq 1$ we have 
    \begin{equation}\label{eqn:GP_limit}
        ([\Phi_{L}]_1,...,[\Phi_{L}]_M) \xrightarrow[N \to \infty]{d} \mathcal{GP}(0,K_{L} \otimes \mathrm{Id}_M)
    \end{equation}
    where the right hand side is a Gaussian Process indexed on $\bR^{N_0}$, with diagonal covariance function defined by
        \begin{gather}\label{def:kernel}
        K_{0}(\bx,\by) = \sprod{\bx}{\by}_{\bR^{N_0}}, ~
        K_{\ell+1}(\bx,\by) = \E\left[ \varphi_{\ell+1}(X_{\ell}) \varphi_{\ell+1}(Y_{\ell}) \right] \\
        (X_{\ell},Y_{\ell}) \sim \mathcal{N}\left(0,
        \begin{bmatrix}
            K_{\ell}(\bx,\bx) & K_{\ell}(\bx,\by)
            \\
            K_{\ell}(\by,\bx) & K_{\ell}(\by,\by)
        \end{bmatrix}\right).
    \end{gather}
\end{theorem}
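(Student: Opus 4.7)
The plan is to verify convergence of finite-dimensional distributions via the method of moments, following the pipeline of Section \ref{pipeline}. Fix any finite collection of inputs $\mathbf{x}^{(1)},\dots,\mathbf{x}^{(n)}$ and indices $k_1,\dots,k_n\in[M]$.

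First I would apply Step (I): by the entry-wise tree expansion (\ref{eqn:tree_exp_coords}), the joint moment factors as
\begin{equation*}
\mathbb{E}\prod_{j=1}^n [\Phi_L(\mathbf{x}^{(j)})]_{k_j}
= \sum_{(\tau_j)\in\prod_j \mathbb{T}_{L,k_j}(\mathbf{x}^{(j)})} \Big(\prod_{j=1}^n \tfrac{\varphi_{\tau_j}}{s(\tau_j)}\Big)\, \mathbb{E}\mathbf{W}_{\sqcup_j \tau_j},
\end{equation*}
which is a \emph{finite} sum because the activations are polynomial. Since every tree in $\mathbb{T}_{L,k}(\mathbf{x})$ contains exactly one $W_L$-edge (its root edge), the disjoint union $\sqcup_j \tau_j$ has $n$ such edges; whenever $n$ is odd no admissible pairing exists, so by Theorem \ref{thm:wickexpansion} the joint moment vanishes identically. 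In particular $\mathbb{E}[\Phi_L(\mathbf{x})]_k = 0$.

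For even $n$, apply Step (II) with Wick (Theorem \ref{thm:wickexpansion}) to expand $\mathbb{E}\mathbf{W}_{\sqcup_j \tau_j}=\sum_{\phi\in\mathcal{P}(\sqcup_j\tau_j)}\mathbf{W}_{(\sqcup_j\tau_j)_\phi}$, and then perform Step (III) with an $N$-scaling analysis adapted to the mixed dimensions of the trees: internal vertices contribute a summation factor $N$, leaves contribute $O(1)$ inner products against the inputs $\mathbf{x}^{(j)}$ or the basis vectors $\mathbf{e}_{k_j}$, and each paired pair of height-$\ell$ edges contributes $\sigma_\ell^2 = N^{-1}\mathbf{1}(\ell\geq 1)$. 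A topological count in the spirit of Lemma \ref{lemma:mixedmoments_genus}, adapted to handle the finite-dimensional leaves separately from the $N$-dimensional internal skeleton, shows that the $N$-exponent achieves its maximum (zero) precisely when $\phi$ is \emph{bi-atomic}: the $n$ trees are matched pairwise and each matched pair $(\tau_i,\tau_j)$ collapses to a tree after the identification. All other admissible pairings yield $o(1)$ contributions as $N\to\infty$ and may be discarded.

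Finally, Step (IV): for each matched pair $(\tau_i,\tau_j)$, the bi-atomic condition forces the $W_L$ root edges to be paired, which identifies the roots and produces the factor $\mathbf{e}_{k_i}\odot\mathbf{e}_{k_j} = \delta_{k_i,k_j}\mathbf{e}_{k_i}$. Summing the remaining sub-pairings over all subtree shapes with weights $\varphi_\tau/s(\tau)$ then produces, after invoking the Taylor expansion of the polynomial $\varphi_L$ and the inductive hypothesis, exactly the Wick expansion of $\mathbb{E}[\varphi_L(X_{L-1})\varphi_L(Y_{L-1})]=K_L(\mathbf{x}^{(i)},\mathbf{x}^{(j)})$; this is the induction step on $L$, whose base case $L=0$ is immediate since $\Phi_0(\mathbf{x})=W_0\mathbf{x}$ is exactly Gaussian with covariance $\langle\mathbf{x},\mathbf{y}\rangle\delta_{k,k'}=K_0(\mathbf{x},\mathbf{y})\delta_{k,k'}$. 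Aggregated over all matchings of $\{1,\dots,n\}$ into pairs, the bi-atomic contributions reproduce exactly Wick's formula for a centered Gaussian vector with covariance $(K_L(\mathbf{x}^{(i)},\mathbf{x}^{(j)})\,\delta_{k_i,k_j})_{i,j}$, and this joint moment convergence (together with the polynomial moment determinacy of Gaussians) yields the claimed Gaussian process limit.

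The hard part will be the combinatorial identity in Step (IV): rigorously matching the weights $\varphi_\tau/s(\tau)$ attached to bi-atomic configurations with the Taylor-plus-Wick decomposition of $\mathbb{E}[\varphi_L(X_{L-1})\varphi_L(Y_{L-1})]$ at each inductive step, so that peeling off the top $W_L$ edges reduces the combinatorial problem at level $L$ cleanly to two independent problems at level $L-1$. A secondary technical point is the non-bi-atomic $o(1)$ bound of Step (III), for which Lemma \ref{lemma:mixedmoments_genus} needs a mild extension to accommodate the fixed-dimension-$N_0$ leaves; however, the topological principle (non-tree identifications collapse internal vertices and reduce the number of free $N$-summations) is identical.
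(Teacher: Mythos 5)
Your proposal is correct and follows essentially the same route as the paper: tree expansion, Wick/genus expansion adapted to the finite-dimensional leaves and $\mathbf{e}_k$-roots, domination by bi-atomic pairings under the GP parametrisation, and an induction on depth that peels off the root $W_L$-edges and matches the weights $\varphi_\tau/s(\tau)$ against the Taylor--Wick expansion of $\E[\varphi_L(X_{L-1})\varphi_L(Y_{L-1})]$ (the paper's Lemmas \ref{lemma:GP_scaling} and \ref{lemma:alpha_trees} together with Proposition \ref{app:prop:expectation_tree_expansion}). The only difference is organisational: the paper first proves a joint Gaussian limit for the tree values $\bW_\eta$ (Proposition \ref{lemma:joint_limit_trees}) and then pushes it through the finite linear combination, whereas you compute the moments of the network outputs directly, and you correctly flag the two genuinely delicate points (the leaf-adapted scaling bound and the root-peeling combinatorial identity).
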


%\subsubsection{Expressing in terms of graphs.}
To prove this, we first consider the coordinate-wise tree expansion of $\Phi_{L}$ given in Equation (\ref{eqn:tree_exp_coords}):
\[
[\Phi_{L}(\bx)]_k = \sum_{\eta \in \mathbb{T}_{L,k}(\bx)} \frac{\varphi_{\eta}}{s(\eta)} \bW_{\eta}  \in \bR.
\]
Finding a distributional limit of $\Phi_{L}$ using this expansion thus amounts to finding the limiting distribution of
$$\left(\bW_{\eta} ~|~ \eta \in \cup_{i=1}^M \mathbb{T}_{L,k_i}(\bx_i)\right)$$ 
as $N \to \infty$, for any fixed sequences $(\bx_i \in \bR^{N_0} ~|~ i \in [M])$ and  $(k_i \in [N_{L+1}] ~|~ i \in [M])$.

The mixed moments of the $\bW_{\eta}$ can be computed using Proposition \ref{prop:mixedmoments_}, which yields
\begin{align}\label{eq:mixedmoments}
    \E\bigg\{\prod_{i=1}^{m}\big(\bW_{\eta_i}-\sum_{\phi\in \mathcal{P}_{\mathrm{A}}(\eta_i)}\mathbf{W}_{(\eta_i)_{\phi}}\big)\bigg\}=\sum_{\phi \in \mathcal{P}_{\mathrm{AF}}(\sqcup_i \eta_i)} \mathbf{W}_{(\sqcup_i \eta_i)_{\phi}}.
\end{align}
To identify the leading order terms on the right-hand side, we must therefore identify the pairings $\phi\in \mathcal{P}_{\mathrm{AF}}(\sqcup_i \eta_i)$ which maximize $\mathbf{W}_{(\sqcup_i \eta_i)_\phi}$. 
This is done in the following lemma.
\begin{lemma}\label{lemma:GP_scaling}
    Under Assumption \ref{assumption:GP_lim}, for any $L \geq 0$ and fixed sequences $(\bx_i \in \bR^{N_0} ~|~ i \in [M])$ , $(k_i \in [N_{L+1}] ~|~ i \in [M])$ and $(\eta_i \in \bT_{L,k_i}(\bx_i) ~|~ i \in [M])$, we have
    \begin{equation}
    \forall i \in [M], \quad 
    \mathcal{P}_{\mathrm{A}}(\eta_i) = \emptyset \text{, and thus} 
    \sum_{\psi \in \mathcal{P}_{\mathrm{A}}(\eta_i)} \mathbf{W}_{(\eta_i)_{\psi}}
    = 0,
    \end{equation} 
    \begin{equation}
        \sum_{\phi \in \mathcal{P}_{\mathrm{AF}}(\sqcup_i \eta_i)} \mathbf{W}_{(\sqcup_i \eta_i)_{\phi}}
        = \sigma_{\sqcup_i \eta_i} N^{\tfrac{1}{2}\left( |E(\sqcup_i \eta_i)| - |\mathcal{L}_0(\sqcup_i \eta_i)|\right)}
        \left(\alpha(\sqcup_i \eta_i)
        + \mathcal{O}\Big(\frac{1}{N}\Big)\right),
    \end{equation}
    where $\mathcal{L}_0(\sqcup_i\eta_i)$ denotes the set of \textit{leaves} of $\sqcup_i\eta_i$ which are endpoints of $0$-labeled edges, 
    \[
    \alpha(\sqcup_i \eta_i) := 
    \sum_{\phi \in \cP_{\mathrm{B}}(\sqcup_i \eta_i)} 
    \delta_{\phi}
    \sprod{\bx}{\bx}_{\phi}
    %\sprod{\mathbf{e}}{\mathbf{e}}_{\phi}
    \]
    where $\delta_{\phi}$ is equal to one if roots paired by $\phi$  are fixed to the same $\mathbf{e}_i$ and zero otherwise, and
    % and \footnote{Note that the fact that classes in $(\mathcal{L}(\sqcup_i\eta_i)/\sim_{\phi})$ correspond to pairs instead of generic subsets is a non-trivial claim.}
    \[
    \sprod{\bx}{\bx}_{\phi} := \prod_{\{\bullet_{\bx_i}, \bullet_{\bx_j}\} \in (\mathcal{L}_0(\sqcup_i\eta_i)/\sim_{\phi})} \sprod{\bx_i}{\bx_j}_{\bR^{N_0}}.
    \]
\end{lemma}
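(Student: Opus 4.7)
The strategy is to apply the Wick expansion of Theorem \ref{thm:wickexpansion} to each $\mathbf{W}_{(\sqcup_i \eta_i)_\phi}$, track the $N$-scaling via an Euler-characteristic argument in the spirit of Lemma \ref{lemma:mixedmoments_genus}, and conclude that only bi-atomic pairings contribute to leading order.

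For the first identity, each tree $\eta_i \in \bT_{L,k_i}(\bx_i)$ has a unique root vertex whose sole incident edge carries the label $W_L$; no other edge of $\eta_i$ shares this label, since the construction of $\bT_{\ell}$ inserts exactly one new edge of the top label at each iteration. Admissibility forces pairings only among identically-labelled edges, so this root edge cannot be paired within $\eta_i$ alone. Hence $\mathcal{P}_{\mathrm{A}}(\eta_i)=\emptyset$ and the sum is vacuously zero.

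For the second identity, fix $\phi \in \mathcal{P}_{\mathrm{AF}}(\sqcup_i \eta_i)$. Expanding $\bW_{(\sqcup_i \eta_i)_\phi}$ via (\ref{eq:W_Glemma}) produces four factors: a global variance $\sigma_{\sqcup_i \eta_i}=\prod_{\ell\geq 1}N^{-e_\ell/2}$ (with $e_\ell$ the number of $\ell$-labelled edges); a factor $N^{I(\phi)}$ from the index summations over identified internal vertices (each with input $\mathbf{1}_N$), where $I(\phi)$ counts internal equivalence classes; inner products $\sprod{\bx_i}{\bx_j}_{\bR^{N_0}}$ from each identified leaf pair (each leaf has a unique $0$-labelled incident edge, so leaf identifications necessarily occur in pairs); and Kronecker factors $\delta_{k_i,k_j}$ from each identified root pair. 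Everything thus reduces to bounding $I(\phi)$. Writing $M$ for the number of trees and noting $|V((\sqcup_i \eta_i)_\phi)|=|\mathcal{L}|/2+M/2+I(\phi)$ and $|E((\sqcup_i \eta_i)_\phi)|=|E|/2$, the per-component inequality $|V_i|-|E_i|\leq 1$ (with equality iff the component is a tree) gives
\[
I(\phi) \;\leq\; c((\sqcup_i \eta_i)_\phi) \;+\; \tfrac{|E|-|\mathcal{L}|-M}{2}.
\]
Since by the argument for Part~1 the root edge of each $\eta_i$ must pair with the root edge of some $\eta_j\neq\eta_i$, every component of $(\sqcup_i\eta_i)_\phi$ contains at least two original trees, so $c((\sqcup_i\eta_i)_\phi)\leq M/2$. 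Combining gives $I(\phi)\leq(|E|-|\mathcal{L}|)/2$, with simultaneous equality iff the components of $(\sqcup_i\eta_i)_\phi$ come in tree-pairs and each is globally a tree --- i.e., iff $\phi\in\mathcal{P}_{\mathrm{B}}(\sqcup_i\eta_i)$. Integrality of $I(\phi)$ then forces non-bi-atomic pairings to contribute a relative factor of at most $N^{-1}$, yielding the claimed $O(1/N)$ error, and the bi-atomic sum reproduces exactly $\alpha(\sqcup_i \eta_i)$.

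The main obstacle is the careful identification of the simultaneous-equality case in the two-step inequality, which must be matched against Definition \ref{def:various_partitions}'s bi-atomicity: equality forces trees to pair exclusively (from $c=M/2$) and each such merged pair to be globally a tree (from the per-component Euler step), which together constitute bi-atomicity. The remaining leaf/root bookkeeping and the polynomial-in-$N$ control of sub-leading contributions follow from standard Wick-type manipulations.
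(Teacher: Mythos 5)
Your proof is correct and follows essentially the same route as the paper's: both parts rest on the observation that each root edge is the unique $L$-edge of its tree (which empties $\mathcal{P}_{\mathrm{A}}(\eta_i)$ and forces cross-tree root pairings), followed by factoring out the leaf, root and variance contributions and bounding the number of internal vertex classes by the tree inequality $|V|-|E|\le c$ together with $c\le M/2$, with simultaneous equality precisely at bi-atomic pairings. The paper packages the counting step as a reduction to the pruned graph $G^1$ and an appeal to Lemma \ref{lemma:mixedmoments_genus}, whereas you inline the Euler count and obtain $c\le M/2$ directly from the root-pairing constraint --- a harmless, arguably cleaner variant; the only blemish is your parenthetical $\sigma_{\sqcup_i\eta_i}=\prod_{\ell\ge 1}N^{-e_\ell/2}$, which prematurely specializes the variances (the lemma keeps the $\sigma_\ell$ general at this stage), but nothing in your argument depends on it.
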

\begin{proof}
    See Section \ref{sec:proofs_lem2&3}.
\end{proof}
Here, $\alpha(\sqcup_i\eta_i)$ is essentially a weighted version of the cardinality of $\mathcal{P}_\mathrm{B}(\sqcup_i\eta_i)$.
Applying this lemma to (\ref{eq:mixedmoments}) gives
\begin{equation}\label{eqn:GP_tree_scaling}
     \E\left\{\prod_{i=1}^{m} \bW_{\eta_i}\right\}
     = \sigma_{\sqcup_i \eta_i} N^{\tfrac{1}{2}({|E(\sqcup_i \eta_i)| - |\mathcal{L}_0(\sqcup_i \eta_i)|})}
        \left(
        \alpha(\sqcup_i \eta_i) + \mathcal{O}\Big(\frac{1}{N}\Big)\right).
\end{equation}
\begin{remark}
    To have a well-defined distributional limit for $\Phi_{L}$, the $\sigma_{\ell}$ must be chosen in a way that makes the above quantity $\mathcal{O}(1)$ for all $\eta_i \in \bT_{L,k_i}$. 
This is equivalent to requiring that
$\sigma_{\eta} N^{\tfrac{1}{2}({|E(\eta)| - |\mathcal{L}_0(\eta)}|)} = \mathcal{O}(1)$ for all $L,k \geq 1$ and $\eta \in \mathbb{T}_{L,k}$, which is in turn equivalent to 
\begin{equation}\label{eq:GPlimitinit}
     \sigma_0=\mathcal{O}(1) \text{ and } \sigma_{\ell} = \mathcal{O}(N_{\ell}^{-1/2}) \,\,  \forall \ell \geq 1.
\end{equation}
The canonical choice would then be the one for which $\sigma_{\eta} N^{\tfrac{1}{2}({|E(\eta)| - |\mathcal{L}_0(\eta)}|)} = 1$, which explains our Assumption \ref{assumption:GP_lim} and is known as the ``GP limit" parametrisation (see \cite{Lee2017DeepNN}). 
\end{remark}

\noindent The proof of Theorem \ref{thm:joint_gaussian_limit} (replacing $|\mathcal{P}_{\mathrm{B}}(\sqcup_i\eta_i)|$ with $\alpha(\sqcup_i\eta_i)$) then gives the following limit.
\begin{proposition}\label{lemma:joint_limit_trees}
    Assume that Assumption \ref{assumption:GP_lim} holds. Then for any $M,L \geq 1$  and fixed sequences $(\bx_i \in \bR^{N_0} ~|~ i \in [M])$ , $(k_i \in [N_{L+1}] ~|~ i \in [M])$ and $(\eta_i \in \bT_{L,k_i}(\bx_i) ~|~ i \in [M])$ we have 
    \begin{equation}
        \left(\bW_{\eta_i} ~\bigg|~ i \in [M] \right) 
        \xrightarrow[N \to \infty]{d} 
        \left(Z_{\eta_i} ~\bigg|~ i \in [M] \right) 
        \overset{d}{=}\, \mathcal{GP}(0,C)
    \end{equation}
    where the right-hand side denotes a Gaussian process indexed on $(\eta_i \in \bT_{L,k_i}(\bx_i) ~|~ i \in [M])$ with covariance $C(\eta,\nu) = \alpha(\eta \sqcup \nu)$, $\overset{d}{=}$ denotes equality in distribution and $\alpha$ is defined as in the statement of Lemma \ref{lemma:GP_scaling}.
\end{proposition}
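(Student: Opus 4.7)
The plan is to apply the method of moments: I will show that every joint mixed moment $\mathbb{E}\bigl\{\prod_{i=1}^m \bW_{\eta_i}\bigr\}$ (for arbitrary $m$ and an arbitrary multiset of trees $\eta_1,\dots,\eta_m$ drawn from the families $\bT_{L,k_i}(\bx_i)$) converges as $N \to \infty$ to the corresponding mixed moment of a centered Gaussian process with covariance $C(\eta,\nu) = \alpha(\eta \sqcup \nu)$. Since the limiting covariance is bounded (it is a finite combinatorial sum of inner products of fixed vectors $\bx_i$), the candidate Gaussian law is determined by its moments, so convergence of all joint moments implies joint convergence in distribution.

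First I would combine the two conclusions of Lemma \ref{lemma:GP_scaling} with the centered Wick expansion of Proposition \ref{prop:mixedmoments_}. Because each atomic sum $\sum_{\psi \in \mathcal{P}_{\mathrm{A}}(\eta_i)} \bW_{(\eta_i)_\psi}$ vanishes identically, Proposition \ref{prop:mixedmoments_} gives
\begin{equation*}
    \mathbb{E}\Big\{\prod_{i=1}^m \bW_{\eta_i}\Big\}
    = \sum_{\phi \in \mathcal{P}_{\mathrm{AF}}(\sqcup_i \eta_i)} \bW_{(\sqcup_i \eta_i)_\phi}.
\end{equation*}
The scaling statement in Lemma \ref{lemma:GP_scaling}, together with the GP-limit choice $\sigma_\ell = N_\ell^{-1/2}$ (which makes the normalising factor $\sigma_{\sqcup_i \eta_i}\, N^{\frac{1}{2}(|E(\sqcup_i \eta_i)| - |\mathcal{L}(\sqcup_i \eta_i)|)}$ equal to one, by tracking how many edges and leaves each layer contributes), then yields
\begin{equation*}
    \mathbb{E}\Big\{\prod_{i=1}^m \bW_{\eta_i}\Big\}
    \;=\; \alpha(\sqcup_i \eta_i) \;+\; \mathcal{O}(N^{-1}).
\end{equation*}

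Next I would check that the leading constant $\alpha(\sqcup_i \eta_i)$ is precisely the Isserlis/Wick expansion of the $m$-th mixed moment of the Gaussian process with covariance $C(\eta,\nu) = \alpha(\eta\sqcup\nu)$. This is the combinatorial crux of the argument, and is where I expect the main work to sit. The point is that a bi-atomic pairing $\phi \in \mathcal{P}_{\mathrm{B}}(\sqcup_i \eta_i)$ by definition (Definition \ref{def:various_partitions}) partitions the connected components into pairs $\{\eta_i,\eta_j\}$ such that the restricted pairing makes $(\eta_i \sqcup \eta_j)_{\phi}$ a tree, so bi-atomic pairings are in bijection with pairs $(\pi,(\phi_{\{i,j\}})_{\{i,j\} \in \pi})$ where $\pi$ is a pairing of $[m]$ and each $\phi_{\{i,j\}} \in \mathcal{P}_{\mathrm{B}}(\eta_i \sqcup \eta_j)$. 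Since both $\delta_\phi$ and $\langle \bx,\bx\rangle_\phi$ factorise over these pair-components,
\begin{equation*}
    \alpha(\sqcup_i \eta_i)
    \;=\; \sum_{\pi} \prod_{\{i,j\}\in \pi} \alpha(\eta_i \sqcup \eta_j)
    \;=\; \sum_{\pi} \prod_{\{i,j\}\in \pi} C(\eta_i,\eta_j),
\end{equation*}
which matches Wick's theorem (Theorem \ref{thm:Wicks_theo}) applied to the claimed limiting Gaussian vector. In particular, when $m$ is odd the sum is empty and the limit vanishes, consistent with a centered Gaussian.

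Finally, to upgrade moment convergence into joint convergence in distribution, I would note that the limiting Gaussian law is determined by its moments (the covariance $C$ takes finitely many values on any finite collection of trees, so the moments satisfy Carleman's condition trivially). Convergence in distribution of the finite-dimensional marginals then follows from the standard moment method, completing the proof. The computational content is entirely in the bijection of the previous paragraph; everything else is an immediate assembly of Lemma \ref{lemma:GP_scaling}, Proposition \ref{prop:mixedmoments_}, and the choice of variances dictated by (\ref{eq:GPlimitinit}).
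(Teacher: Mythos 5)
Your proposal is correct and follows essentially the same route as the paper: the paper derives Equation (\ref{eqn:GP_tree_scaling}) from Proposition \ref{prop:mixedmoments_} and Lemma \ref{lemma:GP_scaling} and then simply invokes the method of moments, with the factorisation of bi-atomic pairings over pairs of connected components (your "combinatorial crux") being exactly the argument used in the proof of Theorem \ref{thm:joint_gaussian_limit}. Your write-up merely makes explicit the steps the paper leaves implicit.
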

\noindent Lastly, we will need the following decomposition of the covariances $\alpha(\eta\sqcup \nu)$.
\begin{lemma}\label{lemma:alpha_trees}
Let $\alpha$ be as in Lemma \ref{lemma:GP_scaling} and fix $\ell \geq 1$. For any $\tau = [\tau_1 \cdots \tau_n]_{\ell} \in \bT_{\ell}(\bx_1)$ and $\eta = [\eta_1 \cdots \eta_m]_{\ell} \in \bT_{\ell}(\bx_2)$, let $\tau^{(i)}$ denote the tree obtained from $\tau$ by fixing its unique out-vertex to $\mathbf{e}_i$, and similarly for $\eta^{(j)}$. Then we have that
    \[
    \alpha(\tau^{(i)} \sqcup \eta^{(j)}) = \delta_{ij} \sum_{\pi} \prod_{\{\pi_1, \pi_2\} \in \pi} \alpha(\pi_1 \sqcup \pi_2)
    \]
where the sum is over all pairings $\pi$ of the elements of $(\tau_1^{(i)}, \dots, \tau_n^{(i)}, \eta_1^{(j)}, \dots, \eta_m^{(j)})$.
\end{lemma}
\begin{proof}
    See Section \ref{sec:proofs_lem2&3}.
\end{proof}

We are now ready to prove Theorem \ref{thm:GPlimit}.
\begin{proof}[Proof of Theorem \ref{thm:GPlimit}]
    It follows directly from Proposition \ref{lemma:joint_limit_trees} and Equation \eqref{eqn:tree_exp_coords} that 
    \[
        ([\Phi_L]_1,...,[\Phi_L]_{M})\overset{d}{\longrightarrow}\mathcal{GP}(0, K_L \otimes \mathrm{Id}_M)
    \]
    where for any $\ell\geq 0$, $K_{\ell}$ is defined by
    \begin{equation}\label{eqn:GP_lim_tree}
        K_{\ell}(\bx,\by) = 
        \sum_{\eta_{\bx} \in \bT_{\ell,1}(\bx) } \sum_{\eta_{\by} \in \bT_{\ell,1}(\by) }
        \frac{\varphi_{\eta_{\bx}}\varphi_{\eta_{\by}}}{s(\eta_{\bx}) s(\eta_{\by})}\alpha(\eta_{\bx} \sqcup \eta_{\by}) \in \bR.
    \end{equation}
    What's left is to show that this definition of $K_{\ell}$ coincides with the one in the theorem statement. 
    We proceed by induction on $\ell$.
    The base case $\ell=0$ follows from $\bT_{0,i}(\bx) = \{\text{\begin{tikzpicture}[x=0.75pt,y=0.75pt,yscale=-.5,xscale=.5]
%uncomment if require: \path (0,147); %set diagram left start at 0, and has height of 147

%Straight Lines [id:da18068410433688165] 
\draw    (80.64,65) -- (121.14,65) ;
\draw [shift={(103.29,65)}, rotate = 180] [color={rgb, 255:red, 0; green, 0; blue, 0 }  ][line width=0.75]    (4.37,-1.32) .. controls (2.78,-0.56) and (1.32,-0.12) .. (0,0) .. controls (1.32,0.12) and (2.78,0.56) .. (4.37,1.32)   ;
%Flowchart: Connector [id:dp9202369762522851] 
\draw  [color={rgb, 255:red, 0; green, 0; blue, 0 }  ,draw opacity=1 ][fill={rgb, 255:red, 0; green, 0; blue, 0 }  ,fill opacity=1 ][line width=1.5]  (124.11,68.18) .. controls (125.87,66.54) and (125.97,63.79) .. (124.33,62.03) .. controls (122.68,60.27) and (119.93,60.18) .. (118.17,61.82) .. controls (116.41,63.46) and (116.32,66.21) .. (117.96,67.97) .. controls (119.6,69.73) and (122.36,69.82) .. (124.11,68.18) -- cycle ;
%Flowchart: Connector [id:dp8237979785649205] 
\draw  [color={rgb, 255:red, 0; green, 0; blue, 0 }  ,draw opacity=1 ][fill={rgb, 255:red, 0; green, 0; blue, 0 }  ,fill opacity=1 ][line width=1.5]  (83.62,68.18) .. controls (85.37,66.54) and (85.47,63.79) .. (83.83,62.03) .. controls (82.19,60.27) and (79.43,60.18) .. (77.67,61.82) .. controls (75.92,63.46) and (75.82,66.21) .. (77.46,67.97) .. controls (79.1,69.73) and (81.86,69.82) .. (83.62,68.18) -- cycle ;

% Text Node
\draw (83,44) node [anchor=north west][inner sep=0.75pt]  [font=\tiny,rotate=-0.89]  {$W_{0}$};
% Text Node
\draw (50,57.97) node [anchor=north west][inner sep=0.75pt]  [font=\tiny,rotate=-0.89]  {$\mathbf{e}_{i}$};
% Text Node
\draw (128.79,58.54) node [anchor=north west][inner sep=0.75pt]  [font=\tiny,rotate=-0.89]  {$\mathbf{x}$};

\end{tikzpicture}}\}$ and $\alpha(\text{\begin{tikzpicture}[x=0.75pt,y=0.75pt,yscale=-.5,xscale=.5]
%uncomment if require: \path (0,147); %set diagram left start at 0, and has height of 147

%Straight Lines [id:da18068410433688165] 
\draw    (80.64,65) -- (121.14,65) ;
\draw [shift={(103.29,65)}, rotate = 180] [color={rgb, 255:red, 0; green, 0; blue, 0 }  ][line width=0.75]    (4.37,-1.32) .. controls (2.78,-0.56) and (1.32,-0.12) .. (0,0) .. controls (1.32,0.12) and (2.78,0.56) .. (4.37,1.32)   ;
%Flowchart: Connector [id:dp9202369762522851] 
\draw  [color={rgb, 255:red, 0; green, 0; blue, 0 }  ,draw opacity=1 ][fill={rgb, 255:red, 0; green, 0; blue, 0 }  ,fill opacity=1 ][line width=1.5]  (124.11,68.18) .. controls (125.87,66.54) and (125.97,63.79) .. (124.33,62.03) .. controls (122.68,60.27) and (119.93,60.18) .. (118.17,61.82) .. controls (116.41,63.46) and (116.32,66.21) .. (117.96,67.97) .. controls (119.6,69.73) and (122.36,69.82) .. (124.11,68.18) -- cycle ;
%Flowchart: Connector [id:dp8237979785649205] 
\draw  [color={rgb, 255:red, 0; green, 0; blue, 0 }  ,draw opacity=1 ][fill={rgb, 255:red, 0; green, 0; blue, 0 }  ,fill opacity=1 ][line width=1.5]  (83.62,68.18) .. controls (85.37,66.54) and (85.47,63.79) .. (83.83,62.03) .. controls (82.19,60.27) and (79.43,60.18) .. (77.67,61.82) .. controls (75.92,63.46) and (75.82,66.21) .. (77.46,67.97) .. controls (79.1,69.73) and (81.86,69.82) .. (83.62,68.18) -- cycle ;

% Text Node
\draw (83,44) node [anchor=north west][inner sep=0.75pt]  [font=\tiny,rotate=-0.89]  {$W_{0}$};
% Text Node
\draw (50,57.97) node [anchor=north west][inner sep=0.75pt]  [font=\tiny,rotate=-0.89]  {$\mathbf{e}_{i}$};
% Text Node
\draw (128.79,58.54) node [anchor=north west][inner sep=0.75pt]  [font=\tiny,rotate=-0.89]  {$\mathbf{x}$};

\end{tikzpicture}} \sqcup \text{\begin{tikzpicture}[x=0.75pt,y=0.75pt,yscale=-.5,xscale=.5]
%uncomment if require: \path (0,147); %set diagram left start at 0, and has height of 147

%Straight Lines [id:da18068410433688165] 
\draw    (80.64,65) -- (121.14,65) ;
\draw [shift={(103.29,65)}, rotate = 180] [color={rgb, 255:red, 0; green, 0; blue, 0 }  ][line width=0.75]    (4.37,-1.32) .. controls (2.78,-0.56) and (1.32,-0.12) .. (0,0) .. controls (1.32,0.12) and (2.78,0.56) .. (4.37,1.32)   ;
%Flowchart: Connector [id:dp9202369762522851] 
\draw  [color={rgb, 255:red, 0; green, 0; blue, 0 }  ,draw opacity=1 ][fill={rgb, 255:red, 0; green, 0; blue, 0 }  ,fill opacity=1 ][line width=1.5]  (124.11,68.18) .. controls (125.87,66.54) and (125.97,63.79) .. (124.33,62.03) .. controls (122.68,60.27) and (119.93,60.18) .. (118.17,61.82) .. controls (116.41,63.46) and (116.32,66.21) .. (117.96,67.97) .. controls (119.6,69.73) and (122.36,69.82) .. (124.11,68.18) -- cycle ;
%Flowchart: Connector [id:dp8237979785649205] 
\draw  [color={rgb, 255:red, 0; green, 0; blue, 0 }  ,draw opacity=1 ][fill={rgb, 255:red, 0; green, 0; blue, 0 }  ,fill opacity=1 ][line width=1.5]  (83.62,68.18) .. controls (85.37,66.54) and (85.47,63.79) .. (83.83,62.03) .. controls (82.19,60.27) and (79.43,60.18) .. (77.67,61.82) .. controls (75.92,63.46) and (75.82,66.21) .. (77.46,67.97) .. controls (79.1,69.73) and (81.86,69.82) .. (83.62,68.18) -- cycle ;

% Text Node
\draw (83,44) node [anchor=north west][inner sep=0.75pt]  [font=\tiny,rotate=-0.89]  {$W_{0}$};
% Text Node
\draw (50,57.97) node [anchor=north west][inner sep=0.75pt]  [font=\tiny,rotate=-0.89]  {$\mathbf{e}_{i}$};
% Text Node
\draw (128.79,58.54) node [anchor=north west][inner sep=0.75pt]  [font=\tiny,rotate=-0.89]  {$\mathbf{y}$};

\end{tikzpicture}}) = \sprod{\bx}{\by}_{\bR^{N_0}}$.
    
    For the inductive step, we assume that the claim holds for $\ell$ and apply Proposition \ref{app:prop:expectation_tree_expansion} with $\cA_1 = \bT_{\ell,1}(\bx)$, $\cA_2 = \bT_{\ell,1}(\by)$ and $$\lambda(\eta,\nu) = \frac{\varphi_{\eta}\varphi_{\nu}}{s(\eta) s(\nu)}\alpha(\eta \sqcup \nu).$$
    Note that $\bT_{\ell+1,1}(\bx)$ and $\bX_{\cA_1}$ are in bijection via $\eta := [\eta_1 \cdots \eta_m]_{\ell+1} \mapsto \bar\eta := \llbracket \eta_1 \cdots \eta_m \rrbracket$ and under this identification we have 
    \[
    s(\eta) = {\fs(\bar\eta)} \prod_i s(\eta_i),\quad  \varphi_\eta = 
    \varphi_{\ell+1}^{(m)} \prod_i \varphi_{\eta_i}.
    \]
    Lemma \ref{lemma:alpha_trees} thus gives 
    \[
    \alpha(\eta \sqcup \nu) = \sum_{\pi \in \cP(\bar\eta \sqcup \bar\nu)} \prod_{\{\pi_1, \pi_2\} \in \pi} \alpha(\pi_1 \sqcup \pi_2),
    \]
    and this completes the proof since
    \[
    \frac{\varphi_{\eta}\varphi_{\nu}}{s(\eta) s(\nu)} \alpha(\eta \sqcup \nu)  = 
    \frac{\varphi_{\bar\eta}\varphi_{\bar\nu}}{\fs(\bar\eta) \fs(\bar\nu)} 
    \sum_{\pi \in \cP(\bar\eta \sqcup \bar\nu)} \prod_{\{\pi_1, \pi_2\} \in \pi} \lambda(\pi_1, \pi_2).
    \]
\end{proof}

\subsection{Convergence of the Neural Tangent Kernel}\label{sec:NTK}
Recall that $\Theta_{L}(\bx,\by)\in \mathbb{R}^{N_{L+1}\times N_{L+1}}$ is the matrix representation of
\begin{equation}
    \sum_{\ell=0}^{L} \lambda_{\ell}^2 (\mathrm{d}\Phi_{L}(\bx))_{W_{\ell}}\circ(\mathrm{d}\Phi_{L}(\by))_{W_{\ell}}^{\top},
\end{equation}
for $(\lambda_\ell^2)_{\ell\geq 0}$. We prove convergence of $\Theta_L(\mathbf{x},\mathbf{y})$ to its limit under the following conditions. 
\begin{assumption}\label{assumption:NTK}
     Assume that the input and output dimensions $N_0$ and $N_{L+1}$ of $\Phi_L$ are fixed, and that $N_{\ell}=N$ for all other layers $0<\ell \leq L$.
     Assume also that $\sigma_0=1$ and $\sigma_\ell = N^{-\frac{1}{2}}$ for $\ell>0$ (the ``GP limit" initialization).
\end{assumption}

\begin{theorem}\label{thm:ntk}
    Under Assumption \ref{assumption:NTK} and if $\lambda_\ell^2=\sigma_\ell^2\mathbf{1}(\ell>0)+\mathbf{1}(\ell=0)$, we have 
\begin{equation} 
    \mathbb{E}\bigg\{\Big\Vert\Theta_{L}(\bx,\by)  - \Theta_{L}^{\infty}(\bx,\by) \otimes \mathrm{Id}_{N_{L+1}}\Big\Vert^2\bigg\}=\mathcal{O}\bigg(\frac{N_{L+1}^2}{N}\bigg),
\end{equation}
where
\begin{equation}
    \Theta_0^{\infty}(\bx,\by) = \sprod{\bx}{\by}_{\bR^{N_0}}, \quad \Theta_{L}^{\infty}(\bx,\by) = K_{L}(\bx,\by) + \dot K_{L}(\bx,\by)\Theta_{L - 1}^{\infty}(\bx,\by) 
\end{equation}
and $\dot K_{\ell}$ is defined in the same way as $K_{\ell}$ but substituting $\varphi_{\ell}$ for $\varphi'_{\ell}$ in (\ref{def:kernel}).
\end{theorem}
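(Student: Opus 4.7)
The plan is to apply the pipeline of Section \ref{pipeline} to each layer-$\ell$ contribution of the NTK separately. From the coordinate-wise expansion (\ref{eqn:NTK_matrix_components}),
\[
[\Theta_L(\bx,\by)]_{i,j} = \sum_{\ell=0}^L \lambda_\ell \sum_{\tau \in \partial_\ell \bT_{L,i}(\bx)} \sum_{\eta \in \partial_\ell \bT_{L,j}(\by)} \frac{\varphi_\tau\,\varphi_\eta}{s(\tau)\,s(\eta)}\,\bW_{\tau \circ \eta^\top},
\]
each $\tau \circ \eta^\top$ is a connected product graph obtained by gluing two operator trees along their free $\ell$-edge, and Corollary \ref{thm:wickexpansion_ones} applies under the GP parameterisation. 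An adaptation of Lemma \ref{lemma:GP_scaling}, together with the choice $\lambda_\ell = \sigma_\ell^2$, ensures that each summand $\lambda_\ell\,\E[\bW_{\tau \circ \eta^\top}]$ is $O(1)$ and that the leading contributions come from \emph{fully-atomic} pairings $\phi$, i.e.\ those for which $(\tau \circ \eta^\top)_\phi$ is a tree.

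To identify the limit, any fully-atomic $\phi$ decomposes along the glued free edge into an ``upper'' sub-pairing (edges at levels $\ell+1,\dots,L$, between the glue and the roots $\mathbf{e}_i,\mathbf{e}_j$) and a ``lower'' sub-pairing (the remaining edges, between the glue and the leaves $\bullet_\bx,\bullet_\by$). Admissibility confines each sub-pairing to its half; moreover a leading $\phi$ must pair upper edges of $\tau$ one-to-one with those of $\eta$, which collapses the two roots and produces the factor $\delta_{ij}$. A composition lemma extending Lemma \ref{lemma:alpha_trees} to operator trees glued at a free edge shows that summing over upper one-to-one pairings, weighted by the Taylor coefficients $\varphi_k^{(m)}(0)$ encoded in $\varphi_\tau,\varphi_\eta$, yields the product $\prod_{k=\ell+1}^L \dot K_k(\bx,\by)$, while summing over lower sub-pairings assembles, by the same argument that underpins Proposition \ref{lemma:joint_limit_trees}, into $K_\ell(\bx,\by)$ (with the convention $K_0(\bx,\by)=\langle \bx,\by\rangle$). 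Collecting contributions over $\ell$ and recognising the telescoping recursion gives
\[
\E[\Theta_L(\bx,\by)]_{i,j} \;\longrightarrow\; \delta_{ij}\bigl(K_L(\bx,\by) + \dot K_L(\bx,\by)\,\Theta_{L-1}^\infty(\bx,\by)\bigr).
\]

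For $L^2$ convergence it suffices to show $\mathrm{Var}([\Theta_L(\bx,\by)]_{i,j}) = o(1)$. Expanding the second moment as a double sum over pairs $(\tau_1\circ\eta_1^\top) \sqcup (\tau_2\circ\eta_2^\top)$ and applying Proposition \ref{prop:mixedmoments_} with the fully-atomic subtractions corresponding to the mean computed above, the centred second moment is controlled by atom-free pairings. By Lemma \ref{lemma:mixedmoments_genus} these contribute at relative order $O(1/N)$, giving $\mathrm{Var}([\Theta_L(\bx,\by)]_{i,j}) = O(1/N)$, and combined with the convergence of the expectation this yields the claim.

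The main obstacle is the combinatorial heart of the middle step: producing a composition lemma, analogous to Lemma \ref{lemma:alpha_trees}, that describes the upper/lower factorisation of fully-atomic pairings of $\tau \circ \eta^\top$ and precisely matches the Taylor coefficients appearing in $\varphi_\tau,\varphi_\eta$ across the contracted upper layers with the $\varphi_k'(0)$ terms used in the definition of $\dot K_k$. Once this is in place, everything else is routine application of the Wick/genus machinery already developed in Sections \ref{sec:genus} and \ref{sec:GP}.
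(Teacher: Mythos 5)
Your overall strategy is the paper's: expand each layer's contribution over pairs of derivative trees, use the Wick/genus machinery to show $\sigma_\ell^2\bW_{\tau\circ\eta^\top}$ converges in $L^2$ to the fully-atomic contribution $a(\tau\circ\eta^\top)$ (this is Lemma \ref{lemma:L2}), identify the limit by a combinatorial decomposition of atomic pairings, and control the variance by atom-free pairings via Lemma \ref{lemma:mixedmoments_genus}. The one place you diverge is the identification step, and it is also the place where your plan stops short. You propose a one-shot ``upper/lower'' factorisation of a fully-atomic pairing of $\tau\circ\eta^\top$ producing $K_\ell\prod_{k=\ell+1}^L\dot K_k$ per layer, and you explicitly defer the composition lemma that would justify it. That lemma is precisely the paper's Lemma \ref{lemma:NTK_bijection}, but the paper proves only its single-layer version: the bijection $\eta\mapsto(\eta^+,\eta^-)$ peels off the root edge and its sibling subtrees (giving one factor $\dot K_L$ via $\alpha(\tau^+\sqcup\eta^+)$ and Equation (\ref{eqn:GP_lim_tree})) and leaves a derivative tree of depth $L-1$; Proposition \ref{prop:tree_movements}(3) forces the root $L$-edges to be paired together, which both yields $\delta_{ij}$ and splits the pairing set as $\cP_{\mathrm{B}}(\tau^+\sqcup\eta^+)\times\cP_{\mathrm{A}}(\tau^-\circ(\eta^-)^\top)$. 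Induction on $L$ then gives the recursion $\Theta_L^\infty=K_L+\dot K_L\Theta_{L-1}^\infty$ directly, without ever needing the fully unrolled product. This is strictly less work than what you set yourself, and I would recommend it.

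Two smaller points of imprecision in your middle step. First, the ``upper'' part is not ``edges at levels $\ell+1,\dots,L$'': each trunk level $k$ carries full sibling subtrees reaching down to level-$0$ leaves fixed to $\bx$ and $\by$, and those leaves are exactly what make the factor $\dot K_k(\bx,\by)$ depend on $K_{k-1}$; a literal level-based split would misassign them. Second, it is not admissibility that confines sub-pairings to their halves (an admissible pairing may happily match a $0$-edge below the glue with one in a hanging subtree above it); it is atomicity, through Proposition \ref{prop:tree_movements}, that rules this out. Neither issue is fatal, but both need to be fixed in any written-out version of your composition lemma.
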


%\subsubsection{Expressing in terms of graphs.}
 Recall that the $i,j$-th entry of the NTK can be expressed by the following graph expansion (see Equation (\ref{eqn:NTK_matrix_components}))
\begin{equation}\label{eq:ijNTK}
    \sum_{\ell=0}^L\lambda_\ell^2[(\mathrm{d}\Phi_{L}(\mathbf{x}))_{W_{\ell}}\circ(\mathrm{d}\Phi_{L}(\mathbf{y}))_{W_{\ell}}^{\top}]_{ij}
    = \sum_{\ell=0}^L
    \sum_{\substack{\tau \in \partial_{\ell}\bT_{L,i}(\mathbf{x})\\\eta \in \partial_{\ell}\bT_{L,j}(\mathbf{y})}}
    \frac{\varphi_{\tau}\varphi_{\eta}}{s(\tau)s(\eta)}
    \lambda_\ell^2\mathbf{W}_{\tau\circ\eta^{\top}}.
\end{equation}
We once again use Proposition \ref{prop:mixedmoments_} to study the joint distribution of the product graphs on the right-hand side, from which the following lemma follows.
\begin{lemma}\label{lemma:L2}
    Under Assumption \ref{assumption:NTK}, for any $\tau \in \partial_{\ell}\bT_{L,i_1}(\bx_1)$ and $\eta \in \partial_{\ell}\bT_{L,i_2}(\bx_2)$, we have
    \begin{equation}
        \E\bigg\{
     \Big( 
     \sigma_{\ell}^2 \bW_{\tau \circ \eta^{\top}} - 
    a(\tau \circ \eta^{\top})
     \Big)^2
     \bigg\} = 
    \mathcal{O}\Big(\frac{1}{N}\Big),
    \end{equation}
    where 
    \begin{equation}
        a(\tau \circ \eta^{\top}):= 
         \sum_{\psi \in \cP_{\mathrm{A}}(\tau \circ \eta^{\top})} \delta_{i_1i_2}
        \sprod{\bx}{\bx}_{\psi}.
    \end{equation}
\end{lemma}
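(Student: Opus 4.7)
Set $G := \tau \circ \eta^{\top}$. By construction $G$ is a single connected product graph obtained by gluing the trees $\tau$ and $\eta^{\top}$ along their free $\ell$-edges; the identified edge becomes fixed to $\mathbf{I}$, while every other $W_m$-edge of $G$ is still Gaussian and admissible under Wick. I would establish the lemma via the bias-variance decomposition
\[
\E\bigl[(\sigma_\ell^{2}\bW_G - a(G))^{2}\bigr] = \operatorname{Var}(\sigma_\ell^{2}\bW_G) + \bigl(\sigma_\ell^{2}\E[\bW_G] - a(G)\bigr)^{2},
\]
and reduce the lemma to the two estimates $\sigma_\ell^{2}\E[\bW_G]-a(G)=\mathcal{O}(N^{-1/2})$ and $\operatorname{Var}(\sigma_\ell^{2}\bW_G)=\mathcal{O}(N^{-1})$.

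For the bias, I would expand $\E[\bW_G]=\sum_{\phi\in\cP(G)}\bW_{G_\phi}$ using Theorem~\ref{thm:wickexpansion} and split the sum into fully-atomic and non-atomic contributions. Mimicking the vertex-counting argument in the proof of Lemma~\ref{lemma:GP_scaling}, only fully-atomic $\psi$ survive at the leading order in $N$: each $G_\psi$ is a tree whose internal vertices (of dimension $N$) are summed freely, producing powers of $N$ that exactly cancel the $\sigma_\ell^{-2}$ weight, while leaves (of fixed dimension $N_0$) collapse to inner products $\langle\bx_a,\bx_b\rangle$ indexed by $\psi$; the fact that the two root vertices $\bullet_{\mathbf{e}_{i_1}}$ and $\bullet_{\mathbf{e}_{i_2}}$ must coalesce in the tree $G_\psi$ forces the factor $\mathbf{e}_{i_1}\odot\mathbf{e}_{i_2}=\delta_{i_1,i_2}\mathbf{e}_{i_1}$. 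Summing yields $\sigma_\ell^{2}\sum_{\psi\in\cP_{\mathrm A}(G)}\bW_{G_\psi}=a(G)$, while the non-atomic pairings contribute at a strictly smaller order, giving the $\mathcal{O}(N^{-1/2})$ remainder.

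For the variance, I would apply Proposition~\ref{prop:mixedmoments_} to the two-component graph $G\sqcup G$, which gives
\[
\E\Bigl[\Bigl(\bW_G - \!\!\sum_{\psi\in\cP_{\mathrm A}(G)}\!\!\bW_{G_\psi}\Bigr)^{2}\Bigr] = \sum_{\phi\in\cP_{\mathrm{AF}}(G\sqcup G)} \bW_{(G\sqcup G)_\phi}.
\]
Atom-free pairings split into bi-atomic ones (which cross-pair the two copies of $G$ so that $(G\sqcup G)_\phi$ is a tree) and strictly worse ones (which introduce at least one cycle). An Euler-characteristic count in the spirit of Lemma~\ref{lemma:mixedmoments_genus} shows that bi-atomic contributions are of order $\sigma_\ell^{-4}N^{-1}$ and the rest are of smaller order; combined with the previous bias estimate this yields $\operatorname{Var}(\sigma_\ell^{2}\bW_G)=\mathcal{O}(N^{-1})$, and the two estimates conclude the proof.

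The principal obstacle is adapting the vertex-count of Lemma~\ref{lemma:mixedmoments_genus} to this setting: unlike the all-$\mathbf{1}_N$ scenario there, the roots and leaves of $G$ carry fixed vectors ($\mathbf{e}_{i_j}$ and $\bx_j$) and the leaves live in the fixed-dimensional space $\bR^{N_0}$. One must therefore cleanly separate ``bulk'' internal vertices (which contribute factors of $N$) from ``boundary'' root and leaf vertices (which contribute only inner products or Kronecker symbols), and verify that the net $N$-scaling of each $\bW_{G_\phi}$ and $\bW_{(G\sqcup G)_\phi}$ is exactly what the Euler-characteristic bound predicts once this bookkeeping is done.
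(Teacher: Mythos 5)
Your proposal is correct and follows essentially the same route as the paper: identify the atomic pairings of $G=\tau\circ\eta^{\top}$ (forced pairing of the two $L$-edges giving $\delta_{i_1,i_2}$, pairing of the $0$-edges at the leaves giving $\sprod{\bx}{\bx}_{\psi}$), prune roots and leaves to reduce to a graph satisfying Assumption \ref{assumption:genus_graph}, and control the fluctuation via Proposition \ref{prop:mixedmoments_} together with the Euler-characteristic count of Lemma \ref{lemma:mixedmoments_genus}. The only (harmless) inefficiency is the bias--variance decomposition: since $a(G)$ equals $\sigma_\ell^2\sum_{\psi\in\cP_{\mathrm A}(G)}\bW_{G_\psi}$ exactly, the quantity produced by Proposition \ref{prop:mixedmoments_} is already the left-hand side of the lemma, so no separate bias estimate is needed (and in fact the non-atomic remainder is $\mathcal{O}(N^{-1})$, not merely $\mathcal{O}(N^{-1/2})$, by the integer gap in the Euler-characteristic exponent).
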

\begin{proof}
    See Section \ref{sec:proofs_lem4&5}.
\end{proof}
\begin{remark}
This implies that we need to take $\lambda_\ell^2 = \sigma_{\ell}^2$ to get a well-defined limit, as assumed in Theorem \ref{thm:ntk}. This is known as the {``NTK parametrization"} (see \cite{jacot2018neural}). 
\end{remark}
As with Lemma \ref{lemma:alpha_trees} in the previous section, we will need a combinatorial decomposition of the limiting covariances $a(\tau\circ\eta^\top)$ in order to prove Theorem \ref{thm:ntk} by induction.
\begin{lemma}\label{lemma:NTK_bijection}
For $L\geq 1$, there is a canonical bijection $$\partial_\ell \bT_{L,i}(\bx) \to \bT_{L,i}(\bx) \times \partial_{\ell} \bT_{L-1,1}(\bx), \quad \eta \mapsto ( \eta^+, \eta^-)$$ under which 
\[
    a(\tau \circ \eta^{\top}) = \alpha(\tau^+ \sqcup \eta^+) a(\tau^- \circ {\eta^-}^{\top}), \quad {s(\tau^+)s(\tau^-) = s(\tau), \quad \varphi_{\tau} = \dot\varphi_{\tau^+}\varphi_{\tau^-}}
\]
where $\dot \varphi_{[\tau_1 \cdots \tau_n]_{L}} := \varphi_{L}^{(n+1)} \prod_i \varphi_{\tau_i}$ and $a$ is defined as in the statement of Lemma \ref{lemma:L2}.
Further, if $\ell = L$, we have $\partial_{\ell} \bT_{L-1,1}(\bx) := \{\bullet\}$ so that $\partial_\ell \bT_{L,i}(\bx) \simeq \bT_{L,i}(\bx)$ and $ a(\tau \circ \eta^{\top}) = \alpha(\tau^+ \sqcup \eta^+)$.
\end{lemma}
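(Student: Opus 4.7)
The plan is to define $\eta \mapsto (\eta^+, \eta^-)$ directly using the recursive structure of the tree space. For $\ell < L$, the free $\ell$-edge of $\eta \in \partial_\ell \bT_{L,i}(\bx)$ lies strictly below the root edge and is therefore contained in a unique child subtree of the root. Writing $\eta = [\eta^-, \sigma_1, \dots, \sigma_m]_L$ so that $\eta^- \in \partial_\ell \bT_{L-1,1}(\bx)$ is the child carrying the free edge and $\sigma_1, \dots, \sigma_m$ are the remaining complete subtrees, I set $\eta^+ := [\sigma_1, \dots, \sigma_m]_L \in \bT_{L,i}(\bx)$. Since trees in $\bT$ are non-plane and the free-edge child is unambiguous, the assignment $\eta \mapsto (\eta^+, \eta^-)$ has inverse $(\tau^+, \tau^-) \mapsto [\tau^-, \sigma_1, \dots, \sigma_m]_L$ (with $\tau^+ = [\sigma_1, \dots, \sigma_m]_L$), establishing the bijection. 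The boundary case $\ell = L$ is trivial: the only $\ell$-edge is the root edge, and under the convention $\partial_\ell \bT_{L-1,1}(\bx) := \{\bullet\}$ the map becomes $\eta \mapsto (\eta, \bullet)$.

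\textbf{Identities for $s$ and $\varphi$.} These follow from the recursive definitions. Since $\eta^-$ is distinguished by its free-edge marker, it is never grouped with any of the $\sigma_i$'s in Equation (\ref{def:symmetry-factor}), giving $s(\eta) = s(\eta^-) \, s([\sigma_1, \dots, \sigma_m]_L) = s(\eta^-) s(\eta^+)$. For $\varphi$, the tree $\eta$ has $m+1$ children at its root while $\eta^+$ has $m$, so $\varphi_\eta = \varphi_L^{(m+1)}(0)\, \varphi_{\eta^-} \prod_i \varphi_{\sigma_i}$; comparing with $\dot\varphi_{\eta^+} := \varphi_L^{(m+1)}(0) \prod_i \varphi_{\sigma_i}$ yields $\varphi_\eta = \dot\varphi_{\eta^+} \varphi_{\eta^-}$. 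The $\ell = L$ case uses the boundary conventions $s(\bullet) = \varphi_\bullet = 1$, with $\dot\varphi$ already encoding the extra derivative arising from freeing the root edge.

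\textbf{Factorization of $a$.} From Lemma \ref{lemma:L2}, $a(\tau \circ \eta^\top) = \delta_{i_1, i_2} \sum_{\psi \in \mathcal{P}_{\mathrm{A}}(\tau \circ \eta^\top)} \sprod{\bx}{\bx}_\psi$, so I plan to construct a bijection
\[
\mathcal{P}_{\mathrm{A}}(\tau \circ \eta^\top) \;\longleftrightarrow\; \mathcal{P}_{\mathrm{B}}(\tau^+ \sqcup \eta^+) \times \mathcal{P}_{\mathrm{A}}(\tau^- \circ (\eta^-)^\top)
\]
under which the leaf-pair product factors multiplicatively. The key structural input is that the edges of $\tau \circ \eta^\top$ split into an ``upper'' part forming $\tau^+ \sqcup \eta^+$ (the two root edges plus the side subtrees of $\tau$ and $\eta$) and a ``central'' part forming $\tau^- \circ (\eta^-)^\top$ (including the identified free $\ell$-edge); the edge counts match since $|E(\tau \circ \eta^\top)| = |E(\tau)| + |E(\eta)| - 1 = |E(\tau^+)| + |E(\eta^+)| + |E(\tau^- \circ (\eta^-)^\top)|$. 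I would then argue that any fully-atomic $\psi$ cannot pair an upper edge with a central edge: the side subtrees attach to the rest of the graph only at the two top vertices of the root edges, so any cross-pairing would either create a cycle or disconnect a component, contradicting the tree condition. Granting this, the restriction of $\psi$ to the upper part is forced to be bi-atomic on $\tau^+ \sqcup \eta^+$ (the two root edges, being the unique $L$-edges, must pair with each other, identifying the roots $\mathbf{e}_{i_1}$ and $\mathbf{e}_{i_2}$ and yielding the $\delta_{i_1, i_2}$ factor in $\alpha(\tau^+ \sqcup \eta^+)$), while the restriction to the central part is fully atomic on $\tau^- \circ (\eta^-)^\top$ (whose two component roots are both $\mathbf{e}_1$, trivializing the corresponding delta factor). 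Since the leaves of $\tau \circ \eta^\top$ partition as the leaves of $\tau^+ \sqcup \eta^+$ plus those of $\tau^- \circ (\eta^-)^\top$, the inner product $\sprod{\bx}{\bx}_\psi$ splits as the product of the upper and central ones.

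\textbf{Main obstacle.} The crux is the no-cross-pairing claim, together with the precise identification of the restrictions as bi-atomic on top and fully atomic below. This is a topological/combinatorial argument based on the Euler characteristic applied to the paired graph, keeping careful track of how the two ``articulation'' vertices at the top of the root edges constrain the connectivity between side subtrees and the central component. Once this is settled, the inverse construction (gluing a bi-atomic upper pairing to a fully-atomic central pairing) is straightforward.
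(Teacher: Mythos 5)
Your decomposition $\eta\mapsto(\eta^+,\eta^-)$ (peel off the unique child of the root edge's tail that carries the free $\ell$-edge) is exactly the one the paper uses, your derivations of $s(\tau)=s(\tau^+)s(\tau^-)$ and $\varphi_\tau=\dot\varphi_{\tau^+}\varphi_{\tau^-}$ from the recursive definitions are correct, and you attack the factorization of $a$ the same way the paper does, via a bijection $\mathcal{P}_{\mathrm{A}}(\tau\circ\eta^\top)\simeq\mathcal{P}_{\mathrm{B}}(\tau^+\sqcup\eta^+)\times\mathcal{P}_{\mathrm{A}}(\tau^-\circ(\eta^-)^\top)$.

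However, the step you yourself flag as the main obstacle is a genuine gap, and the one-line justification you offer for it does not work as stated. You claim no fully-atomic pairing can match an ``upper'' edge with a ``central'' edge because ``the side subtrees attach to the rest of the graph only at the two top vertices of the root edges.'' But the central component $\tau^-\circ(\eta^-)^\top$ attaches to the rest of the graph at exactly the same two vertices, so this observation is symmetric between the two parts and cannot by itself exclude cross-pairings (and a pairing never ``disconnects a component''---identifying edges only merges cells). What is actually needed is the path-propagation property of atomic pairings of rooted trees, Proposition \ref{prop:tree_movements} in the paper: if two edges are paired by an atomic pairing, the entire paths from them back to the root must be paired edge-by-edge or coincide (part (3)), and the two subtrees hanging from a paired pair of edges must then be paired entirely within themselves (part (2)). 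Applied here, a pairing of an edge of some $\sigma_i$ with an edge of $\tau^-$ would propagate up to a forced pairing of the root edges of $\sigma_i$ and $\tau^-$, hence to an internal pairing of $\sigma_i$ against $\tau^-$, which is impossible because $\tau^-$ contains the identified free edge, fixed to the deterministic $\mathbf{I}$ and hence outside $E_{\mathcal W}$; the two root $W_L$-edges pair with each other for the same admissibility reason, giving the bi-atomic top piece. Until you prove this propagation lemma (or carry out your proposed Euler-characteristic argument in full, including the reduction of the composed graph $\tau\circ\eta^\top$, which has two root vertices, to the single-rooted setting), the identity $a(\tau\circ\eta^\top)=\alpha(\tau^+\sqcup\eta^+)\,a(\tau^-\circ(\eta^-)^\top)$ is not established.
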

\begin{proof}
        See Section \ref{sec:proofs_lem4&5}.
\end{proof}
We can now derive the rate of convergence of the NTK to its limit.
 \begin{proof}[Proof of Theorem \ref{thm:ntk}]
    Let $i,j\in \{1,...,N_{L+1}\}$ be arbitrary, and recall from Equation \eqref{eq:ijNTK} that
\begin{align*}
    \lambda_\ell^2 [ (\mathrm{d}\Phi_{L}(\bx))_{W_{\ell}}\circ(\mathrm{d}\Phi_{L}(\by))_{W_{\ell}}^{\top} ]_{ij} 
    =& \sum_{\tau \in \partial_{\ell}\bT_{L,i}(\mathbf{x})}
    \sum_{\eta \in \partial_{\ell}\bT_{L,j}(\mathbf{y})}
    \frac{\varphi_{\tau}\varphi_{\eta}}{s(\tau)s(\eta)}
  \lambda_\ell^2 \mathbf{W}_{\tau\circ\eta^{\top}}
\end{align*}
for any $0\leq \ell \leq L$. Note that the number of terms in this sum is finite, and crucially does not depend on $N$ nor $N_0,N_{L+1}$. The same is true for the factors $\varphi_\tau,\varphi_\eta$ and $s(\tau), s(\eta)$. Lemma \ref{lemma:L2} and repeated applications of the elementary inequality $(a+b)^2\leq2(a^2+b^2)$ thus yield
\begin{align}\label{eq:L2NTKquant}
\mathbb{E}\bigg\{\Big(\sum_{\tau, \eta} \frac{\varphi_{\tau}\varphi_{\eta}}{s(\tau)s(\eta)} a(\tau \circ \eta^{\top})-\sum_{\tau,\eta}\frac{\varphi_{\tau}\varphi_{\eta}}{s(\tau)s(\eta)}\lambda_\ell^2\mathbf{W}_{\tau\circ\eta^\top}\Big)^2\bigg\}=\mathcal{O}\bigg(\frac{1}{N}\bigg),
\end{align}
where the implicit constant depends on $L$. 

For $\ell = L$, Lemma \ref{lemma:NTK_bijection} then gives
\[
\sum_{\tau, \eta} \frac{\varphi_{\tau}\varphi_{\eta}}{s(\tau)s(\eta)} a(\tau \circ \eta^{\top})=\sum_{\tau^+, \eta^+} \frac{\varphi_{\tau^+}\varphi_{\eta^+}}{s(\tau^+)s(\eta^+)} \alpha(\tau^+ \sqcup \eta^+)
\]
which is in turn equal to $\delta_{ij} K_{L}(\bx,\by)$ by the proof of Theorem \ref{thm:GPlimit}.
Otherwise, noting that $s(\tau)=s(\tau^+)s(\tau^-) $ and $\varphi_{\tau} = \dot\varphi_{\tau^+}\varphi_{\tau^-}$, we instead have 
\begin{align*}
&\sum_{\tau, \eta} \frac{\varphi_{\tau}\varphi_{\eta}}{s(\tau)s(\eta)} a(\tau \circ \eta^{\top})\\
&=
    \sum_{\tau^+, \eta^+}\sum_{\tau^-, \tilde\eta} \frac{\dot\varphi_{\tau^+}\dot\varphi_{\eta^+}}{s(\tau^+)s(\eta^+)}
\frac{\varphi_{\tau^-}\varphi_{\eta^-}}{s(\tau^-)s(\eta^-)} 
    \alpha(\tau^+ \sqcup \eta^+) a(\tau^- \circ {\eta^-}^{\top})
\end{align*}
which is equal to 
\[
\delta_{ij} \dot K_{L}(\bx,\by) \lim_{N \to \infty}
    {\sigma_\ell^2}[(\mathrm{d}\Phi_{L - 1}(\bx))_{W_{\ell}}(\mathrm{d}\Phi_{L - 1}(\by))_{W_{\ell}}^{\top}]_{1,1},
\]
(once again by the proof of Theorem \ref{thm:GPlimit}).

It follows that $[\Theta_{L}(\mathbf{x},\mathbf{y})]_{ij}$ converges in $L^2$ to $[\Theta_{L}^\infty(\mathbf{x},\mathbf{y})\otimes\mathrm{Id}_{N_{L+1}}]_{ij}$ at a rate of $\mathcal{O}(1/N)$ for each $i,j\in \{1,...,N_{L+1}\}$. The theorem then follows since $N_{L+1}$ is finite.
\end{proof}

% We want to stress how the analytic form of the limits follows from the combinatorial structure of our graphs, namely from the splitting $\tau \mapsto (\tau^+, \tau^-)$.
% In particular note how the derivative $\varphi'_{L}$ appears since when pruning $\tau^-$ from $\tau$ to get $\tau^+$ we are forgetting about one of the edges growing from the root edge, taking the derivative once at this last level precisely encodes this information.
\subsection{Distribution of the squared singular values of the Jacobian}\label{sec:jacobian}
Consider the input-output Jacobian $\mathbf{J}_{L,\bx}$, which we defined in Equation (\ref{def:Jacobian}) as the matrix representation of
\begin{equation}\label{eq:differential_sec5.3}
         \mathrm{d}(\varphi_{L}\circ\Phi_{L-1})_\bx \in \bR^{N_{L}\times N_0}.
\end{equation} 
Letting $\{\xi_1,...,\xi_{N_L}\}$ be the eigenvalues of $\mathbf{J}_{L,\bx}\mathbf{J}_{L,\bx}^\top$, recall that the empirical spectral distribution (ESD) of this matrix is defined as the random measure
% We will apply our method to compute the moments of the empirical spectral distribution of $\mathbf{J}_{L,\bx}\mathbf{J}_{L,\bx}^T$ (defined below), thereby recovering and extending a result of \cite{pennington2018emergence}. 
\begin{equation}\label{def:JacobianESD}
    \rho_L := \frac{1}{N_L}\sum_{i=1}^{N_L} \delta_{\xi_i}
\end{equation}
where $\delta_{\xi_i}$ denotes a Dirac mass at $\xi_i$. Fix $x=(\lim_{N\to\infty}\frac{1}{N}\langle\mathbf{x},\mathbf{x}\rangle)^{1/2}$ and let $m_{k,L}^{(N)}(x)$ denote the $k$-th moment of $\rho_L$, noting that
\begin{equation}
        m_{k,L}^{(N)}(x) =       
        \frac{1}{N}
        \mathrm{Tr}\left((\mathbf{J}_{L,\bx}\mathbf{J}_{L,\bx}^{\top})^k\right).
\end{equation}

We will assume the following for the remainder of this section. 
\begin{assumption}\label{assumption:Jacobian} For all $\ell \geq 0$, $N_\ell=N$ and $\sigma_\ell^2=1/{N}$. Assume also that the $\bx \in \bR^N$ in \eqref{eq:differential_sec5.3} satisfies $\frac{1}{N}\sprod{\bx}{\bx}_{\bR^N} \to x^2 \in \bR$ as $N \to \infty$. 
\end{assumption}
Our goal is to show that under this assumption, $m_{k,L}^{(N)}(x)\to m_{k,L}(x)$, where $m_{k,L}$ is defined by the recursion in Equation \eqref{eq:recursion}. This is the content of Theorem \ref{thm:jacobianmoments}. In order to state the latter, we must first introduce the notion of a non-crossing partition and its Kreweras complement.
\begin{definition}
    A partition $\pi$ of $\{1,...,n\}$ is said to be \textit{non-crossing} if, whenever $a,b$ belong to a block (disjoint subset) of $\pi$ and $c,d$ to another, we cannot have $a<c<b<d$. We denote the set of all such partitions by  $\mathrm{NC}_n$. 
\end{definition}

To visualize the non-crossing property, plot the sequence $(1,...,n)$ in clockwise order as evenly spaced points on a circle. For every block $\{a_1,...,a_j\}$ of $\pi\in \mathrm{NC}_n$, connect the points corresponding to each $a_i$ to form polygons with these points as vertices, as depicted in Figure \ref{fig:dual} (if $j=2$, simply form a line segment). If $\pi$ is non-crossing, the resulting polygons and line segments will never intersect. 

\begin{figure}[ht]\label{fig:dual}
    \centering
    \scalebox{.99}{\input{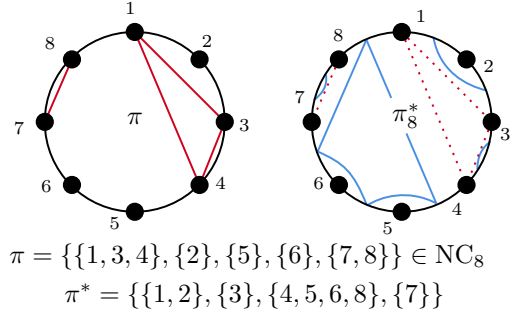}}
    \caption{A non-crossing partition $\pi$ of eight elements (red), and its Kreweras complement $\pi^*$ (blue).}
\end{figure}

Further, interpreting the circle as a cycle graph whose vertices are the same points $1,...,n$ and edges are the arcs connecting these points, such a $\pi\in \mathrm{NC}_8$ induces a partition on the edges as follows. 
Label by $i$ the edge connecting $v_i$ and $v_{i+1}$ and by $n$ the edge connecting $v_n$ and $v_1$. 
Then  two edges are put in the same block if we can draw a line inside the circle connecting them without intersecting a polygon (or line segment) corresponding to a block of $\pi$. This resulting edge partition is known as the \textit{Kreweras complement of $\pi$} (see \cite{nicaspeicher}), and denote it by $\pi^*$. Note that this is also an element of $\mathrm{NC}_n$.

\begin{theorem}\label{thm:jacobianmoments}  Define  
    \begin{equation}
        \mu_{k,\ell+1}(x) := \E[\varphi'_{\ell+1}(X_{\ell})^{2k}] \text{ for } \quad X_{\ell} \sim \mathcal{N}(0,K_{\ell}({x})),
    \end{equation}
    where $K_\ell$ is defined recursively by
    \[
    K_0(x) = x^2, \quad K_{\ell+1}(x) = \E[\varphi_{\ell+1}(X_{\ell})^{2}].
    \]
    Then under Assumption \ref{assumption:Jacobian},
    \[
        m_{k,L}^{(N)}(x)\underset{N\to\infty}{\longrightarrow} m_{k,L}(x),
    \]
    where $m_{k,L}$ is defined recursively as follows:
    \[
        m_{k,0}(x) = 1, \quad m_{1,L}(x) = \mu_{1,L}(x)m_{1,L-1}(x),
    \]
    \begin{equation}\label{eq:recursion}
        m_{k,L}(x)=\sum_{\pi\in \mathrm{NC}_k} \mu_{\pi,L}(x)m_{\pi^*,L-1}(x),
    \end{equation}
    where $\mu_{\pi,L}(x):=\prod_{B\in \pi} \mu_{|B|, L}(x)$ and similarly for $m_{\pi^*,L-1}(x)$.
\end{theorem}
\begin{remark}\label{eq:fusscatalan}
The moments ${m}_{k,L}$ can be seen as a non--linear (activation--dependent) generalisation of the Fuss-Catalan numbers (see, e.g., \cite{nicaspeicher}) \[
    \mathrm{FC}_L^{(k)}=\frac{1}{kL+1}\binom{(L+1)k}{k}, \quad \mathrm{FC}_L^{(k)} = \sum_{\pi\in \mathrm{NC}_k}\prod_{B\in \pi}\mathrm{FC}_{L-1}^{(|B|)}.
    \] Indeed, if we consider the linear case with $\varphi=\mathrm{Id}$ in the above, then $\mu_{\pi,\ell}=1$ for any $\pi,\ell$ and we recover $m_{k,L}=\sum_{\pi\in \mathrm{NC}_k}m_{\pi,L-1}$ (noting that $\pi\mapsto\pi^*$ is a bijection).
\end{remark}
% \begin{remark}[Moment determinacy]\label{rem:determinacy} If we normalize the activation functions to have derivative less than or equal to one in absolute value at every point, then it's clear that $m_{k,L}(x)\leq \mathrm{FC}_L^{(k)}$ for each $k,L$. Since the latter satisfy the Carleman condition (see \cite{Alexeev2010AsymptoticDO}), we conclude that $\gamma_L^{\mathrm{NFC}}(x,(\varphi_\ell)_\ell))$ is uniquely determined by its moments in this case.
% \end{remark}
Since $x$ is fixed a priori by Assumption \ref{assumption:Jacobian}, we omit it from the notation in what follows.

\subsubsection{Proof of Theorem \ref{thm:jacobianmoments}}
Recall that have derived a graph expansion for $m_{k,L}^{(N)}$ in Section \ref{sec:trees} (Equations \eqref{eq:k=1trace} and \eqref{eq:k=2trace}). For general $k$, the latter takes the form
\begin{equation}\label{def:jacobian_firstexp}    \frac{1}{N}\sum_{\eta_j^{(i)}}\left(\prod_{i=1}^2\prod_{j=1}^k\frac{\varphi_{\eta^{(i)}_j}}{s(\eta^{(i)}_j)}
    \right)\bW_{\mathrm{Tr}\big(\eta^{(1)}_1\circ (\eta^{(1)}_2)^{\top} \circ \cdots \circ \eta^{(k)}_1\circ (\eta^{(k)}_2)^{\top}\big)},
\end{equation}
where the sum if taken over all trees $\eta^{(i)}_j$ in $(\partial_\mathbf{x}\mathbb{T}_L(\mathbf{x}))^*$, for $i\in [2]$ and $j\in [k]$.

We call the graphs
\[
    \mathrm{Tr}\left(\eta^{(1)}_1\circ (\eta^{(1)}_2)^{\top} \circ \cdots \circ \eta^{(k)}_1\circ (\eta^{(k)}_2)^{\top}\right),
\]
arising in this expansion \textit{decorated cycles}, owing to their particular form: they each consist of a unique undirected cycle (meaning that one obtains a cycle after ignoring edge directions in the graph) which in this case is of length $2kL$, and trees rooted at vertices belonging to this cycle. An example for $L=2$ is depicted in Figure \ref{fig:decoratedcycleconcrete}.

\begin{figure}[t]
    \centering
    \input{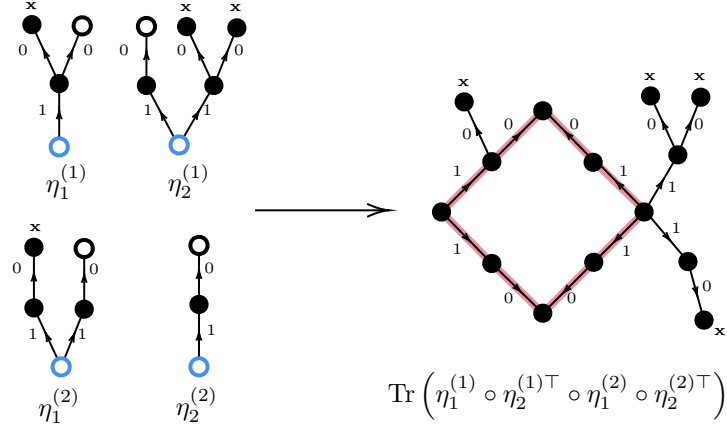}
    \caption{A decorated cycle (right) formed by four trees (left), with the latter's unique (undirected) cycle highlighted in red.}
    \label{fig:decoratedcycleconcrete}
\end{figure}

It will turn out to be convenient to work directly with the trees rooted at at the cycle instead of the initial $\eta_i^{(j)}$. To that end, note that in any $\eta_i^{(j)}\in(\partial_\mathbf{x}\mathbb{T}_L(\mathbf{x}))^*$, we have a unique path of length $L$ from the root to the (unique) free leaf, which we call the \textit{trunk}. The tree $\eta_i^{(j)}$ then consists of this trunk, together with $L$ subtrees  $\eta_{i,1}^{(j)},...,\eta_{i,L}^{(j)}$ belonging to $(\mathbb{T}_{1}(\mathbf{x}))^*,...,(\mathbb{T}_{L}(\mathbf{x}))^*$, respectively which come out of the trunk. This bijection is depicted in Figure \ref{fig:trunk_bijection} below. 

\begin{figure}[h]
    \centering
    {\input{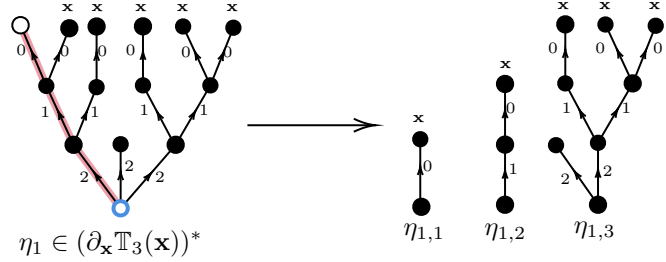}}
    \caption{A tree in $(\partial_\mathbf{x}\mathbb{T}(\mathbf{x}))^*$  is in bijection with the sequence of trees rooted at its trunk (highlighted in red).}
    \label{fig:trunk_bijection}
\end{figure}

Using this bijection, any graph of the form
\begin{equation}\label{def:decoratedcycle}
    \mathrm{Tr}\left(\eta^{(1)}_1\circ (\eta^{(1)}_2)^{\top} \circ \cdots \circ \eta^{(k)}_1\circ (\eta^{(k)}_2)^{\top}\right),
\end{equation}
thus corresponds to a unique sequence of trees 
\begin{align}\label{eq:treeseq}
\boldsymbol{\eta} &=(\eta_{i,l}^{(j)})_{j\in [k],i\in [2], l\in [L]} \in \prod_{i=1}^k ((\bT_{0}(\mathbf{x}))^*\times\cdots\times (\bT_{L}(\mathbf{x}))^*)^2,
\end{align}
(which are product graphs) and so we adopt the notation $G(\boldsymbol{\eta})$ to denote the graph, as well as $C(\boldsymbol{\eta})$ to denote its cycle. Lastly, we use $\mathrm{DC}(k,L)$ to denote the space of all such graphs. With this notation, we can rewrite the expansion in Equation \eqref{def:jacobian_firstexp} as follows.
\begin{lemma} 
\label{lemma:jacobian_moms_expansion}
\[      \mathrm{Tr}\big((\mathbf{J}_{L,\mathbf{x}}\mathbf{J}_{L,\mathbf{x}}^\top)^k\big)
=\sum_{G(\boldsymbol{\eta})} \frac{\dot \varphi_{\boldsymbol{\eta}}}{s(\boldsymbol{\eta})}\bW_{G(\boldsymbol{\eta})},
\]
where the sum is taken over all $G(\boldsymbol{\eta})\in \mathrm{DC}(k,L)$ (or equivalently, all $\boldsymbol{\eta}=(\eta_{i,l}^{(j)})_{i,j,l}$, \textit{c.f.} Equation (\ref{eq:treeseq})), and
\[
\frac{\dot \varphi_{\boldsymbol{\eta}}}{s(\boldsymbol{\eta})}:=\prod_{i,j,l}\frac{\dot \varphi_{\eta_{i,l}^{(j)}}}{s(\eta_{i,l}^{(j)})}.
\]
\end{lemma}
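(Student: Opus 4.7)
The plan is to combine the graph expansion of $\mathbf{J}_{L,\bx}$ from Equation~(\ref{eqn:jacobian_graph_exp}) with the bijection, depicted in Figure~\ref{fig:trunk_bijection}, between trees in $(\partial_{\bx}\mathbb{T}_L(\bx))^*$ and sequences of subtrees hanging off their trunks.

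First, I would iterate the $k=1,2$ computations presented in Section~\ref{sec:related_expansions}, using linearity of the trace together with the graphical rules for composition and transposition from Section~\ref{dictionary}, to obtain
\[
\mathrm{Tr}\big((\mathbf{J}_{L,\bx}\mathbf{J}_{L,\bx}^\top)^k\big) = \sum_{(\eta_i^{(j)})} \bigg(\prod_{j=1}^k \prod_{i=1}^2 \frac{\varphi_{\eta_i^{(j)}}}{s(\eta_i^{(j)})}\bigg)\, \mathbf{W}_{\mathrm{Tr}(\eta_1^{(1)}\circ (\eta_2^{(1)})^\top \circ \cdots \circ \eta_1^{(k)}\circ (\eta_2^{(k)})^\top)},
\]
where the sum runs over all $2k$-tuples $(\eta_i^{(j)})_{i\in[2],\,j\in[k]}$ of trees in $(\partial_{\bx}\mathbb{T}_L(\bx))^*$; this is the general-$k$ analogue of Equation~(\ref{eq:k=2trace}) and follows by a direct induction on $k$.

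Next, I would establish the bijection $\eta \leftrightarrow (\eta_l)_{l \in [L]}$ between $\eta \in (\partial_{\bx}\mathbb{T}_L(\bx))^*$ and sequences of subtrees $\eta_l \in (\mathbb{T}_l(\bx))^*$ hanging off the trunk of $\eta$ at depth $l$, proceeding by induction on $L$ using the single-step decomposition of Lemma~\ref{lemma:NTK_bijection}: at each step one strips off the top subtree at the root of the trunk, leaving a shorter trunk-tree. Under this bijection the factorizations $s(\eta) = \prod_l s(\eta_l)$ and $\varphi_\eta = \prod_l \dot\varphi_{\eta_l}$ hold: the first is immediate since the trunk vertices are distinguished (they lie on the unique root-to-free-leaf path), so no permutation across distinct trunk depths is possible, while the second follows from unwinding the definitions. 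At a trunk vertex at layer $l$, the out-degree is $n+1$ where $n$ is the out-degree of the root of $\eta_l$ (the extra unit coming from the trunk child heading toward the free leaf), producing the factor $\varphi_l^{(n+1)}(0)$ which is precisely the leading term in $\dot\varphi_{\eta_l}$.

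Finally, applying this bijection separately to each of the $2k$ trees $\eta_i^{(j)}$, the composed product graph $\mathrm{Tr}(\eta_1^{(1)}\circ (\eta_2^{(1)})^\top \circ \cdots \circ (\eta_2^{(k)})^\top)$ coincides exactly with the decorated cycle $G(\boldsymbol{\eta})$ by definition, while the product of coefficients rearranges as $\prod_{i,j} \varphi_{\eta_i^{(j)}}/s(\eta_i^{(j)}) = \prod_{i,j,l} \dot\varphi_{\eta_{i,l}^{(j)}}/s(\eta_{i,l}^{(j)}) = \dot\varphi_{\boldsymbol{\eta}}/s(\boldsymbol{\eta})$. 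The main obstacle will be the careful coefficient bookkeeping in the trunk decomposition, especially at the boundary layers of the trunk (the freed leaf at layer $0$ and the out-vertex at layer $L$), which require a slightly different treatment than the internal trunk vertices.
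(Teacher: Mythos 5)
Your proposal is correct and follows essentially the same route as the paper: the general-$k$ expansion of the trace over $2k$-tuples of trees in $(\partial_{\bx}\mathbb{T}_L(\bx))^*$, followed by the trunk bijection $\eta \leftrightarrow (\eta_l)_{l\in[L]}$ with the factorizations $s(\eta)=\prod_l s(\eta_l)$ and $\varphi_\eta=\prod_l \dot\varphi_{\eta_l}$, which together yield the coefficient identity $\prod_{i,j}\varphi_{\eta_i^{(j)}}/s(\eta_i^{(j)})=\prod_{i,j,l}\dot\varphi_{\eta_{i,l}^{(j)}}/s(\eta_{i,l}^{(j)})$. The paper states this as a one-line consequence of the preceding discussion; your version just makes the inductive bookkeeping (on $k$ and on the trunk depth) explicit, and your observation that the trunk child is distinguished by containing the $\inbullet$ leaf is exactly why the symmetry factor splits.
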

\begin{proof}
    This follows from the aforementioned observation, and the fact that
    \[
\left(\prod_{i=1}^2\prod_{j=1}^k\frac{\varphi_{\eta^{(j)}_i}}{s(\eta^{(j)}_i)}
    \right)=\prod_{i,j,l}\frac{\dot \varphi_{\eta_{i,l}^{(j)}}}{s(\eta_{i,l}^{(j)})}
    \]
    where $\dot\varphi$ is defined as in Lemma \ref{lemma:NTK_bijection}.
\end{proof}
We thus need to determine the limits of (values of) decorated cycles, which is achieved in the following proposition.
\begin{proposition}\label{prop:decorated_cycle_scaling}
    Let $k,L\geq 0$ and $G=G(\boldsymbol{\eta})\in \mathrm{DC}(k,L)$  be a decorated cycle. Then under Assumption \ref{assumption:Jacobian}, we have that
    \begin{equation}
        \E\left\{\left(
        \lambda_G \bW_G - a(G)
        \right)^2\right\} = \mathcal{O}\bigg(\frac{1}{N}\bigg),
    \end{equation}
    where
    \begin{equation}
        \lambda_G = \frac{1}{\sigma_G N^{\check{e}(G)+1}},
        \text{ and }
        a(G) := \sum_{\psi \in \cP_{\mathrm{A}}(G)}
        \prod_{\{\bullet_{\bx}, \bullet_{\bx}\} \in (\mathcal{L}_0(G)/\sim_{\psi})}
        \frac{\sprod{\bx}{\bx}}{N}.
    \end{equation}
\end{proposition}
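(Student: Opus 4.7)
The plan is to apply Proposition \ref{prop:mixedmoments_} to the disjoint union $G \sqcup G$, which yields
\[
\E\!\left\{\Big(\bW_G - \textstyle\sum_{\psi \in \cP_\mathrm{A}(G)} \bW_{G_\psi}\Big)^2\right\} = \sum_{\phi \in \cP_\mathrm{AF}(G \sqcup G)} \bW_{(G\sqcup G)_\phi}.
\]
The stated $L^2$ bound then reduces to two tasks: (i) identifying $\sum_{\psi} \bW_{G_\psi}$ with $\lambda_G^{-1} a(G)$, and (ii) showing each summand on the right is $O(\lambda_G^{-2}/N)$.

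For task (i), I would first prove the combinatorial fact that for any decorated cycle $G \in \mathrm{DC}(k,L)$ and any atomic pairing $\psi \in \cP_\mathrm{A}(G)$, the equivalence classes of $\mathcal{L}(G)$ induced by $\sim_\psi$ all have size exactly two. The idea is that the $2kL$ trunk edges of $G$ come in a palindromic label pattern around the cycle, and admissibility combined with the requirement that $G_\psi$ be a tree constrains these edges to be paired non-crossingly, forcing each trunk to fold onto its matched partner and consequently pairing the attached subtrees — and in particular their $\bullet_\bx$ leaves — two-to-two. With this in hand, $\bW_{G_\psi}$ is evaluated by tracking how the identity-matrix couplings on the edges of the tree $G_\psi$ propagate through the sum over indexations: each internal vertex class and each leaf-pair class contributes an independent factor of $N$ and $\langle \bx,\bx\rangle$ respectively, yielding $\bW_{G_\psi} = \sigma_G N^{\check{e}(G)+1} \prod_{\{\bullet_\bx, \bullet_\bx\}} \tfrac{\langle\bx,\bx\rangle}{N}$, so that summing over $\psi$ gives exactly $\lambda_G^{-1} a(G)$.

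For task (ii), I would run the genus-expansion argument of Lemma \ref{lemma:mixedmoments_genus}. Although Assumption \ref{assumption:genus_graph} does not hold here due to the $\bx$-leaf inputs, under Assumption \ref{assumption:Jacobian} one has $\|\bx\|^2 = O(N)$ and hence $\sum_i[\bx]_i^n = O(N)$ for every fixed $n \geq 1$, so the replacement of $\mathbf{1}$-leaves by $\bx$-leaves only affects $\bW_{(G \sqcup G)_\phi}$ by bounded factors. The Euler-characteristic estimate from Lemma \ref{lemma:mixedmoments_genus} then shows that for atom-free $\phi$ the number of vertex classes in $(G \sqcup G)_\phi$ is strictly smaller than the bi-atomic maximum, producing an extra factor of $1/N$ and yielding $\bW_{(G\sqcup G)_\phi} = O(\lambda_G^{-2}/N)$ per summand. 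The hard part will be task (i): both the size-two pairing claim and the precise index-counting giving the exact product of $\langle\bx,\bx\rangle/N$ factors rely on a careful analysis of how the subtree decorations of the cycle fold under an atomic pairing. I expect this is cleanest to prove by induction on $L$, stripping the outermost matched pair of cycle edges to reduce a decorated cycle at depth $L$ to one at depth $L-1$, with the outermost leaves contributing the first $\langle\bx,\bx\rangle$ factor.
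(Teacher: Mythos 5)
Your overall strategy matches the paper's: center via Proposition \ref{prop:mixedmoments_} applied to $G\sqcup G$, identify $\sum_{\psi\in\cP_{\mathrm{A}}(G)}\bW_{G_\psi}$ with $\lambda_G^{-1}a(G)$ by showing that under an atomic pairing the $0$-edges whose tails are $\bx$-leaves must be paired among themselves, and control the atom-free remainder with the genus expansion after checking that the non-trivial vertex inputs only contribute $\mathcal{O}(1)$ normalized factors. For the leaf-pairing step, however, the paper does not need your induction on $L$: it simply notes that the only $0$-edges of a decorated cycle other than the leaf edges lie on the cycle $C(\boldsymbol{\eta})$, and Lemma \ref{lem:firstdecomp} (atomic pairings must pair the cycle edges among themselves, since otherwise a cycle survives in $G_\phi$) then forces the leaf edges to be paired with each other, whence every leaf class is a pair contributing $\sprod{\bx}{\bx}/N$. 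Your ``palindromic folding'' induction would essentially re-derive that lemma, so it is more work but not a different idea.

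There is one step whose justification is wrong as written. You claim that $\norm{\bx}^2=O(N)$ implies $\sum_i[\bx]_i^n=O(N)$ for every fixed $n\geq 1$; this fails for $n\geq 3$ (take $\bx=(\sqrt{N}x,0,\dots,0)$, which satisfies Assumption \ref{assumption:Jacobian} but has $\sum_i[\bx]_i^4=N^2x^4$), so you cannot conclude that replacing $\mathbf{1}$-leaves by $\bx$-leaves changes $\bW_{(G\sqcup G)_\phi}$ only by bounded factors via a blanket moment bound. The paper's version of this step is structural rather than analytic: it argues that each vertex class of $G_\phi$ carries input $\mathbf{1}$, $\bx$ or $\bx\odot\bx$ (at most two degree-one $\bx$-leaves per class), and for those three cases the normalized sums $1$, $\sum_i[\bx]_i/N\leq\norm{\bx}_2/\sqrt{N}$ and $\sprod{\bx}{\bx}/N$ are all $\mathcal{O}(1)$ under Assumption \ref{assumption:Jacobian}. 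To repair your argument you must bound the number of $\bx$-leaves that can fall into a single class of $\sim_\phi$ (exploiting that a leaf is adjacent to a single edge and hence is directly identified only with the tail of the edge paired to it), rather than appeal to higher empirical moments of $\bx$, which the assumption does not control.
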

\begin{proof}
    See Section \ref{proofs_prop12}.
\end{proof}

\begin{remark}
    Note that by Assumption \ref{assumption:Jacobian}, $\lambda_G = \frac{1}{N}$ for all decorated cycles $G$ since $\sigma_\ell = \frac{1}{\sqrt{N}}$ for all $\ell \geq 0$. 
\end{remark}

    Since $\sigma_\ell = \frac{1}{\sqrt{N}}$, we then have that
    \[
    \frac{1}{N}\mathrm{Tr}\big((\mathbf{J}_{L,\mathbf{x}}\mathbf{J}_{L,\mathbf{x}}^\top)^k\big)
    =\sum_{\substack{G(\boldsymbol{\eta})\in \mathrm{DC}(k,L)}} \frac{\dot \varphi_{\boldsymbol{\eta}}}{s(\boldsymbol{\eta})} \lambda_{G(\boldsymbol{\eta})} \bW_{G(\boldsymbol{\eta})}
    \xrightarrow{L^2} 
    \sum_{\substack{G(\boldsymbol{\eta})\in \mathrm{DC}(k,L)}} \frac{\dot \varphi_{\boldsymbol{\eta}}}{s(\boldsymbol{\eta})} a(G(\boldsymbol{\eta}))
    \]
    with rate $\mathcal{O}\big(\frac{1}{N}\big)$, since these are finite sums.
% \begin{remark}
%     If the moments of $\rho_L$ wdeals with cycles (e.g. in \cite{dubach2021words},Section 3.1.1), there are no bi-atomic pairings and the error drops to $\mathcal{O}(1/{N^2})$. A standard Borel-Cantelli argument would then give almost sure convergence of the 
% \end{remark}

% Now since
% $
% \frac{1}{N}\mathrm{Tr}\big((\mathbf{J}_{L,\mathbf{x}}\mathbf{J}_{L,\mathbf{x}}^\top)^k\big)
% $
% is the $k$-th moment of $\rho_L$, the method of moments gives the weak convergence in probability of $\rho_L$ to a measure with $k$-th moment equalling
% \[
%     \sum_{\substack{G(\boldsymbol{\eta})\in \mathrm{DC}(k,L)}} \frac{\dot \varphi_{\boldsymbol{\eta}}}{s(\boldsymbol{\eta})} a(G(\boldsymbol{\eta}))\quad (=m_{k,L}),
% \]
% and what's left is to prove Proposition \ref{prop:jacobianmoments} (namely that this quantity satisfies the recursion therein). This will be the most involved part of the argument.
Now letting 
\[
     \tilde{m}_{k,L}=\sum_{\substack{G(\boldsymbol{\eta})\in \mathrm{DC}(k,L)}} \frac{\dot \varphi_{\boldsymbol{\eta}}}{s(\boldsymbol{\eta})} a(G(\boldsymbol{\eta})),
\]
it suffices to show that $\tilde{m}_{k,L}$ satisfies the recursion in \eqref{eq:recursion}.

To derive a recursive formula for the coefficients $m_{k,L}$, we analyze the structure of atomic pairings for decorated cycles, beginning with the following result.
\begin{lemma}\label{lem:firstdecomp} 
Let $G(\boldsymbol{\eta})\in \mathrm{DC}(k,L)$, $C(\boldsymbol{\eta})$ be this graph's cycle (\emph{i.e.} the subgraph which is a cycle after forgetting about edge orientations), and $\phi \in\mathcal{P}_\mathrm{A}(G(\boldsymbol{\eta}))$. Then $\phi=\phi_C \sqcup \phi_{G\setminus C}$ for some $\phi_C\in\mathcal{P}(C(\boldsymbol{\eta}))$. In particular, all of the $L$-edges of $C(\boldsymbol{\eta})$ are paired between themselves by some pairing $\phi_L\subseteq \phi_C$.
\end{lemma}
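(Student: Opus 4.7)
My plan is to first establish the stronger claim that any $\phi \in \mathcal{P}_\mathrm{A}(G(\boldsymbol{\eta}))$ pairs cycle edges only with cycle edges; once this is done, both assertions of the lemma follow easily. The decomposition $\phi = \phi_C \sqcup \phi_{G \setminus C}$ is then automatic, with $\phi_C$ the restriction of $\phi$ to $E(C(\boldsymbol{\eta}))$, and the statement about $L$-edges reduces to admissibility (Def.~\ref{def:admissible_pairings}): each $L$-edge of $C(\boldsymbol{\eta})$ lies on the cycle, so by the stronger claim its partner under $\phi$ is also on the cycle, and admissibility forces the partner to be an $L$-edge as well, yielding the sub-pairing $\phi_L \subseteq \phi_C$.

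The central step is thus to exclude any mixed pair $\{e_C, e_T\} \in \phi$ with $e_C$ a cycle edge and $e_T$ a tree edge, for which I would use a short homological argument over $\mathbb{Z}/2$. Since $G(\boldsymbol{\eta})$ is a cycle $C(\boldsymbol{\eta})$ of length $2kL$ with trees attached, it is connected with first Betti number equal to one, so $H_1(G(\boldsymbol{\eta}); \mathbb{Z}/2) \cong \mathbb{Z}/2$ is generated by the cycle $1$-chain $[C] := \sum_{e \in E(C(\boldsymbol{\eta}))} e$. Atomicity of $\phi$ means $G(\boldsymbol{\eta})_\phi$ is a tree, so $H_1(G(\boldsymbol{\eta})_\phi; \mathbb{Z}/2) = 0$. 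The edge-identification quotient $G(\boldsymbol{\eta}) \to G(\boldsymbol{\eta})_\phi$ induces a chain map whose image of $[C]$ is $\sum_{[e] \in E(G(\boldsymbol{\eta})_\phi)} n_{[e]} \, [e] \pmod{2}$, where $n_{[e]} \in \{0, 1, 2\}$ counts how many of the two edges comprising the pair $[e] \in \phi$ lie on $C(\boldsymbol{\eta})$. This image is automatically a $1$-cycle of $G(\boldsymbol{\eta})_\phi$, so by $H_1 = 0$ (and the absence of higher cells) it must vanish, forcing every $n_{[e]}$ to be even and thereby excluding the mixed case $n_{[e]} = 1$.

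The main subtlety I anticipate is bookkeeping the chain map carefully: one has to verify that the edge identifications in $G_\phi$, together with the accompanying vertex identifications, genuinely induce a well-defined chain map on $C_1(-; \mathbb{Z}/2)$. Admissibility is designed precisely so that paired edges are glued respecting their orientation conventions, which is what makes this induced map well-defined; working modulo $2$ then bypasses the sign bookkeeping that would otherwise be required over $\mathbb{Z}$.
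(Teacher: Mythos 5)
Your proof is correct, but it takes a genuinely different route from the paper's. The paper's proof of Lemma \ref{lem:firstdecomp} is local and pictorial: assuming some cycle edge is paired with a tree edge, it exhibits (via Figure \ref{fig:cyclicpairing}) a closed walk in $G(\boldsymbol{\eta})_\phi$ traversing that identified edge exactly once, i.e.\ a cycle, contradicting atomicity; the decomposition and the $L$-edge statement then follow from admissibility exactly as in your final step. You replace this with a global homological computation: the fundamental class $[C]\in H_1(G(\boldsymbol{\eta});\mathbb{Z}/2)$ must map to zero under the chain map into the tree $G(\boldsymbol{\eta})_\phi$ (well defined because paired edges are identified coherently with orientation, and every edge of a decorated cycle carries a random input and hence is paired), and the mod-$2$ coefficient of a class $[e]$ in the image is exactly the parity of the number of cycle edges in that pair, so the mixed case $n_{[e]}=1$ is excluded. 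This is airtight — a nonzero $\mathbb{Z}/2$ one-cycle in a one-dimensional complex forces a graph cycle, so no appeal to a figure is needed — and it generalizes immediately to graphs of higher first Betti number, where each independent cycle class must die and constrains the pairing in the same way. The cost is a small amount of machinery relative to the paper's elementary walk argument; the benefit is that the obstruction is made explicit rather than delegated to a picture.
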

\begin{proof}
For $\phi$ to be atomic, it must pair edges in a way that collapses the undirected cycle $C(\boldsymbol{\eta})$ without creating any new cycles in the process. But this is only possible if edges of $C(\boldsymbol{\eta})$ are paired between themselves, as any edge that isn't paired this way would end up in a cycle as depicted in Figure \ref{fig:cyclicpairing} below.
\begin{figure}[ht]
    \centering
    \input{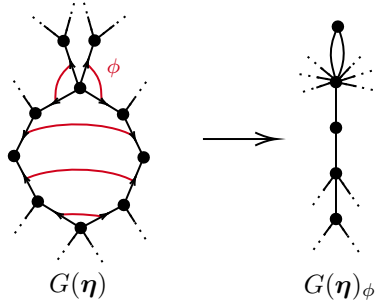}
    \caption{A cycle formed by pairing edges that belong to $C(\boldsymbol{\eta})$ with edges that do not.}
    \label{fig:cyclicpairing}
\end{figure}

Further, since edges can only be paired with other edges with the same input, $\phi$ must contain a pairing $\phi_{L}$ of the $L$-edges in $C(\boldsymbol{\eta})$.
\end{proof}

Now let $\phi\in \mathcal{P}_{\mathrm{A}}(G(\boldsymbol{\eta}))$, and $\phi_L$ be as in the lemma above. Note that in $G(\boldsymbol{\eta})$, the trees $\eta_{1,L}^{(j)},\eta_{2,L}^{(j)}$ share a root for each $j$, which belongs to $C(\boldsymbol{\eta})$. For each $j\in[k]$, we let $v_j:=\text{root}(\eta_{1,L}^{(j)}\wedge\eta_{2,L}^{(j)})$ denote said roots, and note that each $v_j$ is the head of exactly two $L$-edges in $C(\boldsymbol{\eta})$. All other vertices in the latter are heads of edges with inputs $W_i$ with $i<L$.

Said differently, the $k$ pairs of $L$-edges of $C(\boldsymbol{\eta})$ are in bijection with $v_1,...,v_k$, and through this bijection, $\phi_L\subseteq \phi$ induces a unique partition $\pi$ of $\{1,...,k\}$ defined by letting $a$ and $b$ be in the same block of $\pi$ if they label endpoints of edges which have been paired by $\phi_{L}$. This is depicted in Figure \ref{fig:noncrossingpartition}, and we write $\phi_L\sim \pi$ to denote the fact that $\phi_L$ induces $\pi$.

 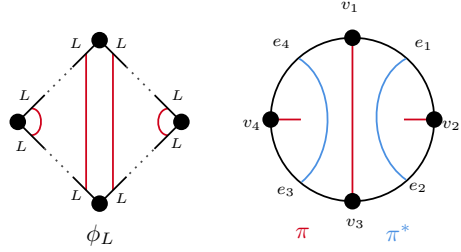
\begin{figure}[!ht]
    \centering
    \scalebox{.90}{\tikzset{every picture/.style={line width=0.75pt}} %set default line width to 0.75pt        

\begin{tikzpicture}[x=0.75pt,y=0.75pt,yscale=-1,xscale=1]
%uncomment if require: \path (0,6537); %set diagram left start at 0, and has height of 6537

%Straight Lines [id:da4882233050271274] 
\draw [color={rgb, 255:red, 208; green, 2; blue, 27 }  ,draw opacity=1 ]   (294.71,1153.91) -- (294.71,1245.6) ;
%Curve Lines [id:da25105520931238345] 
\draw [color={rgb, 255:red, 208; green, 2; blue, 27 }  ,draw opacity=1 ]   (191.37,1193.6) .. controls (184,1194.18) and (184,1208.18) .. (191.31,1208.63) ;
%Curve Lines [id:da044392134943809225] 
\draw [color={rgb, 255:red, 208; green, 2; blue, 27 }  ,draw opacity=1 ]   (114.71,1193.54) .. controls (122.05,1193.54) and (121.99,1208.57) .. (114.66,1208.57) ;
%Straight Lines [id:da7817705288659311] 
\draw [color={rgb, 255:red, 208; green, 2; blue, 27 }  ,draw opacity=1 ]   (145.47,1162.79) -- (145.47,1239.44) ;
%Straight Lines [id:da4170510367201792] 
\draw [color={rgb, 255:red, 208; green, 2; blue, 27 }  ,draw opacity=1 ]   (160.5,1162.73) -- (160.5,1239.38) ;
%Shape: Ellipse [id:dp44575283722460735] 
\draw   (248.86,1199.75) .. controls (248.86,1174.43) and (269.39,1153.91) .. (294.71,1153.91) .. controls (320.03,1153.91) and (340.55,1174.43) .. (340.55,1199.75) .. controls (340.55,1225.07) and (320.03,1245.6) .. (294.71,1245.6) .. controls (269.39,1245.6) and (248.86,1225.07) .. (248.86,1199.75) -- cycle ;
%Flowchart: Connector [id:dp27603654491639573] 
\draw  [color={rgb, 255:red, 0; green, 0; blue, 0 }  ,draw opacity=1 ][fill={rgb, 255:red, 0; green, 0; blue, 0 }  ,fill opacity=1 ] (290.58,1153.91) .. controls (290.58,1151.63) and (292.43,1149.78) .. (294.71,1149.78) .. controls (296.99,1149.78) and (298.83,1151.63) .. (298.83,1153.91) .. controls (298.83,1156.19) and (296.99,1158.04) .. (294.71,1158.04) .. controls (292.43,1158.04) and (290.58,1156.19) .. (290.58,1153.91) -- cycle ;
%Flowchart: Connector [id:dp4821910037540802] 
\draw  [color={rgb, 255:red, 0; green, 0; blue, 0 }  ,draw opacity=1 ][fill={rgb, 255:red, 0; green, 0; blue, 0 }  ,fill opacity=1 ] (194.73,1201.09) .. controls (194.73,1198.81) and (196.58,1196.96) .. (198.86,1196.96) .. controls (201.13,1196.96) and (202.98,1198.81) .. (202.98,1201.09) .. controls (202.98,1203.37) and (201.13,1205.21) .. (198.86,1205.21) .. controls (196.58,1205.21) and (194.73,1203.37) .. (194.73,1201.09) -- cycle ;
%Flowchart: Connector [id:dp2696885542276162] 
\draw  [color={rgb, 255:red, 0; green, 0; blue, 0 }  ,draw opacity=1 ][fill={rgb, 255:red, 0; green, 0; blue, 0 }  ,fill opacity=1 ] (103.04,1201.09) .. controls (103.04,1198.81) and (104.89,1196.96) .. (107.17,1196.96) .. controls (109.45,1196.96) and (111.29,1198.81) .. (111.29,1201.09) .. controls (111.29,1203.37) and (109.45,1205.21) .. (107.17,1205.21) .. controls (104.89,1205.21) and (103.04,1203.37) .. (103.04,1201.09) -- cycle ;
%Flowchart: Connector [id:dp4735964375510411] 
\draw  [color={rgb, 255:red, 0; green, 0; blue, 0 }  ,draw opacity=1 ][fill={rgb, 255:red, 0; green, 0; blue, 0 }  ,fill opacity=1 ] (290.58,1245.6) .. controls (290.58,1243.32) and (292.43,1241.47) .. (294.71,1241.47) .. controls (296.99,1241.47) and (298.83,1243.32) .. (298.83,1245.6) .. controls (298.83,1247.88) and (296.99,1249.72) .. (294.71,1249.72) .. controls (292.43,1249.72) and (290.58,1247.88) .. (290.58,1245.6) -- cycle ;
%Flowchart: Connector [id:dp3570691828484649] 
\draw  [color={rgb, 255:red, 0; green, 0; blue, 0 }  ,draw opacity=1 ][fill={rgb, 255:red, 0; green, 0; blue, 0 }  ,fill opacity=1 ] (148.89,1246.93) .. controls (148.89,1244.65) and (150.73,1242.8) .. (153.01,1242.8) .. controls (155.29,1242.8) and (157.14,1244.65) .. (157.14,1246.93) .. controls (157.14,1249.21) and (155.29,1251.06) .. (153.01,1251.06) .. controls (150.73,1251.06) and (148.89,1249.21) .. (148.89,1246.93) -- cycle ;
%Flowchart: Connector [id:dp24663700045303782] 
\draw  [color={rgb, 255:red, 0; green, 0; blue, 0 }  ,draw opacity=1 ][fill={rgb, 255:red, 0; green, 0; blue, 0 }  ,fill opacity=1 ] (148.89,1155.24) .. controls (148.89,1152.96) and (150.73,1151.12) .. (153.01,1151.12) .. controls (155.29,1151.12) and (157.14,1152.96) .. (157.14,1155.24) .. controls (157.14,1157.52) and (155.29,1159.37) .. (153.01,1159.37) .. controls (150.73,1159.37) and (148.89,1157.52) .. (148.89,1155.24) -- cycle ;
%Straight Lines [id:da4957325582565695] 
\draw    (107.17,1201.09) -- (122.14,1216.06) ;
%Straight Lines [id:da5331527934019773] 
\draw    (138.04,1231.95) -- (153.01,1246.93) ;
%Straight Lines [id:da6303593329133035] 
\draw    (153.01,1155.24) -- (167.99,1170.22) ;
%Straight Lines [id:da4327183104564727] 
\draw    (183.88,1186.11) -- (198.86,1201.09) ;
%Straight Lines [id:da4180116227857036] 
\draw    (168.1,1231.84) -- (153.01,1246.93) ;
%Straight Lines [id:da4583008813630858] 
\draw    (198.86,1201.09) -- (183.77,1216.18) ;
%Straight Lines [id:da0969675809668229] 
\draw    (153.01,1155.24) -- (137.92,1170.33) ;
%Straight Lines [id:da3815077599931711] 
\draw    (122.26,1186) -- (107.17,1201.09) ;
%Straight Lines [id:da7498697967092427] 
\draw [color={rgb, 255:red, 74; green, 74; blue, 74 }  ,draw opacity=1 ] [dash pattern={on 0.84pt off 2.51pt}]  (137.35,1170.91) -- (122.26,1186) ;
%Straight Lines [id:da9185889016184985] 
\draw [color={rgb, 255:red, 74; green, 74; blue, 74 }  ,draw opacity=1 ] [dash pattern={on 0.84pt off 2.51pt}]  (183.19,1216.75) -- (168.1,1231.84) ;
%Straight Lines [id:da4454293431923728] 
\draw [color={rgb, 255:red, 74; green, 74; blue, 74 }  ,draw opacity=1 ] [dash pattern={on 0.84pt off 2.51pt}]  (122.14,1216.06) -- (138.04,1231.95) ;
%Straight Lines [id:da2669304448550993] 
\draw [color={rgb, 255:red, 74; green, 74; blue, 74 }  ,draw opacity=1 ] [dash pattern={on 0.84pt off 2.51pt}]  (167.99,1170.22) -- (183.88,1186.11) ;
%Curve Lines [id:da09087945570694367] 
\draw [color={rgb, 255:red, 74; green, 144; blue, 226 }  ,draw opacity=1 ]   (264.78,1165.57) .. controls (285.33,1178.18) and (286.33,1218.18) .. (266.24,1234.78) ;
%Curve Lines [id:da39568271784300246] 
\draw [color={rgb, 255:red, 74; green, 144; blue, 226 }  ,draw opacity=1 ]   (324.78,1165.57) .. controls (302.67,1177.84) and (302.67,1217.84) .. (324.78,1233.57) ;
%Straight Lines [id:da7086163495829305] 
\draw [color={rgb, 255:red, 208; green, 2; blue, 27 }  ,draw opacity=1 ]   (248.86,1199.75) -- (265.83,1199.75) ;
%Straight Lines [id:da3743215277781271] 
\draw [color={rgb, 255:red, 208; green, 2; blue, 27 }  ,draw opacity=1 ]   (323.58,1199.75) -- (340.55,1199.75) ;
%Flowchart: Connector [id:dp20151351072580614] 
\draw  [color={rgb, 255:red, 0; green, 0; blue, 0 }  ,draw opacity=1 ][fill={rgb, 255:red, 0; green, 0; blue, 0 }  ,fill opacity=1 ] (244.74,1199.75) .. controls (244.74,1197.47) and (246.58,1195.63) .. (248.86,1195.63) .. controls (251.14,1195.63) and (252.99,1197.47) .. (252.99,1199.75) .. controls (252.99,1202.03) and (251.14,1203.88) .. (248.86,1203.88) .. controls (246.58,1203.88) and (244.74,1202.03) .. (244.74,1199.75) -- cycle ;
%Flowchart: Connector [id:dp20534090363961033] 
\draw  [color={rgb, 255:red, 0; green, 0; blue, 0 }  ,draw opacity=1 ][fill={rgb, 255:red, 0; green, 0; blue, 0 }  ,fill opacity=1 ] (336.42,1199.75) .. controls (336.42,1197.47) and (338.27,1195.63) .. (340.55,1195.63) .. controls (342.83,1195.63) and (344.68,1197.47) .. (344.68,1199.75) .. controls (344.68,1202.03) and (342.83,1203.88) .. (340.55,1203.88) .. controls (338.27,1203.88) and (336.42,1202.03) .. (336.42,1199.75) -- cycle ;

% Text Node
\draw (261.33,1259) node [anchor=north west][inner sep=0.75pt]  [color={rgb, 255:red, 208; green, 2; blue, 27 }  ,opacity=1 ]  {$\pi$};
% Text Node
\draw (312,1257) node [anchor=north west][inner sep=0.75pt]  [color={rgb, 255:red, 74; green, 144; blue, 226 }  ,opacity=1 ]  {$\pi^{*}$};
% Text Node
\draw (248.33,1153.07) node [anchor=north west][inner sep=0.75pt]  [font=\scriptsize]  {$e_{4}$};
% Text Node
\draw (250.78,1235.91) node [anchor=north west][inner sep=0.75pt]  [font=\scriptsize]  {$e_{3}$};
% Text Node
\draw (324.78,1233.57) node [anchor=north west][inner sep=0.75pt]  [font=\scriptsize]  {$e_{2}$};
% Text Node
\draw (328,1153.74) node [anchor=north west][inner sep=0.75pt]  [font=\scriptsize]  {$e_{1}$};
% Text Node
\draw (231.58,1198.6) node [anchor=north west][inner sep=0.75pt]  [font=\scriptsize]  {$v_{4}$};
% Text Node
\draw (289.58,1253.93) node [anchor=north west][inner sep=0.75pt]  [font=\scriptsize]  {$v_{3}$};
% Text Node
\draw (343.33,1199.07) node [anchor=north west][inner sep=0.75pt]  [font=\scriptsize]  {$v_{2}$};
% Text Node
\draw (286.67,1132.41) node [anchor=north west][inner sep=0.75pt]  [font=\scriptsize]  {$v_{1}$};
% Text Node
\draw (144,1257) node [anchor=north west][inner sep=0.75pt]    {$\phi _{L}$};
% Text Node
\draw (160.5,1239.38) node [anchor=north west][inner sep=0.75pt]  [font=\tiny]  {$L$};
% Text Node
\draw (134.97,1237.93) node [anchor=north west][inner sep=0.75pt]  [font=\tiny]  {$L$};
% Text Node
\draw (191.31,1208.63) node [anchor=north west][inner sep=0.75pt]  [font=\tiny]  {$L$};
% Text Node
\draw (191.77,1181.92) node [anchor=north west][inner sep=0.75pt]  [font=\tiny]  {$L$};
% Text Node
\draw (160.27,1152.58) node [anchor=north west][inner sep=0.75pt]  [font=\tiny]  {$L$};
% Text Node
\draw (134.76,1152.58) node [anchor=north west][inner sep=0.75pt]  [font=\tiny]  {$L$};
% Text Node
\draw (104.25,1210.26) node [anchor=north west][inner sep=0.75pt]  [font=\tiny]  {$L$};
% Text Node
\draw (106.09,1181.92) node [anchor=north west][inner sep=0.75pt]  [font=\tiny]  {$L$};

\end{tikzpicture}}
    \caption{A pairing $\phi_L$ of the $L$-edges of a cycle $C(\boldsymbol{\eta})$ (left), the partition $\pi\in \mathrm{NC}_4$ that is induced by $\phi_L$ (right, in red), and its complement $\pi^*$ (in blue).}
    \label{fig:noncrossingpartition}
\end{figure}

\bigskip
For atomic $\phi$'s, we notice that the only $\pi$ that arise this way must be non-crossing. 
Indeed, if $\phi_L\sim \pi$ and $\pi$ admits a crossing, then a cycle is formed as depicted in Figure \ref{fig:crossing} below.

% \begin{remark}
%     This corresponds to condition 2.4.1. (i) in \cite{dubach2021words}, our $C(\boldsymbol{\eta})$ playing the role of their polygon $f_w$.
% \end{remark}

\begin{figure}[!ht]
    \centering
    \tikzset{every picture/.style={line width=0.75pt}} %set default line width to 0.75pt        

\begin{tikzpicture}[x=0.75pt,y=0.75pt,yscale=-1,xscale=1]
%uncomment if require: \path (0,3655); %set diagram left start at 0, and has height of 3655

%Curve Lines [id:da23088601048278845] 
\draw [color={rgb, 255:red, 0; green, 0; blue, 0 }  ,draw opacity=1 ]   (338.96,3501.87) .. controls (343.63,3515.03) and (375.96,3510.87) .. (377.96,3501.72) ;
%Curve Lines [id:da12008421435560412] 
\draw [color={rgb, 255:red, 0; green, 0; blue, 0 }  ,draw opacity=1 ]   (338.96,3501.87) .. controls (342.63,3491.03) and (374.96,3490.87) .. (377.96,3501.72) ;
%Straight Lines [id:da16082160424172542] 
\draw [color={rgb, 255:red, 74; green, 74; blue, 74 }  ,draw opacity=1 ] [dash pattern={on 0.84pt off 2.51pt}]  (194.82,3520.03) -- (194.42,3490.42) ;
%Curve Lines [id:da05644338606698751] 
\draw [color={rgb, 255:red, 208; green, 2; blue, 27 }  ,draw opacity=1 ]   (206.36,3479.57) .. controls (225.82,3491.16) and (231.82,3520.16) .. (232.06,3545.87) ;
%Curve Lines [id:da8671761394148807] 
\draw [color={rgb, 255:red, 208; green, 2; blue, 27 }  ,draw opacity=1 ]   (206.56,3531.31) .. controls (218.82,3519.16) and (236.96,3489.88) .. (231.56,3463.44) ;
%Straight Lines [id:da8382014584014559] 
\draw [color={rgb, 255:red, 74; green, 74; blue, 74 }  ,draw opacity=1 ] [dash pattern={on 0.84pt off 2.51pt}]  (271.82,3547.16) -- (245.82,3549.16) ;
%Flowchart: Connector [id:dp8192522012226247] 
\draw  [color={rgb, 255:red, 0; green, 0; blue, 0 }  ,draw opacity=1 ][fill={rgb, 255:red, 0; green, 0; blue, 0 }  ,fill opacity=1 ] (214.17,3468.72) .. controls (214.17,3466.44) and (216.02,3464.59) .. (218.3,3464.59) .. controls (220.58,3464.59) and (222.42,3466.44) .. (222.42,3468.72) .. controls (222.42,3471) and (220.58,3472.84) .. (218.3,3472.84) .. controls (216.02,3472.84) and (214.17,3471) .. (214.17,3468.72) -- cycle ;
%Straight Lines [id:da8304556383100845] 
\draw    (218.3,3468.72) -- (244.82,3458.16) ;
\draw [shift={(233.79,3462.55)}, rotate = 158.3] [color={rgb, 255:red, 0; green, 0; blue, 0 }  ][line width=0.75]    (4.37,-1.96) .. controls (2.78,-0.92) and (1.32,-0.27) .. (0,0) .. controls (1.32,0.27) and (2.78,0.92) .. (4.37,1.96)   ;
%Flowchart: Connector [id:dp5277147124047051] 
\draw  [color={rgb, 255:red, 0; green, 0; blue, 0 }  ,draw opacity=1 ][fill={rgb, 255:red, 0; green, 0; blue, 0 }  ,fill opacity=1 ] (214.17,3542.58) .. controls (214.17,3540.3) and (216.02,3538.46) .. (218.3,3538.46) .. controls (220.58,3538.46) and (222.42,3540.3) .. (222.42,3542.58) .. controls (222.42,3544.86) and (220.58,3546.71) .. (218.3,3546.71) .. controls (216.02,3546.71) and (214.17,3544.86) .. (214.17,3542.58) -- cycle ;
%Straight Lines [id:da6776724334410603] 
\draw    (218.3,3542.58) -- (245.82,3549.16) ;
\draw [shift={(234.39,3546.43)}, rotate = 193.44] [color={rgb, 255:red, 0; green, 0; blue, 0 }  ][line width=0.75]    (4.37,-1.96) .. controls (2.78,-0.92) and (1.32,-0.27) .. (0,0) .. controls (1.32,0.27) and (2.78,0.92) .. (4.37,1.96)   ;
%Straight Lines [id:da2234801656805483] 
\draw    (218.3,3468.72) -- (194.42,3490.42) ;
\draw [shift={(208.88,3477.28)}, rotate = 137.73] [color={rgb, 255:red, 0; green, 0; blue, 0 }  ][line width=0.75]    (4.37,-1.96) .. controls (2.78,-0.92) and (1.32,-0.27) .. (0,0) .. controls (1.32,0.27) and (2.78,0.92) .. (4.37,1.96)   ;
%Straight Lines [id:da9979806113526992] 
\draw    (194.82,3520.03) -- (218.3,3542.58) ;
\draw [shift={(208.29,3532.97)}, rotate = 223.85] [color={rgb, 255:red, 0; green, 0; blue, 0 }  ][line width=0.75]    (4.37,-1.96) .. controls (2.78,-0.92) and (1.32,-0.27) .. (0,0) .. controls (1.32,0.27) and (2.78,0.92) .. (4.37,1.96)   ;
%Flowchart: Connector [id:dp06390813383722194] 
\draw  [color={rgb, 255:red, 0; green, 0; blue, 0 }  ,draw opacity=1 ][fill={rgb, 255:red, 0; green, 0; blue, 0 }  ,fill opacity=1 ] (190.33,3490.95) .. controls (190.04,3488.69) and (191.63,3486.62) .. (193.89,3486.33) .. controls (196.15,3486.03) and (198.22,3487.63) .. (198.51,3489.89) .. controls (198.81,3492.15) and (197.21,3494.22) .. (194.96,3494.51) .. controls (192.7,3494.8) and (190.63,3493.21) .. (190.33,3490.95) -- cycle ;
%Flowchart: Connector [id:dp8947470703654608] 
\draw  [color={rgb, 255:red, 0; green, 0; blue, 0 }  ,draw opacity=1 ][fill={rgb, 255:red, 0; green, 0; blue, 0 }  ,fill opacity=1 ] (190.7,3520.03) .. controls (190.7,3517.75) and (192.54,3515.91) .. (194.82,3515.91) .. controls (197.1,3515.91) and (198.95,3517.75) .. (198.95,3520.03) .. controls (198.95,3522.31) and (197.1,3524.16) .. (194.82,3524.16) .. controls (192.54,3524.16) and (190.7,3522.31) .. (190.7,3520.03) -- cycle ;
%Flowchart: Connector [id:dp3755533625537292] 
\draw  [color={rgb, 255:red, 0; green, 0; blue, 0 }  ,draw opacity=1 ][fill={rgb, 255:red, 0; green, 0; blue, 0 }  ,fill opacity=1 ] (240.7,3458.16) .. controls (240.7,3455.88) and (242.54,3454.03) .. (244.82,3454.03) .. controls (247.1,3454.03) and (248.95,3455.88) .. (248.95,3458.16) .. controls (248.95,3460.44) and (247.1,3462.28) .. (244.82,3462.28) .. controls (242.54,3462.28) and (240.7,3460.44) .. (240.7,3458.16) -- cycle ;
%Flowchart: Connector [id:dp2962556994147053] 
\draw  [color={rgb, 255:red, 0; green, 0; blue, 0 }  ,draw opacity=1 ][fill={rgb, 255:red, 0; green, 0; blue, 0 }  ,fill opacity=1 ] (241.7,3549.16) .. controls (241.7,3546.88) and (243.54,3545.03) .. (245.82,3545.03) .. controls (248.1,3545.03) and (249.95,3546.88) .. (249.95,3549.16) .. controls (249.95,3551.44) and (248.1,3553.28) .. (245.82,3553.28) .. controls (243.54,3553.28) and (241.7,3551.44) .. (241.7,3549.16) -- cycle ;
%Straight Lines [id:da8507452837692013] 
\draw    (245.82,3549.16) -- (258.82,3548.16) ;
%Straight Lines [id:da9296485716022549] 
\draw [color={rgb, 255:red, 74; green, 74; blue, 74 }  ,draw opacity=1 ] [dash pattern={on 0.84pt off 2.51pt}]  (273.82,3462.16) -- (244.82,3458.16) ;
%Straight Lines [id:da6737845491297895] 
\draw    (244.82,3458.16) -- (259.82,3460.16) ;
%Straight Lines [id:da1586975547295848] 
\draw    (194.62,3499.23) -- (194.42,3490.42) ;
%Straight Lines [id:da7323170171628085] 
\draw    (194.82,3520.03) -- (194.62,3512.23) ;
%Straight Lines [id:da8834049564957044] 
\draw [color={rgb, 255:red, 0; green, 0; blue, 0 }  ,draw opacity=1 ][fill={rgb, 255:red, 0; green, 0; blue, 0 }  ,fill opacity=1 ]   (264,3502.42) -- (299.29,3502.42) ;
\draw [shift={(301.29,3502.42)}, rotate = 180] [color={rgb, 255:red, 0; green, 0; blue, 0 }  ,draw opacity=1 ][line width=0.75]    (10.93,-3.29) .. controls (6.95,-1.4) and (3.31,-0.3) .. (0,0) .. controls (3.31,0.3) and (6.95,1.4) .. (10.93,3.29)   ;
%Flowchart: Connector [id:dp14054322349188064] 
\draw  [color={rgb, 255:red, 0; green, 0; blue, 0 }  ,draw opacity=1 ][fill={rgb, 255:red, 0; green, 0; blue, 0 }  ,fill opacity=1 ] (373.84,3501.72) .. controls (373.84,3499.44) and (375.68,3497.59) .. (377.96,3497.59) .. controls (380.24,3497.59) and (382.09,3499.44) .. (382.09,3501.72) .. controls (382.09,3504) and (380.24,3505.84) .. (377.96,3505.84) .. controls (375.68,3505.84) and (373.84,3504) .. (373.84,3501.72) -- cycle ;
%Flowchart: Connector [id:dp7788081324209999] 
\draw  [color={rgb, 255:red, 0; green, 0; blue, 0 }  ,draw opacity=1 ][fill={rgb, 255:red, 0; green, 0; blue, 0 }  ,fill opacity=1 ] (334.84,3501.87) .. controls (334.84,3499.59) and (336.68,3497.75) .. (338.96,3497.75) .. controls (341.24,3497.75) and (343.09,3499.59) .. (343.09,3501.87) .. controls (343.09,3504.15) and (341.24,3506) .. (338.96,3506) .. controls (336.68,3506) and (334.84,3504.15) .. (334.84,3501.87) -- cycle ;
%Straight Lines [id:da9706850232665525] 
\draw    (328.63,3494.03) -- (338.96,3501.87) ;
%Straight Lines [id:da49457127113599797] 
\draw    (338.96,3501.87) -- (327.96,3510.87) ;
%Straight Lines [id:da5126281403569956] 
\draw    (388.96,3509.87) -- (377.96,3501.72) ;
%Straight Lines [id:da6135048229241639] 
\draw    (377.96,3501.72) -- (387.96,3492.87) ;
%Curve Lines [id:da17604681517582466] 
\draw  [dash pattern={on 0.84pt off 2.51pt}]  (328.63,3494.03) .. controls (308.06,3479.39) and (301.06,3520.39) .. (327.96,3510.87) ;
%Curve Lines [id:da8970934669293613] 
\draw  [dash pattern={on 0.84pt off 2.51pt}]  (387.96,3492.87) .. controls (407.91,3476.39) and (407.91,3524.39) .. (388.96,3509.87) ;

\end{tikzpicture}
    \caption{If $\phi_L\sim \pi$ and $\pi$ has a crossing, then identifying edges according to $\phi_L$ creates a cycle.}
    \label{fig:crossing}
\end{figure}
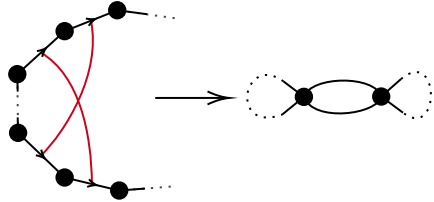

% \begin{lemma} Let $\phi\in {G}(,\boldsymbol{\eta})$ be acyclic, and $\phi_L\subset \phi$ be the pairing of $L$-edges of $C(\boldsymbol{\eta},)$ given by Lemma \ref{lem:firstdecomp}. 
% Then $\phi_L\sim \pi$ for some $\pi\in \mathrm{NC}_m$.
% \end{lemma}

Next, recall that by definition, $v_{j}$ is the root of $\eta_{1,L}^{(j)}\wedge \eta_{2,L}^{(j)}$ for every $j\in [k]$. Upon identifying $L$-edges of $C(\boldsymbol{\eta})$ by $\phi_L$, such trees get joined at their roots. More precisely, if $\phi_L\sim \pi$, then for every block $B=\{j_1,...,j_l\}$ of $\pi$ we obtain the tree $$(\eta^{(j_1)}_{1,L}\wedge\eta^{(j_1)}_{2,L}\wedge \cdots \wedge  \eta^{(j_l)}_{1,L}\wedge\eta^{(j_l)}_{2,L}):=\wedge_{i=1}^l(\eta^{(j_i)}_{1,L}\wedge\eta^{(j_i)}_{2,L}).$$
Furthermore, for $\phi$ to be acyclic, it must at the very least pair edges of such trees internally within each tree (otherwise, a cycle would be formed at their roots). We therefore get, for each block $B=\{j_1,..,j_l\}$ of $\pi$, a pairing $\phi_B\in \mathcal{P}_{\mathrm{A}}(\wedge_{i=1}^l(\eta_{1,L}^{(j_i)}\wedge\eta_{2,L}^{(j_i)}))$

Having now paired $L$-edges belonging to $C(\boldsymbol{\eta})$ (using $\phi_L$), and those belonging to trees of height $L$ decorating this cycle (using the $\phi_B$'s), we must describe how $\phi$ pairs the remaining edges. Crucially, observe that the unpaired edges have inputs $W_i$ for $i\in \{1,...,L-1\}$ and form disjoint decorated cycles themselves. To be precise, we will need the following definition.

\begin{definition}
    Let $G(\boldsymbol{\eta})\in \mathrm{DC}(k,L), \phi_L$ be as above and $\phi_L\sim \pi$ for some $\pi\in \mathrm{NC}_k$. 
    Let $\tilde{B}$ be a block of $\pi^*\in\mathrm{NC}_k$.
    Then the \textup{restriction of $G(\boldsymbol{\eta})_{\phi_L}$ to $\tilde{B}$} is the decorated cycle in $\mathrm{DC}(|\tilde{B}|,L-1)$ determined by the trees
    \begin{align*}
        \boldsymbol{\eta}|_{\tilde{B}}=(\eta_{i,l}^{(j)})_{i,j,l},\quad i\in [2], l\in [L-1], j\in B.
    \end{align*}
\end{definition}

Returning to our previous point, the leftover edges (that were not paired by $\phi_L$ or $\phi_B$) form one decorated cycle per block of $\pi^*$, which are given by
\[
    G(\boldsymbol{\eta}|_{\tilde{B}}),\quad \tilde{B}\in \pi^*.
\]
For $\phi$ to be atomic, we claim that the edges of each of these $G(\boldsymbol{\eta}|_{\tilde{B}})$ must once again be paired internally. This is because the $G(\boldsymbol{\eta}|_{\tilde{B}})$ are connected to one another in $G(\boldsymbol{\eta})_{\phi_L\sqcup\{\phi_B\}_{B\in \pi}}$ by paths whose edges that have already been paired by $\phi_B$ or $\phi_L$; pairing some $e\in G(\boldsymbol{\eta}|_{\tilde{B}})$ to another $e'\in G(\boldsymbol{\eta}|_{\tilde{B'}})$ would then create a cycle which we cannot collapse (as in Figure \ref{fig:crossing}). Put differently, this implies that any pairing $\phi'$ of the remaining edges can be decomposed into a disjoint union as follows:
\[ 
    {\phi'}=\bigsqcup_{\tilde{B}\in \pi^*} \phi'_{\tilde{B}}.
\]
Summarizing everything we have said so far, any  pairing $\phi\in \mathcal{P}_{\mathrm{A}}(G(\boldsymbol{\eta}))$ consists of the following:
\begin{enumerate}
    \item A pairing $\phi_L$ of the $L$-edges of $C(\boldsymbol{\eta})$, such that $\phi_L\sim \pi\in \mathrm{NC}_k$.
    \item For every block $B=\{j_1,...,j_l\}$ of $\pi$, a pairing $\phi_B \in \mathcal{P}_{\mathrm{A}}(\wedge_{i=1}^l(\eta^{(j_i)}_{1,L}\wedge\eta^{(j_i)}_{2,L}))$.
    \item For every block $\tilde{B}$ of $\pi^*_k$, a pairing $\phi_{\tilde{B}}\in \mathcal{P}_{\mathrm{A}}(G(\boldsymbol{\eta})|_{\tilde{B}})$.
\end{enumerate}
Noting how the quantity 
\[
\prod_{\{\bullet_{\bx}, \bullet_{\bx}\} \in (\mathcal{L}_0(G(\mathbf{\eta}))/\sim_{\phi})}
        \frac{\sprod{\bx}{\bx}}{N}
\]
then factors into
\[
\left(
\prod_{B \in \pi}
\prod_{\{\bullet_{\bx}, \bullet_{\bx}\} \in (\mathcal{L}_0(G(\mathbf{\eta}))/\sim_{\phi_B})}
        \frac{\sprod{\bx}{\bx}}{N}
\right)
\left(
\prod_{\tilde{B} \in \pi^*} 
\prod_{\{\bullet_{\bx}, \bullet_{\bx}\} \in (\mathcal{L}_0(G(\mathbf{\eta}))/\sim_{\phi_{\tilde{B}}})}
\frac{\sprod{\bx}{\bx}}{N}
\right),
\]
the decomposition of $\phi \in \mathcal{P}_{\mathrm{A}}(G(\boldsymbol{\eta}))$ given by (1),(2),(3) yields the following result.
\begin{lemma}\label{lemma:a_decomp}
\begin{equation}
    a(G(\boldsymbol{\eta}))=\sum_{\pi \in \mathrm{NC}_k} \left(\prod_{B=\{j_i\}_{i=1}^l}a({\wedge_{i=1}^l(\eta^{(j_i)}_{1,L}\wedge\eta^{(j_i)}_{2,L})})\right)\left(\prod_{\tilde{B}\in \pi^*_k}a({G(\boldsymbol{\eta}|_{\tilde{B}})})\right).
\end{equation}
\end{lemma}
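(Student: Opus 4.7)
The plan is to treat the identity as a bookkeeping exercise built on top of the combinatorial decomposition of $\mathcal{P}_\mathrm{A}(G(\boldsymbol{\eta}))$ that was just enumerated in points (1)--(3) preceding the lemma. By definition,
\[
a(G(\boldsymbol{\eta})) \;=\; \sum_{\phi\in \mathcal{P}_\mathrm{A}(G(\boldsymbol{\eta}))}\,\prod_{\{\bullet_\bx,\bullet_\bx\}\in(\mathcal{L}(G(\boldsymbol{\eta}))/\sim_\phi)}\!\!\!\frac{\sprod{\bx}{\bx}}{N},
\]
so it is enough to (a) turn the decomposition into an honest bijection and (b) show that the leaf-product factorises across the three pieces.

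First I would promote points (1)--(3) into a bijection
\[
\mathcal{P}_\mathrm{A}(G(\boldsymbol{\eta}))\;\longleftrightarrow\;\bigsqcup_{\pi\in\mathrm{NC}_k}\;\prod_{B\in\pi}\mathcal{P}_\mathrm{A}\!\bigl(\wedge_{i=1}^{|B|}(\eta^{(j_i)}_{1,L}\wedge\eta^{(j_i)}_{2,L})\bigr)\;\times\;\prod_{\tilde B\in\pi^*}\mathcal{P}_\mathrm{A}\!\bigl(G(\boldsymbol{\eta}|_{\tilde B})\bigr),
\]
where the pairing $\phi_L$ of trunk $L$-edges is absorbed into the choice of $\pi$ (I address the uniqueness point below). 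The forward direction is already given: starting from $\phi$, read off $\pi$ from $\phi_L$ and then restrict $\phi$ to the merged subtrees and to the restriction graphs $G(\boldsymbol{\eta}|_{\tilde B})$. For the reverse direction, I would glue an atomic $\phi_L$ dictated by $\pi$ together with arbitrary atomic $\{\phi_B\}_{B\in\pi}$ and $\{\phi_{\tilde B}\}_{\tilde B\in\pi^*}$, and verify that the resulting pairing is itself atomic: the three components act on disjoint edge-sets, each contracts its subgraph to a tree, and the only points where the subgraphs meet are the already-identified vertices $v_j$, so no new cycle can appear.

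Second, because $\phi_L$ pairs only trunk $L$-edges (never edges adjacent to a leaf), the equivalence $\sim_\phi$ on $\mathcal{L}(G(\boldsymbol{\eta}))$ respects the natural splitting of leaves into those lying in the height-$L$ subtrees attached at the $v_j$ and those lying in the sub-decorated-cycles $G(\boldsymbol{\eta}|_{\tilde B})$. Each identified leaf-pair is therefore either produced by a single $\phi_B$ or by a single $\phi_{\tilde B}$, and the leaf-product cleanly factorises as
\[
\prod_{\{\bullet_\bx,\bullet_\bx\}\in(\mathcal{L}/\sim_\phi)}\!\!\!\frac{\sprod{\bx}{\bx}}{N}
\;=\;\prod_{B\in\pi}\,\prod_{\{\bullet_\bx,\bullet_\bx\}\in(\mathcal{L}/\sim_{\phi_B})}\!\!\!\frac{\sprod{\bx}{\bx}}{N}\;\cdot\;\prod_{\tilde B\in\pi^*}\,\prod_{\{\bullet_\bx,\bullet_\bx\}\in(\mathcal{L}/\sim_{\phi_{\tilde B}})}\!\!\!\frac{\sprod{\bx}{\bx}}{N}.
\]
Substituting this into the definition of $a$ and using the bijection to interchange sum and product yields the claimed identity, since the inner sums-of-products are by definition $a(\wedge_{i=1}^{|B|}(\eta^{(j_i)}_{1,L}\wedge\eta^{(j_i)}_{2,L}))$ and $a(G(\boldsymbol{\eta}|_{\tilde B}))$ respectively.

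The main obstacle is the uniqueness of $\phi_L$ given $\pi$: each $v_j$ has two incident trunk $L$-edges (one from the ``left'' trunk and one from the ``right'' trunk in the cyclic order on $C(\boldsymbol{\eta})$), and a priori there is more than one way to pair the $2|B|$ trunk edges inside a block. I expect the correct statement to be that non-crossingness of $\pi$, together with the planar cyclic arrangement of the $v_j$ and the requirement that the contraction on each block be a tree, forces a unique planar matching (a Kreweras-complement-style pairing). Verifying this rigidity, and simultaneously checking that the resulting map onto triples is bijective, is the delicate step; everything else is bookkeeping.
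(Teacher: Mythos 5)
Your proposal is correct and follows essentially the same route as the paper: the identity is obtained from the decomposition of an atomic pairing into $\phi_L$ (read off from $\pi\in\mathrm{NC}_k$), the block pairings $\phi_B$ of the merged height-$L$ trees, and the dual-block pairings $\phi_{\tilde B}$ of the sub-decorated-cycles, combined with the factorisation of the leaf product $\prod\sprod{\bx}{\bx}/N$ across these pieces. The uniqueness of $\phi_L$ given $\pi$ that you flag as delicate is a real point that the paper also leaves implicit, but it resolves exactly as you anticipate: within a block of size $l$ the $2l$ trunk $L$-edges sit in consecutive pairs around $C(\boldsymbol{\eta})$, and the requirement that the contraction create no cycle forces the unique nested (non-crossing) matching of those edges, so no multiplicity factor appears in the sum over $\mathrm{NC}_k$.
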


% An example is in order to help elucidate this theorem and the heavy notation we have introduced. Consider the tuple $\boldsymbol{\ell}=(2,1,2,2)$, in which case $k=4$, $m=3$, and $L=2$. Using the same notation as above, let $G(\boldsymbol{\eta})\in \mathrm{DC}(\boldsymbol{\ell})$, and consider a pairing $\phi_2$ of the $2$-edges of $C(\boldsymbol{\eta})$ satisfying $\phi_2 \sim \pi = \{\{1,3,4\}\}$.
% This partition $\pi$ is depicted in Figure \ref{fig:fullexamplepartition}, alongside its dual $\pi_4^*$ (relative to $\{1,2,3,4\}$).

% \begin{figure}
%     \centering
%      \scalebox{.8}{\input{Figures/fullexamplepartition}}
%         \caption{A partition $\pi=\{\{2\},\{1,3,4\}\}\in \mathrm{NC}(\{1,2,3,4\})$ and its dual $\pi^*$.}
%     \label{fig:fullexamplepartition}
% \end{figure}

Before finishing the proof of Theorem \ref{thm:jacobianmoments}, an example illustrating this lemma and the heavy notation that we have introduced for it is well in order. Consider the case when we're computing $m_{4,2}$, for which we end up studying decorated cycles $G(\boldsymbol{\eta})\in \mathrm{DC}(4,2)$. Then the decomposition of $a(G(\boldsymbol{\eta}))$ given in Lemma \ref{lemma:a_decomp} is depicted in full detail in the  figure \ref{fig:fullexample} below.
\begin{figure}[ht]
    \centering
    \input{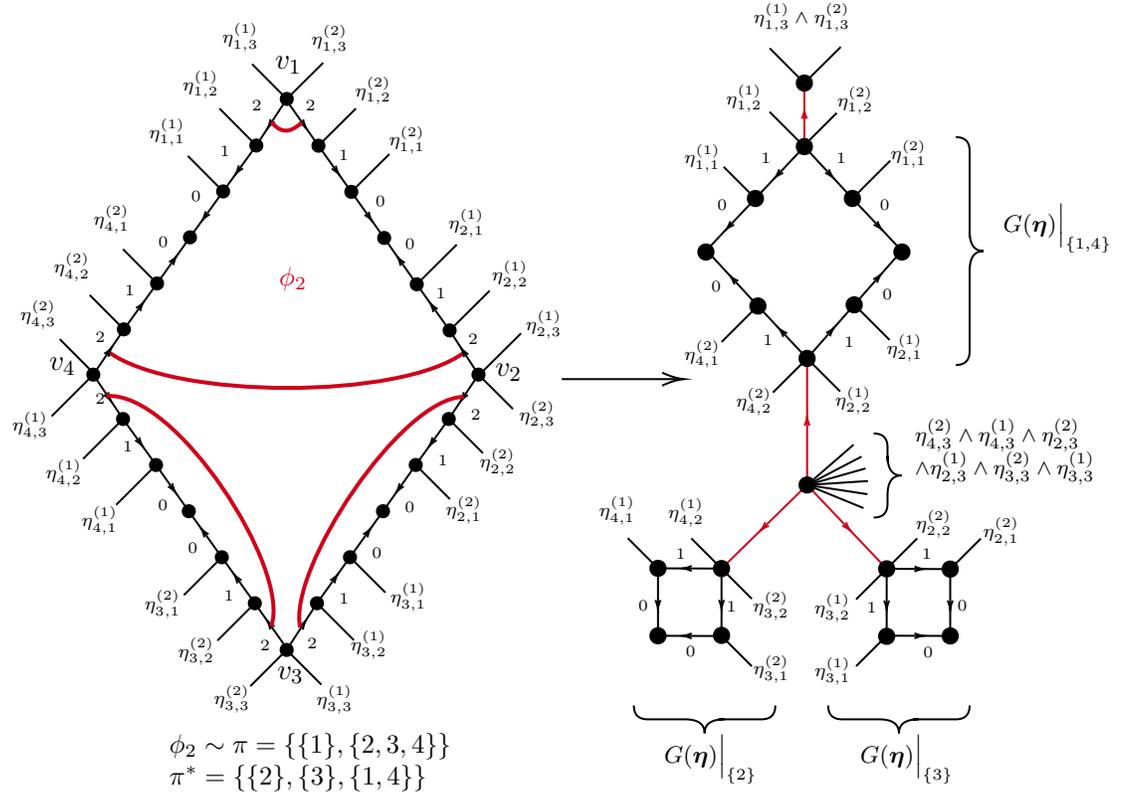}
    \caption{Let $\phi\in \mathcal{P}_{\mathrm{A}}(G(\boldsymbol{\eta}))$ and $\phi_2\subset \phi$ be as in Lemma \ref{lem:firstdecomp} (depicted in red on the left in this figure). Then $\phi_2$ corresponds to a non-crossing partition $\pi$: here, $\pi$ only has two blocks, which gives rise to the trees $\eta_{1,3}^{(1)}\wedge \eta_{1,3}^{(2)}$ and $\eta_{4,3}^{(1)}\wedge\eta_{4,3}^{(2)}\wedge\eta_{2,3}^{(1)}\wedge\eta_{2,3}^{(2)}\wedge\eta_{3,3}^{(1)}\wedge\eta_{3,3}^{(2)}$. Its complement has three blocks, giving rise to three decorated cycles: $G(\boldsymbol{\eta})|_{\{1,4\}}\in \mathrm{DC}(2,1)$, $G(\boldsymbol{\eta})|_{\{2\}}\in \mathrm{DC}(1,1)$ and $G(\boldsymbol{\eta})|_{\{3\}}\in \mathrm{DC}(1,1)$.}
    \label{fig:fullexample}
\end{figure}

%\subsubsection{Back to analysis:} 
We now have the tools to prove Theorem \ref{thm:jacobianmoments}.
\begin{proof}[Proof of Theorem \ref{thm:jacobianmoments}]
Recall that
\[
    \tilde{m}_{k,L}=\sum_{\substack{G(\boldsymbol{\eta})\in \mathrm{DC}(k,L)}} \frac{\dot \varphi_{\boldsymbol{\eta}}}{s(\boldsymbol{\eta})} a(G(\boldsymbol{\eta}))
\]
by Proposition \ref{prop:decorated_cycle_scaling}. We can then use the decomposition given in Lemma \ref{lemma:a_decomp} for each $a(G(\boldsymbol{\eta}))$ in this sum to get
\[
    \sum_{\substack{G(\boldsymbol{\eta})\in \mathrm{DC}(k,L)}}\frac{\dot \varphi_{\boldsymbol{\eta}}}{s(\boldsymbol{\eta})} \sum_{\pi \in \mathrm{NC}_k} \left(\prod_{B=\{j_i\}_{i=1}^l\in \pi}a({\wedge_{i=1}^l(\eta^{(j_i)}_{1,L}\wedge\eta^{(j_i)}_{2,L})})\right)\left(\prod_{\tilde{B}\in \pi^*_k}a({G(\boldsymbol{\eta}|_{\tilde{B}})})\right).
\]
The next step will be to interchange the order of summation. 

To that end, we must first distribute the factor $\frac{\dot \varphi_{\boldsymbol{\eta}}}{s(\boldsymbol{\eta})}$. The same arguments of Proposition \ref{app:prop:expectation_tree_expansion} used to prove Theorem \ref{thm:GPlimit} show that
    \[
    \mu_{k,L} = \sum_{(\eta^{(j)}_{i,L})_{j\in[k], i\in [2]}\in ((\bT_{L}(\bx))^*)^{2k}} 
    \left(\prod_{j=1}^k\frac{\dot\varphi_{\eta^{(j)}_{2,L}}\dot\varphi_{\eta^{(j)}_{1,L}}}{s(\eta^{(j)}_{1,L})s(\eta^{(j)}_{2,L})}
    \right)
    a({\wedge_{j=1}^k (\eta^{(j)}_{1,L}\wedge\eta^{(j)}_{2,L})}).
    \]
Notice that the sum over $\mathrm{NC}_k$ does not depend on a specific $G(\boldsymbol{\eta})$. 
Splitting the sum over $G(\eta)$ into two sums, the first over $(\eta^{(j)}_{i,L})_{j\in[k], i\in [2]}\in ((\bT_{L}(\bx))^*)^{2k}$ and the second over the remaining trees in $\boldsymbol{\eta}$, interchanging the sums then gives
\[
    \sum_{\pi\in \mathrm{NC}_k} \bigg(\prod_{B\in \pi} \mu_{|B|,L}\bigg)\bigg(\prod_{\tilde B\in \pi^*} \sum_{G(\boldsymbol{\eta}|_{\tilde B})} \frac{\dot \varphi_{\boldsymbol{\eta}|_{\tilde B}}}{s(\boldsymbol{\eta}|_{\tilde B})} a(G(\boldsymbol{\eta}|_{\tilde B}))\bigg).
\]
Lastly, we note that
\[
    \sum_{G(\boldsymbol{\eta}|_{\tilde B})} \frac{\dot \varphi_{\boldsymbol{\eta}|_{\tilde B}}}{s(\boldsymbol{\eta}|_{\tilde B})} a(G(\boldsymbol{\eta}|_{\tilde B})) = \sum_{G'\in \mathrm{DC}(|\tilde{B}|,L-1)} \frac{\dot \varphi_{G'}}{s(G')} a(G') = \tilde{m}_{|\tilde{B}|,L-1}.
\]
It follows that
\[
 \tilde{m}_{k,L}=\sum_{\pi\in \mathrm{NC}_k} \mu_{\pi,L} \tilde{m}_{\pi^*,L-1},
\]
and we conclude that $\tilde{m}_{k,L}=m_{k,L}$ for every $k,L$, where $m_{k,L}$ is as in the Proposition's statement.

\end{proof}

\section{Extensions to non-Gaussian, sparse and complex weights.}\label{sec:extensions}

\subsection{Non-Gaussian weights}
\label{sec:non_gaussian}
All of the results obtained in the previous section can be extended to neural networks $\Phi$ with non-Gaussian random weights, under moment assumptions. Consider the following more general sequence $\mathcal{W}$.
\begin{definition}\label{def:non-gaussian-real}
Let $N>0$ and $\mathcal{W}:=(W_i ~|~ i\in \N)$, where the $W_i\in\mathbb{R}^{N\times N}$ are independent matrices with entries  $\sim \sigma_i Z_i$, where $\{Z_i\}_{i\geq 0}$ is a family of i.i.d. random variables satisfying
\[
\E\{Z_i^{2}\} = 1, \quad
\E\{Z_i^{2k+1}\} = 0, \quad 
\E\{Z_i^{2k}\} < \infty\quad \forall k\geq 0.
\] 
\end{definition}
\noindent Note that we take the matrices in this sequence to be square for simplicity only, the results in this section carrying over to rectangular matrices as in Section \ref{sec:genus} by straightforward modifications. The graphs considered in this section are all assumed to satisfy the following condition.
\begin{assumption}\label{assumption:non-gaussian}
 $G$ is a product graph with inputs which are either elements of $\mathcal{W}$, or deterministic vector/matrices with entries uniformly bounded in $N$.
\end{assumption}

We define $E_\mathcal{W}$ and the set of admissible pairings of  $E_\mathcal{W}$ in exactly the same way as in the Gaussian case.

% \begin{definition}[Admissible pairing] 
%     Let $G$ be a product graph under Assumption \ref{assumption:non-gaussian}, and $E_\mathcal{W}=\{e\in E: \mathbf{X}_e=W_\ell\text{ for some $\ell(e):=\ell \in \N$}\}$. 
%     Then a pairing  $\phi$ of $E_\mathcal{W}$ is said to be \emph{admissible} if any two paired edges $\{e,e'\} \in \phi$ satisfy $\ell(e)=\ell(e')$. 
%     We denote the set of all admissible pairings of $E_\mathcal{W}$ by $\mathcal{P}(G)$.
% \end{definition}

Recall also that for any $G$, $\check{e}(G):=|E(G_{\phi_0})|$ for any fixed admissible pairing $\phi_0$ of its edges (the choice of $\phi_0$ is irrelevant, as the value of $|E(G_{\phi_0})|$ is constant over pairings). 

\begin{theorem}[Approximate Wick expansion]\label{thm:non_gauss_wickexpansion}
 For any (possibly disconnected) product graph $G$
    \begin{equation}\label{eq:non_gauss_wickexpansion}
        \E \left\{\bW_G \right\} =  \sum_{\phi \in \cP(G)} \bW_{G_{\phi}} + \sigma_G \mathcal{O}(N^{\check{e}(G)+c(G)-1})
    \end{equation}
where $\phi$ runs over all of $G$'s \emph{admissible} pairings.
\end{theorem}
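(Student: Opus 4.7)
The plan is to redo the proof of Theorem \ref{thm:wickexpansion} while accounting for the fact that entries of the $W_\ell$ are no longer Gaussian. Expanding as in that proof,
\[
\E\{\mathbf{W}_G\} = \sum_{\mathbf{i}_V} \Big(\prod_{c \notin E_\mathcal{W}} [\mathbf{X}_c]_{i_c}\Big) \E\Big\{\prod_{e \in E_\mathcal{W}} [\mathbf{X}_e]_{i_e}\Big\},
\]
I would group indexations $\mathbf{i}_V$ according to the admissible partition $\pi$ of $E_\mathcal{W}$ they induce (two edges lie in the same block iff they carry the same matrix label $\ell$ and the same entry indices). By independence and symmetry of the $Z_i$ ($\E\{Z^{2k+1}\}=0$), only partitions whose blocks all have even size contribute, and the corresponding inner expectation factorises as $\prod_{B \in \pi} \sigma_{\ell(B)}^{|B|}\,\E\{Z_{\ell(B)}^{|B|}\}$. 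Note that $\prod_B \sigma_{\ell(B)}^{|B|} = \prod_{e \in E_\mathcal{W}} \sigma_{\ell(e)} = \sigma_G$ regardless of $\pi$, and the remaining moment product is bounded uniformly in $N$ thanks to the finite-moment hypothesis.

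Splitting the even-block partitions into pairings $\mathcal{P}(G)$ and partitions $\Pi^{>2}(G)$ containing a block of size at least four, the pairings contribute exactly the Wick sum $\sum_{\phi \in \mathcal{P}(G)} \mathbf{W}_{G_\phi}$ (since $\E\{Z^2\}=1$ reduces each block factor to the $\sigma_{\ell(e)}^2$ of the Gaussian calculation). What remains is to show that the $\Pi^{>2}(G)$ contribution is $\sigma_G \cdot \mathcal{O}(N^{\check{e}(G) + c(G) - 1})$. Since $\Pi^{>2}(G)$ is finite with cardinality independent of $N$, and the deterministic inputs are bounded by assumption, it suffices to establish the uniform estimate $|\mathbf{W}_{G_\pi}| \lesssim \sigma_G\, N^{|V(G_\pi)|}$ together with the vertex bound $|V(G_\pi)| \leq c(G) + \check{e}(G) - 1$ for every $\pi \in \Pi^{>2}(G)$.

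The vertex estimate---and, I expect, the main obstacle to arrange cleanly---follows from three elementary graph-theoretic observations: (i) $|V(G_\pi)| \leq c(G_\pi) + |E(G_\pi)|$ by the classical forest bound (with equality when $G_\pi$ is a forest); (ii) $c(G_\pi) \leq c(G)$ since identifying edges can only merge connected components; and (iii) $|E(G_\pi)| = r_\pi + |E \setminus E_\mathcal{W}|$, where $r_\pi$ is the number of blocks of $\pi$. The defining property $\pi \in \Pi^{>2}(G)$ forces $r_\pi \leq |E_\mathcal{W}|/2 - 1$ (because any block of size $\geq 4$ strictly reduces the block count compared with a pairing refinement), hence $|E(G_\pi)| \leq \check{e}(G) - 1$, and combining (i)-(iii) yields $|V(G_\pi)| \leq c(G) + \check{e}(G) - 1$. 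Multiplying by the $N$-independent coefficient coming from the moment factors and summing over $\Pi^{>2}(G)$ produces the claimed error term, completing the argument.
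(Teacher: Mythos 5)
Your decomposition of $\E\{\bW_G\}$ by the exact coincidence pattern of the indexation is a valid reorganisation of the argument, and your error estimate for $\Pi^{>2}(G)$ is correct: the forest bound $|V(G_\pi)|\leq c(G_\pi)+|E(G_\pi)|$, the monotonicity $c(G_\pi)\leq c(G)$, and the block-count inequality $r_\pi\leq |E_{\mathcal{W}}|/2-1$ together give $|V(G_\pi)|\leq \check{e}(G)+c(G)-1$, which is exactly the exponent needed. This is essentially the same counting the paper performs, just organised differently (the paper distributes each balanced indexation among its compatible pairings with weight $1/|\{\phi:\mathbf{i}\rightsquigarrow\phi\}|$ and then isolates the set $\mathcal{I}^G_{(1)}$ of indexations with no extra coincidences).

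The one genuine flaw is the claim that ``the pairings contribute exactly the Wick sum $\sum_{\phi\in\cP(G)}\bW_{G_\phi}$.'' This is false as an exact identity. Your $S_\pi$, for $\pi$ a pairing, sums only over indexations whose induced partition is \emph{exactly} $\pi$, whereas $\bW_{G_\phi}$ sums over \emph{all} indexations compatible with $\phi$, including those with further coincidences. Concretely, an indexation in which four $\ell$-edges share the same entry index appears once on your side of the ledger, inside $S_\pi$ for the $\pi\in\Pi^{>2}(G)$ containing that $4$-block, with moment factor $\E\{Z^4\}$; but it appears three times in $\sum_\phi\bW_{G_\phi}$ (once for each of the $3!!$ pairings refining the $4$-block), each time with factor $(\E\{Z^2\})^2$. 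In general $\sum_{\phi\in\cP(G)}\bW_{G_\phi}=\sigma_G\sum_{\pi}\bigl(\prod_{B\in\pi}(|B|-1)!!\bigr)T_\pi$, where $T_\pi$ is the sum of the deterministic factors over indexations inducing $\pi$ and the sum runs over all all-even partitions, while your pairing contribution is $\sigma_G\sum_{\pi\ \mathrm{pairing}}T_\pi$. The discrepancy is therefore a finite, $N$-independent linear combination of the $T_\pi$ with $\pi$ a non-pairing even partition, and each such $T_\pi$ is $\mathcal{O}(N^{|V(G_\pi)|})=\mathcal{O}(N^{\check{e}(G)+c(G)-1})$ by the very vertex bound you prove at the end — so the conclusion survives, but you must state and absorb this discrepancy into the error term rather than asserting an exact equality; as written, the step silently discards a contribution of precisely the order you are trying to control.
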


\begin{proof}
    Consider the set of indexations $\mathcal{I}^G := \prod_{v \in V(G)} \{1,...,N_{v}\}$. Recall that for each $e=(u,v)\in E$, $i_e=i_ui_v$. 
    Then
    \[
    \bW_G = \sum_{\mathbf{i} \in \mathcal{I}^G} \prod_{c \in C} [\mathbf{X}_c]_{i_c},
    \quad
    \E\{\bW_G\} = \sum_{\mathbf{i}} 
    \left( 
    \prod_{c \in C\setminus E_{\mathcal{W}}} [\mathbf{X}_c]_{i_c} 
    \right)
    \E\left\{ 
    \prod_{e \in E_{\mathcal{W}}} [\mathbf{X}_e]_{i_e} 
    \right\} 
    \]

    Following \cite{Dubach2021} we write $\mathbf{i} \rightsquigarrow \phi$ whenever 
    \[
    \forall \{e,e'\} \in \phi, ~ i_e = i_{e'}
    \]
    and say that an indexation $\mathbf{i}$ is \emph{balanced} whenever there exists a $\phi\in \mathcal{P}(G)$ for which $\mathbf{i} \rightsquigarrow \phi$.

    Note how for \emph{unbalanced} $\mathbf{i}$ one has 
    $\E\left\{ \prod_{e \in E_{\mathcal{W}}} [\mathbf{X}_e]_{i_e} \right\} = 0$ so that we can write the expectation as 
    \begin{align*}
    \E\{\bW_G\} &= \sum_{\phi \in \cP(G)}
    \sum_{\mathbf{i}\rightsquigarrow \phi} 
    \frac{1}{|\{\phi \in \cP(G) ~|~ \mathbf{i}\rightsquigarrow \phi\}|}
    \left( \prod_{c \in C\setminus E_{\mathcal{W}}} [\mathbf{X}_c]_{i_c} 
    \right)
    \E\left\{ \prod_{e \in E_{\mathcal{W}}} [\mathbf{X}_e]_{i_e} \right\} 
    \end{align*}
    
    For simplicity write
    $\Gamma_\phi := \sum_{\mathbf{i}\rightsquigarrow \phi}  \frac{1}{|\{\phi \in \cP(G) ~|~ \mathbf{i}\rightsquigarrow \phi\}|}
    \left( \prod_{c \in C\setminus E_{\mathcal{W}}} [\mathbf{X}_c]_{i_c} 
    \right)
    \E\left\{ \prod_{e \in E_{\mathcal{W}}} [\mathbf{X}_e]_{i_e} \right\} $
    so that 
    \[
    \E\{\bW_G\} = \sum_{\phi \in \cP(G)} \Gamma_\phi.
    \]

    Note that for all $\phi \in \cP(G)$ it holds, for constants depending on $G$ but not on $N$, that 
    \begin{align*}
        \Gamma_\phi 
        = \sum_{\mathbf{i}\rightsquigarrow \phi}  \sigma_G \mathcal{O}(1) 
        = \sigma_G \mathcal{O}(|\{\mathbf{i} ~|~ \mathbf{i}\rightsquigarrow \phi\}|) 
        = \sigma_G \mathcal{O}(N^{|V(G_\phi)|})
    \end{align*}
    Notice that the maximum value of $|V(G_\phi)|$ is reached only when $\phi$ is {fully atomic}, in which case one has 
    \[
    |V(G_\phi)| = \check{e}(G) + c(G)
    \]
    where we recall that $\check{e}=|E(G_\phi)|$
    (otherwise, this is a strict inequality).

    Furthermore, for such $\phi$, we have
    \begin{equation}
        \Gamma_\phi = \sum_{\mathcal{I}^G_{(1)} \ni \mathbf{i}\rightsquigarrow \phi} 
        \left( \prod_{c \in C\setminus E_{\mathcal{W}}} [\mathbf{X}_c]_{i_c} 
        \right)
        \E\left\{ \prod_{e \in E_{\mathcal{W}}} [\mathbf{X}_e]_{i_e} \right\} 
        + \sigma_G \mathcal{O}(N^{\check{e}(G) + c(G)-1})
    \end{equation}
    where 
    \begin{equation*}
        \mathcal{I}^G_{(1)} := 
        \{\mathbf{i}\in \mathcal{I}^G ~|~ \forall e,e'\in E_{\mathcal{W}}, i_e=i_{e'}\implies \mathbf{X}_e=\mathbf{X}_{e'} \text{ and } i_e \neq i_{e_0} \hspace{3pt} \forall e_0 \in E_\mathcal{W}  \setminus \{e,e'\}\},
    \end{equation*}
    noting that if $\mathbf{i}\in \mathcal{I}_{(1)}^G$, there exists exactly one $\phi\in \mathcal{P}(G)$ for which $\mathbf{i}\rightsquigarrow \phi$.

    But in this case, we also have
    \[
    \bW_{G_\phi} = \sum_{\mathcal{I}^G_{(1)} \ni \mathbf{i}\rightsquigarrow \phi} 
        \left( \prod_{c \in C\setminus E_{\mathcal{W}}} [\mathbf{X}_c]_{i_c} 
        \right)
        \E\left\{ \prod_{e \in E_{\mathcal{W}}} [\mathbf{X}_e]_{i_e} \right\} 
        + \sigma_G \mathcal{O}(N^{\check{e}(G) + c(G)-1}),
    \]
    so that we can write 
    \begin{align*}
        \E\{\bW_G\} &= \sum_{\phi \in \cP_{\mathrm{A}}(G)} \Gamma_\phi + \sum_{\phi \in \cP(G)\setminus\cP_{\mathrm{A}}(G)} \Gamma_\phi
        \\
        &= \sum_{\phi \in \cP_{\mathrm{A}}(G)} \Gamma_\phi 
        + \sigma_G \mathcal{O}(N^{\check{e}(G) + c(G)-1})
        \\
        &= \sum_{\phi \in \cP_{\mathrm{A}}(G)} 
        \left( \bW_{G_\phi} + \sigma_G \mathcal{O}(N^{\check{e}(G) + c(G)-1}) \right)
        + \sigma_G \mathcal{O}(N^{\check{e}(G) + c(G)-1})
        \\
        &= \sum_{\phi \in \cP_{\mathrm{A}}(G)} \bW_{G_\phi}
        + \sigma_G \mathcal{O}(N^{\check{e}(G) + c(G)-1})
        \\
        &= \sum_{\phi \in \cP(G)} \bW_{G_\phi}
        + \sigma_G \mathcal{O}(N^{\check{e}(G) + c(G)-1})
    \end{align*}
    where in the last equality we have used the fact that
    \[
    \forall \phi \in \cP(G)\setminus\cP_{\mathrm{A}}(G), ~ 
    \bW_{G_\phi} = \sigma_G \mathcal{O}(N^{\check{e}(G) + c(G)-1}).
    \]
\end{proof}

\begin{proposition}\label{prop:mixedmoments_non_gauss}
Let $G=\sqcup_{i=1}^{c(G)}G_i$ be a product graph with disjoint connected components $G_i$. Then
\begin{align*}
     \E\bigg\{
     \prod_{i=1}^{c(G)} \Big( 
     &\bW_{G_i} - 
    \sum_{\psi \in \mathcal{P}_{\mathrm{A}}(G_{i})}
      \bW_{(G_i)_\psi}
     + \sigma_{G_i}\mathcal{O}(N^{\check{e}(G_i)})\Big)
     \bigg\} \\
     &= 
     \sum_{\phi \in \mathcal{P}_{\mathrm{B}}(G)} \bW_{G_\phi} +  \sigma_G\mathcal{O}(N^{\check{e}(G) +{c(G)}/{2}-1}).
\end{align*}
\end{proposition}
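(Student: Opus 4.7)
The strategy is to mirror the Gaussian arguments of Proposition \ref{prop:mixedmoments_} and Lemma \ref{lemma:mixedmoments_genus}, using Theorem \ref{thm:non_gauss_wickexpansion} in place of the exact Wick expansion (Theorem \ref{thm:wickexpansion}) and carefully tracking the additional error contributions. Write $M_i := \bW_{G_i} - \sum_{\psi \in \mathcal{P}_A(G_i)} \bW_{(G_i)_\psi}$ for the ``centered'' random part of each factor and $R_i$ for its deterministic $\sigma_{G_i}\mathcal{O}(N^{\check{e}(G_i)})$ correction. Expand the full product as
\[
\prod_i (M_i + R_i) \;=\; \sum_{T \subseteq [c(G)]} \Bigl(\prod_{i \in T} M_i\Bigr)\Bigl(\prod_{i \notin T} R_i\Bigr),
\]
so that $\E\!\bigl[\prod_i (M_i + R_i)\bigr] = \sum_T \E\bigl[\prod_{i \in T} M_i\bigr]\prod_{i \notin T} R_i$, and it suffices to control $\E\bigl[\prod_{i \in T} M_i\bigr]$ for each $T$.

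For fixed $T$, apply the inclusion-exclusion identity used in the proof of Proposition \ref{prop:mixedmoments_}, but substitute $\E[\bW_{G_{T'}}] = \sum_{\phi \in \mathcal{P}(G_{T'})} \bW_{(G_{T'})_\phi} + \sigma_{G_{T'}}\mathcal{O}(N^{\check{e}(G_{T'}) + |T'| - 1})$ from Theorem \ref{thm:non_gauss_wickexpansion} for each $T' \subseteq T$. The deterministic portions combine via the same combinatorial cancellation as in the Gaussian case, producing $\sum_{\phi \in \mathcal{P}_{\mathrm{AF}}(G_T)} \bW_{(G_T)_\phi}$; the Wick errors aggregate to an overall residual of order $\sigma_{G_T}\mathcal{O}(N^{\check{e}(G_T) + |T| - 1})$. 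Next, invoke the Euler-characteristic bound of Lemma \ref{lemma:mixedmoments_genus} --- which is purely topological and therefore applies verbatim here --- to decompose
\[
\sum_{\phi \in \mathcal{P}_{\mathrm{AF}}(G_T)} \bW_{(G_T)_\phi} \;=\; \sum_{\phi \in \mathcal{P}_{\mathrm{B}}(G_T)} \bW_{(G_T)_\phi} + \sigma_{G_T}\mathcal{O}(N^{\check{e}(G_T) + |T|/2 - 1}),
\]
with non-bi-atomic atom-free pairings strictly subdominant.

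Substituting back into the sum over $T$, the $T = [c(G)]$ contribution yields the principal term $\sum_{\phi \in \mathcal{P}_{\mathrm{B}}(G)} \bW_{G_\phi}$ together with an error matching the statement. For $T \subsetneq [c(G)]$, the extra factor $\prod_{i \notin T} R_i = \sigma_{G_{T^c}}\mathcal{O}(N^{\check{e}(G_{T^c})})$ multiplies a bound on $\E[\prod_{i \in T} M_i]$, and one verifies that the result is absorbed into $\sigma_G\mathcal{O}(N^{\check{e}(G) + c(G)/2 - 1})$. The \textbf{main obstacle} lies precisely in this bookkeeping step: when $|T|$ is odd, $\mathcal{P}_{\mathrm{B}}(G_T) = \emptyset$, so one cannot use the naive bound and must instead exploit that atom-free pairings then satisfy $|V((G_T)_\phi)| \leq \check{e}(G_T) + (|T|-1)/2$. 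A parity-sensitive refinement of Lemma \ref{lemma:mixedmoments_genus}'s Euler-characteristic argument is essential to close the estimate, since the crude error $\sigma_{G_T}\mathcal{O}(N^{\check{e}(G_T) + |T| - 1})$ inherited directly from Theorem \ref{thm:non_gauss_wickexpansion} would otherwise overwhelm the desired remainder.
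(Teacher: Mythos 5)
Your overall architecture (inclusion--exclusion over subsets of components, then the Euler-characteristic bound) follows the paper, but there is a genuine gap in how you handle the error coming from the approximate Wick expansion, and it appears already in the top term $T=[c(G)]$, not only in the odd-$|T|$ bookkeeping you single out. If you invoke Theorem \ref{thm:non_gauss_wickexpansion} as a black box, each $\E\{\bW_{G_{T'}}\}$ carries a lump-sum residual $\sigma_{G_{T'}}\mathcal{O}(N^{\check{e}(G_{T'})+|T'|-1})$. These residuals are bounds on quantities with no relation to one another across different $T'$, so they do not participate in the inclusion--exclusion cancellation; after multiplying by the atomic factors $\prod_{i\in T\setminus T'}\sum_\psi \bW_{(G_i)_\psi}=\sigma_{G_{T\setminus T'}}\mathcal{O}(N^{\check e(G_{T\setminus T'})+|T\setminus T'|})$ and summing over $T'$, you are left with an un-cancelled error of order $\sigma_{G}\mathcal{O}(N^{\check{e}(G)+c(G)-1})$ --- a factor $N^{c(G)/2}$ larger than the remainder you need. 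No refinement of Lemma \ref{lemma:mixedmoments_genus} can repair this, because that lemma bounds the pairing contributions $\bW_{G_\phi}$, whereas the offending error is the intrinsic moment error of the non-Gaussian Wick expansion (balanced indexations outside $\mathcal{I}^G_{(1)}$, i.e.\ higher-moment contributions), which in your accounting is not indexed by pairings at all.

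The fix, and what the paper actually does, is to keep the error attributed to individual pairings: from the proof of Theorem \ref{thm:non_gauss_wickexpansion} one has $\E\{\bW_{G_{T'}}\}=\sum_{\phi\in\cP(G_{T'})}\Gamma_\phi$ with $\Gamma_\phi=\sigma\,\mathcal{O}(N^{|V((G_{T'})_\phi)|})$ in general and $\Gamma_\phi=\bW_{(G_{T'})_\phi}+\sigma\,\mathcal{O}(N^{|V((G_{T'})_\phi)|-1})$ for atomic $\phi$, together with the approximate factorization $\Gamma_{\phi_{T'}}\prod_i\Gamma_{\phi_i}=\Gamma_{\phi_{T'}\times_i\phi_i}+\mathcal{O}(N^{|V(G_{\phi})|-1})$ across disjoint components. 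The deterministic corrections $\sigma_{G_i}\mathcal{O}(N^{\check e(G_i)})$ in the statement are likewise absorbed into the $\Gamma_\psi$ for $\psi\in\mathcal{P}_{\mathrm{A}}(G_i)$ rather than split off as a separate binomial factor. With this, the inclusion--exclusion cancels each non-surviving pairing together with its error, and the surviving pairings (those in which no connected component is paired within itself) satisfy $|V(G_\phi)|\le \check e(G)+c(G)/2$, which is what delivers the stated remainder. Your parity observation for odd $|T|$ is correct but is an immediate consequence of the integrality of the Euler-characteristic quantity; it is not the obstruction.
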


\begin{proof}
    {For any $\phi\in \mathcal{P}_\mathrm{A}(G)$, let $\Gamma_\phi=\mathbf{W}_{G_\phi}+\sigma_{G}\mathcal{O}(N^{\check{e}(G)})$ and define $\Gamma_{\phi_i}$ similarly for each $\phi_i\in \mathcal{P}_\mathrm{A}(G_i)$}. Proceeding as in Proposition \ref{prop:mixedmoments_} we obtain 
    \begin{align*}
        \E\bigg\{
     \prod_{i=1}^{c(G)} \Big( 
     \bW_{G_i} - 
    \sum_{\phi \in \mathcal{P}(G_i)}
      \Gamma_\phi \Big)
     \bigg\} &= \sum_{T \subseteq [c(G)]} 
        (-1)^{|T^c|}
        \sum_{\phi_T \times_{i \in T^c} \phi_i}
        \Big(  \Gamma_{\phi_T}\prod_{i\in T^c} \Gamma_{\phi_i} \Big)
    \end{align*}
    where the last sum is over $\cP(G_T) \times (\prod_{i \in T^c} \mathcal{P}_{\mathrm{A}}(G_i))$.

    Arguing as in the proof of the previous lemma, we find that
    \begin{equation}
        \Gamma_{\phi_T}\prod_{i\in T^c} \Gamma_{\phi_i} =
    \Gamma_{\phi_T \times_{i \in T^c} \phi_i}
    + \mathcal{O}(N^{|V(G_{\phi_T \times_{i \in T^c} \phi_i})|-1})
    \end{equation}
    where the implicit constants do not depend from $N$.

    But then using inclusion exclusion, we obtain on the one hand
    \[
    \sum_{T \subseteq [c(G)]} 
        (-1)^{|T^c|}
        \sum_{\phi_T \times_{i \in T^c} \phi_i}
        \Gamma_{\phi_T \times_{i \in T^c} \phi_i}
    =
    \sum_{\phi \in \tilde{\cP}(G)} \Gamma_\phi
    \]
    where $\tilde{\cP}(G) \subseteq \cP(G)$ is the set of pairings of $G$ such that none of its connected components are paired within themselves, and on the other hand,
    \begin{equation}\label{eq:gammaestimate}
    \sum_{T \subseteq [c(G)]} 
        (-1)^{|T^c|}
        \sum_{\phi_T \times_{i \in T^c} \phi_i} \mathcal{O}(N^{|V(G_{\phi_T \times_{i \in T^c} \phi_i})|})
    = \mathcal{O}\Big(\sum_{\phi \in \tilde{\cP}(G)} N^{|V(G_{\phi})|-1}\Big).
    \end{equation}
    We conclude by arguing as before that the maximal order of the $\Gamma_\phi$ is reached when the partition is {bi-atomic}, which is the only case when $V(G_\phi) = \check{e}(G) + \frac{c(G)}{2}$ and when 
    \[
    \Gamma_\phi = \bW_\phi + \mathcal{O}\Big(N^{\check{e}(G) + {c(G)}/{2}-1}\Big).
    \]
\end{proof}

Reiterating the arguments in Section \ref{sec:application_NNs} then yields the following.
\begin{corollary}\label{cor:nongaussiantheorem}
    Let $\Phi$ be a neural network with weights $W_\ell$ with entries as in definition \ref{def:non-gaussian-real}. Then the conclusions of Theorems \ref{thm:GPlimit}, \ref{thm:ntk} and \ref{thm:jacobianmoments} still hold .
\end{corollary}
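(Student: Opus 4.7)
The plan is to observe that the three theorems in Section \ref{sec:application_NNs} use the Gaussianity of the weights in only two places: the exact Wick expansion (Theorem \ref{thm:wickexpansion}) and the centred mixed-moment identity (Proposition \ref{prop:mixedmoments_}). Both of these have already been upgraded to the present non-Gaussian setting in Theorem \ref{thm:non_gauss_wickexpansion} and Proposition \ref{prop:mixedmoments_non_gauss}, which differ from the Gaussian identities only by an additive error of order $\sigma_G\,O(N^{\check e(G) + c(G) - 1})$ (resp.\ $\sigma_G\,O(N^{\check e(G) + c(G)/2 - 1})$). Since the leading terms are of order $\sigma_G N^{\check e(G) + c(G)}$ (resp.\ $\sigma_G N^{\check e(G) + c(G)/2}$), each error is exactly one power of $N$ below the leading term. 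The whole corollary therefore reduces to re-running the three proofs and checking that after normalisation by the appropriate power of $N$, the extra errors still tend to $0$.

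For Theorem \ref{thm:GPlimit}, I would replay the derivation of Proposition \ref{lemma:joint_limit_trees}. Under the GP-limit scaling $\sigma_0 = 1$, $\sigma_\ell = N^{-1/2}$, the same calculation as in Lemma \ref{lemma:GP_scaling} yields, via Proposition \ref{prop:mixedmoments_non_gauss} applied to $G = \sqcup_i \eta_i$,
\[
    \E\!\Big\{\prod_i \mathbf{W}_{\eta_i}\Big\}
    = \sigma_{\sqcup \eta_i} N^{(|E|-|\mathcal{L}|)/2}\big(\alpha(\sqcup \eta_i) + O(1/N)\big) + \sigma_{\sqcup \eta_i} O\big(N^{\check e + c/2 - 1}\big),
\]
and the second error term is of order $N^{-1}$ or smaller relative to the leading contribution. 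The method of moments then identifies the same Gaussian process limit as in the Gaussian case.

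For Theorem \ref{thm:ntk}, I would reprove Lemma \ref{lemma:L2}: the bound $\E\{(\sigma_\ell^2 \mathbf{W}_{\tau\circ\eta^\top} - a(\tau\circ\eta^\top))^2\} = O(1/N)$ used Proposition \ref{prop:mixedmoments_} for the single-component graph $G = \tau\circ\eta^\top \sqcup \tau\circ\eta^\top$; replacing it with Proposition \ref{prop:mixedmoments_non_gauss} adds an $O(N^{\check e + c/2 - 1})$ term which, after dividing by $\sigma_G N^{\check e + c/2}$, remains $O(1/N)$. $L^2$-convergence of the NTK then follows exactly as before.

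For Theorem \ref{thm:jacobianthm}, I would revisit Proposition \ref{prop:decorated_cycle_scaling} under $\sigma_\ell = N^{-1/2}$ and the decorated cycle graphs of Section \ref{sec:jacobian}. The same calculation as in the Gaussian case, using Proposition \ref{prop:mixedmoments_non_gauss} in place of Proposition \ref{prop:mixedmoments_}, gives $\E\{(\lambda_G \mathbf{W}_G - a(G))^2\} = O(1/N)$. Since the combinatorial recursion (\ref{eq:recursion}) depends only on the atomic-pairing structure and the activations, neither of which changes in the non-Gaussian setting, the limiting moments $m_{k,L}$ and hence the limiting measure $\gamma_L^{\mathrm{NFC}}$ are the same as in the Gaussian case, and weak convergence in probability of $\rho_L$ follows by the method of moments exactly as before.

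The main obstacle is bookkeeping: one must verify, separately for the two different scaling choices ($\sigma_\ell = N^{-1/2}$ on all layers versus $\sigma_0 = 1$ and $\sigma_\ell = N^{-1/2}$ for $\ell > 0$), that the extra $O(N^{\check e + c - 1})$ and $O(N^{\check e + c/2 - 1})$ terms really are dominated by the leading contributions for every graph appearing in the tree expansions. Because these errors are uniformly one power of $N$ below the leading order and the relevant sums are finite, this reduces to routine exponent checking rather than a genuinely new argument.
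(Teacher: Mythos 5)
Your proposal is correct and follows the same route as the paper, which simply notes that the arguments of Section \ref{sec:application_NNs} can be reiterated once Theorem \ref{thm:non_gauss_wickexpansion} and Proposition \ref{prop:mixedmoments_non_gauss} replace their Gaussian counterparts; you have merely made explicit the exponent bookkeeping that the paper leaves implicit. Your observation that the additional error terms sit exactly one power of $N$ below the leading (fully-atomic or bi-atomic) contributions, and hence vanish after the relevant normalisations, is precisely the point on which the paper's one-line proof rests.
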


\subsection{Sparse weights}
\label{sec:sparse}
Following \cite{Dubach2021}, we can also go further and consider sparse matrices with i.i.d. entries satisfying the same moment assumptions as in \ref{def:non-gaussian-real}.

Consider the sequence $\tilde{\mathcal{W}}$ of matrices $\tilde W_i := W_i \odot B_i$, where $W_i\in \mathcal{W}$ as defined above and the $B_i$ are independent matrices with i.i.d., Bernoulli distributed entries with parameter $p_N$ satisfying $N p_N \to \infty$. Note that
\[
\E\{[\tilde W_i]_{\alpha,\beta}^{2k}\} = p_N\sigma_i^{2k}\E\{Z_i^{2k}\} , \quad
\E\{[\tilde W_i]_{\alpha,\beta}^{2k+1}\} = 0, \quad \forall k\geq 0.
\]
If $G$ is a product graph whose random inputs are from $\tilde{\mathcal{W}}$, we will use $\tilde{\mathbf{X}}_e=\mathbf{X}_e\odot B_i$ to denote such inputs.

\begin{assumption}\label{assumption:sparse}
 $G$ is a product graph with inputs which are either elements of $\tilde{\mathcal{W}}$, or deterministic vector/matrices with entries uniformly bounded in $N$.
\end{assumption}

\begin{proposition}
\label{prop:sparse_wick}
    Let $G$ be a product graph satisfying Assumption \ref{assumption:sparse}. Then
    \begin{equation}\label{eq:sparse_wickexpansion}
        \E \left\{\bW_G \right\} =  \sum_{\phi \in \cP_{\mathrm{A}}(G)} \bW_{G_{\phi}} + \sigma_G p_N^{{|E_{\mathcal{W}}|}/{2}-1} \mathcal{O}(N^{\check{e}(G)+c(G)-1})
    \end{equation}
\end{proposition}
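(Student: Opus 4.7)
The plan is to adapt the proof of Theorem \ref{thm:non_gauss_wickexpansion} by tracking powers of $p_N$ alongside powers of $N$. The essential new ingredient is the moment identity
\[
\E\{[\tilde W_i]_{\alpha,\beta}^{2k}\} = p_N \sigma_i^{2k} \E\{Z_i^{2k}\},
\]
which produces exactly one factor of $p_N$ per group of co-indexed edges, regardless of the group's size.

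First, I would expand, as in the proof of Theorem \ref{thm:non_gauss_wickexpansion},
\[
\E\{\bW_G\} = \sum_{\mathbf{i} \in \mathcal{I}^G} \Big(\prod_{c \in C \setminus E_\mathcal{W}} [\mathbf{X}_c]_{i_c}\Big) \E\Big\{\prod_{e \in E_\mathcal{W}} [\tilde W_e]_{i_e}\Big\},
\]
observing that the inner expectation vanishes unless $\mathbf{i}$ is balanced, meaning $\mathbf{i} \rightsquigarrow \phi$ for some $\phi \in \cP(G)$. For balanced $\mathbf{i}$, the edges of $E_\mathcal{W}$ partition into groups of even size $2k_j$ (edges sharing both the same random matrix and the same index pair), and the contribution factorizes as $\prod_j p_N \sigma_{\ell_j}^{2k_j} \E\{Z_{\ell_j}^{2k_j}\}$, yielding a total of $p_N^{r}$ for $r$ groups.

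Next, following the bookkeeping of Theorem \ref{thm:non_gauss_wickexpansion}, I would write $\E\{\bW_G\} = \sum_{\phi \in \cP(G)} \Gamma_\phi$ after distributing each balanced $\mathbf{i}$ uniformly across the pairings consistent with it. The contributions split naturally into two regimes. For fully atomic $\phi \in \cP_\mathrm{A}(G)$, restricting to the generic indexations $\mathcal{I}^G_{(1)}$ (those making $\phi$ the unique consistent pairing) recovers $\bW_{G_\phi}$ exactly, where the identified edge in $G_\phi$ now carries input $\E\{\tilde W_\ell \odot \tilde W_\ell\} = p_N \sigma_\ell^2 \mathbf{I}$, so the full main term scales like $\sigma_G p_N^{|E_\mathcal{W}|/2} N^{\check{e}(G)+c(G)}$. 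The remaining balanced indexations must exhibit at least one \emph{super-collision}, namely a group of size $\geq 4$ obtained by merging pairs of $\phi$; each such merging simultaneously forfeits one factor of $p_N$ relative to the generic count and enforces at least one extra index coincidence, giving an error bounded by $\sigma_G p_N^{|E_\mathcal{W}|/2-1} O(N^{\check{e}(G)+c(G)-1})$. For non-atomic $\phi$, $|V(G_\phi)| \leq \check{e}(G)+c(G)-1$, so $\Gamma_\phi = \sigma_G p_N^{|E_\mathcal{W}|/2} O(N^{\check{e}(G)+c(G)-1})$, which is subsumed into the stated error since $p_N \leq 1$.

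The main obstacle is the combinatorial accounting for super-collisions: one must verify that merging $k$ pairs of $\phi$ into blocks of size $\geq 4$ costs at least $k$ factors of $p_N$ (immediate from the moment formula, since $k$ distinct groups become fewer) and at least $k$ free indices (because the merging is defined by extra equalities among the $i_e$). Once this accounting is verified, summing the clean main terms over $\phi \in \cP_\mathrm{A}(G)$ produces $\sum_{\phi \in \cP_\mathrm{A}(G)} \bW_{G_\phi}$, while the super-collision contributions and the non-atomic contributions together assemble into the uniform error $\sigma_G p_N^{|E_\mathcal{W}|/2-1} O(N^{\check{e}(G)+c(G)-1})$, completing the proof.
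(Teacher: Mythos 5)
Your proposal is correct and follows essentially the same route as the paper: expand over indexations, isolate the generic class $\mathcal{I}^G_{(1)}$ whose contribution reproduces $\sum_{\phi\in\cP_{\mathrm{A}}(G)}\bW_{G_\phi}$ up to the stated error, and control the remaining balanced indexations by trading each lost factor of $p_N$ against a lost factor of $N$, with the hypothesis $Np_N\to\infty$ making a single extra collision the worst case. The only differences are organizational — the paper stratifies the balanced indexations directly by the number $l$ of distinct index groups (the sets $\mathcal{J}^G_l$, with $|\mathcal{J}^G_l|=\mathcal{O}(N^{|E\setminus E_{\mathcal{W}}|+c(G)+l})$) rather than distributing them over pairings $\phi$ — together with a harmless off-by-one in your claim that merging $k$ pairs costs at least $k$ factors of $p_N$: it costs $k$ minus the number of resulting blocks, which still gives at least one factor of $p_N$ and one of $N$ per merge event and hence the same final bound.
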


\begin{proof} Consider the set of indexations $\mathcal{I}^G := \prod_{v \in V(G)} \{1,...,N_{v}\}$, and define $\mathcal{I}^G_{(1)}$, balanced and unbalanced indexations as in the proof of theorem \ref{thm:non_gauss_wickexpansion}. 

If $\mathbf{i}\in \mathcal{I}^G$ is {unbalanced}, one has 
    $\E\left\{ \prod_{e \in E_{\mathcal{W}}} [\tilde{\mathbf{X}}_e]_{i_e} \right\} = 0$
    while if $\mathbf{i} \in \mathcal{I}^G_{(1)}$, 
    \[
    \E\left\{ \prod_{e \in E_{\mathcal{W}}} [\tilde{\mathbf{X}}_e]_{i_e} \right\} = \sigma_G p_N^{{|E_{\mathcal{W}}|/2}}.
    \]
    If $\mathbf{i} \notin  \mathcal{I}^G_{(1)}$ is balanced, then
     \[
    \E\left\{ \prod_{e \in E_{\mathcal{W}}} [\tilde{\mathbf{X}}_e]_{i_e} \right\} = \sigma_G p_N^{l} \mathcal{O}(1)
    \]
    for some $1 \leq l < {|E_{\mathcal{W}}|}/{2}$.  
    We can thus partition the set of balanced indexations as $\sqcup_{l=1}^{{|E_{\mathcal{W}}|/2}} \mathcal{J}^G_l$ where 
    $\mathcal{J}^G_l$ is the subset of indexations which result in a factor $p_N^l$.
    Noting that $\mathcal{J}^G_{|E_{\mathcal{W}}|/2} = \mathcal{I}^G_{(1)}$, we can write
    \begin{align*}
    \E\{{\bW}_G\} =&
        p_N^{|E_{\mathcal{W}}|/2}  \sum_{\mathbf{i} \in \mathcal{I}^G_{(1)}}
        \left( 
        \prod_{c \in C\setminus E_{\mathcal{W}}} [\mathbf{X}_c]_{i_c} 
        \right)
        \sigma_G 
        +
        \sum_{l=1}^{|E_{\mathcal{W}}|/2-1} 
        \sum_{\mathbf{i} \in \mathcal{J}^G_l} 
        \left( 
        \prod_{c \in C\setminus E_{\mathcal{W}}} [\mathbf{X}_c]_{i_c} 
        \right)
        \sigma_G p_N^{l} \mathcal{O}(1)
        \\
        =&
        p_N^{|E_{\mathcal{W}}|/2}  \sum_{\mathbf{i} \in \mathcal{I}^G_{(1)}}
        \left( 
        \prod_{c \in C\setminus E_{\mathcal{W}}} [\mathbf{X}_c]_{i_c} 
        \right)
        \sigma_G 
        +
        \sum_{l=1}^{|E_{\mathcal{W}}|/2-1} 
        \sigma_G p_N^{l} \mathcal{O}\Big( |\mathcal{J}^G_l|  \Big)
    \end{align*}
    Note also that $|\mathcal{J}^G_l| = \mathcal{O}\Big( N^{|E\setminus E_{\mathcal{W}}| + c(G) + l }  \Big)$.
    
    In fact, each $\mathbf{i}$ is compatible to some $\phi \in \cP(G)$ by virtue of being balanced, thus any $\mathbf{i}$ takes at most ${c(G) + |E\setminus E_{\mathcal{W}}| + \frac{|E_{\mathcal{W}}|}{2}}$ different values, this being the maximal number of vertices in any $G_\phi$. 
    Note though that, by definition of $\mathcal{J}^G_l$, the indices corresponding to the ${|E_{\mathcal{W}}|}/{2}$ pairs take exactly $l$ different values. Each $\mathbf{i} \in \mathcal{J}^G_l$ thus takes at most $c(G) + |E\setminus E_{\mathcal{W}}| + l$ values, ranging from $1$ to $N$.

    Hence we can write
    \begin{align*}
        \sum_{l=1}^{|E_{\mathcal{W}}|/2-1} 
        \sigma_G p_N^{l} \mathcal{O}\Big( |\mathcal{J}^G_l|  \Big)
        &=  \sigma_G \mathcal{O}\Big( N^{|E\setminus E_{\mathcal{W}}|+c(G)} \max\{Np_N, (Np_N)^{|E_{\mathcal{W}}|/2-1}\} \Big)
        \\ 
        &= \sigma_G p_N^{|E_{\mathcal{W}}|/2-1} \mathcal{O}(N^{\check{e}(G)+c(G)-1}).
    \end{align*}
    The claim then follows from the fact that
    \begin{align*}
        \sum_{\phi \in \cP_{\mathrm{A}}(G)} \bW_{G_{\phi}} 
        &= p_N^{|E_{\mathcal{W}}|/2} 
        \left(
        \sum_{\mathbf{i} \in \mathcal{I}^G_{(1)}}
        \left( 
        \prod_{c \in C\setminus E_{\mathcal{W}}} [\mathbf{X}_c]_{i_c} 
        \right)
        \sigma_G 
        + \mathcal{O}(N^{\check{e}(G)+c(G)-1})
        \right)
        \\ 
        &= p_N^{|E_{\mathcal{W}}|/2} 
        \sum_{\mathbf{i} \in \mathcal{I}^G_{(1)}}
        \left( 
        \prod_{c \in C\setminus E_{\mathcal{W}}} [\mathbf{X}_c]_{i_c} 
        \right)
        \sigma_G 
        + p_N^{|E_{\mathcal{W}}|/2} \mathcal{O}(N^{\check{e}(G)+c(G)-1}),
    \end{align*}
    and that 
    $p_N^{|E_{\mathcal{W}}|/2} \mathcal{O}(N^{\check{e}(G)+c(G)-1}) = p_N^{|E_{\mathcal{W}}|/2-1} \mathcal{O}(N^{\check{e}(G)+c(G)-1})$ (since $p_N \leq 1$).
\end{proof}

Similar arguments prove the centered version of this result, giving
\begin{align}
\label{eqn:sparse_wick_center}
\begin{split}
    \E\bigg\{
     \prod_{i=1}^{c(G)} \Big( 
     \bW_{G_i} - 
    & \sum_{\psi \in \mathcal{P}_{\mathrm{A}}(G_{i})}
      \bW_{(G_i)_\psi}
     + \sigma_{G_i}p_N^{{|E_{\mathcal{W}}(G_i)|}/{2} - 1}\mathcal{O}(N^{\check{e}(G_i)})\Big)
     \bigg\} \\ 
     & = 
     \sum_{\phi \in \mathcal{P}_{\mathrm{B}}(G)} \bW_{G_\phi} +  \sigma_G p_N^{|{E_{\mathcal{W}}(G)|/2} - 1} \mathcal{O}(N^{\check{e}(G) +{c(G)}/{2}-1}).
\end{split}
\end{align}
We may then conclude the following.
\begin{corollary}\label{cor:sparsetheorems}
    Let $\Phi$ be a neural network with weights $\tilde{W}_\ell$ (as defined above) and parameters.
    Then after rescaling standard deviations as $\sigma_\ell \mapsto \frac{1}{\sqrt{p_N}}\sigma_\ell$, the conclusions of Theorems \ref{thm:GPlimit}, \ref{thm:ntk} and \ref{thm:jacobianmoments} still hold.
\end{corollary}

\begin{figure}[h!]
    \centering
    \includegraphics[scale=0.25]{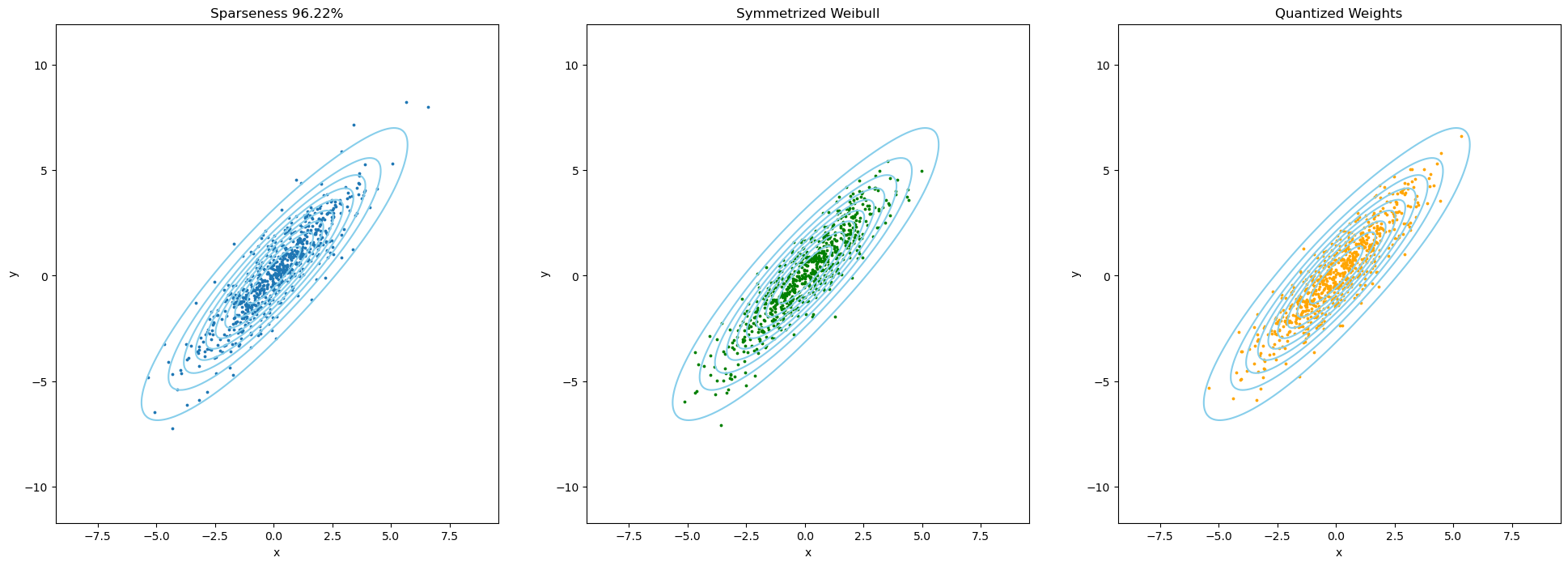}
    \caption{We consider a ReLU network $\Phi_2$ (meaning that the activations are all $x\mapsto \max(x,0)$), where $(N_0, N_1, N_2, N_3) = (2, 700, 700, 1)$. 
    We fix two inputs $\mathbf{x} = (\pi, 1), \mathbf{y}=(e,3) \in \mathbb{R}^2$, and sample the weight matrices $W_0,W_1$ and $W_2$ $700$ times from 4 different ensembles and plot $(\Phi_2(\mathbf{x}), \Phi_2(\mathbf{y}))$. 
    The contour plots in light blue in all three figures serve as a reference, and correspond to $W_0,W_1,W_2$ being Gaussian. 
    The leftmost scatter plot (dark blue) shows the result of multiplying $W_1$ and $W_2$ entry-wise by $p_N^{-1/2}\text{Ber}(p_N)$ for $p_N = \frac{1}{\sqrt{300}}$ (so that roughly $96.22\%$ of their entries are set to $0$). 
    In the middle (green), the $W_i$ have Weibull entries multiplied by a random sign, while on the right (orange) the entries are uniformly distributed in $\{-1,0,1\}$. }
    \label{fig:sparse_weibull_quantized}
\end{figure}

\subsection{The complex case}
\label{sec:complex_case}

We end this section by showing how all of our results (including the extensions to sparse matrices in the previous section) hold for complex-valued matrices. Noting that the definition of product graphs and the extensions in Section \ref{sec:trees} hold \textit{mutatis mutandis} in the complex case, we will once again only need to adapt the arguments of Section \ref{sec:genus}.

\begin{definition}
Let $\mathcal{W}_{\bC}:=(W_i ~|~ i\in \mathbb{Z}\setminus \{0\})$, where for $i>0$, the $W_i\in\mathbb{C}^{N_{r_i}\times N_{c_i}}$ are independent matrices with i.i.d. entries equal to $\sigma_i Z$ where $Z\sim \mathcal{N}_{\mathbb{C}}(0,1)$.
For $i<0$, we set $W_{-i}:=\overline{W}_i$.
\end{definition}

\begin{assumption}\label{assumption1_cmplx} 
$G$ is a product graph whose inputs are either deterministic, or matrices in $\mathcal{W}_\bC$.
\end{assumption}

Recall how a complex random variable $Z \sim \mathcal{N}_{\mathbb{C}}(0,1)$ satisfies the following moment assumptions 
\[
\E[|Z|]=1, \quad n \neq m \implies \E[Z^{n} \bar{Z}^{m}] = 0
\]
where $\bar{Z}$ denotes the conjugate of $Z$.

This means that, in view of Wick's theorem (which takes the same form as in the real case), the correct notion of admissible pairing is given as follows.
\begin{definition}[Complex admissible pairing] 
    Let $G$ be a product graph, and $E_\mathcal{W}=\{e\in E: W_e=W_\ell\text{ for some $\ell(e):=\ell \in \mathbb{Z}\setminus \{0\}$}\}$. 
    Then a pairing $\phi$ of $E_\mathcal{W}$ is said to be \emph{admissible} if any two paired edges $\{e,e'\} \in \phi$ satisfy $\ell(e)=-\ell(e')$. 
    We denote the set of all admissible pairings of $E_\mathcal{W}$ by $\mathcal{P}(G)$.
\end{definition}

At this point notice that the arguments of Sections \ref{sec:genus} follow verbatim, with the exception of Theorem \ref{thm:joint_gaussian_limit} which now takes the following form. 
\begin{theorem}\label{thm:joint_gaussian_limit_cmplx}
    Let $(G_i)_{i\geq 1}$ be a sequence of connected product graphs satisfying Assumptions \ref{assumption1_cmplx} and \ref{assumption:genus_graph} (for $\mathcal{W}_\bC$). Then for any $m\geq 1$,
    \[
        \big((\sigma_{G_i}N^{\check{e}(G_i) + \frac{1}{2}}\big)^{-1}\bW_{G_i} - |\mathcal{P}_\mathrm{A}(G_i)| \sqrt{N})_{i=1}^m \overset{d}{\longrightarrow} \mathbf{Z} = \big(Z_1,...,Z_m\big)^{\top}
    \]
    where the limit is a centered complex Gaussian vector with covariance function  $C := \E[\mathbf{Z}\mathbf{Z}^{\top}]$
    having entries $[C]_{i,j} = |\mathcal{P}_\mathrm{A}(G_i \sqcup G_j)|$ and pseudo covariance $\Gamma := \E[\mathbf{Z}\mathbf{Z}^{H}]$ with entries $[\Gamma]_{i,j} = |\mathcal{P}_\mathrm{A}(G_i \sqcup \bar{G}_j)|$ where $\bar{G}$ is obtained from $G$ by conjugating all the random matrices..
\end{theorem}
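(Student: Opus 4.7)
The plan is to extend the moment-method argument used for Theorem \ref{thm:joint_gaussian_limit} to cover conjugate moments. A centered complex Gaussian vector $\mathbf{Z}$ is uniquely determined by its covariance $C = \E[\mathbf{Z}\mathbf{Z}^\top]$ together with its pseudo-covariance $\Gamma = \E[\mathbf{Z}\mathbf{Z}^H]$, so joint convergence will follow from showing that, for all $m \geq 1$ and all non-negative integers $k_i, l_i$,
\[
\E\bigg\{\prod_{i=1}^m \tilde Z_i^{k_i}\, \overline{\tilde Z_i}^{\,l_i}\bigg\} \xrightarrow[N\to\infty]{} \E\bigg\{\prod_{i=1}^m Z_i^{k_i} \bar Z_i^{\,l_i}\bigg\},
\]
where $\tilde Z_i := (\sigma_{G_i}N^{\check{e}(G_i)+1/2})^{-1}\bW_{G_i} - |\mathcal{P}_\mathrm{A}(G_i)|\sqrt{N}$ as before.

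Since $\overline{\bW_{G_i}} = \bW_{\bar G_i}$ (by definition of $\bar G_i$ and the identification $W_{-\ell}=\overline{W_\ell}$), the conjugate $\overline{\tilde Z_i}$ equals the same centered expression with $G_i$ replaced by $\bar G_i$. Applying the complex version of the centered genus expansion (Lemma \ref{lemma:mixedmoments_genus}, which carries over verbatim under the modified admissibility rule) to the disjoint union
\[
G := \bigsqcup_{i=1}^m \big(G_i^{\sqcup k_i} \sqcup \bar G_i^{\sqcup l_i}\big)
\]
shows that the left-hand moment equals $|\mathcal{P}_\mathrm{B}(G)| + \mathcal{O}(N^{-1})$.

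Matching this count with the Gaussian moment is then combinatorial bookkeeping. A bi-atomic pairing of $G$ first pairs its connected components into disjoint pairs, and each such pair $(H,H')$ contributes an independent factor $|\mathcal{P}_\mathrm{A}(H \sqcup H')|$. The factor equals $[C]_{ij}$ when $\{H,H'\}=\{G_i,G_j\}$, equals $[\Gamma]_{ij}$ when $\{H,H'\}=\{G_i,\bar G_j\}$, and equals $\overline{[C]_{ij}}$ when $\{H,H'\}=\{\bar G_i,\bar G_j\}$ (using that $|\mathcal{P}_\mathrm{A}(\bar G_i \sqcup \bar G_j)|=|\mathcal{P}_\mathrm{A}(G_i \sqcup G_j)|$, since conjugating both graphs merely flips every edge label $\ell \mapsto -\ell$ and so preserves admissibility). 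This is exactly the structure of Wick's formula for the mixed moments of a centered complex Gaussian vector with covariance $C$ and pseudo-covariance $\Gamma$, and the limit theorem follows.

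The main obstacle will be confirming that the centered genus expansion truly carries over without modification to the complex admissibility convention. The key check is that when a paired $\ell$-edge and $(-\ell)$-edge are identified in $G_\phi$, the resulting factor is $\E\{[W_\ell]_{\alpha\beta}\overline{[W_\ell]_{\alpha\beta}}\} = \sigma_\ell^2$, which is real and matches Assumption \ref{assumption:genus_graph}, so the identity $\bW_{G_\phi} = \sigma_G N^{|V(G_\phi)|}$ and hence the Euler-characteristic dichotomy between fully-atomic, bi-atomic and atom-free pairings is preserved. Once this is in place, the argument of Theorem \ref{thm:joint_gaussian_limit} applies word for word, with the complex-admissible versions of the relevant pairing sets.
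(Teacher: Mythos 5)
Your proposal is correct and follows essentially the same route the paper takes: the paper does not write out a separate proof but asserts that the moment-method argument of Theorem \ref{thm:joint_gaussian_limit} carries over verbatim under the complex admissibility convention, which is exactly what you do by computing the mixed conjugate moments of the $\tilde Z_i$ via the centered genus expansion applied to $\bigsqcup_i (G_i^{\sqcup k_i}\sqcup \bar G_i^{\sqcup l_i})$ and matching bi-atomic pairings with the complex Wick formula. Your closing verification that identified $\ell$/$(-\ell)$ edge pairs contribute the real factor $\sigma_\ell^2$, so that $\bW_{G_\phi}=\sigma_G N^{|V(G_\phi)|}$ and the Euler-characteristic dichotomy survive, is precisely the check needed to justify the paper's ``follows verbatim'' claim.
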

As for non-Gaussian complex-valued matrices, we replace the entries of $W_i$ (for $i>0$) with $\sigma_i Z_i$, where $\{Z_i\}_{i\geq 0}$ is now a family of i.i.d. random variables satisfying
\[
\E[|Z|]=1, \quad n \neq m \implies \E[Z^{n} \bar{Z}^{m}] = 0,
\quad \E[|Z|^k]<\infty.
\]
Complex analogues of corollaries \ref{cor:nongaussiantheorem} and \ref{cor:sparsetheorems} then follow straightforwardly.

% \section{Further extensions}
% \label{sec:informal_extensions}
% \input{Sections/informal_extensions}

% %\section{Conflict of Interest Statement} The authors have no conflict of interest to declare.

\bibliographystyle{abbrvnat}
\bibliography{main}

% \appendix
% \newpage
\section{Proofs of auxiliary combinatorial results}
This section proves the combinatorial lemmas of Section \ref{sec:application_NNs}. 
\label{sec:aux}

\subsection{Atomic pairings of trees}
We begin with the following useful lemma, which highlights important features of atomic pairings of edges of trees. It will be used in the proof of Lemmas \ref{lemma:alpha_trees} and \ref{lemma:NTK_bijection} below.

In what follows, for any tree $\tau$, we refer to edges whose head is the root as \emph{root edges} and denote them by $r(\tau)$. Given any $e \in E(\tau)$, we also let $\tau|_e$ denote the sub-tree of $\tau$ rooted at $e$'s head. This new tree $\tau|_e$ then has a unique root edge (which is $e$), and a sub-tree rooted at $e$'s tail. Lastly, recall that admissible pairings only pair edges in $E_\mathcal{W}$ (namely edges with random matrices as inputs), and that an $\ell-$edge in $E_\mathcal{W}$ is one that has been fixed to the random matrix $W_\ell\in \mathcal{W}$.
    
\begin{lemma}\label{prop:tree_movements}
    Let $\tau$ be a connected, directed graph with all vertices having in-degree $1$, except for the root $r_\tau \in V(\tau)$ which has in-degree $0$ (so that $\tau$ is a directed tree). 
    Then for any $\phi \in \cP_{\mathrm{A}}(\tau)$, the following holds:
    \begin{enumerate}
        \item $\phi$ only pairs edges which are equidistant from the root.
        \item Let $e,e'$ be arbitrary edges in $\tau$. If $\{e,e'\}\in \phi$, then $\phi$ is an atomic pairing of the edges in $\tau|_e \wedge \tau|_{e'}$ 
        (the formed by identifying the roots of $\tau|_e $ and $\tau|_{e'}$).
        \item  For any $k\geq 0$, let $e_0,...,e_k$ and $e_0',...,e_k'$ be two sequences of edges forming paths of length $k$ which start at $r_\tau$ (so that $e_i, e_i'$ are at distance $i$ from the root and $e_0, e_0'$ are root edges). Then $$\{e_k,e_k'\}\in \phi \implies \{e_i,e_i'\}\in \phi \text{ or } e_i=e_i' \text{ for all } 0\leq i\leq k.$$
    \end{enumerate}
\end{lemma}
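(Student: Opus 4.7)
The three assertions all follow from a single structural fact: since $\phi \in \mathcal{P}_{\mathrm{A}}(\tau)$, the quotient $\tau_\phi$ (obtained by identifying paired edges, and consequently identifying their respective heads and tails) is by definition connected and acyclic, i.e.\ an undirected tree. My plan is to first establish (3) by descending induction on the depth, and then derive (1) and (2) as straightforward consequences.

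For (3), I set $e_i = (v_{i-1}, v_i)$ and $e'_i = (v'_{i-1}, v'_i)$ where $v_{-1} = v'_{-1} = r_\tau$, and induct on $k$. The base $k=0$ is trivial since $e_0$ and $e'_0$ are both root edges. For the step, assume $\{e_k, e'_k\} \in \phi$. Then in $\tau_\phi$ the heads of $e_k$ and $e'_k$ are identified, which is exactly the identification of the tails of $e_{k-1}$ and $e'_{k-1}$. If $e_{k-1} = e'_{k-1}$ we immediately pass to level $k-2$; otherwise $e_{k-1}$ and $e'_{k-1}$ are distinct edges of $\tau_\phi$ sharing a common tail vertex. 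In the underlying undirected graph of $\tau_\phi$, concatenating the unique path from $r_\tau$ to $v_{k-2}$ with $e_{k-1}$ yields one path to this common vertex, while the analogous construction through $v'_{k-2}$ and $e'_{k-1}$ yields another. Because $\tau_\phi$ is acyclic, these two simple paths must coincide, which forces $e_{k-1}$ and $e'_{k-1}$ to be identified in $\tau_\phi$, i.e.\ $\{e_{k-1}, e'_{k-1}\} \in \phi$. Iterating up to the root completes the induction.

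For (1), suppose $\{e, e'\} \in \phi$ with $e$ at distance $k$ and $e'$ at distance $k'$ from $r_\tau$, and without loss of generality $k \le k'$. Applying (3) to the root-paths of $e$ and $e'$ yields a pairing (or coincidence) of $e_0$ with $e'_0$, the unique root edges on these paths. But then iterating downwards, each subsequent level forces pairings of the form $\{e_i, e'_i\}$, and after $k$ such steps the path to $e$ has been entirely used up while the path to $e'$ still has $k' - k > 0$ remaining edges. Those surplus edges are adjacent to the vertex $v_{k-1}$ in $\tau_\phi$ (already fully identified with $v'_{k-1}$), and cannot be paired consistently without creating a new cycle, contradicting atomicity.

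For (2), let $\{e, e'\} \in \phi$ and let $f$ be any edge in $\tau|_e$. The edge $f$ sits at the end of a unique path in $\tau$ from $r_\tau$ passing through $e$; together with the corresponding path through $e'$, (3) forces the partner of $f$ under $\phi$ to lie on the mirror path starting at $e'$, which is precisely $\tau|_{e'}$. Hence the restriction of $\phi$ to the edges of $\tau|_e \cup \tau|_{e'}$ is a pairing within that subset, and the induced quotient is a connected subgraph of the tree $\tau_\phi$, hence itself a tree; this is exactly atomicity of $\phi$ restricted to $\tau|_e \wedge \tau|_{e'}$.

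The main obstacle is the inductive step of (3): rigorously excluding the possibility that $v_{k-2}$ and $v'_{k-2}$ are identified in $\tau_\phi$ via a roundabout chain of pairings involving edges outside the two root paths, rather than via the direct pairing $\{e_{k-1}, e'_{k-1}\}$. My approach to this will be to work with the fact that in any tree there is a \emph{unique} simple path between any pair of vertices, so that two distinct paths from $r_\tau$ to the identified tail vertex force an identification of their first edges adjacent to that vertex; this is the precise form of the tree constraint that drives the entire argument.
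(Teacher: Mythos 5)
Your high-level strategy --- everything reduces to the fact that $\tau_\phi$ is a tree, so one hunts for configurations that would force a cycle --- is the same engine that drives the paper's proof, but your reordering (prove (3) first, then derive (1) and (2)) leaves the load-bearing step unproved. In the inductive step of (3), from $\{e_k,e_k'\}\in\phi$ you correctly conclude that the tails of $e_{k-1}$ and $e_{k-1}'$ are identified in $\tau_\phi$, but the jump to $\{e_{k-1},e_{k-1}'\}\in\phi$ does not follow from uniqueness of simple paths. The images in $\tau_\phi$ of the two root-paths are \emph{walks}, not simple paths: a priori $e_{k-1}$ could be $\phi$-paired with some edge elsewhere in $\tau$ (or be unpaired, or be paired with another $e_j$ on the same root-path), so the walk may revisit edges of $\tau_\phi$ and its last edge need not be the last edge of the unique simple path between the endpoints. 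What acyclicity actually gives, applied to the closed walk obtained by concatenating the two root-path images, is that every edge of $\tau_\phi$ is traversed an even number of times, i.e.\ the $2k$ edges $e_0,\dots,e_{k-1},e_0',\dots,e_{k-1}'$ are matched in pairs under $\sim_\phi$ --- not that $e_i$ is matched with $e_i'$. Closing this gap is precisely the content of the auxiliary steps the paper proves first: that a root edge cannot pair with a non-root edge, then statement (2), and then the observation that if $e_{k-1}\notin E_{\mathcal{W}}$ the closed walk through the root traverses $e_{k-1}$ exactly once, forcing a cycle; (2) is then invoked to rule out $e_{k-1}$ pairing with anything other than $e_{k-1}'$. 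Your concluding remark shows you sensed this obstacle, but ``two distinct paths to the same vertex must have the same last edge'' is false for walks in a tree, so the proposed fix does not close it.

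A second, independent problem is the derivation of (1) from (3): statement (3) only applies to a pair of edges that are \emph{already} equidistant from the root, so it cannot be ``applied to the root-paths of $e$ and $e'$'' when $e$ and $e'$ have different depths. You would first need an unequal-depth analogue of (3) and then an actual contradiction from the leftover edges; the sentence ``cannot be paired consistently without creating a new cycle'' is where that missing argument would have to live. (The paper instead gets (1) by induction on depth from (3) together with the previously established fact that root edges pair only with root edges.) Your derivation of (2) from (1) and (3) is essentially sound, but it sits downstream of these two gaps.
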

\begin{proof}
    We begin by proving that root edges have to be paired with other root edges: consider a root edge $e = (r_\tau, v) \in E_{\mathcal{W}}(\tau)$ and assume that it is paired to a non-root edge $e' = (u',v')$ by $\phi$.

    Assume first that $e'\in E_{\mathcal{W}}(\tau|_e)$ with $u' \neq r_\tau$. Then consider the image in $(\tau)_\phi$ of the path from $v$ to $v'$ in $\tau$: it starts and ends at the same vertex of  $(\tau)_\phi$, while only traversing the (undirected) edge corresponding to $\{e,e'\}$ once (\emph{cf.} Figure \ref{fig:UpDownandTouchtheGround} left).
    This is a cycle, which contradicts the fact that $\phi\in \mathcal{P}_A(\tau)$. 
    Now assume that $e'\in E_{\mathcal{W}}(\tau)\setminus E_{\mathcal{W}}(\tau|_e)$ with $u'\neq r_\tau$. 
    By what we have just shown, there must be another root edge $\tilde e = (r_\tau, \tilde v)$ such that $e' \in E_{\mathcal{W}}(\tau|_{\tilde e})$, and which is not paired to any edge in $E_{\mathcal{W}}(\tau|_{\tilde e})$. 
    Arguing as above, the image of the path from $r_\tau$ to $u'$ is a cycle in $(\tau)_\phi$ traversing the image of $\tilde e$ only once (\emph{cf.} Figure \ref{fig:UpDownandTouchtheGround} (middle)) , giving the desired contradiction.

    % since the image could move up to $u' \in V(\tau|_{\tilde e})$, $(\tau)|_{\phi}$ this would amount to run across a cycle since $e' \sim_\phi e \implies u' \sim_\phi r_{\tau}$, and this cycle would pass through $\tilde e$ only once, hence $(\tau)|_{\phi}$ would not be a tree, which is absurd since $\phi$ is atomic.

    We now prove (2). Assume that $\{e,e'\}\in \phi$, and for contradiction that $\{\varepsilon,\varepsilon'\}\in \phi$ for some $\varepsilon \in E_{\mathcal{W}}(\tau|_e)$ (which is $\neq e$) and $\varepsilon' \in E_{\mathcal{W}}(\tau)\setminus E(\tau|_e \sqcup \tau|_{e'})$. 
    Then consider the path which starts at the head of $\varepsilon'$, goes through $r_\tau$ without using edges in $\tau|_e \sqcup \tau|_{e'}$ and ends at the head of $\varepsilon$ using the edge $e$ (\emph{cf.} Figure \ref{fig:UpDownandTouchtheGround} right). 
    This path's image in $(\tau)|_{\phi}$ is then once again a cycle since it traverses $\{e,e'\}$ only once, we conclude that $\phi|_{\tau|_e \sqcup \tau|_{e'}} \in \cP_{{A}}(\tau|_e \wedge \tau|_{e'})$

    To prove (3) we proceed by induction. Assume that $\{e_k,e_k'\}\in \phi$. If $e_{k-1}=e_{k-1}'$, there is nothing to prove. Otherwise, assume that $e_{k-1}  \notin E_{\mathcal{W}}(\tau)$, so that it is not paired by $\phi$. Then the path which starts at the head of $e_k$, goes to $r_\tau$ through $e_{k-1}$ and ends at the head of $e_k'$ via $e_{k-1}'$ is a cycle in $(\tau)_{\phi}$. We must therefore have  $e_{k-1}, e_{k-1}' \in E_{\mathcal{W}}(\tau)$.
    
    Now if these edges were not paired together by $\phi$, then $\{e_k,e_k'\}\notin \phi$ by (2), giving us a contradiction. This proves (3) by induction on $k$.

    Finally (1) follows from (3) by induction on the distance from the root, using the fact that root edges have to be paired with other root edges.
\end{proof}

 \begin{figure}[ht]
    \centering
    \scalebox{.85}{\input{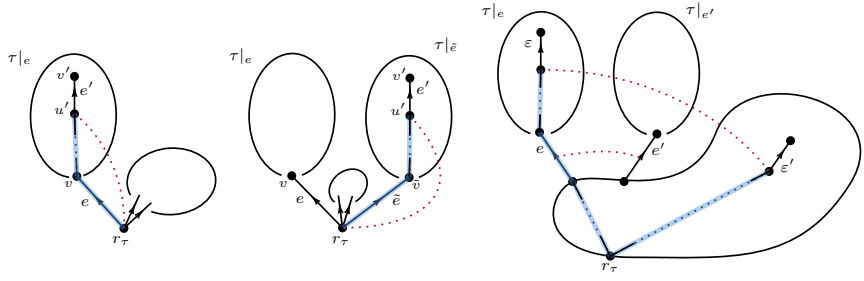}}
    \vspace{-5pt}
    \caption{Paths in $\tau$ which are cycles in $(\tau)_\phi$ in the proof of Proposition \ref{prop:tree_movements}. Red dotted lines denote vertices/edges which are the same in $(\tau)_\phi$, and paths highlighted in blue are cycles in the latter. }
    \label{fig:UpDownandTouchtheGround}
\end{figure}

\subsection{Results in Section \ref{sec:GP}}\label{sec:proofs_lem2&3}
\begin{proof}[Proof of Lemma \ref{lemma:GP_scaling}]
    For the first assertion, we just have to prove that given any $\eta \in \bT_{\ell,k}(\bx)$ one has $\mathcal{P}_{\mathrm{A}}(\eta) = \emptyset$. This is clear, since the tree $\eta$ only has one $\ell-$edge (namely, the one stemming from its root), which then cannot be paired with any other edge. 
    
    For the second assertion, recall that the elements of $\mathcal{L}_0(\sqcup_i \eta_i)$ are the only cells in $C(\sqcup_i\eta_i)$ which are tails of $0$-labeled edges.
    This means that any $\phi \in \cP(\sqcup_i \eta_i)$ identifies these vertices {in pairs} as shown in Figure \ref{fig:leaf_pairing_GP}. It follows that for any pair of vertices $\bullet_{\bx_i}, \bullet_{\bx_j}\in \mathcal{L}_0(\sqcup_i \eta_i)$, if we let ${G}$ denote the product graph obtained from $\sqcup_i \eta_i$ by deleting these vertices along with their adjacent edges (see Figure \ref{fig:leaf_pairing_GP}), then 
        \begin{align}\label{pairupleaves}
        \bW_{(\sqcup_i \eta_i)_\phi} &= 
        \sum_{\alpha,\beta} 
        [\bW_{{G}_{\phi}}]_{\alpha} 
        [\sigma_0^2\mathbf{I}]_{\alpha,\beta} 
        [\bx_i \odot \bx_j]_{\beta} 
        \nonumber \\ &= 
        \sigma_0^2 \sum_{\alpha,\beta} 
        [\bW_{{G}_{\phi}}]_{\alpha} 
        [\bx_i]_{\beta} [\bx_j]_{\beta} 
        = \sigma_0^2 \sprod{\bx_i}{\bx_j}_{\bR^{N_0}} \bW_{{G}_{\phi}}.
    \end{align}

        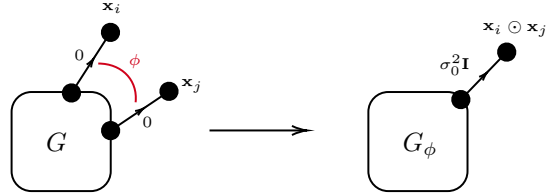
\begin{figure}[ht]
        \centering
        \tikzset{every picture/.style={line width=0.75pt}} %set default line width to 0.75pt        

\begin{tikzpicture}[x=0.75pt,y=0.75pt,yscale=-1,xscale=1]
%uncomment if require: \path (0,314); %set diagram left start at 0, and has height of 314

%Rounded Rect [id:dp661841611246976] 
\draw   (210.62,130.73) .. controls (210.62,125.21) and (215.09,120.73) .. (220.62,120.73) -- (250.62,120.73) .. controls (256.14,120.73) and (260.62,125.21) .. (260.62,130.73) -- (260.62,160.73) .. controls (260.62,166.25) and (256.14,170.73) .. (250.62,170.73) -- (220.62,170.73) .. controls (215.09,170.73) and (210.62,166.25) .. (210.62,160.73) -- cycle ;
%Flowchart: Connector [id:dp5416666297111621] 
\draw  [color={rgb, 255:red, 0; green, 0; blue, 0 }  ,draw opacity=1 ][fill={rgb, 255:red, 0; green, 0; blue, 0 }  ,fill opacity=1 ] (243.79,124.39) .. controls (245.55,122.75) and (245.64,119.99) .. (244,118.23) .. controls (242.36,116.47) and (239.61,116.38) .. (237.85,118.02) .. controls (236.09,119.66) and (235.99,122.42) .. (237.64,124.17) .. controls (239.28,125.93) and (242.03,126.03) .. (243.79,124.39) -- cycle ;
%Straight Lines [id:da9189804275667031] 
\draw    (240.82,121.2) -- (260.5,90.63) ;
\draw [shift={(251.96,103.9)}, rotate = 122.77] [color={rgb, 255:red, 0; green, 0; blue, 0 }  ][line width=0.75]    (4.37,-1.32) .. controls (2.78,-0.56) and (1.32,-0.12) .. (0,0) .. controls (1.32,0.12) and (2.78,0.56) .. (4.37,1.32)   ;
%Flowchart: Connector [id:dp05735211785661831] 
\draw  [color={rgb, 255:red, 0; green, 0; blue, 0 }  ,draw opacity=1 ][fill={rgb, 255:red, 0; green, 0; blue, 0 }  ,fill opacity=1 ] (263.47,93.81) .. controls (265.23,92.17) and (265.32,89.42) .. (263.68,87.66) .. controls (262.04,85.9) and (259.29,85.81) .. (257.53,87.45) .. controls (255.77,89.09) and (255.68,91.84) .. (257.32,93.6) .. controls (258.96,95.36) and (261.71,95.45) .. (263.47,93.81) -- cycle ;
%Flowchart: Connector [id:dp5326261962609491] 
\draw  [color={rgb, 255:red, 0; green, 0; blue, 0 }  ,draw opacity=1 ][fill={rgb, 255:red, 0; green, 0; blue, 0 }  ,fill opacity=1 ] (263.54,143.39) .. controls (265.3,141.75) and (265.39,138.99) .. (263.75,137.23) .. controls (262.11,135.47) and (259.36,135.38) .. (257.6,137.02) .. controls (255.84,138.66) and (255.74,141.42) .. (257.39,143.17) .. controls (259.03,144.93) and (261.78,145.03) .. (263.54,143.39) -- cycle ;
%Straight Lines [id:da8564227241151927] 
\draw    (260.57,140.2) -- (290,120.38) ;
\draw [shift={(277.27,128.95)}, rotate = 146.04] [color={rgb, 255:red, 0; green, 0; blue, 0 }  ][line width=0.75]    (4.37,-1.32) .. controls (2.78,-0.56) and (1.32,-0.12) .. (0,0) .. controls (1.32,0.12) and (2.78,0.56) .. (4.37,1.32)   ;
%Flowchart: Connector [id:dp1541463677181043] 
\draw  [color={rgb, 255:red, 0; green, 0; blue, 0 }  ,draw opacity=1 ][fill={rgb, 255:red, 0; green, 0; blue, 0 }  ,fill opacity=1 ] (292.97,123.56) .. controls (294.73,121.92) and (294.82,119.17) .. (293.18,117.41) .. controls (291.54,115.65) and (288.79,115.56) .. (287.03,117.2) .. controls (285.27,118.84) and (285.18,121.59) .. (286.82,123.35) .. controls (288.46,125.11) and (291.21,125.2) .. (292.97,123.56) -- cycle ;
%Curve Lines [id:da23361075527109354] 
\draw [color={rgb, 255:red, 208; green, 2; blue, 27 }  ,draw opacity=1 ]   (254.33,106.35) .. controls (266.11,106.35) and (274.33,114.58) .. (273.17,125.91) ;
%Straight Lines [id:da4219373286316568] 
\draw    (310.44,140.13) -- (357.67,140.13) ;
\draw [shift={(359.67,140.13)}, rotate = 180] [color={rgb, 255:red, 0; green, 0; blue, 0 }  ][line width=0.75]    (7.65,-2.3) .. controls (4.86,-0.97) and (2.31,-0.21) .. (0,0) .. controls (2.31,0.21) and (4.86,0.98) .. (7.65,2.3)   ;
%Rounded Rect [id:dp6994617458499406] 
\draw   (390.56,130.73) .. controls (390.56,125.21) and (395.03,120.73) .. (400.56,120.73) -- (430.56,120.73) .. controls (436.08,120.73) and (440.56,125.21) .. (440.56,130.73) -- (440.56,160.73) .. controls (440.56,166.25) and (436.08,170.73) .. (430.56,170.73) -- (400.56,170.73) .. controls (395.03,170.73) and (390.56,166.25) .. (390.56,160.73) -- cycle ;
%Flowchart: Connector [id:dp0694696804674968] 
\draw  [color={rgb, 255:red, 0; green, 0; blue, 0 }  ,draw opacity=1 ][fill={rgb, 255:red, 0; green, 0; blue, 0 }  ,fill opacity=1 ] (440.17,127.72) .. controls (441.93,126.08) and (442.03,123.32) .. (440.39,121.57) .. controls (438.74,119.81) and (435.99,119.71) .. (434.23,121.35) .. controls (432.47,122.99) and (432.38,125.75) .. (434.02,127.51) .. controls (435.66,129.27) and (438.42,129.36) .. (440.17,127.72) -- cycle ;
%Straight Lines [id:da4025580611950512] 
\draw    (437.2,124.54) -- (460.33,100.58) ;
\draw [shift={(450.43,110.83)}, rotate = 133.99] [color={rgb, 255:red, 0; green, 0; blue, 0 }  ][line width=0.75]    (4.37,-1.32) .. controls (2.78,-0.56) and (1.32,-0.12) .. (0,0) .. controls (1.32,0.12) and (2.78,0.56) .. (4.37,1.32)   ;
%Flowchart: Connector [id:dp4363553642507374] 
\draw  [color={rgb, 255:red, 0; green, 0; blue, 0 }  ,draw opacity=1 ][fill={rgb, 255:red, 0; green, 0; blue, 0 }  ,fill opacity=1 ] (463.3,103.76) .. controls (465.06,102.12) and (465.16,99.36) .. (463.52,97.61) .. controls (461.88,95.85) and (459.12,95.75) .. (457.36,97.39) .. controls (455.6,99.03) and (455.51,101.79) .. (457.15,103.55) .. controls (458.79,105.31) and (461.55,105.4) .. (463.3,103.76) -- cycle ;

% Text Node
\draw (226.51,140) node [anchor=north west][inner sep=0.75pt]    {${G}$};
% Text Node
\draw (254.89,74.01) node [anchor=north west][inner sep=0.75pt]  [font=\tiny,rotate=-0.89]  {$\mathbf{x}_{i}{}{}$};
% Text Node
\draw (241,98) node [anchor=north west][inner sep=0.75pt]  [font=\tiny,rotate=-0.89]  {$0$};
% Text Node
\draw (275.25,131.93) node [anchor=north west][inner sep=0.75pt]  [font=\tiny,rotate=-0.89]  {$0$};
% Text Node
\draw (294.89,112.76) node [anchor=north west][inner sep=0.75pt]  [font=\tiny,rotate=-0.89]  {$\mathbf{x}_{j}{}{}$};
% Text Node
\draw (268.17,100.23) node [anchor=north west][inner sep=0.75pt]  [font=\tiny,color={rgb, 255:red, 208; green, 2; blue, 27 }  ,opacity=1 ]  {$\phi $};
% Text Node
\draw (406.44,140) node [anchor=north west][inner sep=0.75pt]    {${G}_{\phi }$};
% Text Node
\draw (446.83,82.89) node [anchor=north west][inner sep=0.75pt]  [font=\tiny,rotate=-0.89]  {$\mathbf{x}_{i} \odot \mathbf{x}_{j}$};

\draw (425,100) node [anchor=north west][inner sep=0.75pt]  [font=\tiny,rotate=-0.89]  {$\sigma_0^2\mathbf{I}$};

\end{tikzpicture}
        \caption{Any admissible edge pairing induces a pairing of leaves.}
        \label{fig:leaf_pairing_GP}
    \end{figure}

    A similar argument holds for every pair of roots (heads of $\ell-$edges) in $\sqcup_i \eta_i$, of which there are $c(\sqcup_i \eta_i)$ many. Repeating \eqref{pairupleaves} for each pair of leaves and roots then yields the following equality for any $\phi \in \cP(\sqcup_i \eta_i)$:
    \begin{equation}
        \bW_{(\sqcup_i \eta_i)_\phi} = \sigma_0^{|\mathcal{L}_0(\sqcup_i \eta_i)|} \sigma_{\ell}^{c(\sqcup_i \eta_i)}
        \sprod{\bx}{\bx}_{\phi} \delta_\phi
        \bW_{G^1_{\phi}}
    \end{equation}
    where $G^1$ is obtained from $\sqcup_i \eta_i$ by removing all leaves, roots and their adjacent edges, and 
    \[
    \sprod{\bx}{\bx}_{\phi} := \prod_{\{\bullet_{\bx_i}, \bullet_{\bx_j}\} \in (\mathcal{L}_0(\sqcup_i\eta_i)/\sim_{\phi})} \sprod{\bx_i}{\bx_j}_{\bR^{N_0}}.
    \]
    Note that here, $\delta_\phi$ arises from the roots in the same way as $\sprod{\bx}{\bx}_{\phi}$ did from the leaves, except that since the roots all have inputs $\mathbf{e}_i$ for some $i$, the resulting inner product must either equal zero or one.

    Further, note that under Assumption \ref{assumption:GP_lim}, $G^1$ is a product graph satisfying Assumption \ref{assumption:genus_graph}, in particular a product graph with all vertices having dimension $\fd = N$, fixed to $\mathbf{1}_N$ and with all edges fixed to random Gaussian matrices ($E\setminus E_{\mathcal{W}}(G^1) = \emptyset$).
    We can thus leverage the genus expansion results of Section \ref{sec:genus_actual} to compute the values $\bW_{G_\phi}$ explicitly. 
    
    Note that any $\phi \in \cP_{\mathrm{AF}}(\sqcup_i \eta_i)$ must induce an atom-free partition of the edges of $G^1$, {since each of the removed edges shares a vertex with $G^1$}. By Lemma \ref{lemma:mixedmoments_genus} we know that $\bW_{G^1_\phi}$ is maximized when this induced partition is bi-atomic, in which case 
    \[
    \bW_{G^1_\phi} = \sigma_{G{\color{blue}^1}} N^{(|E(G^1)|+c(G^1))/2}.
    \]
    This yields the claim, since $\sigma_{\sqcup_i\eta_i} = \sigma_0^{|\mathcal{L}_0(\sqcup_i \eta_i)|} \sigma_{\ell}^{c(\sqcup_i \eta_i)} \sigma_{G^1}$,
    \[
    |E(G^1)| = |E(\sqcup_i\eta_i)| - |\mathcal{L}_0(\sqcup_i\eta_i)| - c(\sqcup_i\eta_i), 
    \quad
    c(G^1) = c(\sqcup_i\eta_i),    
    \]
    and  
    \[
        \{\psi\in \mathcal{P}_{\mathrm{B}}(G{^1}): \text{$\psi$ is induced by some $\phi = \mathcal{P}_{\mathrm{B}}(\sqcup_i \eta_i)$}\} = \mathcal{P}_{\mathrm{B}}(G{\color{blue}^1}).
    \]
\end{proof}

\begin{proof}[Proof of Lemma \ref{lemma:alpha_trees}]
    Let $\phi\in \mathcal{P}_\mathrm{B}(\tau^{(i)}\sqcup \eta^{(j)})$ be arbitrary. Denote by $r(\tau)$ (resp. $r(\eta)$) denote the unique edge whose head is the root of $\tau$ (resp. $\eta$). Then $r(\tau)$ and $r(\eta)$ are the only $\ell-$edges in their respective trees, and must thus be paired by $\phi$. 
    Since $\phi$ must then pair $\ell-1$-labeled edges with each other (which are precisely the edges $\{r(\tau_{k}^{(i)})\}_{k=1}^n$, $\{r(\eta_{k}^{(j)})\}_{k=1}^m$) this induces a pairing $\pi$ of the trees in
    \[
        (\tau_1^{(i)},...,\tau_n^{(i)},\eta_1^{(j)},\cdots,\eta_m^{(j)})
    \]
    via the bijection $\tau^{(i)}_{k}\mapsto r(\tau_k^{(i)})$ (and similarly for the $\eta^{(j)}_k$).

By Lemma \ref{prop:tree_movements}, (1) and (2), for each $\{\pi_1,\pi_2\}\in \pi$, $\phi$ contains a bi-atomic pairing of the edges of $\pi_1\sqcup \pi_2$, which we denote by $\phi\big|_{\pi_1\sqcup \pi_2}$ ($\in \mathcal{P}_{\mathrm{B}}(\pi_1\sqcup \pi_2)$). In other words, we can decompose $\phi$ as
   \begin{equation}\label{eq:split}
        \phi = \big\{\{r(\tau), r(\eta)\}\big\} \bigsqcup_{\{\pi_1,  \pi_2\} \in \pi} \phi|_{\pi_1 \sqcup \pi_2},    
    \end{equation}
    and the lemma then follows since we can write  
        \[
    \sprod{\bx}{\bx}_{\phi} = \prod_{\{\pi_1,  \pi_2\} \in \pi} \sprod{\bx}{\bx}_{\phi|_{\pi_1 \sqcup \pi_2}}.
    \] 
\end{proof}

\subsection{Results in Section \ref{sec:NTK}}\label{sec:proofs_lem4&5}

\begin{proof}[Proof of Lemma \ref{lemma:L2}]
    By Proposition \ref{prop:mixedmoments_} we know, setting $G := \tau \circ \eta^{\top}$, that
    \begin{equation}
        \E\bigg\{
     \Big( 
     \bW_{G} - 
    \sum_{\psi \in \mathcal{P}_{\mathrm{A}}(G)}
      \bW_{G_\psi}
     \Big)^2
     \bigg\} = 
     \sum_{\phi \in \mathcal{P}_{\mathrm{AF}}(G\sqcup G)} \bW_{(G \sqcup G)_\phi}.
    \end{equation}
    
    We begin by proving the claim for $0 < \ell < L\neq 0$.  
    We will be reasoning as we did to prove Lemma \ref{lemma:GP_scaling}, namely by analyzing the structure of a pairing in $\mathcal{P}_{\mathrm{A}}(G)$. To that end, note that the elements of $\mathcal{L}_0(G)$ are the only cells of $G$ which are tails of $0$-labeled edges, while the roots of $\tau$ and $\eta$ are the only heads of $L$-labeled edges. 

    It follows that for any $\psi \in \mathcal{P}_{\mathrm{A}}(G)$, these vertices will necessarily all get paired, yielding
    \[
    \bW_{G_{\psi}} = \sigma_0^{|\mathcal{L}_0(G)|} \sigma_L^{2} \sprod{\bx}{\bx}_{\psi} \delta_{i_1,i_2} \bW_{G^1_{\psi}}
    \]
    where the product graph $G^1$ is obtained from $G$ by removing all leaves, roots and their adjacent vertices.
    Under Assumption \ref{assumption:NTK}, $G^1$ is then a product graph satisfying Assumption \ref{assumption:genus_graph}: all of its vertices have dimension $\fd = N$ and fixed to $\mathbf{1}_N$, all edges except one are fixed to random Gaussian matrices, the remaining one is fixed to $\mathbf{I}$. We thus apply the genus expansion formula (Corollary \ref{thm:wickexpansion_ones}) with 
    \[
    |E_{\mathcal{W}}(G^1)| = |E(G^1)|- 1 = |E(G)| - ( |\mathcal{L}_0(G)| + 2) - 1,
    \]
    \[
    |E\setminus E_{\mathcal{W}}(G^1)| = 1,
    \]
    and since any atomic partition of $G$ induces an atomic one on $G^1$, we conclude that
    \begin{align*}
    &\bW_{G^1_{\psi}} = \sigma_{G^1} N^{\frac{1}{2}{(|E(G)| - |\mathcal{L}_0(G)| - 3)} + 2},\\
    &\sum_{\psi \in \mathcal{P}_{\mathrm{A}}(G)}
      \bW_{G_\psi} = \sigma_G N^{\frac{1}{2}({|E(G)| - |\mathcal{L}_0(G)|+1})} a(G).
    \end{align*}
    The same reasoning applied to $\cP_{\mathrm{AF}}(G\sqcup G)$ gives  

    \begin{align*}
        &\bW_{(G\sqcup G)_{\phi}} = \sigma_{G}^2 N^{|E(G)| - |\mathcal{L}_0(G)| } \sprod{\bx}{\bx}_{\phi} \delta_\phi \text{\quad\quad(for $\phi \in \cP_{\mathrm{B}}(G\sqcup G)$),}\\
        &\sum_{\phi \in \mathcal{P}_{\mathrm{AF}}(G\sqcup G)} \bW_{(G \sqcup G)_\phi}
        =
        \sigma_{G}^2 N^{|E(G)| - |\mathcal{L}_0(G)| }
        \left(
        \alpha(G\sqcup G) + \mathcal{O}\Big(\frac{1}{N}\Big) 
        \right).
    \end{align*}

    We now turn to the cases when $\ell=0$ or $\ell=L$, noting that the pruned graphs do not contain any $\mathbf{I}$ this time. 
    In the $\ell=L$ case, we get
    \begin{align*}
    &\bW_{G_\psi}\asymp \sigma_G N^{\frac{1}{2}{(|E(G)| - (|\mathcal{L}_0(G)|+1))}+1} = \sigma_G N^{\frac{1}{2}{(|E(G)| - |\mathcal{L}_0(G)|+1))}}, \\
    &\bW_{(G\sqcup G)_\phi}\asymp \sigma_G^2 N^{\frac{1}{2}(2|E(G)| - 2(|\mathcal{L}_0(G)| + 1)) + 1}=   \sigma_G^2 N^{|E(G)| - |\mathcal{L}_0(G)| },
    \end{align*}
    respectively, while for $\ell=0$ we get
    \begin{align*}
    &\bW_{G_\psi}\asymp \sigma_G N^{\frac{1}{2}({|E(G)| - (|\mathcal{L}_0(G)|+2)})+1} = \sigma_G N^{\frac{1}{2}({|E(G)| - |\mathcal{L}_0(G)|})}, \\
    &\bW_{(G\sqcup G)_\phi} \asymp \sigma_G^2 N^{\frac{1}{2}({2|E(G)| - 2(|\mathcal{L}_0(G)| + 2)}) + 1} =   \sigma_G^2 N^{|E(G)| - |\mathcal{L}_0(G)|  - 1}.
    \end{align*}

    Putting everything together, for any $\ell$ and under Assumption \ref{assumption:GP_lim} (in which case $ \sigma_G = N^{-\frac{1}{2}{(|E(G)| - |\mathcal{L}_0(G)|-\mathbf{1}(\ell>0)})}$), we always obtain
    \begin{equation}
        \E\bigg\{
     \Big( 
     \sigma_{\ell}^2 \bW_{G} - 
    a(G)
     \Big)^2
     \bigg\} = 
    \frac{1}{N}\left( \alpha(G\sqcup G) + \mathcal{O}\Big(\frac{1}{N}\Big)\right).
    \end{equation}
\end{proof}

\begin{proof}[Proof of Lemma \ref{lemma:NTK_bijection}] Each tree in $\bT_{L,i}(\bx)$ has exactly one $L-$edge, namely the unique edge adjacent to its root. If $\ell=L$, the canonical bijection in the lemma's statement is the map which assigns to $\eta \in \partial_L \bT_{L,i}(\bx)$ the tree $\eta^+$ obtained by fixing $\eta$'s root edge to $W_L$.
Since each bi-atomic pairing of $\tau^+ \sqcup \eta^+$ has to necessarily pair the root edges, 
it follows that $\cP_{\mathrm{B}}(\tau^+ \sqcup \eta^+)$ and $\cP_{\mathrm{A}}(\tau \circ \eta^{\top})$ are in canonical bijection, and in particular that $a(\tau \circ \eta^{\top}) = \alpha(\tau^+ \sqcup \eta^+)$. 
\begin{figure}[!ht]
    \centering
    \scalebox{0.85}{\input{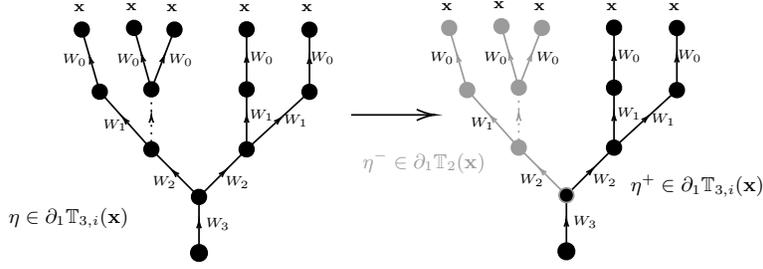}}
    \vspace{-10pt}
    \caption{Decomposition of a tree in $\partial_1\bT_{3,i}$}
    \label{fig:tree_decomposition_NTK}
\end{figure}

If $\ell < L$, then any $\eta \in \partial_{\ell}\bT_{L,i}(\bx)$ can be viewed as a tree $\tau^- \in \partial_{\ell} \bT_{L-1,1}(\bx)$, which is attached by its root to a tree $\tau^+ \in \bT_{L,i}(\bx)$ as in the example of Figure \ref{fig:tree_decomposition_NTK}. Note that this decomposition is unique. It thus remains to show that $\mathcal{P}_\mathrm{A}(\tau\circ \eta^\top)$ is in bijection with $ \cP_{\mathrm{B}}(\tau^+ \sqcup \eta^+) \times \cP_{\mathrm{A}}(\tau^- \circ (\eta^-)^{\top})$.

Let $M\in \mathbb{R}^{N\times N}$ be a random matrix which is independent from the $\{W_\ell\}_\ell$, and is populated with i.i.d. standard Gaussian random variables. Consider the trees $\tau[M]$, $\eta[M]$, which we recall are obtained from $\tau$ and $\eta$ by inserting $M$ into their unique free in-edge, respectively. 

Clearly, $\mathcal{P}_{\mathrm{A}}(\tau\circ \eta^\top)$ is then in bijection with $\mathcal{P}_{\mathrm{A}}(\tau[M]\wedge \eta[M])$, since any pairing in the latter must necessarily pair the edges given $M$ as input, as well as the root edges $r(\tau[M])$ and $r(\eta[M])$ (since they are the only $L$-labeled edges).

We can now use Lemma \ref{prop:tree_movements}, (3). Fix any pairing $\phi\in \mathcal{P}_\mathrm{A}(\tau[M]\wedge\eta[M])$. In $\tau[M]$, the path starting at the root and ending at the head of the unique $M$-labeled edge must be paired by $\phi$ with the analogous path in $\eta[M]$, meaning that edges which are equidistant from the respective roots are paired with each other. In particular, this will end up pairing $r(\tau^-[M])$ with $r(\eta^-[M])$. 

Furthermore, by Lemma \ref{prop:tree_movements}, (2), $\phi$ will only pair edges of $\tau^-[M]$ with edges of $\eta^-[M]$ (and similarly for $\tau^+, \eta^+$ as a consequence). This implies that $\phi$ can be decomposed into the union of some $\phi^-\in \mathcal{P}_\mathrm{A}(\tau^-[M]\wedge \eta^-[M])$ with some $\phi^+\in \mathcal{P}_{\mathrm{A}}(\tau^+\wedge \eta^+)$. 

Repeating this argument for the unique $\tilde{\phi}\in\mathcal{P}_\mathrm{A}(\tau\circ\eta^\top)$ corresponding to $\phi$, we conclude that $\mathcal{P}_\mathrm{A}(\tau\circ\eta^\top)$ is in bijection with
\[
    \mathcal{P}_\mathrm{A}(\tau^+\sqcup \eta^+)\times\mathcal{P}_\mathrm{A}(\tau^-\circ (\eta^-)^\top),
\]
and it follows that 
$$a(\tau \circ \eta^{\top}) = \alpha(\tau^+ \sqcup \eta^+) a(\tau^- \circ {\eta^-}^{\top}).$$
\end{proof}

\subsection{Results in Section \ref{sec:jacobian}}\label{proofs_prop12}
\begin{proof}[Proof of Proposition \ref{prop:decorated_cycle_scaling}]
     Let $G=G(\boldsymbol{\eta})$ be a decorated cycle. We first show that for any admissible pairing $\phi\in \mathcal{P}(G)$, $\mathbf{W}_{G_\phi}=\mathcal{O}(\sigma_G N^{|V(G_\phi)|})$.
     Indeed, for any such $\phi$, we have
    \[
        \bW_{G_\phi} = \sigma_G \prod_{\pi \in V(G)/\sim_{\phi}}{\sum_{\alpha=1}^N [ \odot_{\bullet_\by \in \pi} \by]_{\alpha}}
        = \sigma_G N^{|V(G_\phi)|}\prod_{\pi \in V(G)/\sim_{\phi}} \frac{\sum_{\alpha=1}^N [ \odot_{\bullet_\by \in \pi} \by]_{\alpha}}{N},
    \]
    and as argued in the proof of Lemma \ref{lemma:GP_scaling} (see Figure \ref{fig:leaf_pairing_GP}), the only vertices with input $\bx$ in $G$ are of in-degree $1$. All other vertices have input  $\mathbf{1}$. 
    
    For any $\pi \in V(G)/\sim_\phi$, we therefore have
    \[
    \odot_{\bullet_\by \in \pi} \by \in \{ \mathbf{1}, \bx, \bx\odot\bx\},
    \]
    so that 
    \[
    \frac{\sum_{\alpha=1}^N [ \odot_{\bullet_\by \in \pi} \by]_{\alpha}}{N} \in \bigg\{1, \frac{\sum_{\alpha=1}^N [\bx]_\alpha}{N}, \frac{\sprod{\bx}{\bx}}{N}\bigg\}.
    \]
    Since ${\sprod{\bx}{\bx}}/{N} \to_{N\to\infty} x$ and $$\frac{\sum_{\alpha=1}^N [\bx]_\alpha}{N} \leq \frac{\norm{\bx}_1}{N} \leq \frac{\sqrt{N}\norm{\bx}_2}{N} = \frac{\norm{\bx}_2}{\sqrt{N}} \to_{N\to\infty} \sqrt{x},$$ we can conclude that 
    \[
        \bW_{G_\phi}\leq C_x\sigma_G N^{|V(G_\phi)|}
    \]
    for some constant $C_x$ depending on $x$. The order of $\mathbf{W}_{G_\phi}$ is therefore determined by the number of vertices in $G_\phi$.

    Following Section \ref{sec:genus_actual}, it suffices to prove that
    \[
    \sum_{\psi \in \cP_{\mathrm{A}}(G)} \bW_{G_\psi} = \sigma_G N^{|E(G_\psi)|+1}a(G),
    \]
    which is true in the event that
    \[
    \prod_{\pi \in V(G)/\sim_{\psi}} \frac{\sum_{\alpha=1}^N [ \odot_{\bullet_\by \in \pi} \by]_{\alpha}}{N} = 
        \prod_{\{\bullet_{\bx}, \bullet_{\bx}\} \in (\mathcal{L}(G)/\sim_{\psi})}
        \frac{\sprod{\bx}{\bx}}{N}
    \]
    holds for every $\psi\in \mathcal{P}_{\mathrm{A}}(G)$.

    The latter is equivalent to proving that the edges whose input is equal to $\bx$ are paired between themselves.
    
    Note that on the one hand, these are $0$-labeled edges. On the other, $0$-labeled edges arise in one of two ways. They are either:
    \begin{enumerate}
        \item Edges whose tails are leaves of $G$ (meaning vertices for which the sum of the in and out-degree equals one), and which therefore have input $\mathbf{x}$, or
        \item Edges belonging to the cycle $C(\boldsymbol{\eta})$.
    \end{enumerate} 
    Since $\psi$ is atomic, the result then follows by Lemma \ref{lem:firstdecomp}, as it prohibits $\psi$ form pairing edges inside of $C(\boldsymbol{\eta})$ with edges outside of it in $G$.
\end{proof}
\section{Technical Lemmas}
\label{app:sect:TreeStuff}

The goal of this section is to prove Proposition \ref{app:prop:expectation_tree_expansion}, which is a key ingredient in the proof of Theorem \ref{thm:tree_exp} (in Section \ref{thm:tree_exp}) and Theorem \ref{thm:GPlimit}  (Section \ref{sec:application_NNs}). We begin with the following technical lemma. 

\begin{lemma}\label{lem:summability}
    Consider a sequence of sets $(\cA_i ~|~ i \in \N_{>0})$ and absolutely summable sequences  of vectors $(v_{a_i} ~|~ a_i\in \cA_i) \subseteq \bR^d$ indexed by them. Let $v_{(a_1,\dots,a_M)} := \otimes_{i=1}^M v_{a_i}$  with the convention that empty products have value $1$. 
    Then for $M \in \N$, 
    \begin{equation}
        (\bR^d)^{\otimes M} \ni \otimes_{i=1}^M \left( \sum_{a_i \in \cA_i} v_{a_i} \right)
        =
       \sum_{\boldsymbol{a}_M \in \times_{i=1}^M\cA_i} v_{\boldsymbol{a}_M}.
    \end{equation}
\end{lemma}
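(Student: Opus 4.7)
The plan is to proceed by induction on $M$, using the bilinearity and continuity of the tensor product together with absolute summability to justify interchanging the tensor product with the summations.

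For the base cases, observe that when $M=0$ both sides equal $1$ by the stated empty-product convention, and when $M=1$ the identity is a tautology. For the inductive step, assuming the identity holds for some $M \geq 1$, I would write
\[
\bigotimes_{i=1}^{M+1} \Bigl( \sum_{a_i \in \cA_i} v_{a_i} \Bigr)
= \Bigl( \bigotimes_{i=1}^{M} \sum_{a_i \in \cA_i} v_{a_i} \Bigr) \otimes \Bigl( \sum_{a_{M+1} \in \cA_{M+1}} v_{a_{M+1}} \Bigr)
\]
by associativity of $\otimes$, then invoke the inductive hypothesis to rewrite the first factor as $\sum_{\boldsymbol{a}_M} v_{\boldsymbol{a}_M}$.

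The main technical point to verify is that the bilinearity of $\otimes$ can be distributed over both (possibly infinite) sums. Since $(\bR^d)^{\otimes M}$ is finite-dimensional (with $\dim = d^M$), any cross norm equips it with a Banach-space structure in which $\otimes$ is jointly continuous, and coordinate-wise convergence of the tensor reduces the claim to an elementary product-of-series identity. Concretely, the $(j_1,\dots,j_{M+1})$-coordinate of both sides is
\[
\prod_{i=1}^{M+1} \Bigl( \sum_{a_i \in \cA_i} [v_{a_i}]_{j_i} \Bigr)
= \sum_{\boldsymbol{a}_{M+1} \in \times_{i=1}^{M+1} \cA_i} \prod_{i=1}^{M+1} [v_{a_i}]_{j_i},
\]
where the interchange of the finite product with the infinite sums is justified by the absolute summability of each $(v_{a_i})_{a_i \in \cA_i}$ together with Fubini's theorem (applied to the counting measures on the $\cA_i$). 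Equating coordinates term-by-term gives the desired equality.

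I expect the main (mild) obstacle to be bookkeeping in showing that the iterated application of the induction hypothesis preserves absolute summability of the resulting sequence $(v_{\boldsymbol{a}_M})_{\boldsymbol{a}_M}$ in $(\bR^d)^{\otimes M}$; this follows from the multiplicative bound $\|v_{\boldsymbol{a}_M}\| \leq \prod_{i=1}^M \|v_{a_i}\|$ in any cross norm, and the inequality $\sum_{\boldsymbol{a}_M} \prod_i \|v_{a_i}\| = \prod_i \sum_{a_i} \|v_{a_i}\| < \infty$, again by the coordinate-wise Fubini argument. Once this is in place, the inductive step closes immediately.
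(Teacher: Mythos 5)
Your proposal is correct and follows essentially the same route as the paper: induction on $M$, splitting off the last factor, invoking the inductive hypothesis, and distributing the sums through the bilinear tensor product. The only difference is that you spell out the justification for interchanging the infinite sums (finite-dimensionality, cross norms, Fubini), which the paper leaves implicit; this is a welcome but inessential refinement.
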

\begin{proof}
We proceed by induction on $M$.
If $M=0$ then the result holds by convention.
For the inductive case assume the lemma true up to $M$, then
\begin{align*}
    \otimes_{i=1}^{M+1} \sum_{a_i \in \cA_i} v_{a_i}
    =&
    \left( \otimes_{i=1}^M \sum_{a_i \in \cA_i} v_{a_i} \right)
    \otimes \left( \sum_{a_{M+1} \in \cA_{M+1}} v_{a_{M+1}} \right)
    \\
    =&
    \left(  \sum_{\boldsymbol{a}_M \in \times_{i=1}^M \cA_i} v_{\boldsymbol{a}_M} \right)
    \otimes
    \left( \sum_{a_{M+1} \in \cA_{M+1}} v_{a_{M+1}} \right)
    \\
    =&
   \sum_{a_{M+1} \in \cA_{M+1}}  
   \sum_{\boldsymbol{a}_M \in \times_{i=1}^M \cA_i}  
   v_{\boldsymbol{a}_M} \otimes v_{a_{M+1}}
\end{align*}
Letting $\boldsymbol{a}_M = (a_1,\dots,a_M)$, we have 
$v_{\boldsymbol{a}_M} \otimes v_a = v_{(a_1,\dots,a_{M+1})}$
and the claim follows.
\end{proof}
% \footnote{$\odot$ is already taken by the Hadamard product...}
 
The next step will be to prove an analogous result with Hadamard products $\odot$ instead of tensor products $\otimes$. 
Given that the Hadamard product is commutative, it will be convenient to write the results in terms of multisets (meaning sets where elements can appear more than once) since they are unordered.

Given a set $\cA$, let $\mathbb{X}_{\cA}^M$ be the set of multisets of elements of $\mathcal{A}$ of cardinality $M$, and $\bX_{\mathcal{A}} = \bigcup_{M=0}^{\infty} \bX_{\mathcal{A}}^M$. 
    For any multiset $\tau = \llbracket(a_1)^{k_1} \cdots (a_N)^{k_N}\rrbracket \in \bX_{\mathcal{A}}$, where $a_1,\dots,a_N$ are distinct elements of $\cA$, its \emph{symmetry factor} is defined recursively as
    \begin{equation}\label{eq:multiset_symm}
        \mathfrak{s}(\llbracket~\rrbracket)=1, \quad \mathfrak{s}(\tau) := \prod_{i=1}^N (k_i)!.
    \end{equation}

Using these definitions, we can express Hadamard products of sums of vectors in the following compact form.

\begin{proposition}\label{prop:symmetric_expansion}
    Consider a set $\cA$ and a summable sequence $(v_{a})_{a\in \cA} \subseteq \bR^d$ indexed by $\cA$. We have for any $M\in \mathbb{N}$
    \begin{equation}
        (\bR^d)^{\otimes M} \ni \left( \sum_{a \in \cA} v_a \right)^{\odot M}
        =
       \sum_{\tau \in  \bX_{\cA}^M} \frac{M!}{\mathfrak{s}(\tau)} v_{\tau}
    \end{equation}
    where for $\tau := \llbracket a_1 \cdots a_M \rrbracket$ we define $v_{\tau} := v_{a_{1}} \odot \cdots \odot v_{a_M}$.
\end{proposition}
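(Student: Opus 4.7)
The plan is to reduce this to the analogue of Lemma \ref{lem:summability} for the Hadamard product and then collect terms via an orbit–stabiliser argument. The Hadamard product $\odot$ on $\bR^d$ is bilinear and commutative, so multilinearity gives, by essentially the same induction as in Lemma \ref{lem:summability} (replacing $\otimes$ with $\odot$ throughout),
\begin{equation*}
    \Bigl( \sum_{a \in \cA} v_a \Bigr)^{\odot M}
    = \sum_{(a_1,\dots,a_M) \in \cA^M} v_{a_1} \odot \cdots \odot v_{a_M}.
\end{equation*}
I would verify this auxiliary identity first (one line of induction is enough, with the absolute summability of $(v_a)$ justifying the interchange of sum and product coordinate-wise).

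Next I would exploit commutativity: for any $\sigma \in S_M$ and any tuple $(a_1,\dots,a_M) \in \cA^M$, one has $v_{a_{\sigma(1)}} \odot \cdots \odot v_{a_{\sigma(M)}} = v_{a_1} \odot \cdots \odot v_{a_M}$. Hence the summand only depends on the $S_M$-orbit of $(a_1,\dots,a_M)$, i.e.\ on its class $\tau \in \bX_{\cA}^M$. Grouping the sum by orbits therefore gives
\begin{equation*}
    \Bigl( \sum_{a \in \cA} v_a \Bigr)^{\odot M}
    = \sum_{\tau \in \bX_{\cA}^M} |\mathrm{Orb}(\tau)| \cdot v_{\tau},
\end{equation*}
where $|\mathrm{Orb}(\tau)|$ denotes the number of ordered tuples in the orbit corresponding to $\tau$.

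The final step is the orbit–stabiliser count: for $\tau = \llbracket (a_1)^{k_1} \cdots (a_N)^{k_N}\rrbracket$ with $a_1,\dots,a_N$ distinct and $k_1+\cdots+k_N = M$, the stabiliser of any representative in $S_M$ is conjugate to $S_{k_1}\times \cdots \times S_{k_N}$, of order $\prod_i k_i! = \mathfrak{s}(\tau)$, so $|\mathrm{Orb}(\tau)| = M!/\mathfrak{s}(\tau)$. Substituting this in yields the stated identity. The main (minor) obstacle is purely notational: being careful that $\bX_{\cA}^M$ is the set of orbits and that the stabiliser computation really does produce $\mathfrak{s}(\tau)$ as defined in Definition \ref{app:def:symm_tuple}; no analytic subtlety arises, as the absolute summability of $(v_a)$ ensures all rearrangements above are legitimate.
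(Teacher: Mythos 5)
Your proposal is correct and follows essentially the same route as the paper: expand the $M$-fold product multilinearly over ordered tuples in $\cA^M$, then use commutativity of $\odot$ to group terms by $S_M$-orbits, with each orbit $\tau$ contributing $M!/\mathfrak{s}(\tau)$ copies of $v_\tau$. The only cosmetic difference is that the paper obtains the ordered expansion by applying a linear map $\bigodot$ to the tensor-product identity of Lemma \ref{lem:summability} rather than re-running the induction for $\odot$, and it asserts the orbit cardinality directly where you justify it by orbit–stabiliser; both are fine.
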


\begin{proof}    
    Considering $M$ copies of $\cA$ as different sets and using Lemma \ref{lem:summability}, we have on the one hand that
    \[
    \otimes_{i=1}^M \left( \sum_{a \in \cA} v_a \right)
    = 
    \sum_{\boldsymbol{a} \in \times_{i=1}^M \cA}  \otimes_{i=1}^M 
    ( v_{a_i} ).
    \]
    On the other hand if we define linearly
    \[
    \bigodot : (\bR^d)^{\otimes M} \to \bR^d, \quad v_1 \otimes \cdots \otimes v_M \mapsto v_1 \odot \cdots \odot v_M
    \]
    we have
    \begin{align*}
    \left( \sum_{a \in \cA} v_a \right)^{\odot M}
    =&
    \bigodot \left( \sum_{a \in \cA} v_a \right)^{\otimes M}
    \\
    =&
    \bigodot \sum_{\boldsymbol{a} \in \cA^M} 
    \otimes_{i=1}^M 
    (v_{a_{i}} )
   =
    \sum_{\boldsymbol{a} \in \cA^M}
    \odot_{i=1}^M ( v_{a_i} )
    \end{align*}
    by definition. 
    Note that for any $\boldsymbol{a} = (a_1, \dots, a_M), \boldsymbol{a'} = (a_1', \dots, a_M') \in \cA^M$, 
    \[
        \odot_{i=1}^M ( v_{a_i} )=\odot_{i=1}^M ( v_{a'_i} )
    \]
    whenever
    \[
        (a_1, \dots, a_M) = (a_{\sigma(1)}',\dots,a_{\sigma(M)}').
    \]
    for some $\sigma\in S_M$ \emph{i.e.} $\odot$ is invariant to the action of $S_M$.
    % the value $\boxdot_{i=1}^M ( v_{a_i} )$ will be attained also by (and only by) those $\boxdot_{i=1}^M ( v_{a'_i} )$ such that $\boldsymbol{a}' \in \cA^M$ is obtained from $\boldsymbol{a}$ by permuting its elements. 
    % Therefore, for any fixed $\boldsymbol{a}\in \cA^M$, the $\boldsymbol{a}'$ which elements are exactly those constituting the leaf set of $\tau := [a_1 \cdots a_M]_{\bullet} \in \bX_{\cA}^M$ (noting that trees in $\mathcal{X}_\cA^M$ are non-plane and rooted) lead to the same Hadamard product 
    % \[
    % v_{\tau} := \odot_{i=1}^M ( v_{a_i} )=\odot_{i=1}^M ( v_{a'_i} )
    % \]
    % Since there are exactly $\frac{M!}{\mathfrak{s}(\tau)}$ of them, we have proved that
    Therefore we can write
    \[
    \left( \sum_{a \in \cA} v_a \right)^{\odot M}
    = 
    \sum_{\tau \in  \bX_{\cA}^M} \frac{M!}{\mathfrak{s}(\tau)} v_{\tau}.
    \]
    since the cardinality of orbit $\tau$ is exactly $\frac{M!}{\mathfrak{s}(\tau)}$.
\end{proof}

% \begin{remark}
%     The same argument proves an analogous tree expansion for the \emph{symmetric} tensor product $\boxdot$, instead of the Hadamard product $\odot$, defined by 
%     \begin{equation}
%     v_1 \boxdot \cdots \boxdot v_M
%     =
%     \frac{1}{M!} \sum_{\sigma \in S_M}  v_{\sigma(1)} \otimes \cdots \otimes v_{\sigma(M)}.
%     \end{equation}
% \end{remark}

We are now equipped to prove Proposition \ref{app:prop:expectation_tree_expansion}, which we do using the previous proposition and Wick's theorem. This proposition gives an expansion for covariances of type $\mathbb{E}[\varphi(X)\psi(Y)]$, where $\varphi, \psi$ are polynomials and $(X,Y)$ is a Gaussian vector with covariance of the form \eqref{eq:weird_covariance}. 
\begin{proposition}\label{app:prop:expectation_tree_expansion}
    Let $\varphi,\psi : \bR \to \bR$ be {polynomial functions}. 
    Fix index sets $\cA_1, \cA_2$ and  $\lambda: (\cA_1\cup\cA_2)\times(\cA_1\cup\cA_2) \to \bR$ a symmetric and \emph{summable} function.
    Then if
    \begin{equation}\label{eq:weird_covariance}
    (X,Y) \sim \mathcal{N}\left(0,\begin{bmatrix}
        ~ \sum_{(a,a') \in \cA_1 \times \cA_1} \lambda(a,a') & \sum_{(a,b) \in \cA_1 \times \cA_2} \lambda(a,b) 
        \\ 
        \sum_{(b,a) \in \cA_2 \times \cA_1} \lambda(b,a) & \sum_{(b,b') \in \cA_2 \times \cA_2} \lambda(b,b') 
    \end{bmatrix}\right)
    \end{equation}
    the following expansion holds
    \begin{equation}
        \E\{\varphi(X)\psi(Y)\} = \sum_{\tau \in \bX_{\cA_1}} \sum_{\eta \in \bX_{\cA_2}} 
        \frac{\varphi_{\tau}\psi_{\eta}}{\fs(\tau)\fs(\eta)} 
       \sum_{\pi \in \cP_{\bX}(\tau \sqcup \eta)}  \prod_{\{\pi_1,\pi_2\} \in \pi} \lambda(\pi_1, \pi_2)
    \end{equation}
    where $\varphi_{\llbracket \alpha_1 \cdots \alpha_m \rrbracket} := \varphi^{(m)}(0)$ and $\cP_\bX(\tau \sqcup \eta)$ is the set of pairings of the entries of $\tau$ and $\eta$.
\end{proposition}

\begin{proof}
    Using Wick's theorem we have 
    \begin{align*}
        \E\{\varphi(X)\psi(Y)\} &=
        \sum_{M,N=0}^{\infty} \frac{\varphi^{(M)}(0)\psi^{(N)}(0)}{M!N!} \E\{X^M Y^N\}
        \\ &=
        \sum_{M,N=0}^{\infty} \frac{\varphi^{(M)}(0)\psi^{(N)}(0)}{M!N!} 
        \sum_{\pi \in \cP([M]\sqcup[N])} 
        \prod_{\substack{\{\pi_1,\pi_2\} \in \pi \\ \pi_1 < \pi_2}} 
        \E\{Z_{\pi_1} Z_{\pi_2}\}
        \\ &=
        \sum_{M,N=0}^{\infty} \frac{\varphi^{(M)}(0)\psi^{(N)}(0)}{M!N!} 
        \sum_{\pi \in \cP([M]\sqcup[N])} 
        \prod_{\substack{\{\pi_1,\pi_2\} \in \pi \\ \pi_1 < \pi_2}} 
        \sum_{\gamma_1,\gamma_2 \in \cA_1\sqcup\cA_2}
        \lambda_{\pi_1,\pi_2}(\gamma_1,\gamma_2).
    \end{align*}
    Here $[M]\sqcup[N] := ((1,1), \dots, (1,M), (2,1), \dots, (2,N))$ is ordered lexicographically, where with a slight abuse of notation $\cP([M]\sqcup[N])$ is the set of pairings of elements in $[M]\sqcup[N]$ and
    \[
    \lambda_{(i,*),(j,*)}(c,c') =  \lambda(c,c') \mathbf{1}(c \in \cA_i) \mathbf{1}(c' \in \cA_j).
    \]
    The claim would thus follow from the equality
    \begin{align*}
        & \frac{1}{M!N!} 
        \sum_{\pi \in \cP([M]\sqcup[N])} 
        \prod_{\substack{\{\pi_1,\pi_2\} \in \pi \\ \pi_1 < \pi_2}} 
        \sum_{\gamma_1,\gamma_2 \in \cA_1\sqcup\cA_2}
        \lambda_{\pi_1,\pi_2}(\gamma_1,\gamma_2)
        \\
        &= \sum_{\tau \in \bX_{\cA_1}^M} \sum_{\eta\in \bX_{\cA_2}^N} 
        \frac{1}{\fs(\tau)\fs(\eta)} 
        \sum_{\pi \in \cP_\bX(\tau \sqcup \eta)} \prod_{\{\gamma_1,\gamma_2\} \in \pi} 
        \lambda(\gamma_1,\gamma_2),
    \end{align*}
    which we now prove.
    Begin by noticing that
    \begin{align}\label{app:eqn:combinatorica_1}
        & \sum_{\pi \in \cP([M]\sqcup[N])} 
        \prod_{\substack{\{\pi_1,\pi_2\} \in \pi \\ \pi_1 < \pi_2}} 
        \sum_{\gamma_1,\gamma_2 \in \cA_1\sqcup\cA_2}
        \lambda_{\pi_1,\pi_2}(\gamma_1,\gamma_2)
        \\
        &= \sum_{\Gamma_1 : [M] \to \cA_1}\sum_{\Gamma_2 : [M] \to \cA_2}
        \sum_{\pi \in \cP([M]\sqcup[N])} 
        \prod_{\substack{\{\pi_1,\pi_2\} \in \pi \\ \pi_1 < \pi_2}} 
        \lambda(\Gamma(\pi_1),\Gamma(\pi_2))
    \end{align}
    where $\Gamma((i,k)) = \Gamma_i(k)$.
    We are simply double counting: the left-hand side in \eqref{app:eqn:combinatorica_1} is obtained by labeling the elements of $[M]\sqcup[N]$ after having paired them, while the right-hand side is obtained by reversing the order of the operations with the functions $\Gamma_1,\Gamma_2$ specifying the labels.

 Then for any fixed $\tau \in \bX_{\cA_1}^M$ and $\eta \in \bX_{\cA_2}^N$, we have
    \begin{align}\label{app:eqn:combinatorics_2}
        & \sum_{\pi \in \cP_\bX(\tau \sqcup \eta)} \prod_{\{\gamma_1,\gamma_2\} \in \pi} 
        \lambda(\gamma_1,\gamma_2)
        = 
        \sum_{\pi \in \cP([M]\sqcup[N])} 
        \prod_{\substack{\{\pi_1,\pi_2\} \in \pi \\ \pi_1 < \pi_2}} 
        \lambda(\Gamma(\pi_1),\Gamma(\pi_2))
    \end{align}
    whenever $\Gamma_1 : [M] \to \cA_1$ and $\Gamma_2 : [M] \to \cA_2$  satisfy
    \[
    \tau = \llbracket \Gamma_1(1) \cdots \Gamma_1(M) \rrbracket 
    \text{ and }
    \eta = \llbracket \Gamma_2(1) \cdots \Gamma_2(N) \rrbracket.
    \]
    Letting $\Omega_{\tau,\eta}$ denote the set of all such pairs $(\Gamma_1,\Gamma_2)$, we note that
    \[
        |\Omega_{\tau,\eta}|=\frac{M!}{\fs(\tau)}\frac{N!}{\fs(\eta)}
    \]
    and, in turn, that
\begin{equation}\label{app:eqn:combinatorics_3}
        \bigsqcup_{\tau \in \bX_{\cA_1}^M} \bigsqcup_{\eta \in \bX_{\cA_2}^N} \Omega_{\tau,\eta}  = \{ \Gamma_ 1: [M] \to \cA_1 \}\times\{ \Gamma_2 : [N] \to \cA_2 \}.
    \end{equation}
    
    This allows us to conclude that
    \begin{align*}
        & \sum_{\substack{\tau \in \bX_{\cA_1}^M\\ }} \sum_{\eta \in \bX_{\cA_2}^N} 
        \frac{1}{\fs(\tau)\fs(\eta)} 
        \sum_{\pi \in \cP(\tau \sqcup \eta)} \prod_{\{\gamma_1,\gamma_2\} \in \pi} 
        \lambda(\gamma_1,\gamma_2)
        \\
        &=
        \sum_{\tau \in \bX_{\cA_1}^M} \sum_{\eta \in \bX_{\cA_2}^N}   
        \frac{1}{\fs(\tau)\fs(\eta)} \frac{1}{|\Omega_{\tau,\eta}|}
        \sum_{(\Gamma_1,\Gamma_2) \in \Omega_{\tau,\eta}}
        \sum_{\pi \in \cP([M]\sqcup[N])} 
        \prod_{\substack{\{\pi_1,\pi_2\} \in \pi \\ \pi_1 < \pi_2}} 
        \lambda(\Gamma(\pi_1),\Gamma(\pi_2))
        \\
        &= 
        \frac{1}{M!N!}
        \sum_{\tau \in \bX_{\cA_1}^M} \sum_{\eta \in \bX_{\cA_2}^N}   
        \sum_{(\Gamma_1,\Gamma_2) \in \Omega_{\tau,\eta}}
        \sum_{\pi \in \cP([M]\sqcup[N])} 
        \prod_{\substack{\{\pi_1,\pi_2\} \in \pi \\ \pi_1 < \pi_2}} 
        \lambda(\Gamma(\pi_1),\Gamma(\pi_2))
        \\
        &= 
        \frac{1}{M!N!}
        \sum_{\Gamma_1 : [M] \to \cA_1}\sum_{\Gamma_2 : [M] \to \cA_2}
        \sum_{\pi \in \cP([M]\sqcup[N])} 
        \prod_{\substack{\{\pi_1,\pi_2\} \in \pi \\ \pi_1 < \pi_2}} 
        \lambda(\Gamma(\pi_1),\Gamma(\pi_2))
        \\
        &=
        \frac{1}{M!N!} 
        \sum_{\pi \in \cP([M]\sqcup[N])} 
        \prod_{\substack{\{\pi_1,\pi_2\} \in \pi \\ \pi_1 < \pi_2}} 
        \sum_{\gamma_1,\gamma_2 \in \cA_1\sqcup\cA_2}
        \lambda_{\pi_1,\pi_2}(\gamma_1,\gamma_2)
    \end{align*}
    where the first equality follows from (\ref{app:eqn:combinatorics_2}), the second from $|\Omega_{\tau,\eta}| = \frac{M!}{\fs(\tau)}\frac{N!}{\fs(\eta)}$, the third is (\ref{app:eqn:combinatorics_3}) and the last (\ref{app:eqn:combinatorica_1}). 
\end{proof}

 \section{Numerical study of the Jacobian spectral distribution}\label{app:numerics}

 In this section we present some evidence that the results presented in this work hold for non-polynomial activation functions, focusing on the Jacobian spectral distribution. 
 
We consider ReLU networks (\emph{i.e.} with all activations equal to $\varphi(x) = x\cdot\mathbf{1}(x>0)$).
Ignoring the fact that $\varphi$ is not a polynomial and using Proposition \ref{thm:jacobianmoments} (noting that $\mu_{k,\ell}(x) = \frac{1}{2}$ for all $k,\ell$ and $x$ in that case), we can solve the resulting recursion for $m_{4,L}$ and obtain the following closed-form formulae as a function of the network depth $L$:
\begin{gather*}
    m_{1,L} = \frac{1}{2^L} \quad
    m_{2,L} = \frac{1+2L}{4^L}\quad
    m_{3,L} = \frac{6L(L+1)-2L+1}{8^L} \\
    m_{4,L} = \frac{\frac{4}{3}L(16L^2 + 12L + 5) + 1}{16^L}.
\end{gather*}
We note in passing that this seems to suggest a phenomenon of vanishing gradients as $L$ grows, with the numerator in $m_{k,L}$ being a polynomial in $L$ and the denominator being exponential. 

We then compute these moments empirically, and compare the resulting numerical values with those obtained from these expressions. This is compiled in the following table, which shows very good agreement.

% \begin{table}[h!]
% \centering
% \begin{tabular}{||c c c c||} 
%  \hline
% $m_{1,L}$ & $m_{2,L}$ & $m_{3,L}$ & $m_{4,L}$ \\ [.5ex] 
%  \hline\hline 
%  &&&\\
%  $\frac{1}{2^L}$ & $\frac{1+2L}{4^L}$ & 
%  $\frac{6L(L+1)-2L+1}{8^L}$ & $\frac{\frac{4}{3}L(16L^2 + 12L + 5) + 1}{16^L}$ \\  
%  &&& \\
%  \hline
% \end{tabular}
% \vspace{5pt}
% \caption{The first $4$ limiting Jacobian moments for depth-$L$ \emph{ReLU} networks.}
% \label{table:moments_ReLU}
% \end{table}

\begin{table}[h!]
\begingroup
\renewcommand{\arraystretch}{1.5} % Default value: 1
\centering
\begin{tabular}{cccccc}
\hline
Quantity & L=1 & L=2 & L=3 &                                           \\ \hline
$m_{1,L}$      &  0.501 (0.5)   &  0.251 (0.25)   &  0.126 (0.125)                                   \\
$m_{2,L}$       &  0.752 (0.75)   &  0.316 (0.313)  &  0.111 (0.109)                                 \\
$m_{3,L}$      &   1.384 (1.375)  &  0.529 (0.516)  &  0.135  (0.131)                       \\
$m_{4,L}$       &  2.840 (2.813)   &  1.012  (0.973) &  0.191 (0.181)   \\ \hline
\end{tabular}
\vspace{5pt}
\caption{The first four moments of the limiting spectral distribution of the Jacobian of a depth-$L$ {ReLU} network, for different values of $L$. In the columns, the first value is obtained numerically, by sampling $L$ weight matrices of dimension $500\times 500$ and averaging over $200$ tries, while the value in parentheses is the theoretical prediction (we only kept the first three decimal digits for both).}
\label{table:moments_ReLU}
\endgroup
\end{table}
Next, we independently initialize $M=200$ depth-$2$ networks with weight matrices of dimension $N\times N$ weights, and do so for each $N \in \{50, 100, 250, 500, 700, 1000\}$. 
For each of these networks, we compute the first few empirical spectral moments of $\mathbf{J}_{2,\mathbf{x}}\mathbf{J}_{2,\mathbf{x}}^\top$  with respect to the input $\bx = \mathbf{1}$. In Figure \ref{fig:around_limit}, we plot histograms of the resulting values for each $N$. The picture shows how the convergence to the limiting value is in $L^2$, with the variance around the limit shrinking as the dimension increases. 

 \begin{figure}[h]
    \centering
    \includegraphics[width=0.8\textwidth]{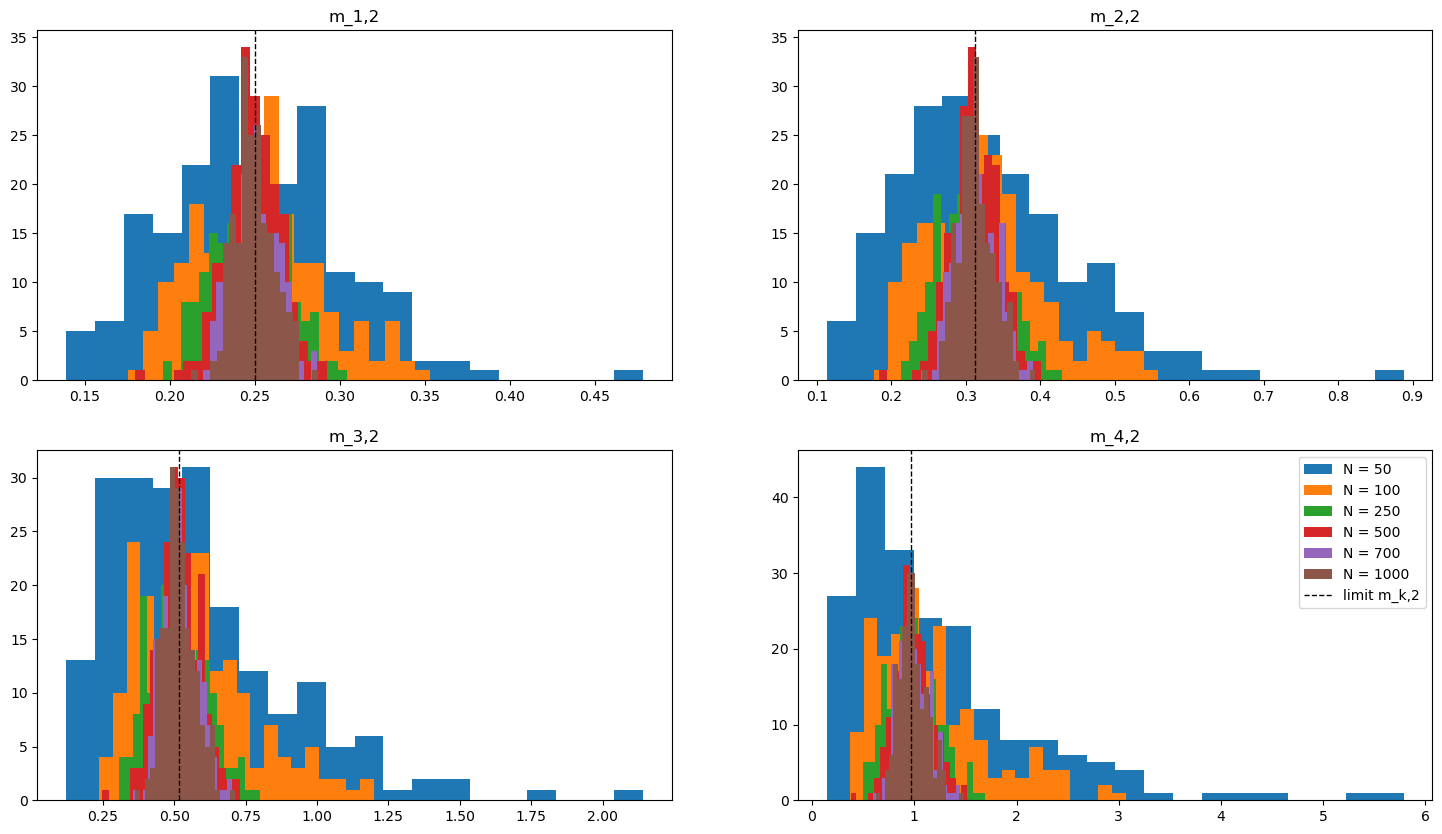}
    \caption{Distribution of empirical Jacobian moments around their limit as $N$ increases.}
    \label{fig:around_limit}
\end{figure}

%  \begin{figure}[ht]
%     \centering
%     \includegraphics[width=0.9\textwidth]{}
%     \caption{Rate of convergence of empirical Jacobian moments around the limit as $N$ increases.}
%     \label{fig:rate_conv}
% \end{figure} 

% For completeness, we repeated the same experiments with uniformly distributed weights in $\{-1, 0, 1\}$ and included the resulting plots below. 

% \newpage
% \clearpage
%  \begin{figure}[h!]
%     \centering
%     \includegraphics[width=0.9\textwidth]{}
%     \caption{Distribution of the moments of the empirical singular value distribution of the Jacobian around their limit (vertical dotted line), as $N$ increases. Here, the weights have entries uniformly distributed in $\{-1, 0, 1\}$.}
%     \label{fig:around_limit_quant}
% \end{figure} 

%  \begin{figure}[h!]
%     \centering
%     \includegraphics[width=0.9\textwidth]{}
%     \caption{Rate of convergence of empirical Jacobian moments around the limit as $N$ increases, weights uniformly distributed in $\{-1, 0, 1\}$.}
%     \label{fig:rate_conv_quant}
% \end{figure} 

%  \begin{figure}[h]
%     \centering
%     \includegraphics[scale=0.25]{}
%     \caption{The quantiles of the Network marginals at two different inputs with wights from all $4$ ensembles in Figure \ref{fig:sparse_weibull_quantized}.}
% \end{figure} 

% \newpage
% \section{Dependency graph.}
% \label{app:sect:map}

%  \begin{figure}[h!]
%     \centering
%     \scalebox{0.8}{
%     \input{Figures/map_results}
%     }
% \end{figure} 

\newpage
\section{Table of Symbols}
\label{app:sect:table}

\begingroup
\renewcommand{\arraystretch}{1.2} % Default value: 1

%\begin{center}
\hspace{-2cm}
\begin{tabular}{ |c||p{11.5cm}|  }
 \hline
 Symbol & \multicolumn{1}{| c |}{Meaning} \\
 \hline
 $\Phi_L$           & Neural network of depth $L$ (Eqn. \ref{def:NN}). \\
 $\mathbf{J}_{L,\bx}$ & Neural network Jacobian  $\mathbf{J}_{L,\bx} = \mathrm{d}(\varphi_{L}\circ \Phi_{L-1})_{\bx}$ (Eqn. \ref{def:Jacobian}).\\
 $\fd$              & Dimension function associated to cells of a graph $G$ (Sec. \ref{sec:graph_dictionary}). \\
 $\mathbf{X}_c \in \bR^{\fd(c)}$ & Input of cell $c$ in a graph $G$ (Sec. \ref{sec:graph_dictionary}). \\
 $\mathfrak{C}$ & Set of cell inputs of a product graph (Def. \ref{def:product-graph}). \\
 $\mathcal{F}=(\mathcal{F}_{\mathrm{in}},\mathcal{F}_{\mathrm{out}})$ & Free cells (Sec. \ref{subsec:operator_graph}) of an operator graph, comprised of a sequence of in-cells and a sequence of out-cells.\\
 $ \mathbf{W}_G$ & Operator associated to a graph $G$ with free cells (Def. \ref{def:operator-graph}). If $G$ has no free cells, it is a product graph and $\mathbf{W}_G$ is its value. \\
 $\bullet_{\mathbf{X}_v}$, $\xrightarrow[]{\mathbf{X}_e}$       & Cell fixed to $\mathbf{X}_c$ (Sec. \ref{subsec:operator_graph}). \\
 $\bullet$, $\rightarrow$       & Cell fixed to $\mathbf{1}$ or $\mathbf{I}$ (Sec. \ref{subsec:operator_graph}). \\
 $\text{\begin{tikzpicture}[x=0.75pt,y=0.75pt,yscale=-.5,xscale=.5]
%uncomment if require: \path (0,196); %set diagram left start at 0, and has height of 196

%Flowchart: Connector [id:dp10716904639166391] 
\draw  [color={rgb, 255:red, 0; green, 0; blue, 0 }  ,draw opacity=1 ][fill={rgb, 255:red, 255; green, 255; blue, 255 }  ,fill opacity=1 ][line width=1]  (15.62,19.02) .. controls (17.37,17.38) and (17.47,14.63) .. (15.83,12.87) .. controls (14.19,11.11) and (11.43,11.01) .. (9.67,12.66) .. controls (7.92,14.3) and (7.82,17.05) .. (9.46,18.81) .. controls (11.1,20.57) and (13.86,20.66) .. (15.62,19.02) -- cycle ;

\end{tikzpicture}}\hspace{-3pt}$, $\dashrightarrow$      & In-cell (Sec. \ref{subsec:operator_graph}). \\
 $\text{\begin{tikzpicture}[x=0.75pt,y=0.75pt,yscale=-.5,xscale=.5]
%uncomment if require: \path (0,196); %set diagram left start at 0, and has height of 196

%Flowchart: Connector [id:dp10716904639166391] 
\draw  [color={rgb, 255:red, 74; green, 144; blue, 226 }  ,draw opacity=1 ][fill={rgb, 255:red, 255; green, 255; blue, 255 }  ,fill opacity=1 ][line width=1]  (15.62,19.02) .. controls (17.37,17.38) and (17.47,14.63) .. (15.83,12.87) .. controls (14.19,11.11) and (11.43,11.01) .. (9.67,12.66) .. controls (7.92,14.3) and (7.82,17.05) .. (9.46,18.81) .. controls (11.1,20.57) and (13.86,20.66) .. (15.62,19.02) -- cycle ;

\end{tikzpicture}}\hspace{-3pt}$, {\color{rgb, 255:red, 74; green, 144; blue, 226 } $\dashrightarrow$}      & Out-cell (Sec. \ref{subsec:operator_graph}).\\
 $\bullet_\bx \mapsto \bullet_\by$       & Switching the input of a cell (Def. \ref{def:bT_L(x)}). \\
 $G_1 \wedge G_2$   & Out-vertex identification (corresponds to a Hadamard product) (Def. \ref{def:graph_wedge}). \\
 $\bX_{\mathcal{A}}^M$   & Space of multisets of elements of $\mathcal{A}$ of cardinality $M$ (Eq. \ref{eq:multiset_symm}). \\
  $\mathfrak{s}(\cdot)$   & Symmetric factor of a multiset (Eq. \ref{eq:multiset_symm}). \\
 $\bT_L$        &  Input-independent operator trees space for NN expansion (Sec. \ref{sec:tree_exp_defs}). \\
 $\bT_L(\bx)$        & Input-dependent operator trees space for NN expansion (Def. \ref{def:bT_L(x)}). \\
 $(\bT_L(\bx))*$        & Input-dependent operator trees with pruned root (Sec. \ref{sec:related_expansions}). \\
 $\partial_{\mathbf{x}}\bT_L(\bx)$   & Space of derivative operator trees with respect to input $\mathbf{x}$ (Sec. \ref{sec:related_expansions}). \\
  $\partial_{\ell}\bT_L(\bx)$   & Space of derivative operator trees with respect to matrix $W_\ell$ (Sec. \ref{sec:related_expansions}). \\
 $s(\cdot)$   & Symmetric factor of trees (Sec. \ref{sec:tree_exp_defs}).\\
$\cP(G)$ & Admissible pairings of edges of $G$ with random inputs (Def. \ref{def:admissible_pairings}).\\
$\mathcal{P}_{\mathrm{A}}(G), \mathcal{P}_{\mathrm{B}}(G), \mathcal{P}_{\mathrm{AF}}(G)$ & Atomic, bi-atomic and atom-free pairings of $G$ (Def. \ref{def:various_partitions}). \\
$G_\phi$ & Graph resulting from identification of $G$'s edges according to $\phi$ (Sec. \ref{sec:genus}).\\
 \hline
\end{tabular}
%\end{center}

\endgroup

% \newpage
% \section{Ideas - Temporary}
% \input{Sections/ideas}

\vfill

\end{document}